\def\vr{{\varphi}}
\newcommand\dint{\displaystyle\int}
\newtheorem{theorem}{Theorem}
\newtheorem{Theorem}{Theorem}[section]
\newtheorem{Lemma}{Lemma}[section]
\newtheorem{Proposition}{Proposition}[section]
\newtheorem{Definition}{Definition}[section]
\newtheorem{Remark}{Remark}[section]
\newtheorem{Example}{Example}[section]
\title[On singular Frobenius  for second order]{On singular Frobenius  for second order linear partial differential equations}
\author{V. Le\'on}
\address{V. Le\'on. ILACVN - CICN, Universidade Federal of the Integração Latino-Americano, Parque tecnológico from Itaip\'u, Foz do  Iguaçu-PR, 85867-970 - Brazil}
\email{victor.leon@unila.edu.br}
\author{B. Sc\'ardua}
\address{B. Sc\'ardua. Instituto from Matem\'atica - Universidade Federal do the Rio from Janeiro,
CP. 68530-Rio from Janeiro-RJ, 21945-970 - Brazil}
\email{bruno.scardua@gmail.com}
\subjclass[2000]{Primary 34A05, 34A25; Secondary 34A30, 34A26.}
\keywords{Frobenius method, regular singularity, Riccati equation.}
\begin{document}
\begin{abstract}
The main subject of this paper is the study of analytic second order linear partial differential equations.
 We aim to solve the classical equations and some more, in the real or complex analytical case. This is done by introducing methods inspired by the method of  Frobenius method for second order linear ordinary differential equations. We introduce a notion of  Euler type partial differential equation. To such a PDE we associate an indicial conic, which is an affine plane curve of degree two. Then comes the concept of regular singularity and finally convergence theorems, which must necessarily take into account the type of PDE (parabolic, elliptical or hyperbolic) and a nonresonance condition. This condition gives  a new geometric interpretation of the original condition between the roots of the original Frobenius theorem for second order ODEs. The interpretation is something like, a certain reticulate has or not vertices on the indexical conic. Finally, we retrieve the solution of all the classical PDEs  by this method (heat diffusion, wave propagation and Laplace equation), and also increase the class of those that have explicit algorithmic solution to far beyond those admitting separable variables. The last part of the paper is dedicated to the construction of PDE models
 for the classical ODEs like Airy, Legendre, Laguerre, Hermite and Chebyshev by two different means. One model is based on the requirement that the restriction of the PDE to lines through the origin must be the classical ODE model. The second is based on the idea of having symmetries on the PDE model and imitating the ODE model.
We study these PDEs and obtain their solutions, obtaining for the framework of PDEs some of the classical 
results, like existence of polynomial solutions (Laguerre, Hermite and Chebyshev polynomials).

\end{abstract}

\maketitle
\tableofcontents

\section{Introduction}
One of the most applicable fields in mathematics is the subject of partial differential equations.
Indeed, a number of phenomena as heat diffusion, waves propagation and electromagnetic forces are
modeled by these equations. All  these equations belong to the  class of second order linear equations. This class plays
a special role in the theory. Indeed, it includes the above mentioned and several other modern problems as nuclear reactions and atomic models.
Since the work of Euler, Lagrange, Bernoulli and Fourier, among others, the techniques  for solving such equations are based on reducing, at least partially, the original PDE to a system of ordinary differential equations and trying to solve these ODEs. This is done by the use of special transforms as Fourier or Laplace transforms. Another possibility is the use of the theory of distributions and operators as the heat kernel.
All this works pretty well, but has some limitations. One of the first to show up is the fact that the equations must have constant coefficients or, at least, the majority of these. Another restriction of the classical methods is the fact that not all PDEs can have separated variables. Indeed, already some simple perturbations of
classical equations fall outside of the class of equations admitting this separation.
Our aim is to bring to the framework of partial differential equations some of the techniques used in
the study of ordinary differential equations.
Before going through this, let us recall the ordinary differential case.
%\subsection{The classical method of Frobenius  for second  ordinary differential equations}

The classical {\em method of Frobenius} is a very useful tool in finding solutions of
a homogeneous second order linear ordinary differential equations with analytic coefficients.
These are equations that write in the form
$a(x) y^{\prime \prime} + b(x)y^\prime  + c(x)y=0$ for some real analytic functions $a(x), b(x), c(x)$ at some point
$x_0\in \mathbb R$. It is well known that if $x_0$ is an ordinary point, i.e., $a(x_0)\ne 0$ then there
are two linearly independent
solutions $y_1(x), y_2(x)$ of the ODE, admitting  power series expansions converging in some common neighborhood of
$x_0$. This is a consequence of the classical theory of ODE and also shows that the solution space of this ODE
has dimension two, i.e., any solution is of the form $c_1 y_1(x) + c_2 y_2(x)$ for some constants $c_1, c_2 \in \mathbb R$.
Second order linear homogeneous differential equations appear in many concrete
problems in natural sciences, as physics,
chemistry, meteorology and even biology. Thus solving such equations is an important task.
The existence of solutions for the case of an ordinary point is not enough for most of the applications.
Indeed, most of the relevant equations are connected to the singular (non-ordinary) case. We can mention
Bessel equation $x^2 y^{\prime \prime}  + x y ^\prime + (x^2 - \nu^2) y=0$, whose range of applications goes from
heat conduction, to the model of the hydrogen atom. This equation has  the origin $x=0$ as a singular point.
Another remarkable equation is the {\it Laguerre equation} $x y ^{\prime \prime} + (\nu +1 -x) y^\prime + \lambda y=0$ where
$\lambda, \nu \in \mathbb R$ are parameters. This equation is quite relevant  in quantum mechanics, since
it appears in the modern quantum mechanical description of the hydrogen atom.
All these are examples of equations with a {\it regular singular point}.
The classical Frobenius methods for second order ODE is found originally found in
\cite{Frobenius} and, more recently, in \cite{Boyce,C} and for higher order in \cite{Leon-Scardua1}.

 The above brief description of the method of Frobenius already suggests that there may exist a
  version of this result for partial differential equations.

 We shall work with the class of second order analytic linear homogeneous partial differential equations. Such an PDE is of the form
\[
a(x,y)\frac{\partial^2 u}{\partial x^2}+b(x,y)\frac{\partial^2 u}{\partial x\partial y}+c(x,y)\frac{\partial^2 u}{\partial y^2}+d_1(x,y)\frac{\partial z}{\partial x}+d_2(x,y)\frac{\partial z}{\partial y}+d_3(x,y)z=f(x,y)
\]
where the coefficients $a(x,y), b(x,y), c(x,y), d_1(x,y), d_2(x,y), d_3(x,y)$ and $f(x,y)$ are complex analytic functions defined in some
open subset  $U\subset \mathbb K^2$ where $\mathbb K \in \{\mathbb R, \mathbb C\}$.

This is a very meaningful class when it comes to classical natural phenomena.
We shall first introduce a simple model for these equations, which is a kind of Euler PDE of second order.
This will be a
PDE of the form
\begin{equation}\label{eq1intro}
Ax^2\frac{\partial^2 z}{\partial x^2}+Bxy\frac{\partial^2 z}{\partial x\partial y}+Cy^2\frac{\partial^2 z}{\partial y^2}+Dx\frac{\partial z}{\partial x}+Ey\frac{\partial z}{\partial y}+Fz=0,\;x>0,y>0
\end{equation}
where $A,B,C,D,E,F\in\mathbb{K}$.

To this equation we shall associate an indicial polynomial which defines a conic in the affine place.
This conic $\mathcal C\subset \mathbb C^2$ is given by
\[
P(r,s)=Ar^2+Brs+Cs^2+(D-A)r+(E-C)s+F=0
\]

This conic plays the role of the indicial equation for Euler ordinary differential equations of order two. Then we shall introduce  a notion of regular singularity for such equations, which is a natural adaptation of the
notion of regular singularity imported from the theory of second order linear ordinary differential equations.
We are able to prove:

\begin{theorem}
\label{Theorem:EulerEDP}{ Consider the following Euler type PDE
\begin{equation}\label{eq1ThmEulerEDP}
Ax^2\frac{\partial^2 z}{\partial x^2}+Bxy\frac{\partial^2 z}{\partial x\partial y}+Cy^2\frac{\partial^2 z}{\partial y^2}+Dx\frac{\partial z}{\partial x}+Ey\frac{\partial z}{\partial y}+Fz=0,\;(x,y) \in \mathbb K^2
\end{equation}
where $A,B,C,D,E,F\in\mathbb{K}$. Given
$(r,s)\in \mathcal C^2$ we have the following equivalence:
\begin{enumerate}
\item The point $(r,s)$ belongs to the indicial conic $\mathcal C$.
 \item We have a
 complex monomial solution of the form $x^ry^s$.
 \end{enumerate}
 If $\mathbb K=\mathbb R$ the  indicial conic  is of one of the following  types:
\begin{enumerate}
\item[{\rm(a)}] Ellipse, if $B^2-4AC<0$.
\item[{\rm(b)}] Parabola, if $B^2-4AC=0$
\item[{\rm(c)}] Hyperbole, if $B^2-4AC>0$.
\end{enumerate}}
\end{theorem}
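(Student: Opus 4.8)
The plan is to treat the two assertions separately; both are elementary — the first a direct substitution, the second the classical reduction of a real quadratic curve to canonical form.

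For the equivalence $(1)\Leftrightarrow(2)$, I would substitute $z=x^ry^s$ directly into \eqref{eq1ThmEulerEDP}. Using
\[
\partial_x^2(x^ry^s)=r(r-1)x^{r-2}y^s,\quad \partial_x\partial_y(x^ry^s)=rs\,x^{r-1}y^{s-1},\quad \partial_y^2(x^ry^s)=s(s-1)x^ry^{s-2},
\]
together with $\partial_x(x^ry^s)=r\,x^{r-1}y^s$ and $\partial_y(x^ry^s)=s\,x^ry^{s-1}$, each of the six terms of the PDE becomes a constant multiple of $x^ry^s$: the factors $x^2$, $xy$, $y^2$, $x$, $y$ in the coefficients are exactly what is needed to absorb the drop in the exponents. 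Collecting the coefficient of $x^ry^s$ gives
\[
Ar(r-1)+Brs+Cs(s-1)+Dr+Es+F=Ar^2+Brs+Cs^2+(D-A)r+(E-C)s+F=P(r,s).
\]
Since $x^ry^s$ never vanishes on the domain under consideration ($x>0,\ y>0$ in the real case, or a simply connected domain avoiding the coordinate axes with a fixed branch of $\log$ in the complex case), the left-hand side of the PDE vanishes identically if and only if $P(r,s)=0$, i.e.\ if and only if $(r,s)\in\mathcal C$. There is no real obstacle here; one only needs to fix the domain on which the monomial $x^ry^s$ is defined so that ``solution'' is unambiguous.

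For the classification in the real case, the type of the affine curve $\{P(r,s)=0\}$ depends only on its degree-two part $Q(r,s)=Ar^2+Brs+Cs^2$, whose symmetric matrix $M=\bigl(\begin{smallmatrix} A & B/2 \\ B/2 & C\end{smallmatrix}\bigr)$ satisfies $\det M=AC-\tfrac14 B^2=-\tfrac14(B^2-4AC)$. I would diagonalize $Q$ by an orthogonal rotation of the $(r,s)$-plane, producing eigenvalues $\lambda_1,\lambda_2$ with $\lambda_1\lambda_2=\det M$, and then complete the square in the rotated coordinates to bring $P$ to normal form. If $B^2-4AC<0$ the eigenvalues have the same sign and $P=0$ becomes $\lambda_1u^2+\lambda_2v^2=\mathrm{const}$, an ellipse; if $B^2-4AC>0$ they have opposite signs, giving a hyperbola; if $B^2-4AC=0$ one eigenvalue vanishes, the corresponding linear term survives, and one obtains a parabola.

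The only point I would flag — and the sole genuine subtlety in an otherwise routine argument — is that these normal forms may degenerate: an ellipse to a single point or to $\emptyset$, a parabola to a pair of parallel or coincident lines, a hyperbola to a pair of intersecting lines, according to the lower-order coefficients (equivalently, to the rank of the full $3\times 3$ matrix of $P$). So the trichotomy should be read as classifying the affine type of the conic $\mathcal C$, with these degenerate specializations understood. Beyond that caveat, the proof is just the standard diagonalize-and-complete-the-square reduction of a real quadratic curve, which I would carry out explicitly only to the extent needed to exhibit the canonical forms above.
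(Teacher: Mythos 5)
Your proposal is correct and follows essentially the same route as the paper: the equivalence is obtained by the same direct substitution of $x^ry^s$ into the operator, yielding the indicial polynomial $P(r,s)$ as the coefficient of the nonvanishing monomial, and the real classification by the sign of $B^2-4AC$ is the standard diagonalize-and-complete-the-square reduction that the paper simply invokes as well known from plane analytic geometry. Your added caveat about degenerate specializations of the conic is a reasonable refinement, consistent with the degenerate types the paper itself lists among its normal forms.
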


This however does not assure the existence of monomial polynomial or monomial rational solutions, since the
points of the indicial conic may be all non-integral. Using some basic Linear Algebra we prove:

\begin{theorem}
\label{Theorem:firstintegralconicnormalform}
Let $E$ be a PDE of Euler type. Then we have the following possibilities:

\begin{enumerate}

\item $E$ is elliptic $\implies$ there are two linearly independent degree one polynomial solutions.

\item $E$ is hyperbolic $\implies$ there is one  polynomial solution $q(x,y)$ and one solution of the form
$1/q(x,y)$, where $q(x,y)$ is a degree one polynomial.

\item $E$ is parabolic $\implies$ there are infinitely many linearly independent
degree $n+ n^2$ polynomial solutions of the form $p(x,y)^n q(x,y)^{n^2}, n \in \mathbb N$ where
$p(x,y), q(x,y)$ are degree one polynomials.

\item $E$ is of degenerate type (1) $\implies$ there are infinitely many linearly independent
degree $n$ polynomial solutions of the form $p(x,y)^n$ and infinitely many solutions of the form $q(x,y)^m$ where $p(x,y), q(x,y)$ are
degree one polynomials intersecting transversely.

\item $E$ is of degenerate type  (2) $\implies$ there are infinitely many $r+m$ degree polynomial solutions of the form $p(x,y)^r q(x,y)^m$

\end{enumerate}
\end{theorem}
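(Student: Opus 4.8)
The plan is to reduce the entire statement to the behaviour of the indicial conic $\mathcal C$ under the transformations that preserve the class of Euler type PDEs, and then to read off solutions from the integer points of $\mathcal C$ via Theorem \ref{Theorem:EulerEDP}.

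First I would record the operator identity behind everything. With $R_1=x\partial_x$ and $R_2=y\partial_y$, two commuting Euler fields, every Euler type operator satisfies
\[
L=A R_1^2+B R_1R_2+C R_2^2+(D-A)R_1+(E-C)R_2+F=P(R_1,R_2),
\]
so that $L(x^ry^s)=P(r,s)\,x^ry^s$; thus $x^ry^s$ is a solution precisely when $(r,s)\in\mathcal C$, which is exactly Theorem \ref{Theorem:EulerEDP}. Next I would set up the admissible moves: the scalings $x\mapsto\lambda x$, $y\mapsto\mu y$ and the flip $x\leftrightarrow y$, which fix (respectively reflect) $\mathcal C$; the substitution $z=x^{\alpha}y^{\beta}w$, which turns $Lz=0$ into $\widetilde L w=0$ with $\widetilde L$ again of Euler type and indicial polynomial $P(r+\alpha,\,s+\beta)$, hence translates $\mathcal C$ by $-(\alpha,\beta)$; and the monomial changes of variable $x=u^{a}v^{b}$, $y=u^{c}v^{d}$ with $ad-bc\neq0$, under which $\mathcal C$ undergoes the corresponding linear map. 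Combining these with the affine classification of plane conics (the elementary linear algebra invoked above) brings $P$ to one of five normal forms according to type: $r^2+s^2-1$ (ellipse), $r^2-s^2-1$ (hyperbola), $s-r^2$ (parabola), $rs$ (transverse pair of lines), and $r^2-1$ or $r^2$ (the remaining degenerations).

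I would then read the non-negative integer points off each normal form and invoke Theorem \ref{Theorem:EulerEDP}. For $r^2+s^2=1$ these are $(1,0)$ and $(0,1)$, yielding two linearly independent degree one polynomial solutions; for $r^2-s^2=1$ they are $(\pm1,0)$, yielding a degree one polynomial $q$ together with $1/q$; for $s=r^2$ they are $(n,n^2)$ with $n\in\mathbb N$, yielding the solutions $p^nq^{n^2}$ of degree $n+n^2$; for $rs=0$ the conic contains both coordinate lines, so every $(n,0)$ and every $(0,m)$ occurs and we recover all $p^n$ and all $q^m$ with $\{p=0\}$ and $\{q=0\}$ transverse; and in the last case $\mathcal C$ contains a line $r=\mathrm{const}$, producing the one-parameter family $p^rq^m$. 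Finally one takes $\mathbb K$-linear combinations of these monomials and transports them back by the inverse normalizing transformation.

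The main obstacle is the interface between the reduction and the polynomiality of the conclusions: a non-integral translation $x^{\alpha}y^{\beta}$ or a non-integral monomial substitution preserves the Euler type and the analytic solutions on $\{x>0,\ y>0\}$ but need not carry monomials to genuine polynomials, so one has to verify, type by type, that the relevant normal form is reached through moves that keep polynomial solutions polynomial (or else state the polynomial conclusions in the normalized variables). A secondary point needing care is the real versus complex split that feeds the five-type classification: for instance an elliptic indicial polynomial with vanishing constant term, such as $r^2+s^2$, is reducible over $\mathbb C$ into a transverse pair of lines and so falls under degenerate type (1) rather than case (1); and in the double-line case $r^2$ one must also account, beyond the polynomial family, for a logarithmic solution, exactly as in the classical Frobenius situation with a repeated indicial root.
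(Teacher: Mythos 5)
Your reduction machinery is sound as far as it goes: the twist $z=x^{\alpha}y^{\beta}w$ and the monomial substitutions $x=u^{a}v^{b}$, $y=u^{c}v^{d}$ do preserve the Euler class and realize arbitrary affine maps of the exponent plane, so you can always bring the indicial conic to a normal form. But precisely because these moves act on the exponents, every solution you transport back to the original variables is again a monomial $x^{r_0}y^{s_0}$ with $(r_0,s_0)$ on the original indicial conic --- i.e.\ nothing beyond Theorem~\ref{Theorem:EulerEDP}. The conclusions of the statement, however, are about solutions built from degree one polynomials $p(x,y),q(x,y)$ of the original variables (two independent degree one polynomial solutions in the elliptic case, $p^nq^{n^2}$ in the parabolic case, etc.), and your moves cannot produce these when the original conic has no integral points: for the paper's elliptic example with indicial conic $\frac{(r-5)^2}{9}+\frac{(s+2)^2}{4}=1$ your procedure yields monomials such as $x^5$ or $x^2y^{-2}$, never a pair of degree one polynomial solutions. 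So the ``main obstacle'' you flag at the end is not an interface technicality to be checked type by type; it is the entire content of the theorem, and your fallback of stating the conclusions ``in the normalized variables'' proves a different, weaker assertion.

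The ingredient your route has no substitute for is the one the paper's proof uses: normalize the conic by an affine substitution of the \emph{independent} variables, $T(x,y)=(p(x,y),q(x,y))$ with $p,q$ of degree one, invoking the paper's earlier claim that such affine changes send Euler equations to Euler equations and transform the indicial conic by $T$ itself; a monomial solution $\xi^{r}\eta^{s}$ of the normalized equation then pulls back to $p(x,y)^{r}q(x,y)^{s}$, which is exactly where the degree one polynomials in the five conclusions come from, the cases being finished by listing integral points of the normal forms much as you do. (Whether that transformation rule is adequately justified in the paper is a separate question, but it is the step that makes the polynomial form of the solutions appear at all.) Your secondary remarks --- that $r^2+s^2$ becomes a pair of lines over $\mathbb{C}$, and that a doubled line should also carry a logarithmic solution --- are sensible refinements, but they do not close this gap.
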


Let us consider a second order linear homogeneous PDE of the form
\[
A(x,y)\frac{\partial^2 z}{\partial x^2}+B(x,y)\frac{\partial^2 z}{\partial x\partial y}+C(x,y)\frac{\partial^2 z}{\partial y^2}+D(x,y)\frac{\partial z}{\partial x}+E(x,y)\frac{\partial z}{\partial y}+F(x,y)z=0,
\]
where the coefficients $A,B,C,D$ and $E$ are analytic functions at some point $(x_0,y_0)\in \mathbb R^2$.
{\rm The point $(x_0,y_0)$ is called {\it ordinary} if some of the coefficients $A, B$ and  $C$ does not vanish at $(x_0,y_0)$.
Otherwise, if $A(x_0,y_0)=B(x_0,y_0)=C(x_0,y_0)=0$,  it will be called a {\it singular point}. A singular point will be called a {\it regular singularity} if the PDE can be put into the form
\[
Ax^2\frac{\partial^2 z}{\partial x^2}+Bxy\frac{\partial^2 z}{\partial x\partial y}+Cy^2\frac{\partial^2 z}{\partial y^2}+xa(x,y)\frac{\partial z}{\partial x}+yb(x,y)\frac{\partial z}{\partial y}+c(x,y)z=0
\]
with $A, B, C\in \mathbb R$ constants, $a(x,y), b(x,y), c(x,y)$ analytic functions,
after some division by coefficients and change of coordinates centered at $(x_0,y_0)$.

The indicial conic of the PDE will be the indicial conic of the corresponding Euler PDE.
If the PDE has real analytic coefficients we shall then say that it is {\it parabolic, elliptic} or
{\it hyperbolic} according to the indicial conic.

 The notion of regular singular point above gives rise to a version for this framework of PDE of the classical method of Frobenius for finding solutions via power series. The picture is not so straightforward, since we are dealing with a degree two affine curve instead of a one variable degree polynomial. First we shall introduce a notion of non-resonance, which extends and gives geometric sense to the main obstruction regarding the roots of the indicial equation in the classical Frobenius theorem for ordinary differential equations.
 A point in the indicial conic $(r_0,s_0)\in \mathcal C$ is {\it resonant} if there is some non-trivial positive translation of $(r_0,s_0)$ by
integral shift $(r_0 +q_1, s_0 +q_2), \, q_1, q_2\in \mathbb Z_+$ which also lies on the indicial conic.
This can be seen as follows: consider the reticulate $r_0\vee s_0 \subset\mathbb C^2$ centered at $(r_0,q_0)$. This means   the set of all points of the form $ (r_0 +q_1,s_0 +q_2)$ where $q_1,q_2\in \mathbb Z$. The positive part of the reticulate is the set of points of the form $ (r_0 +q_1,s_0 +q_2)$ where $q_1,q_2\in \mathbb N\cup\{0\}$.  Then, a point $(r_0,s_0)\in \mathcal C$ of the indicial conic is resonant if there is some vertex of the positive part of the reticulate that lies over the indicial conic.

 We then prove the following Frobenius type result the following existence constructive results:

\begin{theorem}[parabolic real analytic]
\label{Theorem:realparabolic}
A parabolic real analytic second order linear homogeneous PDE, with a regular singularity, admits a convergent Frobenius type solution for each pair $(r,s)\in \mathbb R^2$ in the indicial conic.
Indeed, there is a solution of the form $x^ r y ^s \sum\limits_{Q \in \mathbb N^2} a_q x^{q_1}y^{q_2}$
where the series converges in some neighborhood of the origin, the coefficients $a_Q$ are  real and obtained recursively from the PDE.
\end{theorem}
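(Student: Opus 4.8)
The strategy is to mimic the classical Frobenius substitution adapted to two variables. We begin by writing the regular-singular PDE in the normalized form
\[
Ax^2\frac{\partial^2 z}{\partial x^2}+Bxy\frac{\partial^2 z}{\partial x\partial y}+Cy^2\frac{\partial^2 z}{\partial y^2}+xa(x,y)\frac{\partial z}{\partial x}+yb(x,y)\frac{\partial z}{\partial y}+c(x,y)z=0,
\]
with $a,b,c$ analytic at the origin, and we expand $a(x,y)=\sum a_{Q}x^{q_1}y^{q_2}$, similarly $b$ and $c$, with $a(0,0)=D$, $b(0,0)=E$, $c(0,0)=F$ matching the associated Euler PDE. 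We then seek a solution $z=x^{r}y^{s}\sum_{Q\in\mathbb N^2}a_{Q}x^{q_1}y^{q_2}$ with $a_{(0,0)}=1$. Substituting and collecting the coefficient of $x^{r+q_1}y^{s+q_2}$, the leading (diagonal) operator produces the factor $P(r+q_1,s+q_2)$, where $P$ is exactly the indicial polynomial of Theorem~\ref{Theorem:EulerEDP}; all other terms involve the coefficients $a_{Q'}$ with $Q'$ strictly preceding $Q$ in the (componentwise-plus-total-degree) partial order. This yields the recursion
\[
P(r+q_1,s+q_2)\,a_{Q}=-\sum_{0\le Q'<Q}\bigl(\text{explicit linear combination of }a_{Q'}\text{ with analytic-coefficient weights}\bigr).
\]

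The first key point is solvability of the recursion: we need $P(r+q_1,s+q_2)\ne 0$ for all $Q=(q_1,q_2)\in\mathbb N^2\setminus\{(0,0)\}$. Here is where the parabolic hypothesis does the work. Since $(r,s)$ lies on the conic $\mathcal C$ and $\mathcal C$ is a parabola ($B^2-4AC=0$), I claim the restriction of $P$ to the positive reticulate $\{(r+q_1,s+q_2):q_i\ge 0\}$ vanishes at no further point unless forced; more precisely, a parabola meets any line in at most two points, and along the "axis direction" of the parabola $P$ grows linearly rather than quadratically — so one shows directly, using the parabolic normal form obtained from the linear-algebra reduction behind Theorem~\ref{Theorem:firstintegralconicnormalform}, that $P$ restricted to the shifted lattice cone is eventually nonzero and, after possibly choosing the branch of $(r,s)$ with larger real part (as in the classical method), is nonzero for \emph{all} $Q\neq(0,0)$. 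This is the step I expect to be the main obstacle: one must rule out "late" resonances on the parabola, i.e. show the non-resonance phenomenon is automatic in the parabolic case, exploiting that a parabola has a single point at infinity and hence cannot contain two distinct points differing by a lattice vector lying in the admissible cone except in a controlled way that the larger-real-part choice eliminates. I would handle this by putting the parabola in the affine normal form $s = \alpha r^2+\beta r+\gamma$ (or a degenerate linear form) and checking the two-points-on-a-line obstruction explicitly.

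Once solvability is secured, the coefficients $a_Q$ are uniquely determined and real (all data being real), so it remains to prove convergence. For this I would use the standard majorant method: choose $M,\rho>0$ so that $|a_Q'|$-type coefficients of $a,b,c$ are dominated by those of $M/(1-(x+y)/\rho)$, obtain a lower bound $|P(r+q_1,s+q_2)|\ge \kappa(|Q|+1)$ for $|Q|$ large (again using that $P$ is genuinely quadratic in a transverse direction and the shift keeps us away from the conic), and compare the recursion with the recursion satisfied by the coefficients of an explicit majorant analytic function. The linear lower bound on $|P|$ against the linear-in-$|Q|$ growth of the right-hand side weights is exactly what makes the majorant series have positive radius of convergence, giving a neighborhood of the origin on which $\sum a_Q x^{q_1}y^{q_2}$ converges; multiplying by $x^r y^s$ yields the claimed Frobenius solution. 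Finally I would note that plugging the convergent series back in is legitimate termwise (by the convergence just established), so it is a genuine solution, completing the proof.
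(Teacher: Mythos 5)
Your setup and your convergence strategy coincide with the paper's: the substitution $x^{r}y^{s}\sum_{Q}d_{Q}x^{q_1}y^{q_2}$ leading to the recursion $P(r+q_1,s+q_2)\,d_Q=-e_Q$, followed by a majorant/comparison argument, is exactly the proof of Theorem~\ref{Theorem:paraboliccomplex} (from which the paper deduces Theorem~\ref{Theorem:realparabolic}). One small remark on that part: your worry that $P$ "grows linearly along the axis direction" does not arise for nonnegative shifts, because in the real parabolic case $B=2\sqrt{A}\sqrt{C}$ with $\mathrm{Re}(\sqrt{A}\overline{\sqrt{C}})>0$, and the paper obtains the quadratic bound $\Vert P(q_1+r,q_2+s)\Vert\geq\alpha(q_1+q_2)^2-2\theta\beta(q_1+q_2)$ on the positive quadrant, which yields convergence on the whole bidisc rather than only on a small neighborhood; your weaker linear bound would still give a positive radius, so this is a difference of sharpness, not of substance.

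The genuine gap is your solvability step. You claim that in the parabolic case non-resonance is automatic, i.e.\ that (possibly after choosing a point "with larger real part") one has $P(r+q_1,s+q_2)\neq0$ for all $(q_1,q_2)\in\mathbb{Z}_{\geq0}^2\setminus\{(0,0)\}$. This is false. Take the real parabolic Euler-type equation with $A=C=1$, $B=2$, $a\equiv0$, $b\equiv2$, $c\equiv0$: its indicial conic is $(r+s)^2-r+s=0$, which contains both $(0,0)$ and $(1,0)$, so $(0,0)$ is resonant; adding generic analytic higher-order terms makes $e_{(1,0)}\neq0$ while $P(1,0)=0$, and the recursion cannot be solved at the first step. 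The geometric heuristic does not help: a chord of a parabola in a non-asymptotic direction meets it in two points, and nothing prevents both from being lattice translates of each other inside the positive cone; the fact that each line $R_Q$ meets $\mathcal C$ in at most two points only shows the resonant set $\mathcal R\cap\mathcal C$ is nowhere dense (the paper's lemma), not empty. Moreover the "larger real part" choice has no analogue here, since the theorem you are proving asserts a solution for \emph{every} point of the conic, and the conic carries infinitely many points. The paper does not attempt to remove this obstruction: Theorem~\ref{Theorem:paraboliccomplex} explicitly assumes $(r_0,s_0)$ non-resonant, and that hypothesis is precisely what makes all coefficients $D_Q(r_0,s_0)$ well defined. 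So your proof needs either to reinstate the non-resonance hypothesis (reading the statement as the paper intends) or to supply a genuinely new construction at resonant points (e.g.\ with logarithmic terms), which the sketch does not provide.
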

Theorem~\ref{Theorem:realparabolic} is a consequence of the more general Theorem~\ref{Theorem:paraboliccomplex} which holds for complex PDEs.
Versions of Theorem~\ref{Theorem:realparabolic} for the elliptic and hyperbolic case are also proved. The case of elliptic
equations is stated below and follows from the more general statement in Theorem~\ref{Theorem:ellipticcomplex} for complex PDEs:
\begin{theorem}[elliptic real analytic]
\label{Theorem:realelliptic}
An elliptic second order linear homogeneous real analytic PDE
having a regular singular point, admits  Frobenius type solutions. More precisely,
consider the real analytic second order linear PDE
$$L[z]:=Ax^2\frac{\partial^2 z}{\partial x^2}+Cy^2\frac{\partial^2 z}{\partial y^2}+xa(x,y)\frac{\partial z}{\partial x}+yb(x,y)\frac{\partial z}{\partial y}+c(x,y)z=0$$
where  $A{C}>0$, $a(x,y),b(x,y)$ and $c(x,y)$ analytic in $\Delta[(0,0),(R,R)]$, $R>0$. Let $(r_0,s_0)\in \mathbb R^2$ be a non-resonant real point of the indicial conic $\mathcal C \subset \mathbb C^2$. Then
$L[z]=0$ admits a convergent Frobenius type solution with initial monomial $x^{r_0} y^{s_0}$.
\end{theorem}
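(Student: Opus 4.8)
The plan is to substitute the Ansatz $z(x,y) = x^{r_0} y^{s_0} \sum_{Q\in\mathbb N^2} a_Q x^{q_1} y^{q_2}$ (with $a_{(0,0)}=1$) into $L[z]=0$ and collect terms monomial by monomial to obtain a recursion for the coefficients $a_Q$; then I would prove that this recursion is solvable and that the resulting formal series converges on a polydisc. First I would expand the analytic coefficients as power series, $xa(x,y)=\sum x^{1+i}y^j \alpha_{ij}x\cdot(\dots)$, more precisely writing $a(x,y)=\sum_{i,j\ge0}\alpha_{ij}x^iy^j$, $b(x,y)=\sum \beta_{ij}x^iy^j$, $c(x,y)=\sum \gamma_{ij}x^iy^j$. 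Applying $L$ to a single term $x^{r_0+q_1}y^{s_0+q_2}$ produces the leading contribution $P(r_0+q_1,\,s_0+q_2)\,x^{r_0+q_1}y^{s_0+q_2}$, where $P$ is exactly the indicial polynomial of the associated Euler PDE (here with $B=0$), plus lower-order cross terms coming from the Taylor tails of $a,b,c$ and from the fact that $xa,yb,c$ are not purely monomial. Grouping by the total monomial $x^{r_0+n_1}y^{s_0+n_2}$ yields
\[
P(r_0+n_1,s_0+n_2)\,a_{(n_1,n_2)} = -\sum_{(q_1,q_2)<(n_1,n_2)} a_{(q_1,q_2)}\,\mathcal L_{(n_1-q_1,\,n_2-q_2)}(r_0+q_1,s_0+q_2),
\]
where $\mathcal L_{(k_1,k_2)}$ is an explicit affine-linear expression in its arguments built from $\alpha_{ij},\beta_{ij},\gamma_{ij}$. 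The non-resonance hypothesis on $(r_0,s_0)$ guarantees $P(r_0+n_1,s_0+n_2)\ne 0$ for every $(n_1,n_2)\in\mathbb N^2\setminus\{(0,0)\}$, so the recursion determines all $a_Q$ uniquely, and all $a_Q$ are real since the data are real.

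The substantive part is convergence. The key feature of the \emph{elliptic} case is that $B^2-4AC<0$ forces $A$ and $C$ to have the same sign and $AC>0$, and consequently the quadratic part $Ar^2+Cs^2$ of $P$ is definite; this yields a lower bound of the form $|P(r_0+n_1,s_0+n_2)| \ge \kappa\,(n_1^2+n_2^2) - O(n_1+n_2) \ge c(n_1+n_2)$ for all sufficiently large $n=n_1+n_2$, with $c>0$, after absorbing the finitely many small-$n$ terms (which are nonzero by non-resonance) into the constant. This quadratic growth of the divisor is what the elliptic hypothesis buys us, and it dominates the linear-in-$n$ growth of the numerator coefficients $\mathcal L_{(k_1,k_2)}$. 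I would then run a standard majorant-series argument: choose $0<\rho<R$ so that $a,b,c$ are bounded by a common geometric majorant $\frac{M}{(1-x/\rho)(1-y/\rho)}$ on $\Delta[(0,0),(\rho,\rho)]$, define $A_n=\max_{n_1+n_2=n}|a_{(n_1,n_2)}|$, and derive from the recursion an inequality $A_n \le \frac{C'}{n}\sum_{k=1}^{n}\frac{M}{\rho^k}(\,\text{polynomial in }n,r_0,s_0\,)\,A_{n-k}$; comparing with the coefficients of a solution of an auxiliary first-order ODE (or directly with a geometric series $\sum t^n/\rho'^{\,n}$) shows $\limsup A_n^{1/n}\le 1/\rho'$ for some $0<\rho'\le\rho$, hence the Frobenius series converges on $\Delta[(0,0),(\rho',\rho')]$.

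The main obstacle I anticipate is making the divisor lower bound uniform: one must check that the finitely many lattice points $(r_0+n_1,s_0+n_2)$ at which the quadratic form $Ar^2+Cs^2$ has not yet overwhelmed the linear and constant terms of $P$ are all genuinely bounded away from zero — this is precisely where non-resonance is used, and it must be phrased so that "non-resonant" (no positive integral shift lands back on $\mathcal C$) translates exactly into "$P(r_0+n_1,s_0+n_2)\ne0$ for all $(n_1,n_2)\ne(0,0)$ in $\mathbb N^2$." A secondary technical point is bookkeeping the two-index recursion: since $P$ is quadratic, for a fixed total degree $n$ the set $\{(n_1,n_2): n_1+n_2=n\}$ may contain up to two points of $\mathcal C$, but translated to the shifted lattice these are excluded by non-resonance, so within each degree the linear system for the $a_{(n_1,n_2)}$ with $n_1+n_2=n$ is in fact already diagonal — no genuine coupling across fixed degree occurs beyond what the single scalar divisor $P$ controls. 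Once these two points are handled, the majorant estimate is routine and parallels the parabolic case of Theorem~\ref{Theorem:paraboliccomplex}.
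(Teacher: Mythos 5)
Your proposal follows essentially the same route as the paper's proof (which establishes the more general complex statement, Theorem~\ref{Theorem:ellipticcomplex}, and obtains Theorem~\ref{Theorem:realelliptic} as its real case): Frobenius ansatz, the recursion $P(r_0+q_1,s_0+q_2)\,d_Q=-e_Q$ in which $e_Q$ only involves strictly lower total degree, non-resonance guaranteeing every divisor $P(r_0+q_1,s_0+q_2)$ is nonzero, a quadratic lower bound on $\Vert P(r_0+q_1,s_0+q_2)\Vert$ coming from the definiteness of $Aq_1^2+Cq_2^2$ under $AC>0$, and a majorant comparison after grouping coefficients by total degree. The only slack is at the end: the paper's ratio-test comparison gives the limit exactly $|x|/\rho$, hence convergence for every $\rho<R$ and therefore on all of $\Delta[(0,0),(R,R)]$, whereas your ``for some $0<\rho'\le\rho$'' concedes a smaller polydisc unnecessarily.
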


Next we address the hyperbolic case and obtain as a result of the complex case stated in Theorem~\ref{Theorem:hyperboliccomplex} we obtain:
\begin{theorem}[hyperbolic real analytic]
\label{Theorem:realhyperbolic}
A hyperbolic real analytic second order linear homogeneous complex analytic PDE
having a regular singular point, admits  Frobenius type solutions. More precisely,
consider the second order PDE

$$L[z]:=Ax^2\frac{\partial^2 z}{\partial x^2}+Cy^2\frac{\partial^2 z}{\partial y^2}+xa(x,y)\frac{\partial z}{\partial x}+yb(x,y)\frac{\partial z}{\partial y}+c(x,y)z=0$$

where  ${A}{C}<0$, $a(x,y),b(x,y)$ and $c(x,y)$ real analytic  in $\Delta[(0,0),(R,R)]$, $R>0$.
Let $(r_0,s_0)\in \mathbb R^2$ be a non-resonant real point of the indicial conic. Then
$(\mathcal E)$ admits a convergent Frobenius type solution with initial monomial $x^{r_0} y^{s_0}$.
\end{theorem}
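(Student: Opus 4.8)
The plan is to deduce Theorem~\ref{Theorem:realhyperbolic} from its complex counterpart, Theorem~\ref{Theorem:hyperboliccomplex}, by complexification; the only genuinely new point is that the resulting Frobenius series has real coefficients. Since $a,b,c$ are real analytic on $\Delta[(0,0),(R,R)]$, after shrinking $R$ they extend to holomorphic functions on a bidisc $\{|x|<R\}\times\{|y|<R\}\subset\mathbb{C}^2$, and the complexified equation $L[z]=0$ has the same indicial conic $\mathcal{C}=\{P(r,s)=0\}$, because $P$ is built only from $A,C$ and from the values $a(0,0),b(0,0),c(0,0)$. Hence hyperbolicity ($AC<0$) and the non-resonance of $(r_0,s_0)$, which are conditions on $\mathcal{C}$ alone, persist for the complexification, and Theorem~\ref{Theorem:hyperboliccomplex} yields a convergent Frobenius type solution $\widetilde z=x^{r_0}y^{s_0}\sum_{Q\in\mathbb{N}^2}a_Q\,x^{q_1}y^{q_2}$, normalized by $a_{(0,0)}=1$, of the complex equation, holomorphic on the natural domain of $x^{r_0}y^{s_0}$ near the origin.

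The next step is to show that every $a_Q$ is real. Plugging $z=\sum_Q a_Q\,x^{r_0+q_1}y^{s_0+q_2}$ into $L[z]=0$ and equating the coefficient of $x^{r_0+m_1}y^{s_0+m_2}$ to zero gives, for each $M=(m_1,m_2)\in\mathbb{N}^2$, a recursion
\begin{equation*}
P(r_0+m_1,\,s_0+m_2)\,a_M \;=\; -\sum_{\substack{Q+K=M\\ K\neq(0,0)}}\bigl(\widehat a_K\,(r_0+q_1)+\widehat b_K\,(s_0+q_2)+\widehat c_K\bigr)\,a_Q ,
\end{equation*}
where $\widehat a_K,\widehat b_K,\widehat c_K$ denote the Taylor coefficients at the origin, of multidegree $K$, of $a,b,c$; here the $Q=M$ term of the sum has been absorbed into the coefficient $P(r_0+m_1,s_0+m_2)$ of $a_M$, using $\widehat a_{(0,0)}=D$, $\widehat b_{(0,0)}=E$, $\widehat c_{(0,0)}=F$ together with the identity $A\xi(\xi-1)+C\eta(\eta-1)+D\xi+E\eta+F=P(\xi,\eta)$. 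By the non-resonance hypothesis, $P(r_0+m_1,s_0+m_2)\neq0$ for every $M\neq(0,0)$, so the recursion determines all $a_M$ uniquely from $a_{(0,0)}=1$; since $r_0,s_0$ and all the $\widehat a_K,\widehat b_K,\widehat c_K$ are real, an induction on $|M|$ shows $a_M\in\mathbb{R}$. By uniqueness of the normalized formal solution these are exactly the coefficients $a_Q$ produced by Theorem~\ref{Theorem:hyperboliccomplex}.

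Finally I would restrict to the real quadrant. For small $x,y>0$ the monomial $x^{r_0}y^{s_0}$ is a well-defined positive real, the power series $\sum_Q a_Q x^{q_1}y^{q_2}$ — convergent on a complex bidisc — converges at those real points to a real sum, and the operators $\partial_x,\partial_y$ and $L$ commute with restriction to $\{x>0,\ y>0\}$. Therefore $z:=\widetilde z|_{x,y>0}$ is a real-valued convergent Frobenius type solution of $L[z]=0$ with initial monomial $x^{r_0}y^{s_0}$, which is the assertion of the theorem.

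The main obstacle lies not in this reduction but in the complex result, Theorem~\ref{Theorem:hyperboliccomplex}: proving convergence of the formal Frobenius series in the hyperbolic regime. Unlike the elliptic case, where $|P(r_0+m_1,s_0+m_2)|$ grows quadratically in $|M|$ and a direct majorant (Cauchy-estimate) comparison closes the argument, in the hyperbolic case the indicial curve is a genuine hyperbola whose asymptotic directions $s/r=\pm\sqrt{-A/C}$ carry lattice translates of $(r_0,s_0)$ at which $P$ is small, so these near-resonances must be dominated — this is exactly where the non-resonance hypothesis, together with the rigidity of the hyperbolic normal form, is used — before the majorant series can be summed. For the present real statement all of this is invoked as a black box.
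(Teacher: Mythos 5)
Your proposal is correct and follows essentially the same route as the paper: Theorem~\ref{Theorem:realhyperbolic} is obtained there precisely as a corollary of the complex statement Theorem~\ref{Theorem:hyperboliccomplex}, whose hypotheses ($\mbox{Re}(A\overline{C})<0$, non-resonance of $(r_0,s_0)$) are verified by the real data since the indicial polynomial of the complexified equation is unchanged, and whose conclusion already contains the reality clause, proved exactly as you do via the recursion $P(r_0+q_1,s_0+q_2)D_Q=-E_Q$ with real inputs. The only difference is presentational: you spell out the induction showing $a_M\in\mathbb{R}$ and the restriction to the quadrant $x,y>0$, which the paper leaves implicit.
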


Our results are a first step in the reintroduction
of techniques from ordinary differential equations in the study of classical problems
involving partial differential equations. Our solutions are constructive and computationally
viable. We give a number of examples illustrating the range of these techniques. A further study of these examples is found in a forthcoming work.

\section{Second order linear equations}

Throughout this paper $\mathbb K$ will denote the field of complex numbers $\mathbb C$ or the field of real numbers $\mathbb R$, according to the context.
A {\it second order linear PDE} in two (real) variables is an expression of the form
\[
a(x,y)\frac{\partial^2 u}{\partial x^2}+b(x,y)\frac{\partial^2 u}{\partial x\partial y}+c(x,y)\frac{\partial^2 u}{\partial y^2}+d_1(x,y)\frac{\partial z}{\partial x}+d_2(x,y)\frac{\partial z}{\partial y}+d_3(x,y)z=f(x,y)
\]
where the coefficients $a(x,y), b(x,y), c(x,y), d_1(x,y), d_2(x,y), d_3(x,y)$ and $f(x,y)$ are functions defined in some
open subset  $U\subset \mathbb R^2$.

The class of differentiability of the PDE is the common class of differentiability of its coefficients.

{\em We shall be working with analytic PDEs}, i.e., the coefficients will be assumed to be analytic in their domain of definition. If $\mathbb K=\mathbb R$ this means that they are real analytic. If $\mathbb K=\mathbb C$ this means that they are complex analytic (holomorphic).
The PDE is {\it homogeneous} if $f(x,y)=0$.
The simplest model for these PDEs is the one with constant coefficients, given by
\begin{equation}\label{eq14}
A\frac{\partial^2 z}{\partial x^2}+B\frac{\partial^2 z}{\partial x\partial y}+C\frac{\partial^2 z}{\partial y^2}+D\frac{\partial z}{\partial x}+E\frac{\partial z}{\partial y}+Fz=0.
\end{equation}

\subsection{Classification of real PDEs}
Let us assume that $\mathbb K=\mathbb R$.
It is well-known that the sign of the term $ac-b^2\in \mathbb R$ does not depend on the choice of the coordinates
$(x,y)\in U\subset \mathbb R^2$ above. This allows a partial classification of such PDEs as follows:
\noindent The equation is {\it elliptic} if $ac-b^2>0$ everywhere in $U$.

\noindent The equation is {\it hyperbolic} if $ac -b^2 <0$ everywhere in $U$.

\noindent The equation is {\it parabolic} if $ac=b^2$ everywhere in $U$.

Notice that a PDE may change its behavior in different regions of the domain $U$.

An example of a hyperbolic equation is the {\it wave equation} $u_{xx}= \lambda ^2 u_{tt}$.
 By its turn the {\it heat diffusion equation} $u_{xx} = \lambda^2 u_t$ is parabolic.  Finally, {\it Laplace's equation} $u_{xx} + u_{yy}=0$ is elliptic.

\subsection{Complex analytic PDEs}

We shall consider PDEs with complex analytic coefficients.

\begin{Definition}
{\rm A second order complex analytic linear PDE in two  variables is an expression of the form
\[
a(x,y)\frac{\partial^2 u}{\partial x^2}+b(x,y)\frac{\partial^2 u}{\partial x\partial y}+c(x,y)\frac{\partial^2 u}{\partial y^2}+d_1(x,y)\frac{\partial z}{\partial x}+d_2(x,y)\frac{\partial z}{\partial y}+d_3(x,y)z=f(x,y)
\]
where the coefficients $a(x,y), b(x,y), c(x,y), d_1(x,y), d_2(x,y), d_3(x,y)$ and $f(x,y)$ are complex analytic functions defined in some
open subset  $U\subset \mathbb C^2$.
The PDE is homogeneous if $f(x,y)=0$.}
\end{Definition}
\section{Euler equation for second order PDEs}

In this section we build the basis of the method of Frobenius for PDE. The first step
is to introduce the notion of Euler equation in this framework. Let us get started.

\subsection{Euler partial differential equation of second order}
We shall consider the PDE of the form
\begin{equation}\label{eq1}
Ax^2\frac{\partial^2 z}{\partial x^2}+Bxy\frac{\partial^2 z}{\partial x\partial y}+Cy^2\frac{\partial^2 z}{\partial y^2}+Dx\frac{\partial z}{\partial x}+Ey\frac{\partial z}{\partial y}+Fz=0,\;x>0,y>0
\end{equation}
where $A,B,C,D,E,F\in\mathbb{K}$.
This will be called an {\it Euler partial differential equation of second order} in $\mathbb K^2$.
Notice that $(B xy)^2 - 4 (Ax^2) (Cy^2)= (B^2 - 4 AC) x^2y^2$. Thus we have the following possibilities for
the Euler PDE, according to the classical terminology for linear PDEs:
Denote by $\mathbb K^*=\mathbb K \setminus \{0\}$.
Assume that the equation is {\it real}, ie., $\mathbb K=\mathbb R$, it has real coefficients. In this case the  Euler equation is classified as:

\noindent {\it Hyperbolic} if $B^2 - 4AC>0$,

\noindent {\it Parabolic} if $B^2 =4AC$,

\noindent {\it Elliptic} if  $B^2 -4AC<0$.

We point-out that this coincides with the classical notion for PDEs given above if we consider $U=(\mathbb R^*)^2$.

Now we turn our attention to the Euler equation
above in $\mathbb K^2$ where $\mathbb K \in \{\mathbb R, \mathbb C\}$.

\begin{Definition}[indicial conic]
{\rm The {\it indicial conic} associate to the Euler equation above is defined as the {\em complex} affine plane curve
$\mathcal C:=\{(r,s) \in \mathbb C^2; P(r,s)=0\}$ where:
\[
P(r,s)=Ar^2+Brs+Cs^2+(D-A)r+(E-C)s+F.
\]
In the case of a real PDE, ie., when the coefficients of the PDE are real, we define the  {\it real trace of the indicial conic} (or {\it the real conic} for short) as the intersection
$\mathcal C(\mathbb R):=\mathcal C \cap \mathbb R^2$. The real conic  is the real affine plane curve
$\{(r,s)\in \mathbb R^2, \, P(r,s)=0\}$.}
\end{Definition}

It is clear from the above that, in case $\mathbb K=\mathbb R$,  the Euler equation is hyperbolic, parabolic or elliptic according to the
fact that the corresponding real conic $\mathcal C(\mathbb R) \subset \mathbb R^2$ is a hyperbole, a parabola or an ellipse.

\begin{Definition}[monomial solutions]
{\rm A {\it monomial solution} of the Euler equation is a solution of the form $\vr=x^r y^s$ where
$r, s\in \mathbb C$. The solution is {\it real} if $(r,s)\in \mathbb R^2$. }
\end{Definition}

\begin{proof}[Proof of Theorem~\ref{Theorem:EulerEDP}]
Let $\vr=x^ry^s$ and consider the differential operator $L[z]=Ax^2\frac{\partial^2 z}{\partial x^2}+Bxy\frac{\partial^2 z}{\partial x\partial y}+Cy^2\frac{\partial^2 z}{\partial y^2}+Dx\frac{\partial z}{\partial x}+Ey\frac{\partial z}{\partial y}+Fz$. Then $\vr$  is solution of (\ref{eq1ThmEulerEDP}) if and only if $L[\vr]=0$. Substituting we have
$$L[\vr]=L[x^r y^s]=Ax^2(r(r-1)x^{r-2}y^s)+Bxy(rsx^{r-1}y^{s-1})+Cy^2(s(s-1)x^{r}y^{s-2})+Dx(rx^{r-1}y^s)$$
$$
\\+Ey(sx^ry^{s-1})+Fx^ry^s$$
$$=x^ry^s\big[Ar(r-1)+Brs+Cs(s-1)+rD+sE+F\big].$$
Therefore
$\vr=x^r y^s$ is a solution of (\ref{eq1ThmEulerEDP}) if and only if
\begin{equation}\label{eq2'}
Ar^2+Brs+Cs^2+(D-A)r+(E-C)s+F=0.
\end{equation}

The last part, regarding the classification of the indicial conic, is well-known from Analytic Geometry and Linear Algebra in dimension two.
 \end{proof}

Since the PDE is linear homogeneous, any linear combination of monomial solutions is also a solution.
Indeed, from the merely formal point of view, not all solutions of the Euler equation above are of the
``monomial form". Indeed, since the PDE is linear, any linear combination or a series $\vr=\sum\limits_{n=0} c_n x^{r_n}y^{s_n}$ where $c_n \in \mathbb K$ and $(r_n,s_n)\in \mathcal C$ are points in the indicial conic, is also a solution of the
Euler equation. Later on we shall discuss the restrictions imposed by regularity of the solutions and
boundary or initial conditions.

Let us now assume that we are in the real case $(x,y)\in \mathbb R^2$. Assume that we have a point
of the indicial conic $(r,s)\in \mathcal C\subset \mathbb C^2$   which we write in real and imaginary part  $r=r_1 + i r_2, s= s_1 + i s_2$ where $i ^2=-1$. Then given $x,y\in \mathbb R_+$ we have $x^r=x^{r_1}[\cos(r_2 \ln x) + i \sin (r_2 \ln x)]$  and $y^s=y^{s_1}[\cos(s_2 \ln y) + i \sin (s_2 \ln y)]$.
Thus we get
$
x^r y ^s = x^{r_1} y^{s_1} [\cos(r_2 \ln x)\cos(s_2 \ln y) - \sin(r_2 \ln x) \sin (s_2 \ln y)] + ix^{r_1}y^{s_1}[\cos(r_2 \ln x) \sin (r_2 \ln y)+  \sin (r_2 \ln x)\cos(s_2 \ln y)].
$
Hence we obtain two linearly independent real solutions
\[
u_1(x,y)=x^{r_1} y^{s_1} [\cos(r_2 \ln x)\cos(s_2 \ln y) - \sin(r_2 \ln x) \sin (s_2 \ln y)]
 \]
 and
 \[
 u_2(x,y)=x^{r_1}y^{s_1}[\cos(r_2 \ln x) \sin (r_2 \ln y)+  \sin (r_2 \ln x)\cos(s_2 \ln y)],
  \]
  analytic defined for $x>0,y>0$.

\begin{Remark}[Euler type $PDE \ne (ODE)^2$ of Euler type]{\rm
Consider the following PDE
\begin{equation}\label{eqremark1}
Ax^2\frac{\partial^2 z}{\partial x^2}+Bxy\frac{\partial^2 z}{\partial x\partial y}+Cy^2\frac{\partial^2 z}{\partial y^2}+Dx\frac{\partial z}{\partial x}+Ey\frac{\partial z}{\partial y}+Fz=0,\;x>0,y>0
\end{equation}
where $A,B,C,D,E,F\in\mathbb{R}$. The method of separation of variables can be used to obtain the solution of equation (\ref{eqremark1}). Assuming that
$$z(x,y)=X(x)Y(y)$$
and substituting for $z$ in equation (\ref{eqremark1}), we obtain
$$Ax^2X^{\prime\prime}(x)Y(y)+BxyX^\prime(x)Y^\prime(y)+Cy^2X(x)Y^{\prime\prime}(y)+DxX^\prime(x)Y(y)+EyX(x)Y^\prime(y)+FX(x)Y(y)=0.$$
If $B=0$ we obtain
$$[Ax^2X^{\prime\prime}(x)+DxX^\prime(x)+FX(x)]Y(y)+[Cy^2Y^{\prime\prime}(y)+EyY^\prime(y)]X(x)=0$$
equivalently
\begin{equation}\label{eqremark2}
\frac{Ax^2X^{\prime\prime}(x)+DxX^\prime(x)+FX(x)}{X(x)}=-\frac{Cy^2Y^{\prime\prime}(y)+EyY^\prime(y)}{Y(y)}
\end{equation}
in which the variables are separated; that is, the left side depends only on $x$ and the
right side only on $y$. If we call this separation constant $\lambda$, then equation (\ref{eqremark2}) becomes
$$\frac{Ax^2X^{\prime\prime}(x)+DxX^\prime(x)+FX(x)}{X(x)}=-\frac{Cy^2Y^{\prime\prime}(y)+EyY^\prime(y)}{Y(y)}=\lambda$$
Hence we obtain the following two ordinary differential equations for $X(x)$ and $Y(y)$:
\begin{equation}\label{eqremark3}
Ax^2X^{\prime\prime}(x)+DxX^\prime(x)+(F-\lambda)X(x)=0,
\end{equation}
\begin{equation}\label{eqremark4}
Cy^2Y^{\prime\prime}(y)+EyY^\prime(y)+\lambda Y(y)=0.
\end{equation}
Note that (\ref{eqremark3}) and (\ref{eqremark4}) are Euler equations. Therefore the solutions of (\ref{eqremark3}) and (\ref{eqremark4}) are of the form $x^{r(\lambda)}$ and $y^{s(\lambda)}$, where the indexes $r(\lambda), s(\lambda)$ depend on the separation constant $\lambda$. If we consider the two
indicial equations they are given by
\[
Ar(r-1) + Dr + F - \lambda =0, \, \, C r (r-1) + Er + \lambda=0.
\]
Eliminating $\lambda$ in the system above we obtain the indicial equation of the Euler PDE.
The dependence on the parameter $\lambda$ makes inviable the obtaining of solutions for the PDE separately
via classical Euler ordinary differential equations. It also suggests that more complicate situations,
like PDEs which are of the form $Ax^2\frac{\partial^2 z}{\partial x^2}+Bxy\frac{\partial^2 z}{\partial x\partial y}+Cy^2\frac{\partial^2 z}{\partial y^2}+Dx\frac{\partial z}{\partial x}+Ey\frac{\partial z}{\partial y}+Fz + h.o.t.=0$ (where $h.o.t.$ stands for higher order terms), are not always treatable via the classical ordinary differential equations Frobenius methods.

}
\end{Remark}

\begin{Definition}[analytic, meromorphic and real monomial solution]
{\rm A monomial solution $z=x^r y^s$ of an Euler equation
will be called {\it analytic} (no matter the PDE is real or complex) (respectively, {\it meromorphic}) if $(r,s)\in \mathbb N$ (respectively, $(r,s)\in  \mathbb Z$). The solution is
{\it real} if $r, s \in \mathbb R$.}
\end{Definition}
Let us consider the complex analytic case for the PDE. According to Theorem~\ref{Theorem:EulerEDP} an analytic (respectively, meromorphic) monomial solution $z=x^r y^s$ corresponds to a natural (respectively,  integral)
point $(r,s)\in \mathbb N^2$ (respectively, $(r,s)\in \mathbb Z^2$) in the indicial conic $\mathcal C$ of the Euler equation. A real  solution corresponds to a real point of the conic.

An affine change of coordinates $T(x,y)=(u,v), \, u=a x + by + e , v= cx + dy + g$ in $\mathbb R^2$,
takes an Euler equation $\mathcal E$ into an Euler equation $T_*(\mathcal E)$.
It is also clear that the indicial conics $\mathcal C(\mathcal E)$ of $\mathcal C(T_*\mathcal E)$ are
related by $\mathcal C(T_*\mathcal E)=T(\mathcal C(\mathcal E))$. In particular, these conics are of a same type
(hyperbole, parabola or ellipse).
Another easy to see fact, partially consequence of the above, is that {\em given any conic $\mathcal C\subset \mathbb R^2$ there is an Euler equation $\mathcal E$ which has this conic as indicial conic.}

Let us now consider a real Euler PDE. The indicial conic is given by a real equation and after a real  affine change of coordinates in $\mathbb R^2$ we have the following
possible {\it normal forms} for the real trace of  $\mathcal C$:

\begin{enumerate}
\item $C: r^2 + s^2 =1$ (elliptic type)

\item $C: r^2 - s^2 =1$ (hyperbolic type)

\item $C: s=r^2$ (parabolic type)
\item $C: rs=0$ (degenerate type 1)

\item $C: r(r-1)=0$ (degenerate type 2)
\end{enumerate}
Based on this we shall call a PDE of Euler type, complex or real, with coefficients in the
field $\mathbb K$, as
{\it parabolic, elliptic, hyperbolic, degenerate type 1, degenerate type 2}, according to the
normal form above of its indicial conic after some affine change of coordinates with coefficients
in the field $\mathbb K$.

\begin{proof}[Proof of Theorem~\ref{Theorem:firstintegralconicnormalform}]
The proof is based on two steps. First,  by hypothesis, the original Euler PDE $\mathcal E$ has its
indicial conic $\mathcal C(\mathcal E)$ taken into this normal form $T(\mathcal C(\mathcal E))$ by some affine change of coordinates say
$(\xi, \eta)=T(x,y)=(a + bx + cy, \tilde a + \tilde b x + \tilde c y)=(p(x,y),q(x,y))$ with coefficients $a,b,c,\tilde a, \tilde b, \tilde c \in \mathbb K$. This same transformation $T$  takes the original PDE $\mathcal E$ into a PDE $T(\mathcal E)$ with indicial conic $\mathcal C(T(\mathcal E))=T (\mathcal C(\mathcal E))$ in the normal form. Given then a monomial solution $\vr=x^r y^s$ of $T(\mathcal E)$ the composition $\vr \circ T=(p(x,y))^r(q(x,y))^s$ is a first integral for the   original PDE $\mathcal E$.
Second, now we just need to consider the normal form and obtain
a monomial solution $x^r y^s$ with suitable exponents $r,s\in \mathbb Z$, according to the model.
For instance, the elliptic model admits two point $(r,s)\in \{(0,1), (1,0)\}$ and therefore
monomial solutions of the form $x$ and $y$.

\end{proof}

\subsection{Examples of Euler equations}
 Next we shall give a list of Euler equations and their corresponding indicial conics. This list contains
examples of all the possible cases for the indicial conic.

\noindent{\bf 1- Elliptic case}:
\[
4x^2\frac{\partial^2 z}{\partial x^2}+9y^2\frac{\partial^2 z}{\partial y^2}-36x\frac{\partial z}{\partial x}+45y\frac{\partial z}{\partial y}+100z=0
 \rightarrow \frac{(r-5)^2}{9}+\frac{(s+2)^2}{4}=1.
 \]

\noindent{\bf 2- Degenerate type 1}:
\[
36x^2\frac{\partial^2 z}{\partial x^2}+9y^2\frac{\partial^2 z}{\partial y^2}-72x\frac{\partial z}{\partial x}+15y\frac{\partial z}{\partial y}+82z=0.
\rightarrow s+\frac{1}{3}=\pm 2i\big(r-\frac{3}{2}\big).
\]

\noindent{\bf 3- Elliptic case}:

\[
9x^2\frac{\partial^2 z}{\partial x^2}+4y^2\frac{\partial^2 z}{\partial y^2}+27x\frac{\partial z}{\partial x}-5y\frac{\partial z}{\partial y}+25z=0.
\rightarrow 9\big(r+1\big)^2+4\big(s-\frac{9}{8}\big)^2=\frac{81}{16}-16=-\frac{175}{16}<0.
\]
The solutions are then of the form
$$r=-1\pm i\frac{5\sqrt{7}}{12}\cos\alpha,\;\;\\;\;\;s=\frac{9}{8}\pm i\frac{5\sqrt{7}}{8}\sin \alpha\;\;\;\alpha\in\mathbb{R}.$$

\noindent{\bf 4- Elliptic case}:
\[
x^2\frac{\partial^2 z}{\partial x^2}+y^2\frac{\partial^2 z}{\partial y^2}-x\frac{\partial z}{\partial x}-y\frac{\partial z}{\partial y}-z=0.
\rightarrow    (r-1)^2+(s-1)^2=1.
\]
\noindent{\bf 5- Hyperbolic case}:
\[
x^2\frac{\partial^2 z}{\partial x^2}-2y^2\frac{\partial^2 z}{\partial y^2}+7x\frac{\partial z}{\partial x}+2y\frac{\partial z}{\partial y}+9z=0\rightarrow r^2-2s^2+6r+4s+9=0
\]

and then by
\[
(s-1)^2-\frac{(r+3)^2}{2}=1.
\]

\noindent{\bf 6- Hyperbolic case}:
\begin{equation}\label{eq8}
9x^2\frac{\partial^2 z}{\partial x^2}-16y^2\frac{\partial^2 z}{\partial y^2}+99x\frac{\partial z}{\partial x}-144y\frac{\partial z}{\partial y}-31z=0.
\end{equation}
has indicial conic  given by
$$9r^2-16s^2+90r-128s-31=0 $$
and therefore by
$$3(r+5)=\mp4(s+4).$$

\noindent{\bf 7- Parabolic case}:
\begin{equation}\label{eq9}
2y^2\frac{\partial^2 z}{\partial y^2}+5x\frac{\partial z}{\partial x}+10y\frac{\partial z}{\partial y}-7z=0.
\end{equation}
has indicial conic  given by
$$2s^2+5r+8s-7=0$$
and therefore by
$$(s+2)^2=-\frac{5(r-3)}{2}.$$

\noindent{\bf 8- Parabolic case}:
\begin{equation}\label{eq10}
x^2\frac{\partial^2 z}{\partial x^2}+2xy\frac{\partial^2 z}{\partial x\partial y}+y^2\frac{\partial^2 z}{\partial y^2}+x\frac{\partial z}{\partial x}+y\frac{\partial z}{\partial y}-z=0.
\end{equation}
has indicial conic  given by
$$r^2+2rs+s^2-1=0$$
and therefore by
$r+s=\pm 1\}$.

\noindent{\bf 9- Parabolic case}:

\begin{equation}\label{eq11}
3y^2\frac{\partial^2 z}{\partial y^2}+10y\frac{\partial z}{\partial y}-6z=0.
\end{equation}
has indicial conic  given by
$$3s^2+7s-6=(3s-2)(s+3)=0.$$

\noindent{\bf 10- Parabolic case}:

\begin{equation}\label{eq12}
3y^2\frac{\partial^2 z}{\partial y^2}+y\frac{\partial z}{\partial y}+z=0.
\end{equation}
has indicial conic  given by
$$3s^2-2s+1=0$$
and therefore by
$$s=\frac{1}{3}\pm i\frac{\sqrt{2}}{3}.$$

\subsection{Euler coordinates}

We shall introduce for our framework of PDEs, a change of variables, inspired in Euler's trick for ordinary differential equations. Start with an Euler equation
\begin{equation}\label{eq1Eulertrick}
Ax^2\frac{\partial^2 z}{\partial x^2}+Bxy\frac{\partial^2 z}{\partial x\partial y}+Cy^2\frac{\partial^2 z}{\partial y^2}+Dx\frac{\partial z}{\partial x}+Ey\frac{\partial z}{\partial y}+Fz=0,\;x>0,y>0
\end{equation}
where $A,B,C,D,E,F\in\mathbb{R}$. We consider the {\it Euler coordinates} $(u,v)\in \mathbb R^2$ which are given by  $x=e^u$ and $y=e^v$. Suppose that $\varphi$ is solution of (\ref{eq1Eulertrick}) and we define $\tilde{\varphi}(u,v)=\varphi(e^u,e^v)$ hence we have
\begin{equation}\label{eq13}
\begin{array}{c} \frac{\partial\varphi}{\partial x}\big(e^u,e^v\big)=e^{-u}\frac{\partial\tilde{\varphi}}{\partial u}(u,v),\,
\frac{\partial\varphi}{\partial y}\big(e^u,e^v\big)=e^{-v}\frac{\partial\tilde{\varphi}}{\partial v}(u,v),\,
\frac{\partial^2\varphi}{\partial x^2}\big(e^u,e^v\big)=e^{-2u}\big(\frac{\partial^2\tilde{\varphi}}{\partial u^2}(u,v)-\frac{\partial\tilde{\varphi}}{\partial u}(u,v)\big),\,\\
\\
\frac{\partial^2\varphi}{\partial x \partial y}\big(e^u,e^v\big)=e^{-u}e^{-v}\frac{\partial^2\tilde{\varphi}}{\partial u \partial v}(u,v),\,
\frac{\partial^2\varphi}{\partial y^2}\big(e^u,e^v\big)=e^{-2v}\big(\frac{\partial^2\tilde{\varphi}}{\partial v^2}(u,v)-\frac{\partial\tilde{\varphi}}{\partial v}(u,v)\big),\end{array}
\end{equation}

Given that  $\varphi$ is solution of (\ref{eq1Eulertrick}) we obtain

 \[
 Ae^{2u}\frac{\partial^2 \varphi}{\partial x^2}(e^u,e^v)+Be^ue^v\frac{\partial^2 \varphi}{\partial x\partial y}(e^u,e^v)+Ce^{2v}\frac{\partial^2 \varphi}{\partial y^2}(e^u,e^v)+De^u\frac{\partial \varphi}{\partial x}(e^u,e^v)+Ee^v\frac{\partial \varphi}{\partial y}(e^u,e^v)+F\varphi(e^u,e^v)=0,
 \]

 and by (\ref{eq13}) we have

$$ A\frac{\partial^2 \tilde{\varphi}}{\partial u^2}(u,v)+B\frac{\partial^2 \tilde{\varphi}}{\partial u\partial v}(u,v)+C\frac{\partial^2 \tilde{\varphi}}{\partial v^2}(u,v)+D\frac{\partial \tilde{\varphi}}{\partial u}(u,v)+E\frac{\partial \tilde{\varphi}}{\partial v}(u,v)+F\tilde{\varphi}(u,v)=0.$$
Thus $\tilde{\varphi}$ satisfies the equation:
$$A\frac{\partial^2 \tilde{z}}{\partial u^2}+B\frac{\partial^2 \tilde{z}}{\partial u\partial v}+C\frac{\partial^2 \tilde{z}}{\partial v^2}+(D-A)\frac{\partial \tilde{z}}{\partial u}+(E-C)\frac{\partial \tilde{z}}{\partial v}+F\tilde{z}=0.$$

Summarizing and extending the formal procedure also to the complex case we have:

\begin{Proposition}
\label{Proposition:Euler}
The introduction of Euler coordinates $x=e^u, y= e^v$ transforms the Euler equation
\begin{equation}\label{eq1PropEulertrick}
Ax^2\frac{\partial^2 z}{\partial x^2}+Bxy\frac{\partial^2 z}{\partial x\partial y}+Cy^2\frac{\partial^2 z}{\partial y^2}+Dx\frac{\partial z}{\partial x}+Ey\frac{\partial z}{\partial y}+Fz=0,
\end{equation}
where $A,B,C,D,E,F\in\mathbb{K}$ into the following PDE with constant coefficients
$$A\frac{\partial^2 \tilde{z}}{\partial u^2}+B\frac{\partial^2 \tilde{z}}{\partial u\partial v}+C\frac{\partial^2 \tilde{z}}{\partial v^2}+(D-A)\frac{\partial \tilde{z}}{\partial u}+(E-C)\frac{\partial \tilde{z}}{\partial v}+F\tilde{z}=0.$$
\end{Proposition}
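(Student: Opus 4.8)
The plan is to prove the proposition by a direct application of the chain rule, which is essentially the computation already sketched in the lines preceding the statement. First I would record the setup: on the region $x>0$, $y>0$ the map $(u,v)\mapsto(e^u,e^v)$ is an analytic diffeomorphism onto $(0,\infty)^2$ with inverse $u=\ln x$, $v=\ln y$, so that $\varphi\mapsto\tilde\varphi$, with $\tilde\varphi(u,v):=\varphi(e^u,e^v)$, is a bijection between functions on the two domains. In the complex case the identity to be established is a purely algebraic identity of analytic differential operators, so it suffices to observe that $\tilde\varphi=\varphi\circ(\exp,\exp)$ is well defined and analytic whenever $\varphi$ is, and to run the same formal computation wherever a branch of $\ln$ is available.

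Second, I would compute the derivatives of $\tilde\varphi$ in terms of those of $\varphi$. Since $\partial u/\partial x=1/x=e^{-u}$ and $\partial v/\partial y=1/y=e^{-v}$, the chain rule gives $\tilde\varphi_u=e^{u}\varphi_x(e^u,e^v)$ and $\tilde\varphi_v=e^{v}\varphi_y(e^u,e^v)$; differentiating once more produces $\tilde\varphi_{uu}=e^{2u}\varphi_{xx}(e^u,e^v)+\tilde\varphi_u$, $\tilde\varphi_{vv}=e^{2v}\varphi_{yy}(e^u,e^v)+\tilde\varphi_v$, and $\tilde\varphi_{uv}=e^{u+v}\varphi_{xy}(e^u,e^v)$. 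Solving for the derivatives of $\varphi$ evaluated at $(e^u,e^v)$ gives exactly the formulas collected in (\ref{eq13}), so this step is bookkeeping with no difficulty.

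Third, I would substitute these expressions into the Euler equation (\ref{eq1PropEulertrick}) evaluated at $(x,y)=(e^u,e^v)$. The coefficients become $Ax^2=Ae^{2u}$, $Bxy=Be^{u+v}$, $Cy^2=Ce^{2v}$, $Dx=De^{u}$, $Ey=Ee^{v}$, and each multiplies the corresponding derivative expression; all exponential factors cancel, e.g.\ $Ae^{2u}\cdot e^{-2u}(\tilde\varphi_{uu}-\tilde\varphi_u)=A\tilde\varphi_{uu}-A\tilde\varphi_u$ and $De^{u}\cdot e^{-u}\tilde\varphi_u=D\tilde\varphi_u$. Collecting terms yields $A\tilde\varphi_{uu}+B\tilde\varphi_{uv}+C\tilde\varphi_{vv}+(D-A)\tilde\varphi_u+(E-C)\tilde\varphi_v+F\tilde\varphi=0$, which is the asserted constant-coefficient PDE; the shifts $D\mapsto D-A$ and $E\mapsto E-C$ are precisely the $-A\tilde\varphi_u$ and $-C\tilde\varphi_v$ contributions issuing from the second-order terms. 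Running the argument backwards through the invertible change of coordinates shows conversely that every solution of the transformed equation pulls back to a solution of the Euler equation.

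Honestly there is no genuine obstacle: the whole content is a single invocation of the chain rule, and the second-order formulas are already displayed in (\ref{eq13}). The only point I would take care to state explicitly is the status of the complex case, where $\ln$ is multivalued; I would phrase this, as the text itself does, as ``extending the formal computation to the complex case'', noting that the operator identity is valid wherever both sides are defined.
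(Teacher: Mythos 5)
Your proposal is correct and follows essentially the same route as the paper: the chain-rule computation of the derivatives of $\tilde\varphi(u,v)=\varphi(e^u,e^v)$, equivalent to the formulas in (\ref{eq13}), followed by substitution into the Euler equation and cancellation of the exponential factors, with the shifts $D\mapsto D-A$, $E\mapsto E-C$ arising from the second-order terms exactly as in the text. The remark on invertibility of the change of variables and the formal extension to the complex case matches the paper's own treatment.
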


\section{Classical equations: heat diffusion, wave propagation and Laplace equation}
As mentioned in the Introduction, the classical PDE equations which are classically solved by
separation of variables, can also be solved by our methods. Let us study the classical  heat, wave and Laplace equations.
\subsection{Heat diffusion equation}
 Consider
\begin{equation}\label{eqcalor}
     a^2\frac{\partial^2 z}{\partial x^2} =\frac{\partial z}{\partial y},\;\;x>0,y>0
     \end{equation}
where $a$ is a positive constant. Equation (\ref{eqcalor}) is of the form (\ref{eq14}) with $A=a^2$, $E=-1$ and $B=C=D=F=0$. According to Proposition~\ref{Proposition:Euler},  equation (\ref{eqcalor}) is transformed, by putting $x=\ln u, y=\ln v$, in
\begin{equation}\label{eqcalor2}
a^2u^2\frac{\partial^2 \tilde{z}}{\partial u^2}+a^2u\frac{\partial \tilde{z}}{\partial u}-v\frac{\partial \tilde{z}}{\partial v}=0.
\end{equation}
This equation has parabolic indicial conic of the form $a^2r^2-s=0$,  hence the solutions of (\ref{eqcalor2}) are of the form
$$u^rv^{a^2r^2}$$for any $r$. Back to the original variables the solution of (\ref{eqcalor}) are of the form

$$e^{rx}e^{a^2r^2y}$$for any $r$.

\subsubsection{Boundary conditions} {\rm In equation (\ref{eqcalor}) we consider boundary conditions of type $z(0,y)=0$, $z(L,y)=0$ for every $y>0$. In order to have $z(x,y)=e^{rx}e^{a^2r^2y}$ as solutions, verifying these  boundary conditions, we must have the following. Write  $r=\alpha+i\beta$
and then
$$\begin{array}{l c l} z(x,y)&=&e^{rx}e^{a^2r^2y}=e^{\alpha x+a^2(\alpha^2-\beta^2)y}e^{i[\beta x+2\alpha\beta a^2y]}\\&&\\&=&
e^{\alpha x+a^2(\alpha^2-\beta^2)y}\cos(\beta x+2\alpha\beta a^2y)+ie^{\alpha x+a^2(\alpha^2-\beta^2)y}\sin(\beta x+2\alpha\beta a^2y)
 \end{array}$$
It is enough to consider the imaginary part of this solution $w(x,y)=\mbox{Im}(z(x,y))=e^{\alpha x+a^2(\alpha^2-\beta^2)y}\sin(\beta x+2\alpha\beta a^2y)$. The boundary condition then implies
$0=w(0,y)=e^{a^2(\alpha^2-\beta^2)y}\sin(\beta x+ 2\alpha\beta a^2y)$for every $y>0$. We can choose $\alpha=0$. Thence
$$w(x,y)=e^{-a^2\beta^2y}\sin(\beta x)$$
$$0=w(L,y)=e^{-a^2\beta^2y}\sin(\beta L)$$for every $y>0$. This implies
$$\sin(\beta L)=0$$
and then $$\beta L=n\pi,\;\;n=1,2,3,\ldots$$
therefore the solutions are of the form
$$w_n(x,y)=e^{-\frac{a^2n^2\pi^2}{L^2}y}\sin\big(\frac{n\pi x}{L}\big).$$
This is what we obtain by the classical methods of separation of variables.

\subsection{Wave propagation equation}
 Consider
\begin{equation}\label{eqonda}
     a^2\frac{\partial^2 z}{\partial x^2} =\frac{\partial^2 z}{\partial y^2},\;x>0,y>0
 \end{equation}
where $a$ is a positive constant. Equation (\ref{eqonda}) is of the form (\ref{eq14}) with $A=a^2$, $C=-1$ and $B=D=E=F=0$. Equation (\ref{eqonda}) is transformed, by putting $x=\ln u, y=\ln v$, into
\begin{equation}\label{eqonda2}
a^2u^2\frac{\partial^2 \tilde{z}}{\partial u^2}-v^2 \frac{\partial^2 \tilde{z}}{\partial v^2}+a^2u\frac{\partial \tilde{z}}{\partial u}-v\frac{\partial \tilde{z}}{\partial v}=0
\end{equation}
This equation has indicial conic of the form $a^2r^2-s^2=0$ hence the solutions of (\ref{eqonda2}) are of the form
$$u^rv^{\pm ar}$$for any $r$. Back to the original variables a solution of (\ref{eqonda}) are of the form

$$e^{rx}e^{\pm ary}$$for any $r$.
\subsubsection{Boundary conditions} If in equation (\ref{eqonda}) we consider boundary conditions of type $z(0,y)=0$, $z(L,y)=0$ for every $y>0$. In order to have $z(x,y)=e^{rx}e^{\pm ary}$ as solution verifying these boundary conditions we must have the following: $r=\alpha+i\beta$
and then
$$\begin{array}{l c l} z(x,y)&=&e^{rx}e^{\pm ary}=e^{\alpha x\pm a\alpha y}e^{i[\beta x\pm\beta ay]}\\&&\\&=&e^{\alpha x\pm a\alpha y}\big[\cos(\beta x\pm\beta ay)+i\sin(\beta x\pm\beta ay)\big]
 \end{array}$$
 Choosing $\alpha=0$ we obtain:
$$\begin{array}{l c l} z(x,y)&=&\cos(\beta x\pm\beta ay)+i\sin(\beta x\pm\beta ay)\\&&\\
&=& \cos(\beta x)\cos(\beta a y)\mp\sin(\beta x)\sin(\beta a y)+i[\sin(\beta x)\cos(\beta a y)\pm\cos(\beta x)\sin(\beta a y)]
 \end{array}$$

In this case we may consider the two parcels:

The first
$$u(x,y)=\sin(\beta x)\sin(\beta a y)$$
observe that this parcel verifies as boundary conditions if
$$0=u(L,y)=\sin(\beta L)\sin(\beta a y)$$for every $y>0$. This implies
$$\sin(\beta L)=0$$
and then $$\beta L=n\pi,\;\;n=1,2,3,\ldots$$
therefore the solutions are of the form
$$u_n(x,y)=\sin\big(\frac{n\pi x}{L}\big)\sin\big(\frac{n\pi a y}{L}\big).$$
The second
$$v(x,y)=\sin(\beta x)\cos(\beta a y)$$
observe that this parcel verifies as boundary conditions if
$$0=v(L,y)=\sin(\beta L)\cos(\beta a y)$$for every $y>0$. This implies
$$\sin(\beta L)=0$$
and then $$\beta L=n\pi,\;\;n=1,2,3,\ldots$$
therefore the solutions are of the form
$$v_n(x,y)=\sin\big(\frac{n\pi x}{L}\big)\cos\big(\frac{n\pi a y}{L}\big).$$
Once again, these are the solutions obtained by the separation of variables method.

\subsection{Laplace equation}
 Consider
\begin{equation}\label{eqlaplace}
     \frac{\partial^2 z}{\partial x^2}+\frac{\partial^2 z}{\partial y^2}=0,\;x>0,y>0
 \end{equation}
Equation (\ref{eqlaplace}) is of the form (\ref{eq14}) with $A=1$, $C=1$ and $B=D=E=F=0$. According to Proposition~\ref{Proposition:Euler},  equation (\ref{eqlaplace}) is transformed, by putting $x=\ln u, y=\ln v$, in
\begin{equation}\label{eqlaplace2}
u^2\frac{\partial^2 \tilde{z}}{\partial u^2}+v^2 \frac{\partial^2 \tilde{z}}{\partial v^2}+u\frac{\partial \tilde{z}}{\partial u}+v\frac{\partial \tilde{z}}{\partial v}=0.
\end{equation}
This equation has indicial conic of the form $r^2+s^2=0$ hence the solutions of (\ref{eqlaplace2}) are of the form
$$u^rv^{\pm ir}$$for any $r$. Back to the original variables, the solutions of (\ref{eqlaplace}) are of the form

$$e^{rx}e^{\pm iry}$$for any $r$.
\subsubsection{Boundary conditions:} If in equation (\ref{eqlaplace}) we consider boundary conditions of type $z(0,y)=0$, $z(L,y)=0$ for every $y>0$ and $z(x,0)=0$ for every $x>0$. For  $z(x,y)=e^{rx}e^{\pm iry}$ to be a solution verifying these boundary conditions we must have the following: $r=\alpha+i\beta$

$$\begin{array}{l c l} z(x,y)&=&e^{rx}e^{\pm iry}=e^{\alpha x\mp \beta y}e^{i[\beta x\pm\alpha y]}\\&&\\&=&e^{\alpha x\mp \beta y}\big[\cos(\beta x\pm\alpha y)+i\sin(\beta x\pm\alpha y)\big]
 \end{array}$$
 Choosing $\alpha=0$ we obtain:
$$z(x,y)=e^{\mp \beta y}\cos(\beta x)+ie^{\mp \beta y}\sin(\beta x)$$
In this case we can consider two parcels:

The first
$$u(x,y)=e^{\beta y}\sin(\beta x)$$
observe that this parcel verifies as boundary conditions if
$$0=u(L,y)=e^{\beta y}\sin(\beta L)$$for every $y>0$. This implies
$$\sin(\beta L)=0$$
and then $$\beta L=n\pi,\;\;n=1,2,3,\ldots$$
therefore the solutions are of the form
$$u_n(x,y)=e^{\frac{n\pi y}{L}}\sin\big(\frac{n\pi x}{L}\big).$$
The second
$$v(x,y)=e^{-\beta y}\sin(\beta x)$$
observe that this parcel verifies as boundary conditions if
$$0=u(L,y)=e^{-\beta y}\sin(\beta L)$$for every $y>0$. This implies
$$\sin(\beta L)=0$$
and then $$\beta L=n\pi,\;\;n=1,2,3,\ldots$$
therefore the solutions are of the form
$$v_n(x,y)=e^{-\frac{n\pi y}{L}}\sin\big(\frac{n\pi x}{L}\big).$$
Notice that, putting

$$w_n(x,y)=\frac{u_n(x,y)-v_n(x,y)}{2}=\sinh\big(\frac{n\pi y}{L}\big)\sin\big(\frac{n\pi x}{L}\big).$$

\section{Rational Euler equations: existence of monomial meromorphic  solutions}

Let us now investigate the existence of meromorphic or rational solutions for a given Euler equation with rational numbers are coefficients. This may be useful if we want to obtain monomial solutions which
have analytic behavior, possibly admitting poles.
 As we have already observe,
the problem of finding monomial holomorphic or meromorphic solutions of a given Euler equation  is related to
the existence of  integral  or rational points of the indicial conic. This last is related to the existence of solutions of Diophantine equations.
Let us  present families of examples, of each type for the indicial conic type above, exhibiting rational solutions. Such families are constructed based on the existence results from \cite{Hua} and \cite{Matthews}.
\begin{Theorem}[existence of meromorphic monomial solutions]
\label{Theorem:meromorphicmonomialsol}
The following families of Euler PDEs with integer coefficients admit meromorphic monomial solutions:
\begin{itemize}
\item[{\rm (elliptic)}]
\begin{equation}\label{eq34}
\mathcal Ell: \, A^2x^2\frac{\partial^2 z}{\partial x^2}+C^2y^2\frac{\partial^2 z}{\partial y^2}+3A^2x\frac{\partial z}{\partial x}+3C^2y\frac{\partial z}{\partial y}+(C^2+A^2-A^2C^2)z=0,\;x>0,y>0
\end{equation}
where $A,C\in\mathbb{Z}$ such that $AC>0$.

\item[{\rm(parabolic)}]
\begin{equation}\label{eq37}
\mathcal Par: \,
Ax^2\frac{\partial^2 z}{\partial x^2}+Bxy\frac{\partial^2 z}{\partial x\partial y}+Cy^2\frac{\partial^2 z}{\partial y^2}+3Ax\frac{\partial z}{\partial x}+(C+B)y\frac{\partial z}{\partial y}+Az=0,\;x>0,y>0
\end{equation}
where $A,B,C\in\mathbb{Z}$ such that $B^2-4AC=0$.

\item[{\rm(hyperbolic)}]
\begin{equation}\label{eq40}
\mathcal Hyp: \,
Ax^2\frac{\partial^2 z}{\partial x^2}+Bxy\frac{\partial^2 z}{\partial x\partial y}+Ax\frac{\partial z}{\partial x}+By\frac{\partial z}{\partial y}-Az=0,\;x>0,y>0
\end{equation}
where $A,B\in\mathbb{Z}$ such that $B\neq0$.
\end{itemize}

 Indeed, on each case, there are integral points of the corresponding indicial conic.

\end{Theorem}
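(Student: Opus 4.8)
My plan is to reduce the whole statement to Theorem~\ref{Theorem:EulerEDP}. That theorem says that $x^{r}y^{s}$ solves an Euler PDE exactly when $(r,s)$ lies on the indicial conic $\mathcal C=\{P=0\}$; since such a monomial is meromorphic precisely when $(r,s)\in\mathbb Z^{2}$, it suffices to exhibit, for each of the three families, at least one integral point of the corresponding $\mathcal C$. For each family I would simply read off the coefficients of the PDE, substitute them into $P(r,s)=Ar^{2}+Brs+Cs^{2}+(D-A)r+(E-C)s+F$, and simplify. The families have been engineered so that $P$ either completes to a square or factors, which makes the lattice points visible.

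For the elliptic family (\ref{eq34}), completing two squares yields
\[
P(r,s)=A^{2}(r+1)^{2}+C^{2}(s+1)^{2}-A^{2}C^{2},
\]
so $\mathcal C$ is the ellipse $\frac{(r+1)^{2}}{C^{2}}+\frac{(s+1)^{2}}{A^{2}}=1$ (a genuine ellipse, since $B^{2}-4AC=-4A^{2}C^{2}<0$ because $AC>0$). It passes through the four integral points $(C-1,-1)$, $(-C-1,-1)$, $(-1,A-1)$, $(-1,-A-1)$, which by Theorem~\ref{Theorem:EulerEDP} give meromorphic monomial solutions, e.g. $x^{C-1}y^{-1}$. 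For the parabolic family (\ref{eq37}) one gets $P(r,s)=Ar^{2}+Brs+Cs^{2}+2Ar+Bs+A$; using the hypothesis $B^{2}=4AC$ and multiplying by $4A$ (when $A\neq0$) regroups $4AP$ as the perfect square $(2Ar+Bs+2A)^{2}$, so $\mathcal C$ is the double line $2A(r+1)+Bs=0$. The point $(r,s)=(-1,0)$ lies on it for every admissible $A,B$, and when $B\neq 0$ so does the whole integral family $(Bt-1,-2At)$, $t\in\mathbb Z$; the sub-cases with $A=0$ force $B=0$ and are checked directly, with $(-1,0)$ still on $\mathcal C$. Hence $x^{-1}$ is a meromorphic monomial solution.

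For the hyperbolic family (\ref{eq40}), since $C=0$ the polynomial factors immediately:
\[
P(r,s)=Ar^{2}+Brs+Bs-A=(r+1)\bigl(A(r-1)+Bs\bigr),
\]
so $\mathcal C$ is the union of the two distinct lines $r=-1$ and $A(r-1)+Bs=0$ (distinct because $B\neq0$, and $B^{2}-4AC=B^{2}>0$ as required for the hyperbolic case). Every point $(-1,s)$ with $s\in\mathbb Z$, and every point $(Bt+1,-At)$ with $t\in\mathbb Z$, is integral, so for instance $x^{-1}$ and $x$ are meromorphic monomial solutions.

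There is no real obstacle here: the argument is just this short chain of elementary algebraic identities together with Theorem~\ref{Theorem:EulerEDP}. The only points that demand a little care are the degenerate sub-cases ($A=0$ in the parabolic and hyperbolic families), which are verified separately. Conceptually, the solvability criteria of \cite{Hua,Matthews} enter only in the \emph{design} of the families: they are chosen so that each indicial conic is either degenerate (parabolic, hyperbolic) or a rational conic carrying an obvious infinite set of lattice points (elliptic), so that no genuine Diophantine argument is needed in the verification itself.
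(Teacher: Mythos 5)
Your proposal is correct, and its overall architecture is the same as the paper's: invoke Theorem~\ref{Theorem:EulerEDP} to reduce everything to exhibiting integral points on each indicial conic, then compute $P(r,s)$ family by family. The elliptic and parabolic computations coincide with the paper's (same completion of squares, same four points $(-1\pm C,-1)$, $(-1,-1\pm A)$, same identity $4AP=(2Ar+Bs+2A)^2$). Where you genuinely diverge is in the last steps. In the hyperbolic case the paper multiplies by $B^4$ and performs the shift $B^2r=R-B^2$, $B^2s=S+2AB$ to reach $R(AR+BS)=0$ before translating back; your direct factorization $P(r,s)=(r+1)\bigl(A(r-1)+Bs\bigr)$ gets to the same two lines in one line of algebra and is the cleaner route. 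Also, where the paper finishes the parabolic and hyperbolic cases by citing solvability of the linear Diophantine equations $2Ar+Bs=-2A$ and $Ar+Bs=A$ via $\gcd$ divisibility, you simply exhibit explicit lattice points ($(-1,0)$, the families $(Bt-1,-2At)$ and $(Bt+1,-At)$, and the vertical line $r=-1$), which removes even that small number-theoretic step; as a side benefit, your explicit point $(-1,0)$ covers the degenerate parabolic subcase $A=0$ (forcing $B=0$), which the paper's multiplication by $4A$ silently assumes away. Your closing remark that \cite{Hua,Matthews} matter only for the design of the families, not for the verification, is consistent with how the paper uses those references.
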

\begin{proof}
Let us start with the first elliptic case.
The corresponding indicial conic is

\begin{equation}\label{eq35}
A^2r^2+C^2s^2+2A^2r+2C^2s+C^2+A^2-A^2C^2=0.
\end{equation}

Completing the square in equation (\ref{eq35}) we have
$$A^2[r^2+2r+1]+C^2[s^2+2s+1]=A^2C^2 $$
$$A^2(r+1)^2+C^2(s+1)^2=A^2C^2 $$
\begin{equation}\label{eq36}
 (Ar+A)^2+(Cs+C)^2=(AC)^2.
\end{equation}
Equation (\ref{eq36}) admits entire solutions in the following cases:
$$ Ar+A=\pm AC \;\;\;\mbox{ and }\;\;\;Cs+C=0$$ or
$$ Ar+A=0 \;\;\;\mbox{ and }\;\;\;Cs+C=\pm AC.$$
 In these cases $r$ and $s$ have integral values:
$$ r=-1\pm C \;\;\;\mbox{ and }\;\;\;s=-1$$ or
$$ r=-1 \;\;\;\mbox{ and }\;\;\;s=-1\pm A.$$

Now, for the second (parabolic) case the indicial conic is given by
\begin{equation}\label{eq38}
Ar^2+Brs+Cs^2+2Ar+Bs+A=0.
\end{equation}

Multiplying equation (\ref{eq38}) by $4A$ and using that $B^2=4AC$ we have
$$4A^2r^2+4ABrs+4ACs^2+8A^2r+4ABs+4A^2=0 $$
$$4A^2r^2+4ABrs+B^2s^2+8A^2r+4ABs+4A^2=0 $$
$$(2Ar+Bs)^2+4A(2Ar+Bs)+4A^2=0$$
$$(2Ar+Bs+2A)^2=0$$
\begin{equation}\label{eq39}
2Ar+Bs=-2A.
\end{equation}
Let $h=\mbox{gcd}\{2A,B\}$, since $h$ divides $-2A$ we have that equation (\ref{eq39}) admits entire solutions.

Finally we consider the hyperbolic case. In this case we have the indicial conic given by
\begin{equation}\label{eq41}
Ar^2+Brs+Bs-A=0.
\end{equation}

Multiplying equation (\ref{eq41}) by $B^4$ we have
$$AB^4r^2+B^5rs+B^5s-B^4A=0 $$
$$A(B^2r)^2+B(B^2r)(B^2s)+B^3(B^2s)-B^4A=0$$
by putting
$$B^2r=R-B^2\;\;\;\;\mbox{ and }\;\;\;\;B^2s=S+2AB$$
 we have
$$A(R-B^2)^2+B(R-B^2)(S+2AB)+B^3(S+2AB)-B^4A=0$$
$$A(R^2-2B^2R+B^4)+B(RS-B^2S+2RAB-2AB^3)+B^3S+2AB^4-B^4A=0$$
$$AR^2-2AB^2R+AB^4+BRS-B^3S+2RAB^2-2AB^4+B^3S+2AB^4-B^4A=0$$
$$AR^2+BRS=0$$
\begin{equation}\label{eq42}
R(AR+Bs)=0.
\end{equation}
Equation (\ref{eq42}) admits entire solutions in the following cases:
$$ R=0 \;\;\;\mbox{ or }\;\;\;AR+BS=0$$
back to the original variables
$$ B^2r+B^2=0 \;\;\;\mbox{ or }\;\;\;A(B^2r+B^2)+B(B^2s-2AB)=0$$
 then $$ r=-1,\;s\mbox{any integer}$$
 or
 $$AB^2r+AB^2+B^3s-2AB^2=0$$
 $$AB^2r+B^3s=AB^2$$
\begin{equation}\label{eq43}
Ar+Bs=A,
\end{equation}
be a $h=\mbox{gcd}\{A,B\}$, since $h$ divides $A$ we have that equation (\ref{eq43}) admits entire solutions.

\end{proof}

\section{Frobenius method for second order linear PDEs}
In this section we pave the way to our main results Theorems~\ref{Theorem:paraboliccomplex}, \ref{Theorem:ellipticcomplex} and \ref{Theorem:hyperboliccomplex}, as well as their real counterparts
Theorems~\ref{Theorem:realparabolic}, \ref{Theorem:realelliptic} and \ref{Theorem:realhyperbolic}.
Let us consider a second order linear homogeneous PDE of the form
\[
A(x,y)\frac{\partial^2 z}{\partial x^2}+B(x,y)\frac{\partial^2 z}{\partial x\partial y}+C(x,y)\frac{\partial^2 z}{\partial y^2}+D(x,y)\frac{\partial z}{\partial x}+E(x,y)\frac{\partial z}{\partial y}+F(x,y)z=0,
\]
where the coefficients $A,B,C,D$ and $E$ are analytic functions at some point $(x_0,y_0)\in \mathbb K^2$.

\subsection{Regular singularities}
We recall the following definition from the Introduction:

\begin{Definition}[regular singularity]
{\rm The point $(x_0,y_0)$ is called {\it ordinary} if some of the coefficients $A, B$ and  $C$ does not vanish at $(x_0,y_0)$.
Otherwise, if $A(x_0,y_0)=B(x_0,y_0)=C(x_0,y_0)=0$,  it will be called a {\it singular point}. A singular point will be called a {\it regular singularity} if the PDE can be put into the form
\[
Ax^2\frac{\partial^2 z}{\partial x^2}+Bxy\frac{\partial^2 z}{\partial x\partial y}+Cy^2\frac{\partial^2 z}{\partial y^2}+xa(x,y)\frac{\partial z}{\partial x}+yb(x,y)\frac{\partial z}{\partial y}+c(x,y)z=0
\]
with $A, B, C\in \mathbb R$ constants, $a(x,y), b(x,y), c(x,y)$ analytic functions,
after some division by coefficients and change of coordinates centered at $(x_0,y_0)$.

}
\end{Definition}

We look initially for solutions of the form $\psi=x^r y^s + h.o.t.$. We write $a(x,y)=a(0,0) + x^ny^m a_1(x,y)$ where
$a_1(x,y)$ is analytic and $n+ m \geq 1$. Similarly we write $b(x,y) = b(0,0) + x^\ell + y^k b_1(x,y)$ and
$c(x,y)= c(0,0) + x^t y^s c_1(x,y)$.

Then we can rewrite the PDE as
\[
Ax^2\frac{\partial^2 z}{\partial x^2}+Bxy\frac{\partial^2 z}{\partial x\partial y}+Cy^2\frac{\partial^2 z}{\partial y^2}+xa(0,0) \frac{\partial z}{\partial x}+yb(0,0)\frac{\partial z}{\partial y}+c(0,0)z
\]
\[
+x^{n+1} y^ m a_1(x,y)\frac{\partial z}{\partial x}+x^\ell y^{k+1}b_1(x,y)\frac{\partial z}{\partial y}+x^t y^s c_1(x,y)z=0
\]

Thus the PDE has a first jet which is a PDE of Euler type given by
\[
Ax^2\frac{\partial^2 z}{\partial x^2}+Bxy\frac{\partial^2 z}{\partial x\partial y}+Cy^2\frac{\partial^2 z}{\partial y^2}+xa(0,0) \frac{\partial z}{\partial x}+yb(0,0)\frac{\partial z}{\partial y}+c(0,0)z=0.
\]

The higher order part is seen as an increment and the original PDE as a deformation of the Euler PDE above.

Based on this we define:

\begin{Definition}
{\rm

The {\it indicial conic}  is the complex affine curve  $\mathcal C\subset \mathbb C^2$  given by the zeros of the polynomial
$$P(r,s)=Ar^2+Brs+Cs^2+r(a(0,0)-A)+s(b(0,0)-C)+c(0,0).$$
In case the PDE has real coefficients we can define the {\it real trace of the indicial conic} (or {\it real indicial conic} for short) as $\mathcal C(\mathbb R):=\mathcal C \cap \mathbb R^2$.

}
\end{Definition}

The above definition of regular singularity is motivated by the following

\begin{Lemma}
Let
\[
A(x,y)x^2\frac{\partial^2 z}{\partial x^2}+2B(x,y) xy\frac{\partial^2 z}{\partial x\partial y}+C(x,y) y^2\frac{\partial^2 z}{\partial y^2}+xa(x,y)\frac{\partial z}{\partial x}+yb(x,y)\frac{\partial z}{\partial y}+c(x,y)z=0
\]
be given with $A(x,y), B(x,y), C(x,y), a(x,y), b(x,y), c(x,y)$ of class $C^r$ in  a neighborhood of the origin. For any change of coordinates $(x,y)\mapsto(\tilde x,\tilde y)$, of class $C^r$, that preserves the coordinate axes,
the new PDE is still of the above form. More precisely, if we put $(\tilde x,\tilde y)=(x.\xi_1(x,y), y.\eta_1(x,y))$, then in the new coordinates we have
\[
\tilde A(\tilde x,\tilde y)\tilde x^2\frac{\partial^2 z}{\partial \tilde x^2}+2\tilde B(\tilde x,\tilde y) \tilde x\tilde y\frac{\partial^2 z}{\partial \tilde x\partial \tilde y}+\tilde C (\tilde x,\tilde y)\tilde y^2\frac{\partial^2 z}{\partial \tilde y^2}+ \tilde x \tilde a(\tilde x, \tilde y)\frac{\partial z}{\partial \tilde x}+\tilde y \tilde b(\tilde x, \tilde y)\frac{\partial z}{\partial \tilde y}+\tilde c(\tilde x,\tilde y)z=0
\]

for some $\tilde A(\tilde x,\tilde y), \tilde B(\tilde x,\tilde y), \tilde C(\tilde x,\tilde y), \tilde a(\tilde x,\tilde y), \tilde b(\tilde x,\tilde y), \tilde c(\tilde x,\tilde y)$ of class $C^r$ at the origin.

Moreover, we have 
$$\begin{pmatrix}
\tilde A \tilde x^2 & \tilde B \tilde x \tilde y\\
\tilde B \tilde x \tilde y& \tilde C \tilde y^2
\end{pmatrix} = J^t \begin{pmatrix} A x^2 & Bxy \\ Bxy & C y^2 \end{pmatrix} J
$$
where $J$ is the Jacobian matrix of the change of coordinates. In particular, 
$(\tilde B^2 - \tilde A\tilde C)(\tilde x \tilde y)^2 = |\det J|^2 ( B^2 -   A  C)(xy)^2$.
\end{Lemma}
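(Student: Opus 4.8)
The plan is to treat the second order (principal) part and the lower order part of the equation separately. Write the change of coordinates as $\Phi\colon(x,y)\mapsto(\tilde x,\tilde y)=(x\,\xi_1(x,y),\,y\,\eta_1(x,y))$ with Jacobian $J$; since $\Phi$ is a $C^r$ diffeomorphism carrying each coordinate axis to itself, the functions $\xi_1,\eta_1$ are $C^r$ and nonvanishing near the origin (units in the local ring) and $\det J$ is invertible there. Put $z(x,y)=w(\tilde x,\tilde y)$. Beyond a chain rule computation, the only things to check are that the transformed equation again has (i) principal coefficients of the shape $\tilde A\tilde x^2$, $\tilde B\tilde x\tilde y$, $\tilde C\tilde y^2$, and (ii) a coefficient of $\partial z/\partial\tilde x$ divisible by $\tilde x$ and a coefficient of $\partial z/\partial\tilde y$ divisible by $\tilde y$, with all quotients of class $C^r$ (modulo the couple of derivatives necessarily spent differentiating $\Phi$ twice).

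First I would handle the principal part. Applying the chain rule, each of $z_{xx},z_{xy},z_{yy}$ is a combination of $w_{\tilde x\tilde x},w_{\tilde x\tilde y},w_{\tilde y\tilde y}$ with coefficients quadratic in the first partials of $\tilde x,\tilde y$, plus terms in $w_{\tilde x},w_{\tilde y}$ involving the second partials of $\tilde x,\tilde y$. Collecting the genuinely second order contribution, the coefficient of $w_{\tilde x\tilde x}$ is $(\nabla\tilde x)^{t}M(\nabla\tilde x)$, that of $w_{\tilde y\tilde y}$ is $(\nabla\tilde y)^{t}M(\nabla\tilde y)$, and that of $w_{\tilde x\tilde y}$ is $2\,(\nabla\tilde x)^{t}M(\nabla\tilde y)$, where $M=\begin{pmatrix}Ax^2&Bxy\\Bxy&Cy^2\end{pmatrix}$ and $\nabla\tilde x,\nabla\tilde y$ are the gradient columns. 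Assembling these three entries into the symmetric matrix on the left side of the claimed identity yields exactly $G^{t}MG$ with $G=(\nabla\tilde x\,|\,\nabla\tilde y)$, and $G$ is, up to transposition, the Jacobian matrix of $\Phi$; this is the asserted $\begin{pmatrix}\tilde A\tilde x^2&\tilde B\tilde x\tilde y\\\tilde B\tilde x\tilde y&\tilde C\tilde y^2\end{pmatrix}=J^{t}MJ$. The divisibility in (i) is then a short inspection: from $\tilde x=x\xi_1$ one has $\tilde x_x=\xi_1+x\xi_{1,x}$ and $\tilde x_y=x\xi_{1,y}$, so every monomial of $(\nabla\tilde x)^{t}M(\nabla\tilde x)=Ax^2\tilde x_x^2+2Bxy\,\tilde x_x\tilde x_y+Cy^2\tilde x_y^2$ carries a visible factor $x^2$; dividing by $\tilde x^2=x^2\xi_1^2$ and using that $\xi_1$ is a unit produces a $C^r$ function $\tilde A$. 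Symmetrically one gets $\tilde C$, while the mixed entry $(\nabla\tilde x)^{t}M(\nabla\tilde y)$ is divisible by $xy$, hence by $\tilde x\tilde y=xy\,\xi_1\eta_1$, giving $\tilde B$.

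Next the lower order terms. The coefficient of $w_{\tilde x}$ in the transformed equation is a sum of two pieces: from the original first order part $x\,a\,z_x+y\,b\,z_y$ one gets $x\,a\,\tilde x_x+y\,b\,\tilde x_y=x\,(a\,\tilde x_x+y\,b\,\xi_{1,y})$ using $\tilde x_y=x\xi_{1,y}$; and from the principal part acting through the second partials of $\Phi$ one gets $Ax^2\tilde x_{xx}+2Bxy\,\tilde x_{xy}+Cy^2\tilde x_{yy}$, in which the first two summands visibly carry a factor $x$ and the last equals $Cy^2\,x\,\xi_{1,yy}$. Hence the whole coefficient of $w_{\tilde x}$ is $x$ times a $C^r$ function, i.e. of the form $\tilde x\,\tilde a$; the same computation with the roles of $x,y$ (and of $\xi_1,\eta_1$) interchanged yields the coefficient of $w_{\tilde y}$ in the form $\tilde y\,\tilde b$, and the zeroth order term is simply $\tilde c=c\circ\Phi^{-1}$. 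Finally, taking determinants in the matrix identity and using $\det M=(AC-B^2)(xy)^2$ together with $\det(G^{t}MG)=(\det G)^2\det M$ and $\det G=\det J$ gives $(\tilde B^2-\tilde A\tilde C)(\tilde x\tilde y)^2=|\det J|^2(B^2-AC)(xy)^2$.

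I expect the only real work to be the bookkeeping in the last two paragraphs: verifying that every term landing in front of $w_{\tilde x}$ (respectively $w_{\tilde y}$) is divisible by $x$ (respectively $y$), which rests systematically on the facts that $\tilde x$ together with its $y$-derivatives vanishes on the axis $x=0$ to the needed order, that $\tilde y$ together with its $x$-derivatives vanishes on $y=0$, and that dividing by the units $\xi_1,\eta_1$ preserves the differentiability class. There is no conceptual obstacle; the substance is the tensorial transformation law $G^{t}MG$ for the principal symbol, and the remaining task is organizing the chain rule so that the axis-preserving structure of $\Phi$ is manifest at each step.
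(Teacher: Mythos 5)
Your proof is correct, and its core step --- the chain rule producing the congruence of principal symbols, i.e.\ that the new second–order coefficient matrix is $(\nabla\tilde x\,|\,\nabla\tilde y)^{t}M(\nabla\tilde x\,|\,\nabla\tilde y)$ with $M=\begin{pmatrix}Ax^2&Bxy\\Bxy&Cy^2\end{pmatrix}$ --- is exactly the computation the paper records. The difference is one of completeness: the paper's proof writes down the transformation law for an \emph{arbitrary} change of coordinates ($A=a\xi_x^2+2b\xi_x\xi_y+c\xi_y^2$, etc.) and stops there, never verifying the actual content of the lemma, namely that for an axis-preserving change $(\tilde x,\tilde y)=(x\xi_1,y\eta_1)$ the new principal coefficients are divisible by $\tilde x^2$, $\tilde x\tilde y$, $\tilde y^2$ and the new first-order coefficients by $\tilde x$ and $\tilde y$. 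Your second and third paragraphs supply precisely this missing bookkeeping, using $\tilde x_y=x\xi_{1,y}$, $\tilde x_{yy}=x\xi_{1,yy}$, $\tilde y_x=y\eta_{1,x}$ and the fact that $\xi_1,\eta_1$ are units, so your argument is the more complete of the two. Two small caveats, both shared with (indeed inherited from) the paper rather than introduced by you: with the standard Jacobian $J=\begin{pmatrix}\tilde x_x&\tilde x_y\\ \tilde y_x&\tilde y_y\end{pmatrix}$ the congruence reads $JMJ^{t}$ rather than $J^{t}MJ$ (you flag the transposition; it is harmless for the determinant identity $(\tilde B^2-\tilde A\tilde C)(\tilde x\tilde y)^2=(\det J)^2(B^2-AC)(xy)^2$), and dividing out $\tilde x^2$, $\tilde x\tilde y$, $\tilde y^2$, $\tilde x$, $\tilde y$ costs the derivatives spent on $\xi_1,\eta_1$, so for finite $r$ the new coefficients are in general only of class $C^{r-2}$ --- you acknowledge this parenthetically, whereas the statement in the paper ignores it.
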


\begin{proof}
We start with a PDE of the form
\[
a(x,y)\frac{\partial^2 u}{\partial x^2}+2b(x,y)\frac{\partial^2 u}{\partial x\partial y}+c(x,y)\frac{\partial^2 u}{\partial y^2}+d_1(x,y)\frac{\partial z}{\partial x}+d_2(x,y)\frac{\partial z}{\partial y}+d_3(x,y)z=f(x,y)
\]
and perform a change of coordinates
$(x,y)\mapsto (\xi(x,y), \eta(x,y))$.
Then we obtain a PDE of the form
\[
A(\xi,\eta)\frac{\partial^2 u}{\partial \xi^2}+2B(\xi,\eta)\frac{\partial^2 u}{\partial \xi\partial \eta}+C(\xi,\eta)\frac{\partial^2 u}{\partial \eta ^2}+e_1(\xi,\eta)\frac{\partial z}{\partial \xi}+e_2(\xi,\eta)\frac{\partial z}{\partial \eta}+e_3(\xi,\eta)z=g(\xi,\eta)
\]
where we have $A=a\xi_ x^2 + 2 b\xi_x \xi _y + c \xi_y ^2, \, B=a \xi_x \eta_x + b(\eta_x \xi_y + \eta_y \xi _x) + c \xi_y \eta_y, \, C= a \eta_x ^2 + 2b \eta_x \eta_y + c \eta_y ^2$. In terms of matrices we have
$$\begin{pmatrix}
A & B \\
B & C
\end{pmatrix} = J^t \begin{pmatrix} a & b \\ b & c \end{pmatrix} J
$$
where $J=\begin{pmatrix} \xi_x & \xi _y \\ \eta_x & \eta_y\end{pmatrix}$ is the jacobian matrix of the change of coordinates.

\end{proof}

\begin{Remark}[Behavior of solutions in straight lines]
\label{Remark:restriction}
{\rm Consider the following PDE
\begin{equation}\label{eq44}
Ax^2\frac{\partial^2 z}{\partial x^2}+Bxy\frac{\partial^2 z}{\partial x\partial y}+Cy^2\frac{\partial^2 z}{\partial y^2}+Dx\frac{\partial z}{\partial x}+Ey\frac{\partial z}{\partial y}+Fz=0,
\end{equation}
where $A,B,C\in\mathbb{C}^*,D,E,F\in\mathbb{C}$. Let $u$ be a solution of (\ref{eq44}). Hence we have
\begin{equation}\label{eq45}
Ax^2\frac{\partial^2 u}{\partial x^2}+Bxy\frac{\partial^2 u}{\partial x\partial y}+Cy^2\frac{\partial^2 u}{\partial y^2}+Dx\frac{\partial u}{\partial x}+Ey\frac{\partial u}{\partial y}+Fu=0.
\end{equation}
Consider the line $y=tx$. We want to understand the behavior of the solutions restricted to this line.
For this sake we evaluate $(x,tx)$ in equation (\ref{eq45})
$$Ax^2\frac{\partial^2 u}{\partial x^2}(x,tx)+Btx^2\frac{\partial^2 u}{\partial x\partial y}(x,tx)+Ct^2x^2\frac{\partial^2 u}{\partial y^2}(x,tx)+Dx\frac{\partial u}{\partial x}(x,tx)+Etx\frac{\partial u}{\partial y}(x,tx)+Fu(x,tx)=0.$$
We define $\tilde{u}(x)=u(x,tx)$, taking derivatives we have

$$\tilde{u}^\prime(x)=\frac{\partial u}{\partial x}(x,tx)+t\frac{\partial u}{\partial y}(x,tx)$$

$$\tilde{u}^{\prime\prime}(x)=\frac{\partial^2 u}{\partial x^2}(x,tx)+ 2t\frac{\partial^2 u}{\partial x \partial y}(x,tx)+t^2\frac{\partial^2 u}{\partial y^2}(x,tx).$$
Thus observe that $\tilde{u}$ is solution of the second order Euler equation
$$x^2z^{\prime\prime}+x\big(\frac{D}{A}\big)z^\prime+\big(\frac{F}{A}\big)z=0$$se $A=C=\frac{B}{2}$ and $D=E$. Observe that in this case equation (\ref{eq44}) would be of parabolic type.

}
\end{Remark}

\subsection{Resonances}

We shall now introduce the notion of resonance for a second order linear PDE with a regular singular point. For this sake, we shall consider the second order PDE
\begin{equation}\label{eq16}
({\mathcal E})\, \, \,   L[z]:=Ax^2\frac{\partial^2 z}{\partial x^2}+Bxy\frac{\partial^2 z}{\partial x\partial y}+Cy^2\frac{\partial^2 z}{\partial y^2}+xa(x,y)\frac{\partial z}{\partial x}+yb(x,y)\frac{\partial z}{\partial y}+c(x,y)z=0
\end{equation}
where $A,B, C\in\mathbb{R}, \, \mathbb C$, $a(x,y),b(x,y)$ and $c(x,y)$ real analytic or holomorphic in $\Delta[(0,0),(R,R)]\subset\mathbb R^2 , \, \mathbb C^2$, $R>0$.

The indicial conic $\mathcal C\subset  \mathbb C^2$ is then given by the zeros of the polynomial
$$P(r,s)=Ar^2+Brs+Cs^2+r(a(0,0)-A)+s(b(0,0)-C)+c(0,0).$$

\begin{Definition}
{\rm
A pair $(r,s)\in \mathcal C$ is
called {\it resonant} (with respect to the PDE $\mathcal E$) if there is $Q=(q_1,q_2)\in \mathbb Z_+^2\setminus \{ (0,0)\}$ such that
\[
P(q_1+r,q_2+s)=0.
\]
Since $P(r,s)=0$ this is equivalent to
\[
Aq_1(2r + q_1 -1) + B(q_1 s + rq_2 + q_1 q_2) + Cq_2(2s + q_2-1) +q_1 a(0,0) + q_2 b(0,0)=0.
\]
This last is a degree one equation in $r$ and $s$.
Let us introduce the {\it set of resonant points of} $\mathcal E$ as $\mathcal R(\mathcal E):=\mathcal R \cap \mathcal C$ where
$$\mathcal{R}=\bigcup\limits_{|Q|=1}^\infty R_Q$$ and
$$R_Q:= \lbrace(r,s)\in\mathcal{C};\;P(q_1+r,q_2+s)=0\rbrace.$$

A point $(r_0,s_0)\in \mathcal C$ is {\it non-resonant} if $(r_0,s_0)\not\in\mathcal R \cap \mathcal C$.
}
\end{Definition}

\begin{Lemma}
$\mathcal R(\mathcal E)$ is nowhere dense in $\mathcal C$ and in $\mathbb R^2, \mathbb C^2$.
In particular, the set of non-resonant points is dense in $\mathcal C$.
\end{Lemma}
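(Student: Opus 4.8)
The plan is to analyze the structure of each set $R_Q$ and then argue that the countable union $\mathcal{R}(\mathcal{E}) = \mathcal{R}\cap\mathcal{C}$ cannot be somewhere dense by a Baire/dimension-counting argument adapted to the algebraic setting. First I would fix $Q=(q_1,q_2)\in\mathbb{Z}_+^2\setminus\{(0,0)\}$ and observe, exactly as recorded in the definition, that the condition $P(q_1+r,q_2+s)=0$ together with $P(r,s)=0$ is equivalent to a \emph{degree one} equation $\ell_Q(r,s)=0$, namely
\[
Aq_1(2r+q_1-1)+B(q_1 s + r q_2 + q_1 q_2)+Cq_2(2s+q_2-1)+q_1 a(0,0)+q_2 b(0,0)=0.
\]
Hence $R_Q = \mathcal{C}\cap L_Q$, where $L_Q=\{\ell_Q=0\}$ is an affine line (or, in the degenerate event that all coefficients of $r,s$ in $\ell_Q$ vanish, either all of $\mathbb{C}^2$ or the empty set). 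I would first dispose of the degenerate event: if $\ell_Q$ has both linear coefficients zero, then $2Aq_1 + Bq_2 = 0$ and $Bq_1 + 2Cq_2 = 0$, which forces $(4AC-B^2)q_1 = (4AC-B^2)q_2 = 0$; since $Q\neq(0,0)$ this gives $B^2=4AC$, i.e. the parabolic case, and even there the constant term $q_1 q_2 B + q_1 a(0,0)+q_2 b(0,0)$ generically does not vanish, so $R_Q=\emptyset$ for all but possibly finitely many $Q$ — and in any case each such $R_Q$ is contained in the proper algebraic subset $\mathcal{C}$, which already is nowhere dense in $\mathbb{C}^2$ (resp. $\mathbb{R}^2$) since $\mathcal{C}$ is the zero set of a nonzero polynomial.

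The core case is $\ell_Q\not\equiv 0$, so $L_Q$ is a genuine affine line. Then $R_Q=\mathcal{C}\cap L_Q$ is the intersection of the conic $\mathcal{C}$ with a line: by Bézout, either $L_Q\subset\mathcal{C}$ (possible only if $\mathcal{C}$ is a degenerate conic, a union of two lines, and $L_Q$ is one of its components) or $R_Q$ consists of at most two points. In the first sub-case $R_Q$ is a line, hence has empty interior in $\mathcal{C}$ only if $\mathcal{C}$ has another component; but since $\mathcal{C}$ is one-dimensional, any proper Zariski-closed subset — in particular a single line inside a reducible conic — is nowhere dense in $\mathcal{C}$ for the usual topology, because $\mathcal{C}$ has no isolated one-dimensional component that is open. (Here I would simply note that a line $L_Q$ inside a two-component conic $\mathcal{C}=L_1\cup L_2$ equals, say, $L_1$, and $L_1$ is closed with empty interior in $L_1\cup L_2$ since its complement $L_2\setminus L_1$ is nonempty and its closure meets $L_1$.) So each $R_Q$ is, in every case, a nowhere dense subset of $\mathcal{C}$, indeed a finite set or at most one line-component.

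Finally I would assemble the conclusion. In the real case $\mathbb{K}=\mathbb{R}$ (and in the complex case, mutatis mutandis), $\mathcal{R}(\mathcal{E}) = \bigcup_{Q} R_Q$ is a countable union. If some $R_Q$ is a line-component of a degenerate $\mathcal{C}$, there is at most one such component and it is nowhere dense in $\mathcal{C}$ as above; removing it, the remaining $R_Q$ are all finite. A countable union of finite sets (plus one nowhere dense line) is a countable set with no interior in $\mathcal{C}$ unless $\mathcal{C}$ itself is countable — but $\mathcal{C}$, being a nonempty real curve of degree two containing monomial solutions is either uncountable or, in the genuinely degenerate cases where $\mathcal{C}$ is a single point or empty, the statement is vacuous. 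By the Baire category theorem $\mathcal{C}$ (a locally compact, hence Baire, space) is not a countable union of nowhere dense sets, so $\mathcal{R}(\mathcal{E})$ has empty interior in $\mathcal{C}$; and since $\mathcal{R}(\mathcal{E})\subset\mathcal{C}$ with $\mathcal{C}$ itself nowhere dense in $\mathbb{R}^2$ (resp. $\mathbb{C}^2$), it is nowhere dense there too. Taking complements gives that the non-resonant points are dense in $\mathcal{C}$.

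\emph{Main obstacle.} The one delicate point is the parabolic/degenerate bookkeeping: when $B^2=4AC$ the line $L_Q$ can be parallel to (or coincide with) a component of $\mathcal{C}$, so I must check carefully that $R_Q$ is not accidentally all of $\mathcal{C}$ for infinitely many $Q$, and separately handle the truly degenerate conics (a double line, a single point, or empty) where "nowhere dense in $\mathcal{C}$" needs the vacuous-case caveat. Everything else is Bézout plus Baire.
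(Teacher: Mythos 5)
Your route is the same as the paper's own (very short) proof: the resonance condition $P(q_1+r,q_2+s)=0$ together with $P(r,s)=0$ is a degree-one condition, so $\mathcal R(\mathcal E)$ is the intersection of $\mathcal C$ with countably many lines, and Baire's theorem finishes; your added B\'ezout bookkeeping and the reducible-conic discussion are refinements of that same argument. The genuine gap is your treatment of the case $\ell_Q\equiv 0$. You correctly reduce it to $B^2=4AC$ with $(q_1,q_2)$ in the kernel of $\bigl(\begin{smallmatrix}2A&B\\ B&2C\end{smallmatrix}\bigr)$, but you then dismiss it by saying the constant term ``generically does not vanish, so $R_Q=\emptyset$ for all but possibly finitely many $Q$''. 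Generic is not enough: the lemma quantifies over every PDE of the stated form, and the constant term (which, after using the kernel relations, equals $q_1(a(0,0)-A)+q_2(b(0,0)-C)$, not the expression you wrote) can vanish for \emph{every} $Q$ along the kernel direction. Concretely, take $A=C=1$, $B=-2$, $a(0,0)=b(0,0)=1$, so that $P(r,s)=(r-s)^2+c(0,0)$; then $P(r+q,s+q)=P(r,s)$ for all $q\in\mathbb N$, so every point of $\mathcal C$ is resonant, $\mathcal R(\mathcal E)=\mathcal C$, and the conclusion ``nowhere dense in $\mathcal C$'' (hence density of non-resonant points, which is what the convergence theorems actually need) fails. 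Your fallback that each $R_Q\subset\mathcal C$ and $\mathcal C$ is nowhere dense in $\mathbb C^2$ rescues only the ambient part of the statement, not the part inside $\mathcal C$. The double-line subcase you leave aside is the same phenomenon: there $R_Q$ is either empty or all of $\mathcal C$ according as $Q$ is transverse or parallel to the line, so it cannot be absorbed into the ``$L_1\cup L_2$ with $L_2\neq L_1$'' argument.

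So this step cannot be closed as written; what it really shows is that the lemma needs the additional hypothesis that no positive integral translation leaves $P$ invariant (equivalently $\ell_Q\not\equiv 0$ for all $Q\in\mathbb Z_+^2\setminus\{(0,0)\}$). That hypothesis is automatic in the situations where the lemma is applied (Theorems~\ref{Theorem:paraboliccomplex}, \ref{Theorem:ellipticcomplex}, \ref{Theorem:hyperboliccomplex}): with $A,C\neq 0$ and $B=0$ the kernel is trivial, and in the ``parabolic of real type'' case the kernel direction $(\sqrt C,-\sqrt A)$ cannot meet the closed positive quadrant when $\mbox{Re}(\sqrt A\overline{\sqrt C})>0$. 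Note that the paper's own proof simply asserts that the resonant set is the intersection of $\mathcal C$ with countably many straight lines, so it has the same blind spot; you at least localized the difficulty, but ``generically'' does not resolve it, and an honest write-up should either add the non-invariance hypothesis or verify it from the standing assumptions before invoking Baire.
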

\begin{proof}
The set of resonant points of $\mathcal E$ is the intersection of the indicial conic with
a countable number of straight lines in $\mathbb R^2, \mathbb C^2$. By Baire's category theorem this set has empty interior. It also has zero Lebesgue measure and contains no interior points even when looked inside $\mathcal C C$.
\end{proof}

\begin{Remark}
{\rm Let us point-out a few facts about the notion of resonance we have introduced above:

\begin{enumerate}
\item Geometrically, a point $(r_0,s_0)\in \mathcal C$ is resonant if there is some non-trivial translation of $(r_0,s_0)$ by
integral shift $(r_0 +q_1, s_0 +q_2), \, q_1, q_2\in \mathbb N$ which also lies on the indicial conic.
This can be seen as follows: consider the reticulate $r_0\vee s_0 \subset\mathbb C^2$ centered at $(r_0,q_0)$. This means   the set of all points of the form $ (r_0 +q_1,s_0 +q_2)$ where $q_1,q_2\in \mathbb Z$. The positive part of the reticulate is the set of points of the form $ (r_0 +q_1,s_0 +q_2)$ where $q_1,q_2\in \mathbb N\cup\{0\}$.  Then, a point $(r_0,s_0)\in \mathcal C$ of the indicial conic is resonant if there is some vertex of the positive part of the reticulate that lies over the indicial conic.

\item Consider a  second order linear analytic ODE of the form
$a x^2 u^{\prime \prime} + xb(x) u^\prime + c(x)u=0$, with a regular singularity.
From the classical method of Frobenius, we know that the indicial equation is of the form
$P(r)=a r (r-1) + b(0)r + c(0)=0$. If we choose the root $r_0$ with greater real part then there is a solution of the form $u(x)=x^{r_0} \sum\limits_{q=0}^\infty a_q x^q$. We looking for other solutions,  the exceptional case occurs when  there is another root $r_1$ of the indicial equation which is of the form
$r_1=r_0 -q_0$ for some $q_0 \in \mathbb N$. This means that the indicial polynomial $P(r)$ has a root $r_1$ and another
root of the form $r_1+q$. This is exactly the notion of resonance we have just introduced above for the case of PDEs.

\end{enumerate}
}
\end{Remark}

\subsection{Frobenius type solutions}
We consider
a second order linear homogeneous PDE of the form
\[
A(x,y)\frac{\partial^2 z}{\partial x^2}+B(x,y)\frac{\partial^2 z}{\partial x\partial y}+C(x,y)\frac{\partial^2 z}{\partial y^2}+D(x,y)\frac{\partial z}{\partial x}+E(x,y)\frac{\partial z}{\partial y}+F(x,y)z=0,
\]
where the coefficients $A,B,C,D$ and $E$ are analytic functions at some point $(x_0,y_0)\in \mathbb R^2$. Assume that $(x_0,y_0)$ is a non-singular point or a regular singularity of the PDE.
Let be given $(r_0,s_0)\in \mathbb R^2$  a point of the indicial conic.

\begin{Definition}
{\rm A {\it  Frobenius type solution} of the PDE above is an expression
$$\psi(x,y)=x^{r_0}y^{s_0}\sum^\infty_{|Q|=0}d_Qx^{q_1}y^{q_2},\;\;\;\;d_{0,0}=1,$$
which satisfies the PDE from the formal point of view, ie., $L[\psi](x,y)=0$ where
$L[z]=A(x,y)\frac{\partial^2 z}{\partial x^2}+B(x,y)\frac{\partial^2 z}{\partial x\partial y}+C(x,y)\frac{\partial^2 z}{\partial y^2}+D(x,y)\frac{\partial z}{\partial x}+E(x,y)\frac{\partial z}{\partial y}+F(x,y)z$. Moreover, the coefficients $D_Q$ are obtained by means of recurrences associate to the equation.
 The solution is called of {\it convergent type} if  the series $\sum^\infty_{|Q|=0}d_Qx^{q_1}y^{q_2},\;\;\;\;d_{0,0}=1$ is convergent in the bidisc $\Delta[(0,0),(R,R)]$. We shall say that the solution is {\it real} if the exponent $(r,s)\in \mathbb R^2$ and coefficients
$d_Q$ of the power series are all real.
}
\end{Definition}

\begin{Example}{\rm Consider the second order PDE given by
\begin{equation}\label{eq46}
x^2\frac{\partial^2 z}{\partial x^2}+2xy\frac{\partial^2 z}{\partial x\partial y}+y^2\frac{\partial^2 z}{\partial y^2}-\frac{\partial z}{\partial x}-\frac{\partial z}{\partial y}-\frac{1}{2}z=0
\end{equation}
The origin is not a regular singularity for  (\ref{eq46}). Let us check for the existence of
formal solutions. This equation admits as solution  a formal power  series
\begin{equation}\label{eq47}
\sum^{\infty}_{|Q|=0} a_QX^Q,
\end{equation}
where the coefficients $a_Q$ satisfy the following recurrence formula
\begin{equation}\label{eq48}
C_Q=\big[|Q|^2-|Q|-\frac{1}{2}\big]a_Q,\;\;\;\mbox{ for every }|Q|=0,1,2,\ldots,
\end{equation}
where $C_{q_1,q_2}=(1+q_1)a_{1+q_1,q_2}+(1+q_2)a_{q_1,1+q_2}$. But is not convergent, since  if we consider
\begin{equation}\label{eq50}
\gamma_n=\sum^\infty_{n=0}a_nx^n
\end{equation}
where $a_n=\sum_{|Q|=n}a_Q$. We see that by (\ref{eq48}) the coefficients $a_n$ satisfy the following recorrência
\begin{equation}\label{eq49}
\big(n^2-n-\frac{1}{2}\big)a_n=(n+1)a_{n+1},\;\;\;\mbox{ for every }n=0,1,2,\ldots,
\end{equation}
 If $a_{00}\neq0$ then $a_0\neq0$ and then by the quotient test to  (\ref{eq49}) and (\ref{eq50}), we have that
$$ \left|\frac{a_{n+1}x^{n+1}}{a_nx^n}\right|=\left|\frac{n^2-n-\frac{1}{2}}{n+1}\right|\cdot|x|\to\infty,$$
as $k\to\infty$, if $|x|\neq0$. Thus, the series converges only for $x=0$, and therefore
 (\ref{eq47}) does not define a  convergent solution of (\ref{eq46}).}
\end{Example}

\subsection{Pre-canonical forms}

We start with a PDE of the form
\[
a(x,y)\frac{\partial^2 u}{\partial x^2}+2b(x,y)\frac{\partial^2 u}{\partial x\partial y}+c(x,y)\frac{\partial^2 u}{\partial y^2}+d_1(x,y)\frac{\partial z}{\partial x}+d_2(x,y)\frac{\partial z}{\partial y}+d_3(x,y)z=f(x,y)
\]
and perform a change of coordinates
$(x,y)\mapsto (\xi(x,y), \eta(x,y))$.
Then we obtain a PDE of the form
\[
A(\xi,\eta)\frac{\partial^2 u}{\partial \xi^2}+2B(\xi,\eta)\frac{\partial^2 u}{\partial \xi\partial \eta}+C(\xi,\eta)\frac{\partial^2 u}{\partial \eta ^2}+e_1(\xi,\eta)\frac{\partial z}{\partial \xi}+e_2(\xi,\eta)\frac{\partial z}{\partial \eta}+e_3(\xi,\eta)z=g(\xi,\eta)
\]
where we have $A=a\xi_ x^2 + 2 b\xi_x \xi _y + c \xi_y ^2, \, B=a \xi_x \eta_x + b(\eta_x \xi_y + \eta_y \xi _x) + c \xi_y \eta_y, \, C= a \eta_x ^2 + 2b \eta_x \eta_y + c \eta_y ^2$. In terms of matrices we have
$$\begin{pmatrix}
A & B \\
B & C
\end{pmatrix} = J^t \begin{pmatrix} a & b \\ b & c \end{pmatrix} J
$$
where $J=\begin{pmatrix} \xi_x & \xi _y \\ \eta_x & \eta_y\end{pmatrix}$ is the jacobian matrix of the change of coordinates.

\noindent{\bf Parabolic case}. In this case we have $ac=b^2$ and therefore we can write
$A=(p\xi_x + q \xi_y)^2$ and $C=(p\eta_x + q \eta_y)^2$ for some functions $p(x,y), q(x,y)$. Let now $\xi$ be a solution of the equation $p\xi_x + q \xi_y=0$. Then we have $A=0$ and, since $AC = B^2$ we obtain $B=0$.
This gives the following {\it parabolic canonical form}, after re-renaming the variables $(\xi, \eta)$ as $(x,y)$:

\[
C(x,y)\frac{\partial^2 u}{\partial y ^2}+e_1(x,y)\frac{\partial z}{\partial x}+e_2(x,y)\frac{\partial z}{\partial y}+e_3(x,y)z=g(x,y).
\]

 \noindent{\bf Hyperbolic case}. In this case $ac<b^2$. We can split in factors $A=(p_1 \xi_x + q_1 \xi_y)
 (p_2 \xi_x + q_2 \xi_y)$ and $C=(p_1 \eta_x + q_1 \eta_y)(p_2 \eta_x + q_2 \eta_y)$. Then we choose $\xi$ and $\eta$ satisfying $p_1 \xi_x + q_1 \xi_y= p_2 \eta_x + q_2 \eta_y=0$. Then we obtain $A=C=0$.
 In this case, after re-renaming the coordinates back to $(x,y)$ we obtain the following
 {\it hyperbolic canonical form}:

 \[
2B(x,y)\frac{\partial^2 u}{\partial x\partial y}+e_1(x,y)\frac{\partial z}{\partial x}+e_2(x,y)\frac{\partial z}{\partial y}+e_3(x,y)z=g(x,y)
\]

\noindent{\bf Elliptic case}. In this case $ac> b^2$. We cannot not make $A$ or $C$ equal to zero, but we can
obtain $A=C$ and $B=0$ so that the {\it elliptic canonical form} is:

\[
A(x,y)\frac{\partial^2 u}{\partial x^2}+A(x,y)\frac{\partial^2 u}{\partial y ^2}+e_1(x,y)\frac{\partial z}{\partial x}+e_2(x,y)\frac{\partial z}{\partial y}+e_3(x,y)z=g(x,y)
\]

We point-out that, usually the canonical forms are a bit simpler. Indeed, in the parabolic case
it is observed that $C$ does not vanish and therefore the equation may be divided by $C$ and we ay assume that $C=1$. Similarly, in hyperbolic case it is assumed that $B=1$ and in the elliptic case that we have $A=C=1$.
Nevertheless, we shall not work with these simpler canonical forms. Indeed, we will not be considering exactly the same situation. We shall use the above "pre-canonical forms" adapted to our framework.

\subsubsection{Preparation lemma}

\begin{Lemma}
Consider the following PDE
\begin{equation}\label{eqlem1}
A(x)x^2\frac{\partial^2 z}{\partial x^2}+B(x,y)xy\frac{\partial^2 z}{\partial x\partial y}+C(y)y^2\frac{\partial^2 z}{\partial y^2}+D(x,y)x\frac{\partial z}{\partial x}+E(x,y)y\frac{\partial z}{\partial y}+F(x,y)z=0,
\end{equation}
where $A,C$ holomorphic in $\Delta[0,R]$ and $B,D,E,F$ holomorphic in $\Delta[(0,0),(R,R)]$, $R>0$. Assume that $B(x,y)=2\sqrt{A(x)}\sqrt{C(y)}$ then there suitable change of coordinates so that the equation (\ref{eqlem1}) has the form
\begin{equation}\label{eqlem2}
A(0)\xi^2\frac{\partial^2 \tilde{z}}{\partial \xi^2}+B(0,0)\xi\eta\frac{\partial^2 \tilde
{z}}{\partial \xi\partial \eta}+C(0)\eta^2\frac{\partial^2\tilde{z}}{\partial \eta^2}+\tilde{D}(\xi,\eta)\xi\frac{\partial \tilde{z}}{\partial \xi}+\tilde{E}(\xi,\eta)\eta\frac{\partial \tilde{z}}{\partial \eta}+\tilde{F}(\xi,\eta)\tilde{z}=0,
\end{equation}
where $\tilde{D},\tilde{E},\tilde{F}$ holomorphic in $\Delta[(0,0),(\tilde{R},\tilde{R})],$ $\tilde{R}>0$.
\end{Lemma}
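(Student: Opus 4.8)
The plan is to write down the change of coordinates explicitly and in \emph{separated} form $\xi=\phi(x)$, $\eta=\psi(y)$, exploiting the fact that the parabolicity hypothesis $B(x,y)=2\sqrt{A(x)}\sqrt{C(y)}$ makes the principal symbol a perfect square; then a single pair of one-variable rescalings normalizes all three second order coefficients simultaneously.

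First I would fix holomorphic branches of $\sqrt{A(x)}$ near $x=0$ and of $\sqrt{C(y)}$ near $y=0$ --- here one uses that $A(0)\neq0$ and $C(0)\neq0$, which is implicit in the hypotheses and is in any case needed for these roots to be single-valued and holomorphic at the origin --- normalized so that $B(0,0)=2\sqrt{A(0)}\sqrt{C(0)}$. Next I would define $\phi$ as the solution of the singular first order ODE
\[
\sqrt{A(x)}\,x\,\phi'(x)=\sqrt{A(0)}\,\phi(x),\qquad \phi(0)=0,\ \phi'(0)=1,
\]
which one solves by hand: set $\phi(x)=x\,e^{g(x)}$ with
\[
g(x)=\int_{0}^{x}\frac{1}{t}\left(\frac{\sqrt{A(0)}}{\sqrt{A(t)}}-1\right)dt.
\]
The one point that genuinely needs an argument is that the integrand extends holomorphically across $t=0$: the factor $\frac{\sqrt{A(0)}}{\sqrt{A(t)}}-1$ vanishes at $t=0$ since $\sqrt{A(t)}$ is holomorphic and nonzero there, so dividing by $t$ leaves a holomorphic function, whence $g$ is holomorphic near $0$ with $g(0)=0$ and $\phi(x)=x\,e^{g(x)}$ is holomorphic with $\phi'(0)=1$. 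By construction $x\phi'(x)/\phi(x)=\sqrt{A(0)}/\sqrt{A(x)}$, so that $A(x)\,x^2\,\phi'(x)^2\equiv A(0)\,\phi(x)^2$. I would define $\psi(y)=y\,e^{h(y)}$ in the same way, giving $C(y)\,y^2\,\psi'(y)^2\equiv C(0)\,\psi(y)^2$. Since $\phi'(0)=\psi'(0)=1$, the map $(x,y)\mapsto(\xi,\eta)=(\phi(x),\psi(y))$ is a biholomorphism of a neighborhood of the origin onto a neighborhood of the origin, fixing the origin and preserving the two coordinate axes; let $\tilde{R}>0$ be small enough that this map, its inverse, and all the functions appearing below are holomorphic on $\Delta[(0,0),(\tilde{R},\tilde{R})]$.

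Then I would transform equation (\ref{eqlem1}) by this change of variables. Since $\phi$ depends only on $x$ and $\psi$ only on $y$, the chain rule gives, as operators, $\partial_x=\phi'\partial_\xi$, $\partial_y=\psi'\partial_\eta$, $\partial_{xx}=(\phi')^2\partial_{\xi\xi}+\phi''\partial_\xi$, $\partial_{yy}=(\psi')^2\partial_{\eta\eta}+\psi''\partial_\eta$, $\partial_{xy}=\phi'\psi'\partial_{\xi\eta}$. Hence the coefficient of $\partial_{\xi\xi}$ becomes $A(x)x^2(\phi'(x))^2=A(0)\phi(x)^2=A(0)\xi^2$, and symmetrically the coefficient of $\partial_{\eta\eta}$ becomes $C(0)\eta^2$, while for the mixed term
\[
B(x,y)\,xy\,\phi'(x)\psi'(y)=2\bigl(\sqrt{A(x)}\,x\,\phi'(x)\bigr)\bigl(\sqrt{C(y)}\,y\,\psi'(y)\bigr)=2\sqrt{A(0)}\,\phi(x)\cdot\sqrt{C(0)}\,\psi(y)=B(0,0)\,\xi\eta.
\]
This last identity is exactly the mechanism that lets a separated rescaling do the job. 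The leftover terms fall into the first order part: the new coefficient of $\partial_\xi$ is $A(x)x^2\phi''(x)+D(x,y)x\phi'(x)=x\bigl[A(x)x\phi''(x)+D(x,y)\phi'(x)\bigr]$, and since $\xi=\phi(x)=x\,e^{g(x)}$ differs from $x$ by the unit $e^{g(x)}$, this equals $\xi\,\tilde{D}(\xi,\eta)$, where $\tilde{D}:=e^{-g(x)}\bigl[A(x)x\phi''(x)+D(x,y)\phi'(x)\bigr]$ expressed in the coordinates $(\xi,\eta)$ by composing with the inverse of the coordinate change, hence holomorphic on $\Delta[(0,0),(\tilde{R},\tilde{R})]$. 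In the same way the new coefficient of $\partial_\eta$ is $\eta\,\tilde{E}(\xi,\eta)$ with $\tilde{E}$ holomorphic, and the zeroth order coefficient is simply $\tilde{F}(\xi,\eta):=F(x,y)$ read in $(\xi,\eta)$. This is precisely (\ref{eqlem2}).

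The main obstacle, as flagged above, is producing the regularizing factors $\phi,\psi$: one must check that the naive separable rescaling that normalizes the principal symbol is realized by an honestly holomorphic change of coordinates centered at the singular point, which reduces to the holomorphy of $\tfrac{1}{t}\bigl(\sqrt{A(0)}/\sqrt{A(t)}-1\bigr)$ at $t=0$. The rest is bookkeeping with the chain rule, together with the observation --- forced by the hypothesis $B=2\sqrt{A}\sqrt{C}$ --- that the same scaling that turns $A(x)x^2$ into $A(0)\xi^2$ and $C(y)y^2$ into $C(0)\eta^2$ automatically turns $B(x,y)xy$ into $B(0,0)\xi\eta$.
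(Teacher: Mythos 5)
Your proof is correct and follows essentially the same route as the paper: a separated change of coordinates $\xi=\phi(x)$, $\eta=\psi(y)$ chosen so that $A(x)\,x^2\,\phi'(x)^2=A(0)\,\phi(x)^2$ and $C(y)\,y^2\,\psi'(y)^2=C(0)\,\psi(y)^2$, after which the hypothesis $B=2\sqrt{A}\sqrt{C}$ automatically turns the mixed coefficient into $B(0,0)\,\xi\eta$. Your version is in fact tighter than the paper's: the paper takes $f(x)=\frac{k_1}{x}\exp\bigl(\pm\int^x\frac{\sqrt{A(0)}}{\sqrt{A(s)}}\,ds\bigr)$, whose exponent is missing the factor $\frac{1}{s}$ (as written, $xf(x)$ would not vanish at the origin), whereas your $\phi(x)=x\,e^{g(x)}$ with $g(x)=\int_0^x\frac{1}{t}\bigl(\frac{\sqrt{A(0)}}{\sqrt{A(t)}}-1\bigr)\,dt$ is the correct primitive and additionally supplies what the paper leaves implicit: holomorphy of the change at the origin, the fact that it is a biholomorphism preserving the axes, and the verification that the lower-order coefficients take the form $\xi\tilde D$, $\eta\tilde E$, $\tilde F$ with $\tilde D,\tilde E,\tilde F$ holomorphic.
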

\begin{proof} Let us consider a change of coordinates of the form  $\xi=x f(x)$ and $\eta=yg(y)$.  Let $\tilde{u}(\xi,\eta)=u(x,y)$, where $u$ is solution of (\ref{eqlem1}) thus we have

$$\frac{\partial u}{\partial x}=(f(x)+xf^\prime(x))\frac{\partial \tilde{u}}{\partial \xi}$$
$$\frac{\partial u}{\partial y}=(g(y)+yg^\prime(y))\frac{\partial \tilde{u}}{\partial \eta}$$
$$\frac{\partial^2 u}{\partial x^2}=(2f^\prime(x)+xf^{\prime\prime}(x))\frac{\partial \tilde{u}}{\partial \xi}+(f(x)+xf^\prime(x))^2\frac{\partial^2 \tilde{u}}{\partial \xi^2}$$
$$\frac{\partial^2 u}{\partial x \partial y}=(f(x)+xf^\prime(x))(g(y)+yg^\prime(y))\frac{\partial \tilde{u}}{\partial \xi \partial\eta}$$
$$\frac{\partial^2 u}{\partial y^2}=(2g^\prime(y)+yg^{\prime\prime}(y))\frac{\partial \tilde{u}}{\partial \eta}+(g(y)+yg^\prime(y))^2\frac{\partial^2 \tilde{u}}{\partial \eta^2}.$$
Since $u$ is solution of (\ref{eqlem1}) we have
$$A(x)x^2\frac{\partial^2 u}{\partial x^2}+B(x,y)xy\frac{\partial^2 u}{\partial x\partial y}+C(y)y^2\frac{\partial^2 u}{\partial y^2}+D(x,y)x\frac{\partial u}{\partial x}+E(x,y)y\frac{\partial u}{\partial y}+F(x,y)u=0
$$
therefore
$$\begin{array}{c}A(x)x^2(f(x)+xf^\prime(x))^2\frac{\partial^2 \tilde{u}}{\partial \xi^2}+B(x,y)xy(f(x)+xf^\prime(x))(g(y)+yg^\prime(y))\frac{\partial \tilde{u}}{\partial \xi \partial \eta}\\
\\
+C(y)y^2(g(y)+yg^\prime(y))^2\frac{\partial^2 \tilde{u}}{\partial \eta^2}+\big(A(x)x^2(2f^\prime(x)+xf^{\prime\prime}(x))+D(x,y)x(f(x)+xf^\prime(x))\big)\frac{\partial \tilde{u}}{\partial \xi}\\
\\
+\big(C(y)y^2(2g^\prime(y)+yg^{\prime\prime}(y))+E(x,y)y(g(y)+yg^\prime(y))\big)\frac{\partial \tilde{u}}{\partial \eta}+F(x,y)\tilde{u}=0.
\end{array}$$
We want to find holomorphic functions  $f$ and $g$ in a neighborhood of he origin, so that
$$A(x)x^2(f(x)+xf^\prime(x))^2=A(0)\xi^2=A(0)(xf(x))^2$$ and

$$C(y)y^2(g(y)+yg^\prime(y))^2=C(0)\eta^2=C(0)(yg(y))^2.$$Hence

$$\frac{f(x)+xf^\prime(x)}{f(x)}=\pm\frac{\sqrt{A(0)}}{\sqrt{A(x)}}$$ and
$$\frac{g(y)+yg^\prime(y)}{g(y)}=\pm\frac{\sqrt{C(0)}}{\sqrt{C(y)}}.$$ Therefore

$$f(x)=\frac{k_1}{x}\exp\big(\pm\dint^x\frac{\sqrt{A(0)}}{\sqrt{A(s)}}ds\big)$$
 and
 $$g(y)=\frac{k_2}{y}\exp\big(\pm\dint^y\frac{\sqrt{C(0)}}{\sqrt{C(s)}}ds\big)$$
where $k_1,k_2$ are constant.
Now observe that
$$B(x,y)xy(f(x)+xf^\prime(x))(g(y)+yg^\prime(y))=\pm B(x,y)\xi\eta\frac{\sqrt{A(0)}}{\sqrt{A(x)}}\frac{\sqrt{C(0)}}{\sqrt{C(y)}}=\pm B(0,0)\xi\eta.$$

Thus, it is enough to choose
$$f(x)=\frac{1}{x}\exp\big(\dint^x\frac{\sqrt{A(0)}}{\sqrt{A(s)}}ds\big)$$
 and
 $$g(y)=\frac{1}{y}\exp\big(\dint^y\frac{\sqrt{C(0)}}{\sqrt{C(s)}}ds\big)$$
obtaining the desired change. Under these conditions we have that  $\tilde{u}$ is solution of equation (\ref{eqlem2}).
\end{proof}

\section{Parabolic equations}

%\subsection{Existence and convergence of formal solutions}

Now we shall prove the existence and study the convergence of ``formal" solutions.
More precisely, we shall state and prove the effectiveness of a method of Frobenius for
such PDEs.
The first idea would be to put the PDE into a  simpler form. Nevertheless,
this method requires the use of affine transformations in the changes of coordinates.
This procedure has two main cons: One is the difficulty in order to find the affine transformations on each case. The second is the fact that  affine transformations very much change the expression of the solution.
This would therefore make the procedure less effective in computational terms, since this would be
more on the theoretical point of view. Somehow this would slide a bit from our original purpose.
Let us therefore analyze case by case, without performing any change of coordinates on the given
PDE.

We shall now deal with the parabolic  case. For this sake, we shall consider the second order PDE
\begin{equation}\label{eq16parab}
({\mathcal E})\, \, \,   L[z]:=Ax^2\frac{\partial^2 z}{\partial x^2}+Bxy\frac{\partial^2 z}{\partial x\partial y}+Cy^2\frac{\partial^2 z}{\partial y^2}+xa(x,y)\frac{\partial z}{\partial x}+yb(x,y)\frac{\partial z}{\partial y}+c(x,y)z=0
\end{equation}
where $A,B, C\in\mathbb{R}^*, \, \mathbb C^2$, $a(x,y),b(x,y)$ and $c(x,y)$ real analytic or holomorphic in $\Delta[(0,0),(R,R)]\subset\mathbb R^2 , \, \mathbb C^2$, $R>0$. The parabolic type is given by the condition
$B^2= 4AC$.

\begin{Definition}
{\rm We must say that the  PDE is {\it parabolic of real type} if $B=2\sqrt{A}\sqrt{C}$ and $\mbox{Re}(\sqrt{A}\overline{\sqrt{C}})>0$.
}
\end{Definition}
We observe that, though there is an ambiguity in the choices of $\sqrt{A}$ and $\sqrt{C}$, this does not
affect the condition $\mbox{Re}(\sqrt{A}\overline{\sqrt{C}})>0$. Indeed, the fact that $B=2\sqrt{A}\sqrt{C}$
shows that the condition  $\mbox{Re}(\sqrt{A}\overline{\sqrt{C}})>0$ does not depend on the choices of $\sqrt{A}$ and $\sqrt{C}$.

 In the real analytic case this condition is automatically fulfilled.

We state the full  complex and real analytic case:

%\section{Parabolic case}
\begin{Theorem}[parabolic complex]
\label{Theorem:paraboliccomplex}
A parabolic of real type complex analytic PDE, with a regular singularity, admits a convergent Frobenius type solution.
Indeed, consider the complex analytic second order PDE
\begin{equation}\label{eq16Thmparab}
L[z]:=Ax^2\frac{\partial^2 z}{\partial x^2}+Bxy\frac{\partial^2 z}{\partial x\partial y}+Cy^2\frac{\partial^2 z}{\partial y^2}+xa(x,y)\frac{\partial z}{\partial x}+yb(x,y)\frac{\partial z}{\partial y}+c(x,y)z=0
\end{equation}
 with $A,B,C\in\mathbb{C}^*$ such that $B=2\sqrt{A}\sqrt{C}$ and $\mbox{Re}(\sqrt{A}\overline{\sqrt{C}})>0$, $a(x,y),b(x,y)$ and $c(x,y)$ holomorphic in $\Delta[(0,0),(R,R)]$, $R>0$. Let $(r_0,s_0)\in \mathbb{C}^2$ be a non-resonant point of the indicial conic. Then there exists a Frobenius type convergent solution of (\ref{eq16Thmparab}) with initial monomial
$x^{r_0}y^{s_0}$ and  converging  in the bidisc $\Delta[(0,0),(R,R)]$. Moreover, the solution is real provided that the point $(r_0,s_0)$ and the coefficients of the PDE are real.
\end{Theorem}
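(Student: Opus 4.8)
The plan is to seek a solution $\psi(x,y)=x^{r_0}y^{s_0}\sum_{|Q|\ge 0}d_Q x^{q_1}y^{q_2}$ with $d_{(0,0)}=1$, plug it into $L[z]=0$, derive the recurrence for the coefficients $d_Q$, show the recurrence is solvable (this is where non-resonance enters), and then prove convergence by a majorant-series argument. First I would expand $L[\psi]$ using the product rule. The Euler part $Ax^2\partial_{xx}+Bxy\partial_{xy}+Cy^2\partial_{yy}+a(0,0)x\partial_x+b(0,0)y\partial_y+c(0,0)$ acting on $x^{r_0+q_1}y^{s_0+q_2}$ produces the factor $P(r_0+q_1,s_0+q_2)$, where $P$ is the indicial polynomial. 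The remaining terms — the higher-order parts of $a,b,c$ and the difference between $A,B,C$ and the lower-order coefficients $a(0,0),b(0,0),c(0,0)$ — only involve coefficients $d_{Q'}$ with $|Q'|<|Q|$ (strictly lower total degree), because those terms carry at least one extra power of $x$ or $y$. Collecting the coefficient of $x^{r_0+q_1}y^{s_0+q_2}$ gives a recurrence of the shape
\[
P(r_0+q_1,\,s_0+q_2)\,d_Q \;=\; -\sum_{|Q'|<|Q|} c_{Q,Q'}\,d_{Q'},
\]
where the $c_{Q,Q'}$ are explicit polynomials in the Taylor coefficients of $a,b,c$ and in $r_0,q_1,s_0,q_2$. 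Since $(r_0,s_0)\in\mathcal C$ is non-resonant, $P(r_0+q_1,s_0+q_2)\ne 0$ for every $Q=(q_1,q_2)\in\mathbb Z_+^2\setminus\{(0,0)\}$, so each $d_Q$ is uniquely determined by induction on $|Q|$; when $r_0,s_0$ and the PDE coefficients are real, all $d_Q$ are real. This establishes the existence of a unique formal Frobenius solution.

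The convergence step is the heart of the matter, and this is where the parabolic-of-real-type hypothesis $B=2\sqrt A\sqrt C$, $\operatorname{Re}(\sqrt A\,\overline{\sqrt C})>0$ is used. The key quantitative fact is a lower bound $|P(r_0+q_1,s_0+q_2)|\ge c\,(1+|Q|)$ for some $c>0$ and all large $|Q|$: writing $P$ restricted to the reticulate $r_0\vee s_0$, its quadratic part is $A q_1^2+Bq_1q_2+Cq_2^2=(\sqrt A\,q_1+\sqrt C\,q_2)^2$, and the condition $\operatorname{Re}(\sqrt A\,\overline{\sqrt C})>0$ forces $|\sqrt A\,q_1+\sqrt C\,q_2|$ to grow like $|Q|$ on $\mathbb Z_+^2$ (the two rays $q_1,q_2\ge 0$ stay in a sector bounded away from the zero locus of the linear form, except possibly for a bounded set near the kernel, which the linear part of $P$ then handles). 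Hence $|P(r_0+q_1,s_0+q_2)|$ is bounded below by a positive multiple of $|Q|$ outside a finite set, and by non-resonance it is nonzero on that finite set as well. Using this, together with the analyticity of $a,b,c$ in $\Delta[(0,0),(R,R)]$ (which bounds the Taylor coefficients and hence the $c_{Q,Q'}$ by a geometric majorant), one constructs a majorant series: one compares $\sum d_Q x^{q_1}y^{q_2}$ with the solution of a scalar functional equation whose coefficients dominate termwise, and the $(1+|Q|)$ growth of $|P|$ in the denominator exactly compensates the loss from differentiation, yielding a radius of convergence at least $R$.

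The main obstacle I expect is the convergence estimate, specifically proving the linear lower bound on $|P|$ along the positive reticulate uniformly, and setting up the majorant so that the mixed partial derivative term $Bxy\,\partial_{xy}$ (and the first-order terms $xa\,\partial_x$, $yb\,\partial_y$) are correctly dominated — the bookkeeping of which $d_{Q'}$ appear with which multiplicities is delicate because the operator is not diagonal in an obvious single grading. I would organize this by introducing the total-degree filtration $|Q|=q_1+q_2$, proving that all "perturbation" terms strictly drop this degree, and then running a one-variable majorant in the single variable $t$ with $x=y=t$, reducing the whole estimate to a classical Frobenius-type convergence lemma for an ODE-like recurrence with denominators growing linearly in $|Q|$. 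The real case then follows by taking $\mathbb K=\mathbb R$ throughout, and Theorem~\ref{Theorem:realparabolic} is the specialization of this statement.
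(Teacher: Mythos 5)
Your overall strategy coincides with the paper's proof: expand in a Frobenius series, collect the coefficient of $x^{r_0+q_1}y^{s_0+q_2}$ to get the recurrence $P(r_0+q_1,s_0+q_2)\,d_Q=-e_Q$ with $e_Q$ a linear combination of the $d_{Q'}$ with $|Q'|<|Q|$, invoke non-resonance to solve it inductively, and then run a one-variable majorant in the total degree $n=|Q|$ (the paper literally studies $\tilde\psi(x)=\sum_{|Q|\ge 0}\Vert D_Q\Vert x^{|Q|}$ and applies the ratio test). The formal part of your argument, including the reality statement, is fine.

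The gap is in the convergence estimate. You claim only a linear lower bound $|P(r_0+q_1,s_0+q_2)|\ge c(1+|Q|)$ and assert that this ``exactly compensates the loss from differentiation, yielding a radius of convergence at least $R$''. It does not. The coefficients of $e_Q$ carry factors $(i+r_0)$, $(j+s_0)$, $(q_1+r_0)$ growing linearly in the indices, so after summing over $|Q|=n$ one gets an inequality of the shape $\Lambda(n)\,d_n\le 2M\sum_{k<n}(k+\mathrm{const})\,\rho^{k-n}d_k$, where $\Lambda(n)$ is your lower bound for $|P|$ on $|Q|=n$ and $d_n=\sum_{|Q|=n}\Vert D_Q\Vert$. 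With $\Lambda(n)\sim cn$ the comparison sequence satisfies $g_{n+1}/g_n\to(c+2M)/(c\rho)$, i.e.\ you only obtain a radius strictly smaller than $\rho$, so convergence on all of $\Delta[(0,0),(R,R)]$ does not follow. What the hypothesis actually delivers, and what the paper uses, is a quadratic bound: since $Aq_1^2+Bq_1q_2+Cq_2^2=(\sqrt A\,q_1+\sqrt C\,q_2)^2$ and $\Vert\sqrt A\,q_1+\sqrt C\,q_2\Vert^2=\Vert A\Vert q_1^2+\Vert C\Vert q_2^2+2q_1q_2\,\mathrm{Re}(\sqrt A\,\overline{\sqrt C})\ge\alpha(q_1+q_2)^2$ with $\alpha=\min\{\Vert A\Vert,\Vert C\Vert,\mathrm{Re}(\sqrt A\,\overline{\sqrt C})\}>0$ (valid on all of $\mathbb Z_+^2$, no exceptional set near a kernel is needed), one gets $\Vert P(r_0+q_1,s_0+q_2)\Vert\ge\alpha|Q|\bigl(|Q|-\tfrac{2\theta\beta}{\alpha}\bigr)$ for constants $\theta,\beta$ controlling the linear part of $P$. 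With this quadratic $\Lambda(n)$ the ratio above tends to $|x|/\rho$, the majorant converges for $|x|<\rho$, and letting $\rho$ tend to $R$ gives convergence on the full bidisc. Your own observation that $|\sqrt A\,q_1+\sqrt C\,q_2|$ grows like $|Q|$ already contains the fix; you must keep the square rather than downgrade to a linear bound.
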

\begin{proof}
Let $\varphi$ be a solution  of (\ref{eq16Thmparab}) of the form
\begin{equation}\label{eq17}
\varphi(x,y)=x^ry^s\sum^{\infty}_{|Q|=0}d_QX^Q
\end{equation}
where $d_{0,0}\neq0$. Given that $a(x,y),b(x,y)$ and $c(x,y)$ are holomorphic in $\Delta[(0,0),(R,R)]$ we have that
\begin{equation}\label{eq18}
a(x,y)=\sum^\infty_{|Q|=0}a_QX^Q,\;\;\;b(x,y)=\sum^\infty_{|Q|=0}b_QX^Q\;\;\;\mbox{ and }\;\;\;c(x,y)=\sum^\infty_{|Q|=0}c_QX^Q
\end{equation}
for every $(x,y)\in \Delta[(0,0),(R,R)]$.

Then
$$\frac{\partial\varphi}{\partial x}=\sum^\infty_{|Q|=0}(q_1+r)d_Qx^{q_1+r-1}y^{q_2+s} $$
$$\frac{\partial\varphi}{\partial y}=\sum^\infty_{|Q|=0}(q_2+s)d_Qx^{q_1+r}y^{q_2+s-1} $$
$$\frac{\partial^2\varphi}{\partial x^2}=\sum^\infty_{|Q|=0}(q_1+r)(q_1+r-1)d_Qx^{q_1+r-2}y^{q_2+s} $$
$$\frac{\partial^2\varphi}{\partial x \partial y}=\sum^\infty_{|Q|=0}(q_1+r)(q_2+s)d_Qx^{q_1+r-1}y^{q_2+s-1} $$
$$\frac{\partial^2\varphi}{\partial y^2}=\sum^\infty_{|Q|=0}(q_2+s)(q_2+s-1)d_Qx^{q_1+r}y^{q_2+s-2} $$
and from this we have
$$Ax^2\frac{\partial^2\varphi}{\partial x^2}=x^ry^s\sum^\infty_{|Q|=0}A(q_1+r)(q_1+r-1)d_QX^Q$$
$$Bxy\frac{\partial^2\varphi}{\partial x \partial y}=x^ry^s\sum^\infty_{|Q|=0}B(q_1+r)(q_2+s)d_QX^Q$$
$$Cy^2\frac{\partial^2\varphi}{\partial y^2}=x^ry^s\sum^\infty_{|Q|=0}C(q_2+s)(q_2+s-1)d_QX^Q$$

$$\begin{array}{l c l}xa(x,y)\frac{\partial \varphi}{\partial x}&=&x^ry^s\big(\sum^\infty_{|Q|=0}a_QX^Q\big)\big(\sum^\infty_{|Q|=0}(q_1+r)d_QX^Q\big)\\&&\\&=& x^ry^s\big(\sum^\infty_{|Q|=0}\tilde{a}_QX^Q\big)\end{array}$$ where $\tilde{a}_Q=\sum^{q_1}_{i=0}\sum^{q_2}_{j=0}(i+r)a_{q_1-i,q_2-j}d_{ij}$.

$$\begin{array}{l c l}yb(x,y)\frac{\partial\varphi}{\partial y}&=&x^ry^s\big(\sum^\infty_{|Q|=0}b_QX^Q\big)\big(\sum^\infty_{|Q|=0}(q_2+s)d_QX^Q\big)\\&&\\&=& x^ry^s\big(\sum^\infty_{|Q|=0}\tilde{b}_QX^Q\big)\end{array}$$ where $\tilde{b}_Q=\sum^{q_1}_{i=0}\sum^{q_2}_{j=0}(j+s)b_{q_1-i,q_2-j}d_{ij}$.

$$\begin{array}{l c l}c(x,y)\varphi&=&x^ry^s\big(\sum^\infty_{|Q|=0}c_QX^Q\big)\big(\sum^\infty_{|Q|=0}d_QX^Q\big)\\&&\\&=& x^ry^s\big(\sum^\infty_{|Q|=0}\tilde{c}_QX^Q\big)\end{array}$$ where $\tilde{c}_Q=\sum^{q_1}_{i=0}\sum^{q_2}_{j=0}c_{q_1-i,q_2-j}d_{ij}$.

Given that $\varphi$ is solution of (\ref{eq16Thmparab}) we have
$$Ax^2\frac{\partial^2 \varphi}{\partial x^2}+Bxy\frac{\partial^2 \varphi}{\partial x\partial y}+Cy^2\frac{\partial^2 \varphi}{\partial y^2}+xa(x,y)\frac{\partial \varphi}{\partial x}+yb(x,y)\frac{\partial \varphi}{\partial y}+c(x,y)\varphi=0$$
Therefore
$$x^ry^s\sum^\infty_{|Q|=0}\big(\big[A(q_1+r)(q_1+r-1)+B(q_1+r)(q_2+s)+C(q_2+s)(q_2+s-1)\big]d_Q+\tilde{a}_Q+\tilde{b}_Q+\tilde{c}_Q\big)X^Q=0 $$
hence we have
$$\big[A(q_1+r)(q_1+r-1)+B(q_1+r)(q_2+s)+C(q_2+s)(q_2+s-1)\big]d_Q+\tilde{a}_Q+\tilde{b}_Q+\tilde{c}_Q=0,\;\;\;|Q|=0,1,2,\ldots$$
Using the definitions of $\tilde{a}_Q$, $\tilde{b}_Q$ and $\tilde{c}_Q$ we write the equation anterior as
$$\begin{array}{c}\big[A(q_1+r)(q_1+r-1)+B(q_1+r)(q_2+s)+C(q_2+s)(q_2+s-1)\big]d_Q+\\
\\ \sum^{q_1}_{i=0}\sum^{q_2}_{j=0}\big[(i+r)a_{q_1-i,q_2-j}+(j+s)b_{q_1-i,q_2-j}+c_{q_1-i,q_2-j}\big]d_{i,j}=0\end{array}$$
equivalently
$$\begin{array}{c}\big[A(q_1+r)(q_1+r-1)+B(q_1+r)(q_2+s)+C(q_2+s)(q_2+s-1)+(q_1+r)a_{0,0}+(q_2+s)b_{0,0}+c_{0,0}\big]d_Q\\
\\ +\sum^{q_1-1}_{i=0}\sum^{q_2}_{j=0}\big[(i+r)a_{q_1-i,q_2-j}+(j+s)b_{q_1-i,q_2-j}+c_{q_1-i,q_2-j}\big]d_{i,j}\\
\\+\sum^{q_2-1}_{j=0}\big[(q_1+r)a_{0,q_2-j}+(j+s)b_{0,q_2-j}+c_{0,q_2-j}\big]d_{q_1,j} =0\end{array}$$

Para $|Q|=0$ we have
\begin{equation}\label{eq19} Ar(r-1)+Brs+Cs(s-1)+ra_{0,0}+sb_{0,0}+c_{0,0}=0
\end{equation}
provided that $d_{0,0}\neq0$. The second degree two variables polynomial $P$ given by $$P(r,s)=Ar^2+Brs+Cs^2+r(a_{0,0}-A)+s(b_{0,0}-C)+c_{0,0}$$
is called indicial conic associate to  equation (\ref{eq16Thmparab}).
We see that
\begin{equation}\label{eq20}
P(q_1+r,q_2+s)d_Q+e_Q=0,\;\;\;\;\;|Q|=1,2,\ldots
\end{equation}
where
\begin{equation}\label{eq21}
\begin{array}{c c l}e_Q&=&\sum^{q_1-1}_{i=0}\sum^{q_2}_{j=0}\big[(i+r)a_{q_1-i,q_2-j}+(j+s)b_{q_1-i,q_2-j}+c_{q_1-i,q_2-j}\big]d_{i,j}\\&&\\&&
+\sum^{q_2-1}_{j=0}\big[(q_1+r)a_{0,q_2-j}+(j+s)b_{0,q_2-j}+c_{0,q_2-j}\big]d_{q_1,j},\;\;\;\;\;|Q|=1,2,\ldots\end{array}
\end{equation}
Observe that $e_Q$ is a linear combination of $d_{0,0},d_{1,0},d_{0,1},\ldots,d_{n-1,0},d_{0,n-1}$, whose coefficients are given only in terms of the already known functions $a,b,c$, $r$ and $s$. Letting  $r$, $s$ and $d_{0,0}$ undetermined , as of now we solve  equations (\ref{eq20}) and (\ref{eq21}) in terms of $d_{0,0}$, $r$ and $s$. These solutions are defined by $D_Q(r,s)$, and a $e_Q$ corresponding to $E_Q(r,s)$.
Thus
$$E_{1,0}(r,s)=(ra_{1,0}+sb_{1,0}+c_{1,0})d_{0,0}\;\;\;\;\;\;D_{1,0}(r,s)=-\frac{E_{1,0}(r,s)}{P(1+r,s)},$$
$$E_{0,1}(r,s)=(ra_{0,1}+sb_{0,1}+c_{0,1})d_{0,0}\;\;\;\;\;\;D_{0,1}(r,s)=-\frac{E_{0,1}(r,s)}{P(r,1+s)},$$
 and in general:
 \begin{equation}\label{eq22}
\begin{array}{c c l}E_Q(r,s)&=&\sum^{q_1-1}_{i=0}\sum^{q_2}_{j=0}\big[(i+r)a_{q_1-i,q_2-j}+(j+s)b_{q_1-i,q_2-j}+c_{q_1-i,q_2-j}\big]D_{i,j}(r,s)\\&&\\&&
+\sum^{q_2-1}_{j=0}\big[(q_1+r)a_{0,q_2-j}+(j+s)b_{0,q_2-j}+c_{0,q_2-j}\big]D_{q_1,j}(r,s),\;\;|Q|=1,2,\ldots\end{array}
\end{equation}

\begin{equation}\label{eq23}
D_Q(r,s)=-\frac{E_Q(r,s)}{P(q_1+r,q_2+s)},\;\;\;\;\;|Q|=1,2,\ldots
\end{equation}

The coefficients $D_Q$ obtained in this way, are rational functions of $r$ and $s$, and the only points where they are not defined, are those points $r$ and $s$ for which $P(q_1+r,q_2+s)=0$ for some $|Q|=1,2,\ldots$.  Let us define $\varphi$ by:
\begin{equation}\label{eq24}
\varphi((x,y),(r,s))=d_{0,0}x^ry^s+x^ry^s\sum^{\infty}_{|Q|=1}D_Q(r,s)x^{q_1}y^{q_2}.
\end{equation}
 If the series (\ref{eq24}) converges in $\Delta[(0,0),(R,R)]$, then we have:
\begin{equation}\label{eq25}
L(\varphi)((x,y),(r,s))=d_{0,0}P(r,s)x^ry^s.
\end{equation}
We have the following situation: If a $\varphi$ given by (\ref{eq17}) is solution of (\ref{eq16Thmparab}), then $(r,s)$ must be a point of the indicial conic $P$, and then os $d_Q$ ($|Q|=1,2,\ldots$) are uniquely determined in terms of $d_{0,0}$, $r$ and $s$ by os $D_Q(r,s)$ from (\ref{eq23}), provided that $P(q_1+r,q_2+s)\neq0$, $|Q|=1,2,\ldots$.

Conversely, if $(r,s)$ is a root of $P$ and if os $D_Q(r,s)$ can be determined (i.e., $P(q_1+r,q_2+s)\neq0$ for $|Q|=1,2,\ldots$) then the function $\varphi$ given by $\varphi(x,y)=\varphi((x,y),(r,s))$ is solution of (\ref{eq16Thmparab}) for every choice of $d_{0,0}$, provided that the series (\ref{eq24}) is convergent.

By hypothesis $(r_0,s_0)$ is a point of the indicial conic $P$ such that $(r_0,s_0)\notin\mathcal{R}$, then $P(q_1+r_0,q_2+s_0)\neq0$ for every $|Q|=1,2,\ldots$. Thus, $D_Q(r_0,s_0)$ there exists for every $|Q|=1,2,\ldots$, and putting $d_{0,0}=D_{0,0}(r_0,s_0)=1$ we have that the function $\psi$ given by
\begin{equation}\label{eq26}
\psi(x,y)=x^{r_0}y^{s_0}\sum^\infty_{|Q|=0}D_Q(r_0,s_0)x^{q_1}y^{q_2},\;\;\;\;D_{0,0}(r_0,s_0)=1,
\end{equation}
 is a solution of (\ref{eq16Thmparab}), provided that the series is convergent.

We need to show that the series (\ref{eq26}) converges in the bidisc $\Delta[(0,0),(R,R)]$ where the coefficients $D_Q(r_0,s_0)$ are given recursively  by
\begin{equation}\label{eq27}
\begin{array}{c} D_{0,0}(r_0,s_0)=1,\\
\\
\begin{array}{c} P(q_1+r_0,q_2+s_0)D_Q(r_0,s_0)=-\sum^{q_1-1}_{i=0}\sum^{q_2}_{j=0}\big[(i+r_0)a_{q_1-i,q_2-j}+(j+s_0)b_{q_1-i,q_2-j}+c_{q_1-i,q_2-j}\big]D_{i,j}(r_0,s_0)\\ \\
-\sum^{q_2-1}_{j=0}\big[(q_1+r_0)a_{0,q_2-j}+(j+s_0)b_{0,q_2-j}+c_{0,q_2-j}\big]D_{q_1,j}(r_0,s_0),\;|Q|=1,2,\ldots\end{array}
\end{array}
\end{equation}
Observe that
$$P(q_1+r_0,q_2+s_0)=(\sqrt{A}q_1+\sqrt{C}q_2)^2+2q_1A\big[r_0+\frac{B}{2A}s_0+\frac{a(0,0)-A}{2A}\big]+2q_2C\big[s_0+\frac{B}{2C}r_0+\frac{b(0,0)-C}{2C}\big]$$
therefrom
$$\Vert P(q_1+r_0,q_2+s_0)\Vert\geq  \Vert \sqrt{A}q_1+\sqrt{C}q_2\Vert^2-2q_1\Vert A\Vert\left\Vert r_0+\frac{B}{2A}s_0+\frac{a(0,0)-A}{2A}\right\Vert-2q_2\Vert C\Vert\left\Vert s_0+\frac{B}{2C}r_0+\frac{b(0,0)-C}{2C}\right\Vert.
$$
Given that
$$\Vert \sqrt{A}q_1+\sqrt{C}q_2\Vert^2=\Vert A\Vert q_1^2+\Vert C\Vert q_2^2+2q_1q_2\mbox{Re}(\sqrt{A}\overline{\sqrt{C}})$$
be a $\alpha=\min\{\Vert A\Vert,\Vert C\Vert,\mbox{Re}(\sqrt{A}\overline{\sqrt{C}})\}>0$ we have
$$\Vert \sqrt{A}q_1+\sqrt{C}q_2\Vert^2\geq  \alpha (q_1^2+q_2^2+2q_1q_2)$$
$$\Vert \sqrt{A}q_1+\sqrt{C}q_2\Vert^2\geq  \alpha (q_1+q_2)^2.$$
Let  $\theta=\max\{\left\Vert r_0+\frac{B}{2A}s_0+\frac{a(0,0)-A}{2A}\right\Vert,\left\Vert s_0+\frac{B}{2C}r_0+\frac{b(0,0)-C}{2C}\right\Vert\}$ and $\beta=\max\{\Vert A \Vert,\Vert C \Vert\}$ hence we have
$$\Vert P(q_1+r_0,q_2+s_0)\Vert \geq \alpha(q_1+q_2)^2-2\theta\beta(q_1+q_2).$$
Consequently:
\begin{equation}\label{eq27'}
\Vert P(q_1+r_0,q_2+s_0)\Vert\geq \alpha (q_1+q_2)\big[(q_1+q_2)-\frac{2\theta\beta}{\alpha} \big].
\end{equation}

Let $\rho$ be any number that satisfies the inequality $0<\rho<R$. Given that the series defined in (\ref{eq18}) are convergent for $(x,y)=(\rho,\rho)$ there exists a constant $M>0$ such that

\begin{equation}\label{eq28}
\Vert a_Q\Vert\rho^{|Q|}\leq M,\;\;\;\;\;\Vert b_Q\Vert\rho^{|Q|}\leq M\;\;\;\;\;\Vert c_Q\Vert\rho^{|Q|}\leq M\;\;\;\;\;|Q|=0,1,2,\ldots
\end{equation}
Using (\ref{eq27'}) and (\ref{eq28}) in (\ref{eq27})we obtain
\begin{equation}\label{eq28a}
\begin{array}{c c l}\alpha|Q| \big[|Q|-\frac{2\theta\beta}{\alpha} \big]\Vert D_Q(r_0,s_0)\Vert&\leq& M\sum^{q_1-1}_{j=0}\sum^{q_2}_{j=0}[i+j+\Vert r_0\Vert+\Vert s_0\Vert+1]\rho^{i+j-|Q|}\Vert D_{i,j}(r_0,s_0)\Vert\\&&\\&&+M\sum^{q_2-1}_{j=0}[q_1+\Vert r_0\Vert+\Vert s_0\Vert+1]\rho^{j-q_2}\Vert D_{q_1,j}(r_0,s_0)\Vert.\end{array}
\end{equation}
Summing up all terms of norm $|Q|=n$ in (\ref{eq28a}) we have
\begin{equation}\label{eq28b}
\alpha n \big[n-\frac{2\theta\beta}{\alpha} \big]\sum_{|Q|=n}\Vert D_Q(r_0,s_0)\Vert \leq 2M\sum^{n-1}_{k=0}\big(k+\Vert r_0\Vert+\Vert s_0\Vert+1\big)\rho^{k-n}\big(\sum_{|Q|=k}\Vert D_Q(r_0,s_0)\Vert\big).
\end{equation}

Consider $\tilde{\psi}(x)=\sum^\infty_{|Q|=0}\Vert D_Q(r_0,s_0)\Vert x^{|Q|}$ the formal power series of nonnegative power in the variable $x$. We will show that $\tilde{\psi}$ converges in $D_R[0]$,
this will imply that $\psi$ converges in the bidisc $\Delta[(0,0),(R,R)]$. Let $d_n=\sum_{|Q|=n}\Vert D_Q(r_0,s_0)\Vert$ and then
$$\tilde{\psi}(x)=\sum^\infty_{n=0}\big(\sum_{|Q|=n}\Vert D_Q(r_0,s_0)\Vert \big)x^{n}=\sum^\infty_{n=0}d_nx^n.$$
Let $n_0$ be a natural number such that $n_0>\frac{2\theta\beta}{\alpha}$ and let us define $g_0,g_1,\ldots$ of the following way:
$$g_0=\Vert D_{0,0}(r_0,s_0)\Vert=1,\,\;\;g_n=\sum_{|Q|=n}\Vert D_Q(r_0,s_0)\Vert,\;\;(n=1,2,\ldots,n_0-1)$$e
\begin{equation}\label{eq28c}
\alpha n \big[n-\frac{2\theta\beta}{\alpha} \big]g_n=2M\sum^{n-1}_{k=0}\big(k+\Vert r_0\Vert+\Vert s_0\Vert+1\big)\rho^{k-n}g_k
\end{equation}
for $n=n_0,n_0+1,\ldots$.
Then, comparing the definition of $g_n$ with (\ref{eq28b}), we conclude that
\begin{equation}\label{eq28d}
d_n\leq g_n,\;\;\;\;\;\;n=0,1,2,\ldots
\end{equation}
Thus we will show that the series
\begin{equation}\label{eq28e}
\sum^\infty_{n=0}g_nx^n
\end{equation}
converges for $|x|<\rho$.

Replacing $n$ by $n+1$ in (\ref{eq28c}) we have:
$$\rho\alpha (n+1) \big[n+1-\frac{2\theta\beta}{\alpha} \big]g_{n+1}=\big(\alpha n \big[n-\frac{2\theta\beta}{\alpha} \big]+2M(n+\Vert r_0\Vert+\Vert s_0\Vert+1)\big)g_n$$
for $n=n_0,n_0+1,\ldots$. Thus
$$\left|\frac{g_{n+1}x^{n+1}}{g_{n}x^{n}}\right|=\frac{\alpha n \big[n-\frac{2\theta\beta}{\alpha} \big]+2M(n+\Vert r_0\Vert+\Vert s_0\Vert+1)}{\rho\alpha (n+1) \big[n+1-\frac{2\theta\beta}{\alpha} \big]}|x|$$ converges to  $|x|/\rho$ as $n\to\infty$. Thus according to the quotient test, the series (\ref{eq28e}) converges in $|x|<\rho$. Using (\ref{eq28d}) and by the comparison criteria, we conclude that the series
$$\sum^\infty_{n=0}d_nx^n,\;\;\;\;d_0=1,$$ converges in   $|x|<\rho$.  But given that $\rho$ is any number that satisfies the inequality $0<\rho<R$, we have already proved that this series converges for $|x|<R$.
\end{proof}

%%%%%%%%%%%%%%%%%%%%%%%%%%%%%%%%%%%%%%%%%%%%%%%%%%%%
\section{Non-parabolic equations}

We shall now deal with the non-parabolic cases, in particular the elliptic and hyperbolic cases. For this sake, we shall consider the second order PDE
$$({\mathcal E})\, \, \,  Ax^2\frac{\partial^2 z}{\partial x^2}+Cy^2\frac{\partial^2 z}{\partial y^2}+xa(x,y)\frac{\partial z}{\partial x}+yb(x,y)\frac{\partial z}{\partial y}+c(x,y)z=0$$
where $A,C\in\mathbb{K}^*$, $a(x,y),b(x,y)$ and $c(x,y)$  analytic  in $\Delta[(0,0),(R,R)]\subset\mathbb K^2,\,  R>0$.

\begin{Definition}
{\rm The above PDE will be  classified as {\it elliptic} or {\it hyperbolic} according to
$\mbox{Re}(A \bar{C})>0$ or $\mbox{Re}(A\bar{C})<0$. }
\end{Definition}
The definition above extends the definition in the case of real PDEs.

\subsection{Non-parabolic: elliptic case}

We shall now state what we understand as the version of Frobenius method for elliptic complex PDEs:
\begin{Theorem}[elliptic complex]
\label{Theorem:ellipticcomplex}{
An elliptic second order linear homogeneous real analytic PDE
having a regular singular point, admits  Frobenius type solutions. More precisely,
consider the second order linear PDE

\begin{equation}\label{eq55}
L[z]:=Ax^2\frac{\partial^2 z}{\partial x^2}+Cy^2\frac{\partial^2 z}{\partial y^2}+xa(x,y)\frac{\partial z}{\partial x}+yb(x,y)\frac{\partial z}{\partial y}+c(x,y)z=0
\end{equation}
where $A,C\in\mathbb{C}^*$ with $\mbox{Re}(A\overline{C})>0$, $a(x,y),b(x,y)$ and $c(x,y)$ analytic in $\Delta[(0,0),(R,R)]$, $R>0$. Let $(r_0,s_0)$ be a non-resonant point of the indicial conic $\mathcal C \subset \mathbb C^2$. Then
$(\mathcal E)$ admits a convergent Frobenius type solution with initial monomial $x^{r_0} y^{s_0}$.
Moreover, the solution is real provided that the point $(r_0,s_0)$ and the coefficients of the
PDE are real.}
\end{Theorem}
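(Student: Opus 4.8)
The plan is to mirror the proof of Theorem~\ref{Theorem:paraboliccomplex} step by step, the only genuinely new ingredient being the lower bound on the indicial polynomial along the reticulate, where the ellipticity condition $\mathrm{Re}(A\overline{C})>0$ plays the role that the perfect-square structure $(\sqrt{A}q_1+\sqrt{C}q_2)^2$ played in the parabolic case. First I would substitute the ansatz $\varphi(x,y)=x^{r}y^{s}\sum_{|Q|\geq 0}d_Q X^Q$, with $d_{0,0}\neq 0$, into $L[z]=0$. Since now $B=0$, the computation of $L[\varphi]$ is exactly the one carried out in the proof of Theorem~\ref{Theorem:paraboliccomplex} with the mixed-derivative term deleted; it yields $P(r,s)=0$ at order $|Q|=0$ and the recurrence
\[
P(q_1+r,q_2+s)\,d_Q=-e_Q,\qquad |Q|\geq 1,
\]
where $P(r,s)=Ar^2+Cs^2+r(a(0,0)-A)+s(b(0,0)-C)+c(0,0)$ and $e_Q$ is the same linear combination of the $d_{i,j}$ with $i+j<|Q|$, with coefficients built from the Taylor coefficients of $a,b,c$ and from $r,s$. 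Treating $r,s,d_{0,0}$ as free parameters one defines rational functions $D_Q(r,s)$ recursively by $D_{0,0}=1$ and $D_Q=-E_Q/P(q_1+r,q_2+s)$, well defined wherever $P(q_1+r,q_2+s)\neq 0$. Since $(r_0,s_0)\in\mathcal C$ is non-resonant, $P(q_1+r_0,q_2+s_0)\neq 0$ for every $Q\in\mathbb Z_+^2\setminus\{(0,0)\}$, so all $D_Q(r_0,s_0)$ exist and $\psi(x,y)=x^{r_0}y^{s_0}\sum_{|Q|\geq 0}D_Q(r_0,s_0)X^Q$ is a formal solution.

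The hard part is the convergence of $\psi$ in $\Delta[(0,0),(R,R)]$, and this is where ellipticity enters. Using $P(r_0,s_0)=0$ one writes
\[
P(q_1+r_0,q_2+s_0)=Aq_1^2+Cq_2^2+2Aq_1\Big[r_0+\tfrac{a(0,0)-A}{2A}\Big]+2Cq_2\Big[s_0+\tfrac{b(0,0)-C}{2C}\Big],
\]
and the point is the estimate
\[
\|Aq_1^2+Cq_2^2\|^2=|A|^2q_1^4+|C|^2q_2^4+2\,\mathrm{Re}(A\overline{C})\,q_1^2q_2^2\geq\alpha\,(q_1^2+q_2^2)^2
\]
with $\alpha=\min\{|A|^2,|C|^2,\mathrm{Re}(A\overline{C})\}>0$, hence $\|Aq_1^2+Cq_2^2\|\geq\sqrt{\alpha}\,(q_1^2+q_2^2)\geq\tfrac{\sqrt{\alpha}}{2}(q_1+q_2)^2$, while the linear part is bounded by $2\theta\beta(q_1+q_2)$ with $\theta=\max\{\|r_0+\tfrac{a(0,0)-A}{2A}\|,\|s_0+\tfrac{b(0,0)-C}{2C}\|\}$ and $\beta=\max\{|A|,|C|\}$. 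This gives
\[
\|P(q_1+r_0,q_2+s_0)\|\geq(q_1+q_2)\Big[\tfrac{\sqrt{\alpha}}{2}(q_1+q_2)-2\theta\beta\Big],
\]
the exact analogue of inequality~(\ref{eq27'}). From this point the argument transfers verbatim from the proof of Theorem~\ref{Theorem:paraboliccomplex}: bound the Taylor coefficients of $a,b,c$ on a polydisc of radius $\rho<R$ by a constant $M$, sum the recurrence over $|Q|=n$ to get an inequality for $d_n=\sum_{|Q|=n}\|D_Q(r_0,s_0)\|$, introduce the majorant sequence $g_n$ (with $g_n=d_n$ up to some $n_0>\tfrac{4\theta\beta}{\sqrt{\alpha}}$ and then defined by the corresponding equality), apply the ratio test to $\sum g_n x^n$ to obtain convergence for $|x|<\rho$, and let $\rho\uparrow R$. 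Finally, if $(r_0,s_0)$ and the coefficients of the PDE are real, then $P$ has real coefficients and the recurrence produces real $D_Q(r_0,s_0)$, so $\psi$ is real.

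I expect the only step requiring care to be the inequality $\|Aq_1^2+Cq_2^2\|^2\geq\alpha(q_1^2+q_2^2)^2$, which hinges precisely on $\mathrm{Re}(A\overline{C})>0$; once it replaces the perfect-square estimate used in the parabolic case, the remaining majorant/ratio-test machinery applies without change, and the non-resonance hypothesis is used only to guarantee that no denominator $P(q_1+r_0,q_2+s_0)$ vanishes.
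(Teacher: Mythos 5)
Your proposal is correct and follows essentially the same route as the paper's own proof: the same formal recurrence $P(q_1+r,q_2+s)d_Q=-e_Q$, the same lower bound $\Vert Aq_1^2+Cq_2^2\Vert^2\geq\alpha(q_1^2+q_2^2)^2$ with $\alpha=\min\{\Vert A\Vert^2,\Vert C\Vert^2,\mbox{Re}(A\overline{C})\}$ exploiting ellipticity, and the same majorant sequence plus ratio test leading to convergence on $|x|<\rho$ for every $\rho<R$. No gaps to report.
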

\begin{proof}
Let $\varphi$ be a solution  of (\ref{eq55}) of the form
\begin{equation}\label{eq56}
\varphi(x,y)=x^ry^s\sum^{\infty}_{|Q|=0}d_QX^Q
\end{equation}
where $d_{0,0}\neq0$. Given that $a(x,y),b(x,y)$ and $c(x,y)$ are holomorphic in $\Delta[(0,0),(R,R)]$ we have that
\begin{equation}\label{eq57}
a(x,y)=\sum^\infty_{|Q|=0}a_QX^Q,\;\;\;b(x,y)=\sum^\infty_{|Q|=0}b_QX^Q\;\;\;\mbox{ and }\;\;\;c(x,y)=\sum^\infty_{|Q|=0}c_QX^Q
\end{equation}
for every $(x,y)\in \Delta[(0,0),(R,R)]$.

Then
$$\frac{\partial\varphi}{\partial x}=\sum^\infty_{|Q|=0}(q_1+r)d_Qx^{q_1+r-1}y^{q_2+s} $$
$$\frac{\partial\varphi}{\partial y}=\sum^\infty_{|Q|=0}(q_2+s)d_Qx^{q_1+r}y^{q_2+s-1} $$
$$\frac{\partial^2\varphi}{\partial x^2}=\sum^\infty_{|Q|=0}(q_1+r)(q_1+r-1)d_Qx^{q_1+r-2}y^{q_2+s} $$
$$\frac{\partial^2\varphi}{\partial y^2}=\sum^\infty_{|Q|=0}(q_2+s)(q_2+s-1)d_Qx^{q_1+r}y^{q_2+s-2} $$
and from this we have
$$Ax^2\frac{\partial^2\varphi}{\partial x^2}=x^ry^s\sum^\infty_{|Q|=0}A(q_1+r)(q_1+r-1)d_QX^Q$$
$$Cy^2\frac{\partial^2\varphi}{\partial y^2}=x^ry^s\sum^\infty_{|Q|=0}C(q_2+s)(q_2+s-1)d_QX^Q$$

$$\begin{array}{l c l}xa(x,y)\frac{\partial \varphi}{\partial x}&=&x^ry^s\big(\sum^\infty_{|Q|=0}a_QX^Q\big)\big(\sum^\infty_{|Q|=0}(q_1+r)d_QX^Q\big)\\&&\\&=& x^ry^s\big(\sum^\infty_{|Q|=0}\tilde{a}_QX^Q\big)\end{array}$$ where $\tilde{a}_Q=\sum^{q_1}_{i=0}\sum^{q_2}_{j=0}(i+r)a_{q_1-i,q_2-j}d_{ij}$.

$$\begin{array}{l c l}yb(x,y)\frac{\partial\varphi}{\partial y}&=&x^ry^s\big(\sum^\infty_{|Q|=0}b_QX^Q\big)\big(\sum^\infty_{|Q|=0}(q_2+s)d_QX^Q\big)\\&&\\&=& x^ry^s\big(\sum^\infty_{|Q|=0}\tilde{b}_QX^Q\big)\end{array}$$ where $\tilde{b}_Q=\sum^{q_1}_{i=0}\sum^{q_2}_{j=0}(j+s)b_{q_1-i,q_2-j}d_{ij}$.

$$\begin{array}{l c l}c(x,y)\varphi&=&x^ry^s\big(\sum^\infty_{|Q|=0}c_QX^Q\big)\big(\sum^\infty_{|Q|=0}d_QX^Q\big)\\&&\\&=& x^ry^s\big(\sum^\infty_{|Q|=0}\tilde{c}_QX^Q\big)\end{array}$$ where $\tilde{c}_Q=\sum^{q_1}_{i=0}\sum^{q_2}_{j=0}c_{q_1-i,q_2-j}d_{ij}$.

Given that $\varphi$ is solution of (\ref{eq55}) we have
$$Ax^2\frac{\partial^2 \varphi}{\partial x^2}+Cy^2\frac{\partial^2 \varphi}{\partial y^2}+xa(x,y)\frac{\partial \varphi}{\partial x}+yb(x,y)\frac{\partial \varphi}{\partial y}+c(x,y)\varphi=0$$
Therefore
$$x^ry^s\sum^\infty_{|Q|=0}\big(\big[A(q_1+r)(q_1+r-1)+C(q_2+s)(q_2+s-1)\big]d_Q+\tilde{a}_Q+\tilde{b}_Q+\tilde{c}_Q\big)X^Q=0 $$
hence we have
$$\big[A(q_1+r)(q_1+r-1)+C(q_2+s)(q_2+s-1)\big]d_Q+\tilde{a}_Q+\tilde{b}_Q+\tilde{c}_Q=0,\;\;\;|Q|=0,1,2,\ldots$$
Using the definitions of $\tilde{a}_Q$, $\tilde{b}_Q$ and $\tilde{c}_Q$ we write the equation anterior as
$$\begin{array}{c}\big[A(q_1+r)(q_1+r-1)+C(q_2+s)(q_2+s-1)\big]d_Q+\\
\\ \sum^{q_1}_{i=0}\sum^{q_2}_{j=0}\big[(i+r)a_{q_1-i,q_2-j}+(j+s)b_{q_1-i,q_2-j}+c_{q_1-i,q_2-j}\big]d_{i,j}=0\end{array}$$
equivalently
$$\begin{array}{c}\big[A(q_1+r)(q_1+r-1)+C(q_2+s)(q_2+s-1)+(q_1+r)a_{0,0}+(q_2+s)b_{0,0}+c_{0,0}\big]d_Q\\
\\ +\sum^{q_1-1}_{i=0}\sum^{q_2}_{j=0}\big[(i+r)a_{q_1-i,q_2-j}+(j+s)b_{q_1-i,q_2-j}+c_{q_1-i,q_2-j}\big]d_{i,j}\\
\\+\sum^{q_2-1}_{j=0}\big[(q_1+r)a_{0,q_2-j}+(j+s)b_{0,q_2-j}+c_{0,q_2-j}\big]d_{q_1,j} =0\end{array}$$

Para $|Q|=0$ we have
\begin{equation}\label{eq58} Ar(r-1)+Cs(s-1)+ra_{0,0}+sb_{0,0}+c_{0,0}=0
\end{equation}
provided that $d_{0,0}\neq0$. The second degree two variables polynomial $P$ given by $$P(r,s)=Ar^2+Cs^2+r(a_{0,0}-A)+s(b_{0,0}-C)+c_{0,0}$$
is the indicial conic associate to  equation (\ref{eq55}).
We see that
\begin{equation}\label{eq59}
P(q_1+r,q_2+s)d_Q+e_Q=0,\;\;\;\;\;|Q|=1,2,\ldots
\end{equation}
where
\begin{equation}\label{eq60}
\begin{array}{c c l}e_Q&=&\sum^{q_1-1}_{i=0}\sum^{q_2}_{j=0}\big[(i+r)a_{q_1-i,q_2-j}+(j+s)b_{q_1-i,q_2-j}+c_{q_1-i,q_2-j}\big]d_{i,j}\\&&\\&&
+\sum^{q_2-1}_{j=0}\big[(q_1+r)a_{0,q_2-j}+(j+s)b_{0,q_2-j}+c_{0,q_2-j}\big]d_{q_1,j},\;\;\;\;\;|Q|=1,2,\ldots\end{array}
\end{equation}
Observe that $e_Q$ is a linear combination of $d_{0,0},d_{1,0},d_{0,1},\ldots,d_{n-1,0},d_{0,n-1}$, whose coefficients are given only in terms of the already known functions $a,b,c$, $r$ and $s$. Letting  $r$, $s$ and $d_{0,0}$ undetermined , as of now we solve  equations (\ref{eq59}) and (\ref{eq60}) in terms of $d_{0,0}$, $r$ and $s$. These solutions are defined by $D_Q(r,s)$, and a $e_Q$ corresponding to $E_Q(r,s)$.
Thus
$$E_{1,0}(r,s)=(ra_{1,0}+sb_{1,0}+c_{1,0})d_{0,0}\;\;\;\;\;\;D_{1,0}(r,s)=-\frac{E_{1,0}(r,s)}{P(1+r,s)},$$
$$E_{0,1}(r,s)=(ra_{0,1}+sb_{0,1}+c_{0,1})d_{0,0}\;\;\;\;\;\;D_{0,1}(r,s)=-\frac{E_{0,1}(r,s)}{P(r,1+s)},$$
 and in general:
 \begin{equation}\label{eq61}
\begin{array}{c c l}E_Q(r,s)&=&\sum^{q_1-1}_{i=0}\sum^{q_2}_{j=0}\big[(i+r)a_{q_1-i,q_2-j}+(j+s)b_{q_1-i,q_2-j}+c_{q_1-i,q_2-j}\big]D_{i,j}(r,s)\\&&\\&&
+\sum^{q_2-1}_{j=0}\big[(q_1+r)a_{0,q_2-j}+(j+s)b_{0,q_2-j}+c_{0,q_2-j}\big]D_{q_1,j}(r,s),\;\;|Q|=1,2,\ldots\end{array}
\end{equation}

\begin{equation}\label{eq62}
D_Q(r,s)=-\frac{E_Q(r,s)}{P(q_1+r,q_2+s)},\;\;\;\;\;|Q|=1,2,\ldots
\end{equation}

The coefficients $D_Q$ obtained in this way, are rational functions of $r$ and $s$, and the only points where they are not defined, are those points $r$ and $s$ for which $P(q_1+r,q_2+s)=0$ for some $|Q|=1,2,\ldots$.  Let us define $\varphi$ by:
\begin{equation}\label{eq63}
\varphi((x,y),(r,s))=d_{0,0}x^ry^s+x^ry^s\sum^{\infty}_{|Q|=1}D_Q(r,s)x^{q_1}y^{q_2}.
\end{equation}
 If the series (\ref{eq63}) converges in $\Delta[(0,0),(R,R)]$, then we have:
\begin{equation}\label{eq64}
L(\varphi)((x,y),(r,s))=d_{0,0}P(r,s)x^ry^s.
\end{equation}
We have the following situation: If a $\varphi$ given by (\ref{eq56}) is solution of (\ref{eq55}), then $(r,s)$ must be a point of the indicial conic $P$, and then os $d_Q$ ($|Q|=1,2,\ldots$) are uniquely determined in terms of $d_{0,0}$, $r$ and $s$ by os $D_Q(r,s)$ from (\ref{eq62}), provided that $P(q_1+r,q_2+s)\neq0$, $|Q|=1,2,\ldots$.

Conversely, if $(r,s)$ is a root of $P$ and if os $D_Q(r,s)$ can be determined (i.e., $P(q_1+r,q_2+s)\neq0$ for $|Q|=1,2,\ldots$) then the function $\varphi$ given by $\varphi(x,y)=\varphi((x,y),(r,s))$ is solution of (\ref{eq64}) for every choice of $d_{0,0}$, provided that the series (\ref{eq63}) is convergent.
By hypothesis $(r_0,s_0)$ is a point of the indicial conic $P$ such that $(r_0,s_0)\notin\mathcal{R}$, then $P(q_1+r_0,q_2+s_0)\neq0$ for every $|Q|=1,2,\ldots$. Thus, $D_Q(r_0,s_0)$ there exists for every $|Q|=1,2,\ldots$, and putting $d_{0,0}=D_{0,0}(r_0,s_0)=1$ we have that the function $\psi$ given by
\begin{equation}\label{eq65}
\psi(x,y)=x^{r_0}y^{s_0}\sum^\infty_{|Q|=0}D_Q(r_0,s_0)x^{q_1}y^{q_2},\;\;\;\;D_{0,0}(r_0,s_0)=1,
\end{equation}
 is a solution of (\ref{eq55}), provided that the series is convergent.

We need to show that the series (\ref{eq65}) converges in the bidisc $\Delta[(0,0),(R,R)]$,
 where os $D_{Q}(r_0,s_0)$ are given recursively  by
 \begin{equation}\label{eq66}
 D_{0,0}(r_0,s_0)=1,
\end{equation}
\[
P(q_1+r_0,q_2+s_0)D_Q(r_0,s_0)=-\sum^{q_1-1}_{i=0}\sum^{q_2}_{j=0}
\big[(i+r_0)a_{q_1-i,q_2-j}+(j+s_0)b_{q_1-i,q_2-j}+c_{q_1-i,q_2-j}\big]D_{i,j}(r_0,s_0)
\]
\[
-\sum^{q_2-1}_{j=0}\big[(q_1+r_0)a_{0,q_2-j}+(j+s_0)b_{0,q_2-j}+
c_{0,q_2-j}\big]D_{q_1,j}(r_0,s_0),\;|Q|=1,2,\ldots
\]
Observe that
$$P(q_1+r_0,q_2+s_0)=Aq_1^2+Cq_2^2+2q_1A\big[r_0+\frac{a(0,0)-A}{2A}\big]+2q_2C\big[s_0+\frac{b(0,0)-C}{2C}\big]$$
therefrom
$$\Vert P(q_1+r_0,q_2+s_0)\Vert\geq  \Vert Aq_1^2+Cq_2^2\Vert-2q_1\Vert A\Vert\left\Vert r_0+\frac{a(0,0)-A}{2A}\right\Vert-2q_2\Vert C\Vert\left\Vert s_0+\frac{b(0,0)-C}{2C}\right\Vert.
$$
Given that
$$\Vert Aq_1^2+Cq_2^2\Vert^2=\Vert A\Vert^2 q_1^4+\Vert C\Vert^2 q_2^4+2q_1^2q_2^2\mbox{Re}(A\overline{C})$$
be a $\alpha=\min\{\Vert A\Vert^2,\Vert C\Vert^2,\mbox{Re}(A\overline{C})\}>0$ we have
$$\Vert Aq_1^2+Cq_2^2\Vert^2\geq  \alpha (q_1^4+q_2^4+2q_1^2q_2^2)$$
$$\Vert Aq_1^2+Cq_2^2\Vert^2\geq  \alpha (q_1^2+q_2^2)^2\geq \alpha\frac{(q_1+q_2)^4}{4}$$
$$\Vert Aq_1^2+Cq_2^2\Vert\geq \sqrt{\alpha}\frac{(q_1+q_2)^2}{2}.$$
Let  $\theta=\max\{\left\Vert r_0+\frac{a(0,0)-A}{2A}\right\Vert,\left\Vert s_0+\frac{b(0,0)-C}{2C}\right\Vert\}$ and $\beta=\max\{\Vert A \Vert,\Vert C \Vert\}$ hence we have
$$\Vert P(q_1+r_0,q_2+s_0)\Vert \geq \sqrt{\alpha}\frac{(q_1+q_2)^2}{2}-2\theta\beta(q_1+q_2).$$
Consequently:
\begin{equation}\label{eq67}
\Vert P(q_1+r_0,q_2+s_0)\Vert \geq \frac{\sqrt{\alpha}}{2}(q_1+q_2)\big[(q_1+q_2)-\frac{4\theta\beta}{\sqrt{\alpha}}\big].
\end{equation}

Let $\rho$ be any number that satisfies the inequality $0<\rho<R$. Given that the series defined in (\ref{eq57}) are convergent for $(x,y)=(\rho,\rho)$ there exists a constant $M>0$ such that

\begin{equation}\label{eq68}
\Vert a_Q\Vert\rho^{|Q|}\leq M,\;\;\;\;\;\Vert b_Q\Vert\rho^{|Q|}\leq M\;\;\;\;\;\Vert c_Q\Vert\rho^{|Q|}\leq M\;\;\;\;\;|Q|=0,1,2,\ldots
\end{equation}
Using (\ref{eq67}) and (\ref{eq68}) in (\ref{eq66}) we obtain
\begin{equation}\label{eq69}
\begin{array}{c c l}\frac{\sqrt{\alpha}}{2}|Q|\big[|Q|-\frac{4\theta\beta}{\sqrt{\alpha}}\big]\Vert D_Q(r_0,s_0)\Vert&\leq& M\sum^{q_1-1}_{j=0}\sum^{q_2}_{j=0}[i+j+\Vert r_0\Vert+\Vert s_0\Vert+1]\rho^{i+j-|Q|}\Vert D_{i,j}(r_0,s_0)\Vert\\&&\\&&+M\sum^{q_2-1}_{j=0}[q_1+\Vert r_0\Vert+\Vert s_0\Vert+1]\rho^{j-q_2}\Vert D_{q_1,j}(r_0,s_0)\Vert.\end{array}
\end{equation}
Summing up all terms of norm $|Q|=n$ in (\ref{eq69}) we have
\begin{equation}\label{eq70}
\frac{\sqrt{\alpha}}{2}n\big[n-\frac{4\theta\beta}{\sqrt{\alpha}}\big]\sum_{|Q|=n}\Vert D_Q(r_0,s_0)\Vert\leq 2M\sum^{n-1}_{k=0}\big(k+\Vert r_0\Vert+\Vert s_0\Vert+1\big)\rho^{k-n}\big(\sum_{|Q|=k}\Vert D_Q(r_0,s_0)\Vert\big).
\end{equation}

Consider $\tilde{\psi}(x)=\sum^\infty_{|Q|=0}\Vert D_Q(r_0,s_0)\Vert x^{|Q|}$ the formal power series
of nonnegative power in the variable $x$. We will show that $\tilde{\psi}$ converges in $D_R[0]$, this will imply that $\psi$ converges in the bidisc $\Delta[(0,0),(R,R)]$. Let $d_n=\sum_{|Q|=n}\Vert D_Q(r_0,s_0)\Vert$ and then
$$\tilde{\psi}(x)=\sum^\infty_{n=0}\big(\sum_{|Q|=n}\Vert D_Q(r_0,s_0)\Vert\big)x^{n}=\sum^\infty_{n=0}d_nx^n.$$
Let $n_0$ be a natural number such that $n_0>\frac{4\theta\beta}{\sqrt{\alpha}}$ and let us define $g_0,g_1,\ldots$ of the following way:
$$g_0=\Vert D_{0,0}(r_0,s_0)\Vert=1,\,\;\;g_n=\sum_{|Q|=n}\Vert D_Q(r_0,s_0)\Vert,\;\;(n=1,2,\ldots,n_0-1)$$ and
\begin{equation}\label{eq71}
\frac{\sqrt{\alpha}}{2}n\big[n-\frac{4\theta\beta}{\sqrt{\alpha}}\big]g_n=2M\sum^{n-1}_{k=0}\big(k+\Vert r_0\Vert+\Vert s_0\Vert+1\big)\rho^{k-n}g_k
\end{equation}
for $n=n_0,n_0+1,\ldots$.
Then, comparing the definition of $g_n$ with (\ref{eq70}), we conclude that
\begin{equation}\label{eq72}
d_n\leq g_n,\;\;\;\;\;\;n=0,1,2,\ldots
\end{equation}
Thus we will show that the series
\begin{equation}\label{eq73}
\sum^\infty_{n=0}g_nx^n
\end{equation}
converges for $|x|<\rho$.
Replacing $n$ by $n+1$ in (\ref{eq71}) we have:
$$\rho\frac{\sqrt{\alpha}}{2}(n+1)\big[n+1-\frac{4\theta\beta}{\sqrt{\alpha}}\big]g_{n+1}=\big(\frac{\sqrt{\alpha}}{2}n\big[n-\frac{4\theta\beta}{\sqrt{\alpha}}\big]+2M(n+\Vert r_0\Vert+\Vert s_0\Vert+1)\big)g_n$$
for $n=n_0,n_0+1,\ldots$. Thus
$$\left|\frac{g_{n+1}x^{n+1}}{g_{n}x^{n}}\right|=\frac{\frac{\sqrt{\alpha}}{2}n\big[n-\frac{4\theta\beta}{\sqrt{\alpha}}\big]+2M(n+\Vert r_0\Vert+\Vert s_0\Vert+1)}{\rho\frac{\sqrt{\alpha}}{2}(n+1)\big[n+1-\frac{4\theta\beta}{\sqrt{\alpha}}\big]}|x|$$ converges to  $|x|/\rho$ as $n\to\infty$. Thus according to the quotient test, the series (\ref{eq73}) converges in $|x|<\rho$. Using (\ref{eq72}) and by the comparison criteria, we conclude that the series
$$\sum^\infty_{n=0}d_nx^n,\;\;\;\;d_0=1,$$ converges in   $|x|<\rho$.  But given that $\rho$ is any number that satisfies the inequality $0<\rho<R$, we have already proved that this series converges for $|x|<R$.
\end{proof}

\subsection{Non-parabolic: hyperbolic case}

We present now our version of Frobenius method for second order linear complex PDEs of hyperbolic type:
\begin{Theorem}
[non parabolic convergent II]
\label{Theorem:hyperboliccomplex}{
A hyperbolic second order linear homogeneous complex analytic PDE
having a regular singular point, admits  Frobenius type solutions. More precisely,
consider the second order PDE
\begin{equation}\label{eqh1}
L[z]:=Ax^2\frac{\partial^2 z}{\partial x^2}+Cy^2\frac{\partial^2 z}{\partial y^2}+xa(x,y)\frac{\partial z}{\partial x}+yb(x,y)\frac{\partial z}{\partial y}+c(x,y)z=0
\end{equation}
where $A,C\in\mathbb{C}^*$ with $\mbox{Re}({A}\overline{{C}})<0$, $a(x,y),b(x,y)$ and $c(x,y)$ holomorphic in $\Delta[(0,0),(R,R)]$, $R>0$.
Let $(r_0,s_0)$ be a non-resonant point of the indicial conic. Then
$(\mathcal E)$ admits a convergent Frobenius type solution with initial monomial $x^{r_0} y^{s_0}$.
Moreover, the solution is real provided that the point $(r_0,s_0)$ and the coefficients of the
PDE are real.}
\end{Theorem}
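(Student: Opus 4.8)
The plan is to run, essentially verbatim, the argument of the elliptic case (Theorem~\ref{Theorem:ellipticcomplex}), the only genuinely new point being the lower bound for the values $P(q_1+r_0,q_2+s_0)$ of the indicial polynomial along the positive reticulate. First I would substitute the Frobenius ansatz $\varphi(x,y)=x^{r}y^{s}\sum^{\infty}_{|Q|=0}d_Q X^Q$, with $d_{0,0}\neq 0$, into~(\ref{eqh1}), expand $a,b,c$ in power series on $\Delta[(0,0),(R,R)]$, and collect coefficients of $X^Q$. This yields, exactly as in the elliptic proof, that $(r,s)$ must lie on the indicial conic $P(r,s)=Ar^2+Cs^2+r(a_{0,0}-A)+s(b_{0,0}-C)+c_{0,0}=0$, and that for $|Q|\ge 1$ the coefficients satisfy $P(q_1+r,q_2+s)\,d_Q+e_Q=0$, where $e_Q$ is an explicit linear combination of the lower-order $d_{i,j}$ with coefficients built from $a,b,c,r,s$. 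Fixing $r=r_0$, $s=s_0$, $d_{0,0}=1$, the hypothesis that $(r_0,s_0)$ is non-resonant says precisely that $P(q_1+r_0,q_2+s_0)\neq 0$ for every $Q\in\mathbb Z_+^2\setminus\{(0,0)\}$, so the $D_Q(r_0,s_0):=-E_Q(r_0,s_0)/P(q_1+r_0,q_2+s_0)$ are all well defined; this produces a unique formal Frobenius solution $\psi$, and reality of $\psi$ when $(r_0,s_0)$ and the coefficients are real is immediate from the recursion. Only convergence remains.

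For convergence I would reuse the majorant machinery from the proof of Theorem~\ref{Theorem:ellipticcomplex}: bound $\|a_Q\|,\|b_Q\|,\|c_Q\|\le M\rho^{-|Q|}$ for $\rho<R$, sum the recursion over all $Q$ with $|Q|=n$, and dominate $\sum_n\big(\sum_{|Q|=n}\|D_Q(r_0,s_0)\|\big)x^n$ by an auxiliary scalar series $\sum_n g_n x^n$ to which the ratio test applies. The single modification is the lower estimate for $\|P(q_1+r_0,q_2+s_0)\|$, whose principal part is $\|Aq_1^2+Cq_2^2\|$; since now $\operatorname{Re}(A\overline{C})<0$, the elementary bound used in the elliptic proof is unavailable, so instead I would complete the square:
\[
\|Aq_1^2+Cq_2^2\|^2=\big(\|A\|q_1^2-\|C\|q_2^2\big)^2+2\big(\|A\|\,\|C\|-|\operatorname{Re}(A\overline{C})|\big)q_1^2q_2^2 .
\]
As long as $A\overline{C}$ is not a negative real we have $\|A\|\,\|C\|>|\operatorname{Re}(A\overline{C})|$, so the right-hand side is a positive-definite quadratic form in $(q_1^2,q_2^2)$; hence $\|Aq_1^2+Cq_2^2\|^2\ge c_0(q_1^4+q_2^4)\ge \tfrac{c_0}{8}(q_1+q_2)^4$ for some $c_0>0$, and therefore
\[
\|P(q_1+r_0,q_2+s_0)\|\ge \tfrac{\sqrt{c_0}}{2\sqrt 2}(q_1+q_2)^2-2\theta\beta(q_1+q_2)=\tfrac{\sqrt{c_0}}{2\sqrt 2}(q_1+q_2)\Big[(q_1+q_2)-\tfrac{4\sqrt 2\,\theta\beta}{\sqrt{c_0}}\Big],
\]
with $\theta,\beta$ as in the elliptic proof. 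From this quadratic-in-$|Q|$ bound the rest of the elliptic argument transfers word for word, and $\psi$ converges on the full bidisc $\Delta[(0,0),(R,R)]$.

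The main obstacle is the remaining configuration $A\overline{C}\in\mathbb R_{<0}$ — which contains, in particular, every PDE with $A,C$ real and $AC<0$, i.e.\ every real hyperbolic PDE. There $\|A\|\,\|C\|=|\operatorname{Re}(A\overline{C})|$, the quadratic form above collapses, and $\|Aq_1^2+Cq_2^2\|=\big|\,\|A\|q_1^2-\|C\|q_2^2\,\big|$ is no longer bounded below by a positive multiple of $(q_1+q_2)^2$: along the characteristic direction $\|A\|q_1^2\approx\|C\|q_2^2$ it can even vanish. To handle this I would first reduce, via a monomial change of variables $X=x^{\alpha}y^{\beta}$, $Y=x^{\gamma}y^{\delta}$ (the multiplicative analogue of passing to characteristic coordinates — compare the Preparation Lemma and the hyperbolic pre-canonical form above), to an equation whose principal part is $\widetilde B\,XY\,\partial^2_{XY}$; for such an equation $P(q_1+r,q_2+s)=\widetilde B(q_1+r)(q_2+s)+\widetilde D(q_1+r)+\widetilde E(q_2+s)+\widetilde F$, so $\|P(q_1+r_0,q_2+s_0)\|$ grows at least linearly in $q_1+q_2$ on each coordinate axis and quadratically off the axes, the linear growth being controlled by non-vanishing factors that come precisely from the non-resonance of $(r_0,s_0)$. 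Feeding this weaker, but still coercive, bound into the majorant construction and choosing the majorant radius $\rho$ small enough, the auxiliary series $\sum_n g_n x^n$ still has strictly positive radius of convergence; hence $\psi$ converges on some bidisc $\Delta[(0,0),(r,r)]$ with $0<r\le R$, which is exactly what the statement claims. The delicate point — and the place where the argument must be careful — is extracting a positive radius in this degenerate case from the mere non-resonance of $(r_0,s_0)$, rather than from a stronger Diophantine hypothesis on it.
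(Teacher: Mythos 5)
Your recursion setup (ansatz, indicial conic, well-definedness of the $D_Q$ under non-resonance, reality of the solution) coincides with the paper's, and your treatment of the non-degenerate sub-case is correct: the identity $\Vert Aq_1^2+Cq_2^2\Vert^2=(\Vert A\Vert q_1^2-\Vert C\Vert q_2^2)^2+2(\Vert A\Vert\,\Vert C\Vert-|\mathrm{Re}(A\overline{C})|)\,q_1^2q_2^2$ does give $\Vert Aq_1^2+Cq_2^2\Vert\geq c_1(q_1+q_2)^2$ whenever $A\overline{C}\notin\mathbb{R}_{<0}$, and then the elliptic majorant argument transfers verbatim. Note that this is a genuinely different (and more careful) estimate than the paper's: the paper does not split cases, but asserts the single bound \eqref{eqh12} with $\alpha=\min\{\Vert A\Vert^2,\Vert C\Vert^2\}$, obtained from the step $2q_1^2q_2^2\,\mathrm{Re}(A\overline{C})\geq(q_1+q_2)^2\,\mathrm{Re}(A\overline{C})$; since $\mathrm{Re}(A\overline{C})<0$ this step would need $2q_1^2q_2^2\leq(q_1+q_2)^2$, which fails for $q_1,q_2\geq1$, and the asserted bound is actually violated on the diagonal (take $A=1$, $C=-1$, $q_1=q_2$, where $Aq_1^2+Cq_2^2=0$). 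So the obstruction you isolate is real, and it is not overcome in the paper's own proof either.

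The genuine gap in your proposal is therefore exactly where you flag it: the case $A\overline{C}\in\mathbb{R}_{<0}$, which contains every real hyperbolic equation, i.e.\ the case Theorem~\ref{Theorem:realhyperbolic} is about, and your sketch does not close it. First, the monomial change $X=x^{\alpha}y^{\beta}$, $Y=x^{\gamma}y^{\delta}$ bringing $Ax^2\partial_x^2+Cy^2\partial_y^2$ to cross form generally has irrational exponents, so it destroys the analyticity of $a,b,c$ in the new variables and the $\mathbb{Z}_+^2$ lattice structure on which the recursion, the resonance set and the majorant scheme are built. Second, and more fundamentally, non-resonance alone does not yield the linear coercivity you invoke. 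Normalize $A=1$, $C=-1$ and use $P(r_0,s_0)=0$ to write $P(q_1+r_0,q_2+s_0)=(q_1-q_2)(q_1+q_2)+q_1\mu+q_2\nu$ with $\mu=2r_0+a(0,0)-1$, $\nu=-2s_0+b(0,0)+1$; along a lattice line $q_1-q_2=d$ this equals $\big(d+\tfrac{\mu+\nu}{2}\big)(q_1+q_2)+\tfrac{d}{2}(\mu-\nu)$. If $\mu+\nu=-2d$ for some nonzero integer $d$ (which non-resonance does not forbid; it only forbids the value being zero), the denominators are \emph{constant} along that entire line while the numerators $E_Q$ grow linearly in $|Q|$, and neither your majorant argument nor the paper's ratio-test comparison closes, no matter how small you take $\rho$. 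So in this regime the convergence claim cannot be extracted from non-resonance alone by this method; it seems to require either an additional arithmetic (Diophantine-type) hypothesis on $(r_0,s_0)$ or a different argument. In short: your non-degenerate half is complete and correct, your degenerate half is an honest but unproved plan, and the paper's proof of that same half rests on an invalid inequality; also note that your weaker conclusion (convergence on some smaller bidisc) would in any case fall short of the paper's definition of a convergent Frobenius solution, which demands convergence on all of $\Delta[(0,0),(R,R)]$.
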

\begin{proof}
Let $\varphi$ be a solution  of (\ref{eqh1}) of the form
\begin{equation}\label{eqh2}
\varphi(x,y)=x^ry^s\sum^{\infty}_{|Q|=0}d_QX^Q
\end{equation}
where $d_{0,0}\neq0$. Given that $a(x,y),b(x,y)$ and $c(x,y)$ are holomorphic in $\Delta[(0,0),(R,R)]$ we have that
\begin{equation}\label{eqh3}
a(x,y)=\sum^\infty_{|Q|=0}a_QX^Q,\;\;\;b(x,y)=\sum^\infty_{|Q|=0}b_QX^Q\;\;\;\mbox{ and }\;\;\;c(x,y)=\sum^\infty_{|Q|=0}c_QX^Q
\end{equation}
for every $(x,y)\in \Delta[(0,0),(R,R)]$.

Then
$$\frac{\partial\varphi}{\partial x}=\sum^\infty_{|Q|=0}(q_1+r)d_Qx^{q_1+r-1}y^{q_2+s} $$
$$\frac{\partial\varphi}{\partial y}=\sum^\infty_{|Q|=0}(q_2+s)d_Qx^{q_1+r}y^{q_2+s-1} $$
$$\frac{\partial^2\varphi}{\partial x^2}=\sum^\infty_{|Q|=0}(q_1+r)(q_1+r-1)d_Qx^{q_1+r-2}y^{q_2+s} $$
$$\frac{\partial^2\varphi}{\partial y^2}=\sum^\infty_{|Q|=0}(q_2+s)(q_2+s-1)d_Qx^{q_1+r}y^{q_2+s-2} $$
and from this we have
$$Ax^2\frac{\partial^2\varphi}{\partial x^2}=x^ry^s\sum^\infty_{|Q|=0}A(q_1+r)(q_1+r-1)d_QX^Q$$
$$Cy^2\frac{\partial^2\varphi}{\partial y^2}=x^ry^s\sum^\infty_{|Q|=0}C(q_2+s)(q_2+s-1)d_QX^Q$$

$$\begin{array}{l c l}xa(x,y)\frac{\partial \varphi}{\partial x}&=&x^ry^s\big(\sum^\infty_{|Q|=0}a_QX^Q\big)\big(\sum^\infty_{|Q|=0}(q_1+r)d_QX^Q\big)\\&&\\&=& x^ry^s\big(\sum^\infty_{|Q|=0}\tilde{a}_QX^Q\big)\end{array}$$ where $\tilde{a}_Q=\sum^{q_1}_{i=0}\sum^{q_2}_{j=0}(i+r)a_{q_1-i,q_2-j}d_{ij}$.

$$\begin{array}{l c l}yb(x,y)\frac{\partial\varphi}{\partial y}&=&x^ry^s\big(\sum^\infty_{|Q|=0}b_QX^Q\big)\big(\sum^\infty_{|Q|=0}(q_2+s)d_QX^Q\big)\\&&\\&=& x^ry^s\big(\sum^\infty_{|Q|=0}\tilde{b}_QX^Q\big)\end{array}$$ where $\tilde{b}_Q=\sum^{q_1}_{i=0}\sum^{q_2}_{j=0}(j+s)b_{q_1-i,q_2-j}d_{ij}$.

$$\begin{array}{l c l}c(x,y)\varphi&=&x^ry^s\big(\sum^\infty_{|Q|=0}c_QX^Q\big)\big(\sum^\infty_{|Q|=0}d_QX^Q\big)\\&&\\&=& x^ry^s\big(\sum^\infty_{|Q|=0}\tilde{c}_QX^Q\big)\end{array}$$ where $\tilde{c}_Q=\sum^{q_1}_{i=0}\sum^{q_2}_{j=0}c_{q_1-i,q_2-j}d_{ij}$.

Given that $\varphi$ is solution of (\ref{eqh1}) we have
$$Ax^2\frac{\partial^2 \varphi}{\partial x^2}+Cy^2\frac{\partial^2 \varphi}{\partial y^2}+xa(x,y)\frac{\partial \varphi}{\partial x}+yb(x,y)\frac{\partial \varphi}{\partial y}+c(x,y)\varphi=0$$
Therefore
$$x^ry^s\sum^\infty_{|Q|=0}\big(\big[A(q_1+r)(q_1+r-1)+C(q_2+s)(q_2+s-1)\big]d_Q+\tilde{a}_Q+\tilde{b}_Q+\tilde{c}_Q\big)X^Q=0 $$
hence we have
$$\big[A(q_1+r)(q_1+r-1)+C(q_2+s)(q_2+s-1)\big]d_Q+\tilde{a}_Q+\tilde{b}_Q+\tilde{c}_Q=0,\;\;\;|Q|=0,1,2,\ldots$$
Using the definitions of $\tilde{a}_Q$, $\tilde{b}_Q$ and $\tilde{c}_Q$ we write the equation anterior as
$$\begin{array}{c}\big[A(q_1+r)(q_1+r-1)+C(q_2+s)(q_2+s-1)\big]d_Q+\\
\\ \sum^{q_1}_{i=0}\sum^{q_2}_{j=0}\big[(i+r)a_{q_1-i,q_2-j}+(j+s)b_{q_1-i,q_2-j}+c_{q_1-i,q_2-j}\big]d_{i,j}=0\end{array}$$
equivalently
$$\begin{array}{c}\big[A(q_1+r)(q_1+r-1)+C(q_2+s)(q_2+s-1)+(q_1+r)a_{0,0}+(q_2+s)b_{0,0}+c_{0,0}\big]d_Q\\
\\ +\sum^{q_1-1}_{i=0}\sum^{q_2}_{j=0}\big[(i+r)a_{q_1-i,q_2-j}+(j+s)b_{q_1-i,q_2-j}+c_{q_1-i,q_2-j}\big]d_{i,j}\\
\\+\sum^{q_2-1}_{j=0}\big[(q_1+r)a_{0,q_2-j}+(j+s)b_{0,q_2-j}+c_{0,q_2-j}\big]d_{q_1,j} =0\end{array}$$
Para $|Q|=0$ we have
\begin{equation}\label{eqh4} Ar(r-1)+Cs(s-1)+ra_{0,0}+sb_{0,0}+c_{0,0}=0
\end{equation}
provided that $d_{0,0}\neq0$. The second degree two variables polynomial $P$ given by $$P(r,s)=Ar^2+Cs^2+r(a_{0,0}-A)+s(b_{0,0}-C)+c_{0,0}$$
is called indicial conic associate to  equation (\ref{eqh1}). We see that
\begin{equation}\label{eqh4'}
P(q_1+r,q_2+s)d_Q+e_Q=0,\;\;\;\;\;|Q|=1,2,\ldots
\end{equation}
where
\begin{equation}\label{eqh5}
\begin{array}{c c l}e_Q&=&\sum^{q_1-1}_{i=0}\sum^{q_2}_{j=0}\big[(i+r)a_{q_1-i,q_2-j}+(j+s)b_{q_1-i,q_2-j}+c_{q_1-i,q_2-j}\big]d_{i,j}\\&&\\&&
+\sum^{q_2-1}_{j=0}\big[(q_1+r)a_{0,q_2-j}+(j+s)b_{0,q_2-j}+c_{0,q_2-j}\big]d_{q_1,j},\;\;\;\;\;|Q|=1,2,\ldots\end{array}
\end{equation}
Observe that $e_Q$ is a linear combination of $d_{0,0},d_{1,0},d_{0,1},\ldots,d_{n-1,0},d_{0,n-1}$, whose coefficients are given only in terms of the already known functions $a,b,c$, $r$ and $s$. Letting  $r$, $s$ and $d_{0,0}$ undetermined , as of now we solve  equations (\ref{eqh4'}) and (\ref{eqh5}) in terms of $d_{0,0}$, $r$ and $s$. These solutions are defined by $D_Q(r,s)$, and a $e_Q$ corresponding to $E_Q(r,s)$.
Thus
$$E_{1,0}(r,s)=(ra_{1,0}+sb_{1,0}+c_{1,0})d_{0,0}\;\;\;\;\;\;D_{1,0}(r,s)=-\frac{E_{1,0}(r,s)}{P(1+r,s)},$$
$$E_{0,1}(r,s)=(ra_{0,1}+sb_{0,1}+c_{0,1})d_{0,0}\;\;\;\;\;\;D_{0,1}(r,s)=-\frac{E_{0,1}(r,s)}{P(r,1+s)},$$
 and in general:
 \begin{equation}\label{eqh6}
\begin{array}{c c l}E_Q(r,s)&=&\sum^{q_1-1}_{i=0}\sum^{q_2}_{j=0}\big[(i+r)a_{q_1-i,q_2-j}+(j+s)b_{q_1-i,q_2-j}+c_{q_1-i,q_2-j}\big]D_{i,j}(r,s)\\&&\\&&
+\sum^{q_2-1}_{j=0}\big[(q_1+r)a_{0,q_2-j}+(j+s)b_{0,q_2-j}+c_{0,q_2-j}\big]D_{q_1,j}(r,s),\;\;|Q|=1,2,\ldots\end{array}
\end{equation}
\begin{equation}\label{eqh7}
D_Q(r,s)=-\frac{E_Q(r,s)}{P(q_1+r,q_2+s)},\;\;\;\;\;|Q|=1,2,\ldots
\end{equation}
The coefficients $D_Q$ obtained in this way, are rational functions of $r$ and $s$, and the only points where they are not defined, are those points $r$ and $s$ for which $P(q_1+r,q_2+s)=0$ for some $|Q|=1,2,\ldots$.  Let us define $\varphi$ by:
\begin{equation}\label{eqh8}
\varphi((x,y),(r,s))=d_{0,0}x^ry^s+x^ry^s\sum^{\infty}_{|Q|=1}D_Q(r,s)x^{q_1}y^{q_2}.
\end{equation}
 If the series (\ref{eqh8}) converges in $\Delta[(0,0),(R,R)]$, then we have:
\begin{equation}\label{eqh9}
L(\varphi)((x,y),(r,s))=d_{0,0}P(r,s)x^ry^s.
\end{equation}
We have the following situation: If a $\varphi$ given by (\ref{eqh2}) is solution of (\ref{eqh1}), then $(r,s)$ must be a point of the indicial conic $P$, and then os $d_Q$ ($|Q|=1,2,\ldots$) are uniquely determined in terms of $d_{0,0}$, $r$ and $s$ by os $D_Q(r,s)$ from (\ref{eqh7}), provided that $P(q_1+r,q_2+s)\neq0$, $|Q|=1,2,\ldots$.

Conversely, if $(r,s)$ is a root of $P$ and if os $D_Q(r,s)$ can be determined (i.e., $P(q_1+r,q_2+s)\neq0$ for $|Q|=1,2,\ldots$) then the function $\varphi$ given by $\varphi(x,y)=\varphi((x,y),(r,s))$ is solution of (\ref{eqh9}) for every choice of $d_{0,0}$, provided that the series (\ref{eqh2}) is convergent.

By hypothesis $(r_0,s_0)$ is a point of the indicial conic $P$ such that $(r_0,s_0)\notin\mathcal{R}$, then $P(q_1+r_0,q_2+s_0)\neq0$ for every $|Q|=1,2,\ldots$. Thus, $D_Q(r_0,s_0)$ there exists for every $|Q|=1,2,\ldots$, and putting $d_{0,0}=D_{0,0}(r_0,s_0)=1$ we have that the function $\psi$ given by
\begin{equation}\label{eqh10}
\psi(x,y)=x^{r_0}y^{s_0}\sum^\infty_{|Q|=0}D_Q(r_0,s_0)x^{q_1}y^{q_2},\;\;\;\;D_{0,0}(r_0,s_0)=1,
\end{equation}
 is a solution of (\ref{eqh1}), if  series converges.

We need to show that the series (\ref{eqh8}) converges in the bidisc $\Delta[(0,0),(R,R)]$,
 where os $D_{Q}(r_0,s_0)$ are given recursively  by
 \begin{equation}
 \label{eqh11}
D_{0,0}(r_0,s_0)=1,
\end{equation}
\[
P(q_1+r_0,q_2+s_0)D_Q(r_0,s_0)=-\sum^{q_1-1}_{i=0}\sum^{q_2}_{j=0}
\big[(i+r_0)a_{q_1-i,q_2-j}+(j+s_0)b_{q_1-i,q_2-j}+c_{q_1-i,q_2-j}\big]
D_{i,j}(r_0,s_0)
\]
\[
-\sum^{q_2-1}_{j=0}\big[(q_1+r_0)a_{0,q_2-j}+(j+s_0)b_{0,q_2-j}+c_{0,q_2-j}\big]
D_{q_1,j}(r_0,s_0),\;|Q|=1,2,\ldots
\]

Observe that
$$P(q_1+r_0,q_2+s_0)=Aq_1^2+Cq_2^2+2q_1A\big[r_0+\frac{a(0,0)-A}{2A}\big]+2q_2C\big[s_0+\frac{b(0,0)-C}{2C}\big]$$
therefrom
$$\Vert P(q_1+r_0,q_2+s_0)\Vert\geq  \Vert Aq_1^2+Cq_2^2\Vert-2q_1\Vert A\Vert\left\Vert r_0+\frac{a(0,0)-A}{2A}\right\Vert-2q_2\Vert C\Vert\left\Vert s_0+\frac{b(0,0)-C}{2C}\right\Vert.
$$

Given that
$$\Vert Aq_1^2+Cq_2^2\Vert^2=\Vert A\Vert ^2q_1^4+\Vert C\Vert^2q_2^4+2q_1^2q_2^2\mbox{Re}(A\overline{C})$$
let  $\alpha=\min\{\Vert A\Vert^2,\Vert C\Vert^2\}>0$ and since $\mbox{Re}(A\overline{C})<0$ we have
$$\Vert Aq_1^2+Cq_2^2\Vert^2\geq  \alpha (q_1^4+q_2^4)+\mbox{Re}(A\overline{C})(q_1^2+q_2^2)$$
$$\Vert Aq_1^2+Cq_2^2\Vert^2\geq  \alpha \frac{(q_1^2+q_2^2)^2}{2}+\mbox{Re}(A\overline{C})(q_1+q_2)^2 $$
$$\Vert Aq_1^2+Cq_2^2\Vert^2\geq  \alpha \frac{(q_1+q_2)^4}{8}+\mbox{Re}(A\overline{C})(q_1+q_2)^2$$
$$\Vert Aq_1^2+Cq_2^2\Vert^2\geq  \frac{\alpha (q_1+q_2)^2}{8}\big( (q_1+q_2)^2+\frac{8}{\alpha}\mbox{Re}(A\overline{C})\big).$$
Let  $\theta=\max\{\left\Vert r_0+\frac{a(0,0)-A}{2A}\right\Vert,\left\Vert s_0+\frac{b(0,0)-C}{2C}\right\Vert\}$ and $\beta=\max\{\Vert A \Vert,\Vert C \Vert\}$. Let $n_0$ be a natural number such that $n_0^2>\frac{32\theta^2\beta^2}{\alpha}-\frac{8}{\alpha}\mbox{Re}(A\overline{C})>0$.
Thus for $|Q|=q_1+q_2\geq n_0$ we have that

$$\Vert P(q_1+r_0,q_2+s_0)\Vert \geq \frac{\sqrt{\alpha}}{2\sqrt{2}}(q_1+q_2)\sqrt{(q_1+q_2)^2+\frac{8}{\alpha}\mbox{Re}(A\overline{C})}-2\theta\beta(q_1+q_2).$$
Consequently, for $|Q|\geq n_0$ we have
\begin{equation}\label{eqh12}
\Vert P(q_1+r_0,q_2+s_0)\Vert \geq \frac{\sqrt{\alpha}}{2\sqrt{2}}|Q|\big(\sqrt{|Q|^2+\frac{8}{\alpha}\mbox{Re}(A\overline{C})} -\frac{4\sqrt{2}\theta\beta}{\sqrt{\alpha}}\big).
\end{equation}

Let $\rho$ be any number that satisfies the inequality $0<\rho<R$. Given that the series defined in (\ref{eqh3}) are convergent for $(x,y)=(\rho,\rho)$ there exists a constant $M>0$ such that

\begin{equation}\label{eqh13}
\Vert a_Q\Vert\rho^{|Q|}\leq M,\;\;\;\;\;\Vert b_Q\Vert\rho^{|Q|}\leq M\;\;\;\;\;\Vert c_Q\Vert\rho^{|Q|}\leq M\;\;\;\;\;|Q|=0,1,2,\ldots
\end{equation}
Using (\ref{eqh12}) and (\ref{eqh13}) in (\ref{eqh11}) for $|Q|\geq n_0$ we obtain
\begin{equation}\label{eqh14}
\begin{array}{l}\frac{\sqrt{\alpha}}{2\sqrt{2}}|Q|\big(\sqrt{|Q|^2+\frac{8}{\alpha}\mbox{Re}(A\overline{C})} -\frac{4\sqrt{2}\theta\beta}{\sqrt{\alpha}}\big)\Vert D_Q(r_0,s_0)\Vert\leq\\
\\ M\sum^{q_1-1}_{j=0}\sum^{q_2}_{j=0}[i+j+\Vert r_0\Vert+\Vert s_0\Vert+1]\rho^{i+j-|Q|}\Vert D_{i,j}(r_0,s_0)\Vert+M\sum^{q_2-1}_{j=0}[q_1+\Vert r_0\Vert+\Vert s_0\Vert+1]\rho^{j-q_2}\Vert D_{q_1,j}(r_0,s_0)\Vert.\end{array}
\end{equation}
Summing up all terms of norm $|Q|=n$ ($n\geq n_0$) in (\ref{eqh14}) we have
\begin{equation}\label{eqh15}
\begin{array}{l}\frac{\sqrt{\alpha}}{2\sqrt{2}}n\big(\sqrt{n^2+\frac{8}{\alpha}\mbox{Re}(A\overline{C})} -\frac{4\sqrt{2}\theta\beta}{\sqrt{\alpha}}\big)\sum_{|Q|=n}\Vert D_Q(r_0,s_0)\Vert\leq \\ \\ \hspace{5cm}
2M\sum^{n-1}_{k=0}\big(k+\Vert r_0\Vert+\Vert s_0\Vert+1\big)\rho^{k-n}\big(\sum_{|Q|=k}\Vert D_Q(r_0,s_0)\Vert\big)\end{array}.
\end{equation}

Consider $\tilde{\psi}(x)=\sum^\infty_{|Q|=0}\Vert D_Q(r_0,s_0)\Vert x^{|Q|}$ the formal power series of nonnegative power in the variable $x$. We will show that $\tilde{\psi}$ converges in $D_R[0]$, this will imply that $\psi$ converges in the bidisc $\Delta[(0,0),(R,R)]$. Let $d_n=\sum_{|Q|=n}\Vert D_Q(r_0,s_0)\Vert$ and then
$$\tilde{\psi}(x)=\sum^\infty_{n=0}\big(\sum_{|Q|=n}\Vert D_Q(r_0,s_0)\Vert\big)x^{n}=\sum^\infty_{n=0}d_nx^n.$$
Let us define $g_0,g_1,\ldots$ of the following way:
$$g_0=\Vert D_{0,0}(r_0,s_0)\Vert=1,\,\;\;g_n=\sum_{|Q|=n}\Vert D_Q(r_0,s_0)\Vert,\;\;(n=1,2,\ldots,n_0-1)$$e
\begin{equation}\label{eqh16}
\begin{array}{l}\frac{\sqrt{\alpha}}{2\sqrt{2}}n\big(\sqrt{n^2+\frac{8}{\alpha}\mbox{Re}(A\overline{C})} -\frac{4\sqrt{2}\theta\beta}{\sqrt{\alpha}}\big)g_n=2M\sum^{n-1}_{k=0}\big(k+\Vert r_0\Vert+\Vert s_0\Vert+1\big)\rho^{k-n}g_k\end{array}
\end{equation}
for $n=n_0,n_0+1,\ldots$.
Then, comparing the definition of $g_n$ with (\ref{eqh14}), we conclude that
\begin{equation}\label{eqh17}
d_n\leq g_n,\;\;\;\;\;\;n=0,1,2,\ldots
\end{equation}
Thus we will show that the series
\begin{equation}\label{eqh18}
\sum^\infty_{n=0}g_nx^n
\end{equation}
converges for $|x|<\rho$.
Replacing $n$ by $n+1$ in (\ref{eqh16}) we have:
$$\begin{array}{l}\rho\frac{\sqrt{\alpha}}{2\sqrt{2}}(n+1)\big(\sqrt{(n+1)^2+\frac{8}{\alpha}\mbox{Re}(A\overline{C})} -\frac{4\sqrt{2}\theta\beta}{\sqrt{\alpha}}\big)g_{n+1}=\\
\\ \big[\frac{\sqrt{\alpha}}{2\sqrt{2}}n\big(\sqrt{n^2+\frac{8}{\alpha}\mbox{Re}(A\overline{C})} -\frac{4\sqrt{2}\theta\beta}{\sqrt{\alpha}}\big)+2M(n+\Vert r_0\Vert+\Vert s_0\Vert+1)\big]g_n\end{array}$$
for $n=n_0,n_0+1,\ldots$. Thus
$$\left|\frac{g_{n+1}x^{n+1}}{g_{n}x^{n}}\right|=\frac{\big[\frac{\sqrt{\alpha}}{2\sqrt{2}}n\big(\sqrt{n^2+\frac{8}{\alpha}\mbox{Re}(A\overline{C})} -\frac{4\sqrt{2}\theta\beta}{\sqrt{\alpha}}\big)+2M(n+\Vert r_0\Vert+\Vert s_0\Vert+1)\big]}{\rho\frac{\sqrt{\alpha}}{2\sqrt{2}}(n+1)\big(\sqrt{(n+1)^2+\frac{8}{\alpha}\mbox{Re}(A\overline{C})} -\frac{4\sqrt{2}\theta\beta}{\sqrt{\alpha}}\big)}|x|$$ converges to  $|x|/\rho$ as $n\to\infty$. Thus by the quotient test, the series (\ref{eqh18}) converges in $|x|<\rho$. Using (\ref{eqh17}) and
by the comparison criteria, we conclude that the series
$$\sum^\infty_{n=0}d_nx^n,\;\;\;\;d_0=1,$$ converges in   $|x|<\rho$.  But given that $\rho$ is any number
that satisfies the inequality $0<\rho<R$, we have already proved that this series converges for $|x|<R$.
\end{proof}

%%%%%%%%%%%%%%%%%%%%%%%%%%%%%%%%%%%%%%%%%%%%%%%%%

%%%%%%%%%%%%%%%%%%%%%%%%%%%%%%%%%%%%%%%%%%%%%%%%

\subsection{Non parabolic case: a sufficient convergence condition}
A general statement for a wide class of complex PDEs that includes all the real analytic
cases is given below:

\begin{Theorem}[Non parabolic convergent]
\label{Theorem:nonparabolicgeneral}
Consider the complex analytic second order PDE
\begin{equation}\label{eq3p1}
L[z]:=Ax^2\frac{\partial^2 z}{\partial x^2}+Bxy\frac{\partial^2 z}{\partial x\partial y}+Cy^2\frac{\partial^2 z}{\partial y^2}+xa(x,y)\frac{\partial z}{\partial x}+yb(x,y)\frac{\partial z}{\partial y}+c(x,y)z=0
\end{equation}
 with $A,B,C\in\mathbb{K}^*$ such that $\frac{\Vert B\Vert^2}{2}+\mbox{Re}(A\overline{C})>0$, $\mbox{Re}(A\overline{B})>0$, $\mbox{Re}(B\overline{C})>0$, $a(x,y),b(x,y)$ and $c(x,y)$ analytic in $\Delta[(0,0),(R,R)]$, $R>0$. Let $(r_0,s_0)\in \mathbb{C}^2$ be a non-resonant point of the indicial conic.
 Then there exists a convergent Frobenius type solution of (\ref{eq3p1}) with initial monomial $x^{r_0}y^{s_0}$. If $(r_0,s_0)$ and the coefficients of the PDE are real, the solution is also real.
\end{Theorem}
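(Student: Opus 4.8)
The plan is to rerun the scheme of the proofs of Theorems~\ref{Theorem:ellipticcomplex} and \ref{Theorem:hyperboliccomplex} with the mixed term $Bxy\,\partial^2z/\partial x\partial y$ retained throughout, the only genuinely new ingredient being a lower bound for the indicial polynomial that uses the three sign conditions on $A,B,C$. First I would substitute the formal series $\varphi(x,y)=x^{r}y^{s}\sum_{|Q|\geq 0}d_QX^Q$ into $L[z]=0$, write $a,b,c$ as their convergent Taylor series $\sum a_QX^Q$, $\sum b_QX^Q$, $\sum c_QX^Q$ on $\Delta[(0,0),(R,R)]$, and collect the coefficient of each monomial $X^Q$. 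Exactly as in the derivation of (\ref{eq19})--(\ref{eq21}), this yields for $|Q|=0$ the indicial relation $P(r,s)=0$, with
\[
P(r,s)=Ar^2+Brs+Cs^2+r(a(0,0)-A)+s(b(0,0)-C)+c(0,0),
\]
and for $|Q|\geq 1$ the recursion $P(q_1+r,q_2+s)\,d_Q+e_Q=0$, where $e_Q$ is the same linear combination of the already-determined $d_{i,j}$ with $i+j<|Q|$ as before. Since $(r_0,s_0)$ lies on the indicial conic and is non-resonant, $P(q_1+r_0,q_2+s_0)\neq 0$ for all $|Q|\geq 1$, so the coefficients $D_Q(r_0,s_0)$ are well defined by $D_{0,0}(r_0,s_0)=1$ and $D_Q(r_0,s_0)=-E_Q(r_0,s_0)/P(q_1+r_0,q_2+s_0)$, and $\psi(x,y)=x^{r_0}y^{s_0}\sum D_Q(r_0,s_0)x^{q_1}y^{q_2}$ solves $L[\psi]=0$ formally; the task is to prove it converges on $\Delta[(0,0),(R,R)]$.

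The crux is a quadratic lower bound for $\Vert P(q_1+r_0,q_2+s_0)\Vert$, uniform over the closed first quadrant. Using $P(r_0,s_0)=0$ one gets
\[
P(q_1+r_0,q_2+s_0)=Aq_1^2+Bq_1q_2+Cq_2^2+2q_1A\Big[r_0+\tfrac{B}{2A}s_0+\tfrac{a(0,0)-A}{2A}\Big]+2q_2C\Big[s_0+\tfrac{B}{2C}r_0+\tfrac{b(0,0)-C}{2C}\Big],
\]
and, squaring the modulus of the quadratic part,
\[
\Vert Aq_1^2+Bq_1q_2+Cq_2^2\Vert^2=\Vert A\Vert^2q_1^4+\Vert C\Vert^2q_2^4+\big(\Vert B\Vert^2+2\mbox{Re}(A\overline C)\big)q_1^2q_2^2+2\mbox{Re}(A\overline B)q_1^3q_2+2\mbox{Re}(B\overline C)q_1q_2^3.
\]
Here the hypotheses enter precisely: for $q_1,q_2\geq 0$, the conditions $\mbox{Re}(A\overline B)>0$ and $\mbox{Re}(B\overline C)>0$ make the two cubic cross terms nonnegative, and $\tfrac{\Vert B\Vert^2}{2}+\mbox{Re}(A\overline C)>0$ makes the coefficient of $q_1^2q_2^2$ positive. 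With $\alpha=\min\{\Vert A\Vert^2,\Vert C\Vert^2,\Vert B\Vert^2+2\mbox{Re}(A\overline C)\}>0$ this gives $\Vert Aq_1^2+Bq_1q_2+Cq_2^2\Vert^2\geq\alpha(q_1^4+q_2^4+q_1^2q_2^2)\geq\alpha\tfrac{(q_1+q_2)^4}{8}$, hence $\Vert Aq_1^2+Bq_1q_2+Cq_2^2\Vert\geq\tfrac{\sqrt\alpha}{2\sqrt2}(q_1+q_2)^2$, and absorbing the linear part into constants $\theta,\beta$ (with $\theta$ the maximum of the two bracketed moduli and $\beta=\max\{\Vert A\Vert,\Vert C\Vert\}$) yields
\[
\Vert P(q_1+r_0,q_2+s_0)\Vert\;\geq\;\frac{\sqrt\alpha}{2\sqrt2}\,(q_1+q_2)\Big[(q_1+q_2)-\frac{4\sqrt2\,\theta\beta}{\sqrt\alpha}\Big].
\]

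From this point on the argument is verbatim that of Theorem~\ref{Theorem:ellipticcomplex}. Fix $0<\rho<R$ and $M>0$ with $\Vert a_Q\Vert\rho^{|Q|},\Vert b_Q\Vert\rho^{|Q|},\Vert c_Q\Vert\rho^{|Q|}\leq M$, choose $n_0$ with $n_0>\tfrac{4\sqrt2\theta\beta}{\sqrt\alpha}$, put $d_n=\sum_{|Q|=n}\Vert D_Q(r_0,s_0)\Vert$, and define a majorant sequence $g_n$ by $g_n=d_n$ for $n<n_0$ and
\[
\frac{\sqrt\alpha}{2\sqrt2}\,n\Big[n-\frac{4\sqrt2\theta\beta}{\sqrt\alpha}\Big]g_n=2M\sum_{k=0}^{n-1}\big(k+\Vert r_0\Vert+\Vert s_0\Vert+1\big)\rho^{k-n}g_k,\qquad n\geq n_0.
\]
Summing the recursion over all $Q$ with $|Q|=n$ and invoking the lower bound gives $d_n\leq g_n$ for all $n$; shifting $n\mapsto n+1$ in the defining relation for $g_n$ and applying the ratio test shows that $\sum g_nx^n$, and therefore $\sum d_nx^n$, converges for $|x|<\rho$; as $0<\rho<R$ is arbitrary, the series converges on $\Delta[(0,0),(R,R)]$, so $\psi$ is a bona fide solution. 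If $(r_0,s_0)$ and the coefficients of the PDE are real, then the recursion has real data and every $D_Q(r_0,s_0)$ is real, so $\psi$ is a real Frobenius type solution. The step I expect to be the main obstacle is exactly the uniform quadratic lower bound above: the three inequalities in the hypothesis are calibrated so that the complex quadratic form $Aq_1^2+Bq_1q_2+Cq_2^2$ has modulus at least a fixed constant multiple of $(q_1+q_2)^2$ on the closed first quadrant, and once that is secured the majorant/ratio-test machinery already used in the preceding two theorems carries everything else through unchanged.
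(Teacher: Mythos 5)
Your proposal is correct and follows essentially the same route as the paper's proof: formal substitution yielding the recursion $P(q_1+r,q_2+s)\,d_Q+e_Q=0$, non-resonance to define the coefficients $D_Q(r_0,s_0)$, a quadratic-in-$|Q|$ lower bound for $\Vert P(q_1+r_0,q_2+s_0)\Vert$, and then the identical majorant sequence and ratio-test argument. The only (harmless) difference is in how the lower bound is extracted: you drop the nonnegative cross terms $2\mbox{Re}(A\overline{B})q_1^3q_2$ and $2\mbox{Re}(B\overline{C})q_1q_2^3$ and use $q_1^4+q_2^4+q_1^2q_2^2\geq\frac{(q_1+q_2)^4}{8}$, giving the constant $\frac{\sqrt{\alpha}}{2\sqrt{2}}$ with $\alpha=\min\{\Vert A\Vert^2,\Vert C\Vert^2,\Vert B\Vert^2+2\mbox{Re}(A\overline{C})\}$, whereas the paper keeps those terms by including $\mbox{Re}(A\overline{B})$ and $\mbox{Re}(B\overline{C})$ in the minimum defining $\alpha$ and obtains $\Vert Aq_1^2+Bq_1q_2+Cq_2^2\Vert\geq\frac{\sqrt{\alpha}}{\sqrt{2}}(q_1+q_2)^2$; either constant suffices for the rest of the argument.
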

\begin{proof}
Let $\varphi$ be a solution  of (\ref{eq3p1}) of the form
\begin{equation}\label{eq3p2}
\varphi(x,y)=x^ry^s\sum^{\infty}_{|Q|=0}d_QX^Q
\end{equation}
where $d_{0,0}\neq0$. Given that $a(x,y),b(x,y)$ and $c(x,y)$ are holomorphic in $\Delta[(0,0),(R,R)]$ we have that
\begin{equation}\label{eq3p3}
a(x,y)=\sum^\infty_{|Q|=0}a_QX^Q,\;\;\;b(x,y)=\sum^\infty_{|Q|=0}b_QX^Q\;\;\;\mbox{ and }\;\;\;c(x,y)=\sum^\infty_{|Q|=0}c_QX^Q
\end{equation}
for every $(x,y)\in \Delta[(0,0),(R,R)]$.

Then
$$\frac{\partial\varphi}{\partial x}=\sum^\infty_{|Q|=0}(q_1+r)d_Qx^{q_1+r-1}y^{q_2+s} $$
$$\frac{\partial\varphi}{\partial y}=\sum^\infty_{|Q|=0}(q_2+s)d_Qx^{q_1+r}y^{q_2+s-1} $$
$$\frac{\partial^2\varphi}{\partial x^2}=\sum^\infty_{|Q|=0}(q_1+r)(q_1+r-1)d_Qx^{q_1+r-2}y^{q_2+s} $$
$$\frac{\partial^2\varphi}{\partial x \partial y}=\sum^\infty_{|Q|=0}(q_1+r)(q_2+s)d_Qx^{q_1+r-1}y^{q_2+s-1} $$
$$\frac{\partial^2\varphi}{\partial y^2}=\sum^\infty_{|Q|=0}(q_2+s)(q_2+s-1)d_Qx^{q_1+r}y^{q_2+s-2} $$
and from this we have
$$Ax^2\frac{\partial^2\varphi}{\partial x^2}=x^ry^s\sum^\infty_{|Q|=0}A(q_1+r)(q_1+r-1)d_QX^Q$$
$$Bxy\frac{\partial^2\varphi}{\partial x \partial y}=x^ry^s\sum^\infty_{|Q|=0}B(q_1+r)(q_2+s)d_QX^Q$$
$$Cy^2\frac{\partial^2\varphi}{\partial y^2}=x^ry^s\sum^\infty_{|Q|=0}C(q_2+s)(q_2+s-1)d_QX^Q$$

$$\begin{array}{l c l}xa(x,y)\frac{\partial \varphi}{\partial x}&=&x^ry^s\big(\sum^\infty_{|Q|=0}a_QX^Q\big)\big(\sum^\infty_{|Q|=0}(q_1+r)d_QX^Q\big)\\&&\\&=& x^ry^s\big(\sum^\infty_{|Q|=0}\tilde{a}_QX^Q\big)\end{array}$$ where $\tilde{a}_Q=\sum^{q_1}_{i=0}\sum^{q_2}_{j=0}(i+r)a_{q_1-i,q_2-j}d_{ij}$.

$$\begin{array}{l c l}yb(x,y)\frac{\partial\varphi}{\partial y}&=&x^ry^s\big(\sum^\infty_{|Q|=0}b_QX^Q\big)\big(\sum^\infty_{|Q|=0}(q_2+s)d_QX^Q\big)\\&&\\&=& x^ry^s\big(\sum^\infty_{|Q|=0}\tilde{b}_QX^Q\big)\end{array}$$ where $\tilde{b}_Q=\sum^{q_1}_{i=0}\sum^{q_2}_{j=0}(j+s)b_{q_1-i,q_2-j}d_{ij}$.

$$\begin{array}{l c l}c(x,y)\varphi&=&x^ry^s\big(\sum^\infty_{|Q|=0}c_QX^Q\big)\big(\sum^\infty_{|Q|=0}d_QX^Q\big)\\&&\\&=& x^ry^s\big(\sum^\infty_{|Q|=0}\tilde{c}_QX^Q\big)\end{array}$$ where $\tilde{c}_Q=\sum^{q_1}_{i=0}\sum^{q_2}_{j=0}c_{q_1-i,q_2-j}d_{ij}$.

Given that $\varphi$ is solution of (\ref{eq3p1}) we have
$$Ax^2\frac{\partial^2 \varphi}{\partial x^2}+Bxy\frac{\partial^2 \varphi}{\partial x\partial y}+Cy^2\frac{\partial^2 \varphi}{\partial y^2}+xa(x,y)\frac{\partial \varphi}{\partial x}+yb(x,y)\frac{\partial \varphi}{\partial y}+c(x,y)\varphi=0$$
Therefore
$$x^ry^s\sum^\infty_{|Q|=0}\big(\big[A(q_1+r)(q_1+r-1)+B(q_1+r)(q_2+s)+C(q_2+s)(q_2+s-1)\big]d_Q+\tilde{a}_Q+\tilde{b}_Q+\tilde{c}_Q\big)X^Q=0 $$
hence we have
$$\big[A(q_1+r)(q_1+r-1)+B(q_1+r)(q_2+s)+C(q_2+s)(q_2+s-1)\big]d_Q+\tilde{a}_Q+\tilde{b}_Q+\tilde{c}_Q=0,\;\;\;|Q|=0,1,2,\ldots$$
Using the definitions of $\tilde{a}_Q$, $\tilde{b}_Q$ and $\tilde{c}_Q$ we write the equation anterior as
$$\begin{array}{c}\big[A(q_1+r)(q_1+r-1)+B(q_1+r)(q_2+s)+C(q_2+s)(q_2+s-1)\big]d_Q+\\
\\ \sum^{q_1}_{i=0}\sum^{q_2}_{j=0}\big[(i+r)a_{q_1-i,q_2-j}+(j+s)b_{q_1-i,q_2-j}+c_{q_1-i,q_2-j}\big]d_{i,j}=0\end{array}$$
equivalently
$$\begin{array}{c}\big[A(q_1+r)(q_1+r-1)+B(q_1+r)(q_2+s)+C(q_2+s)(q_2+s-1)+(q_1+r)a_{0,0}+(q_2+s)b_{0,0}+c_{0,0}\big]d_Q\\
\\ +\sum^{q_1-1}_{i=0}\sum^{q_2}_{j=0}\big[(i+r)a_{q_1-i,q_2-j}+(j+s)b_{q_1-i,q_2-j}+c_{q_1-i,q_2-j}\big]d_{i,j}\\
\\+\sum^{q_2-1}_{j=0}\big[(q_1+r)a_{0,q_2-j}+(j+s)b_{0,q_2-j}+c_{0,q_2-j}\big]d_{q_1,j} =0\end{array}$$

Para $|Q|=0$ we have
\begin{equation}\label{eq3p4} Ar(r-1)+Brs+Cs(s-1)+ra_{0,0}+sb_{0,0}+c_{0,0}=0
\end{equation}
provided that $d_{0,0}\neq0$. The second degree two variables polynomial $P$ dado by $$P(r,s)=Ar^2+Brs+Cs^2+r(a_{0,0}-A)+s(b_{0,0}-C)+c_{0,0}$$
is called indicial conic associate to  equation (\ref{eq3p1}).
We see that
\begin{equation}\label{eq3p5}
P(q_1+r,q_2+s)d_Q+e_Q=0,\;\;\;\;\;|Q|=1,2,\ldots
\end{equation}
where
\begin{equation}\label{eq3p6}
\begin{array}{c c l}e_Q&=&\sum^{q_1-1}_{i=0}\sum^{q_2}_{j=0}\big[(i+r)a_{q_1-i,q_2-j}+(j+s)b_{q_1-i,q_2-j}+c_{q_1-i,q_2-j}\big]d_{i,j}\\&&\\&&
+\sum^{q_2-1}_{j=0}\big[(q_1+r)a_{0,q_2-j}+(j+s)b_{0,q_2-j}+c_{0,q_2-j}\big]d_{q_1,j},\;\;\;\;\;|Q|=1,2,\ldots\end{array}
\end{equation}
Observe that $e_Q$ is a linear combination of $d_{0,0},d_{1,0},d_{0,1},\ldots,d_{n-1,0},d_{0,n-1}$, whose coefficients are given only in terms of the already known functions $a,b,c$, $r$ and $s$. Letting  $r$, $s$ and $d_{0,0}$ undetermined, as of now we solve  equations (\ref{eq3p5}) and (\ref{eq3p6}) in terms of $d_{0,0}$, $r$ and $s$. These solutions are defined by $D_Q(r,s)$, and a $e_Q$ corresponding to $E_Q(r,s)$.
Thus
$$E_{1,0}(r,s)=(ra_{1,0}+sb_{1,0}+c_{1,0})d_{0,0}\;\;\;\;\;\;D_{1,0}(r,s)=-\frac{E_{1,0}(r,s)}{P(1+r,s)},$$
$$E_{0,1}(r,s)=(ra_{0,1}+sb_{0,1}+c_{0,1})d_{0,0}\;\;\;\;\;\;D_{0,1}(r,s)=-\frac{E_{0,1}(r,s)}{P(r,1+s)},$$
 and in general:
 \begin{equation}\label{eq3p7}
\begin{array}{c c l}E_Q(r,s)&=&\sum^{q_1-1}_{i=0}\sum^{q_2}_{j=0}\big[(i+r)a_{q_1-i,q_2-j}+(j+s)b_{q_1-i,q_2-j}+c_{q_1-i,q_2-j}\big]D_{i,j}(r,s)\\&&\\&&
+\sum^{q_2-1}_{j=0}\big[(q_1+r)a_{0,q_2-j}+(j+s)b_{0,q_2-j}+c_{0,q_2-j}\big]D_{q_1,j}(r,s),\;\;|Q|=1,2,\ldots\end{array}
\end{equation}

\begin{equation}\label{eq3p8}
D_Q(r,s)=-\frac{E_Q(r,s)}{P(q_1+r,q_2+s)},\;\;\;\;\;|Q|=1,2,\ldots
\end{equation}

The coefficients $D_Q$ obtained in this way, are rational functions of $r$ and $s$, and the only points where they are not defined, are those points $r$ and $s$ for which $P(q_1+r,q_2+s)=0$ for some $|Q|=1,2,\ldots$.  Let us define $\varphi$ by:
\begin{equation}\label{eq3p9}
\varphi((x,y),(r,s))=d_{0,0}x^ry^s+x^ry^s\sum^{\infty}_{|Q|=1}D_Q(r,s)x^{q_1}y^{q_2}.
\end{equation}
 If the series (\ref{eq3p9}) converges in $\Delta[(0,0),(R,R)]$, then we have:
\begin{equation}\label{eq3p10}
L(\varphi)((x,y),(r,s))=d_{0,0}P(r,s)x^ry^s.
\end{equation}
We have the following situation: If a $\varphi$ given by (\ref{eq3p2}) is solution of (\ref{eq3p1}), then $(r,s)$ must be a point of the indicial conic $P$, and then os $d_Q$ ($|Q|=1,2,\ldots$) are uniquely determined in terms of $d_{0,0}$, $r$ and $s$ by os $D_Q(r,s)$ from (\ref{eq3p8}), provided that $P(q_1+r,q_2+s)\neq0$, $|Q|=1,2,\ldots$.

Conversely, if $(r,s)$ is a root of $P$ and if os $D_Q(r,s)$ can be determined (i.e., $P(q_1+r,q_2+s)\neq0$ for $|Q|=1,2,\ldots$) then the function $\varphi$ given by $\varphi(x,y)=\varphi((x,y),(r,s))$ is solution of (\ref{eq3p1}) for every choice of $d_{0,0}$, provided that the series (\ref{eq3p9}) is convergent.

By hypothesis $(r_0,s_0)$ is a point of the indicial conic $P$ such that $(r_0,s_0)\notin\mathcal{R}$, then $P(q_1+r_0,q_2+s_0)\neq0$ for every $|Q|=1,2,\ldots$. Thus, $D_Q(r_0,s_0)$ there exists for every $|Q|=1,2,\ldots$, and putting $d_{0,0}=D_{0,0}(r_0,s_0)=1$ we have that the function $\psi$ given by
\begin{equation}\label{eq3p11}
\psi(x,y)=x^{r_0}y^{s_0}\sum^\infty_{|Q|=0}D_Q(r_0,s_0)x^{q_1}y^{q_2},\;\;\;\;D_{0,0}(r_0,s_0)=1,
\end{equation}
 is a solution of (\ref{eq3p1}), when the series is convergent.

We need to show that the series (\ref{eq3p11}) converges in the bidisc $\Delta[(0,0),(R,R)]$ where os $D_Q(r_0,s_0)$ are given recursively  by
\begin{equation}\label{eq3p12}
\begin{array}{c} D_{0,0}(r_0,s_0)=1,\\
\\
\begin{array}{c} P(q_1+r_0,q_2+s_0)D_Q(r_0,s_0)=-\sum^{q_1-1}_{i=0}\sum^{q_2}_{j=0}\big[(i+r_0)a_{q_1-i,q_2-j}+(j+s_0)b_{q_1-i,q_2-j}+c_{q_1-i,q_2-j}\big]D_{i,j}(r_0,s_0)\\ \\
-\sum^{q_2-1}_{j=0}\big[(q_1+r_0)a_{0,q_2-j}+(j+s_0)b_{0,q_2-j}+c_{0,q_2-j}\big]D_{q_1,j}(r_0,s_0),\;|Q|=1,2,\ldots\end{array}
\end{array}
\end{equation}
Observe that
$$P(q_1+r_0,q_2+s_0)=Aq_1^2+Bq_1q_2+Cq_2^2+2q_1A\big[r_0+\frac{B}{2A}s_0+\frac{a(0,0)-A}{2A}\big]+2q_2C\big[s_0+\frac{B}{2C}r_0+\frac{b(0,0)-C}{2C}\big]$$
therefrom
$$\Vert P(q_1+r_0,q_2+s_0)\Vert\geq  \Vert Aq_1^2+Bq_1q_2+Cq_2^2\Vert-2q_1\Vert A\Vert\left\Vert r_0+\frac{B}{2A}s_0+\frac{a(0,0)-A}{2A}\right\Vert-2q_2\Vert C\Vert\left\Vert s_0+\frac{B}{2C}r_0+\frac{b(0,0)-C}{2C}\right\Vert.
$$
Given that
$$\Vert Aq_1^2+Bq_1q_2+Cq_2^2\Vert^2=q_1^4\Vert A\Vert^2+2q_1^2q_2^2\big[\frac{\Vert B\Vert^2}{2}+\mbox{Re}(A\overline{C})\big]+q_2^4\Vert C\Vert^2+2q_1q_2\big[q_1^2\mbox{Re}(A\overline{B})+q_2^2\mbox{Re}(B\overline{C})\big]$$
be a $\alpha=\min\lbrace\Vert A\Vert^2,\frac{\Vert B\Vert^2}{2}+\mbox{Re}(A\overline{C}),\Vert C\Vert^2,\mbox{Re}(A\overline{B}),\mbox{Re}(B\overline{C})\rbrace>0$ we have
$$\Vert Aq_1^2+Bq_1q_2+Cq_2^2\Vert^2\geq  \alpha \big[q_1^4+2q_1^2q_2^2+q_2^4+2q_1q_2(q_1^2+q_2^2)\big]$$
$$\Vert Aq_1^2+Bq_1q_2+Cq_2^2\Vert^2\geq  \alpha \big[(q_1^2+q_2^2)^2+2q_1q_2(q_1^2+q_2^2)\big]$$
$$\Vert Aq_1^2+Bq_1q_2+Cq_2^2\Vert^2\geq  \alpha (q_1^2+q_2^2) \big[q_1^2+q_2^2+2q_1q_2\big]$$
$$\Vert Aq_1^2+Bq_1q_2+Cq_2^2\Vert^2\geq  \alpha (q_1^2+q_2^2)(q_1+q_2)^2\geq \frac{\alpha}{2}(q_1+q_2)^4$$
$$\Vert Aq_1^2+Bq_1q_2+Cq_2^2\Vert\geq \frac{\sqrt{\alpha}}{\sqrt{2}}(q_1+q_2)^2.$$
Let  $\theta=\max\lbrace\left\Vert r_0+\frac{B}{2A}s_0+\frac{a(0,0)-A}{2A}\right\Vert,\left\Vert s_0+\frac{B}{2C}r_0+\frac{b(0,0)-C}{2C}\right\Vert\rbrace$ and $\beta=\max\{\Vert A \Vert,\Vert C \Vert\}$ hence we have
$$\Vert P(q_1+r_0,q_2+s_0)\Vert \geq \frac{\sqrt{\alpha}}{\sqrt{2}}(q_1+q_2)^2-2\theta\beta(q_1+q_2).$$
Consequently:
\begin{equation}\label{eq3p13}
\Vert P(q_1+r_0,q_2+s_0)\Vert\geq \frac{\sqrt{\alpha}}{\sqrt{2}} (q_1+q_2)\big[(q_1+q_2)-\frac{2\sqrt{2}\theta\beta}{\sqrt{\alpha}} \big].
\end{equation}

Let $\rho$ be any number that satisfies the inequality $0<\rho<R$. Given that the series defined in (\ref{eq3p3}) are convergent for $(x,y)=(\rho,\rho)$ there exists a constant $M>0$ such that

\begin{equation}\label{eq3p14}
\Vert a_Q\Vert\rho^{|Q|}\leq M,\;\;\;\;\;\Vert b_Q\Vert\rho^{|Q|}\leq M\;\;\;\;\;\Vert c_Q\Vert\rho^{|Q|}\leq M\;\;\;\;\;|Q|=0,1,2,\ldots
\end{equation}
Using (\ref{eq3p13}) and (\ref{eq3p14}) in (\ref{eq3p12}) we obtain
\begin{equation}\label{eq3p15}
\begin{array}{c c l}\frac{\sqrt{\alpha}}{\sqrt{2}} |Q|\big[|Q|-\frac{2\sqrt{2}\theta\beta}{\sqrt{\alpha}} \big]\Vert D_Q(r_0,s_0)\Vert&\leq& M\sum^{q_1-1}_{j=0}\sum^{q_2}_{j=0}[i+j+\Vert r_0\Vert+\Vert s_0\Vert+1]\rho^{i+j-|Q|}\Vert D_{i,j}(r_0,s_0)\Vert\\&&\\&&+M\sum^{q_2-1}_{j=0}[q_1+\Vert r_0\Vert+\Vert s_0\Vert+1]\rho^{j-q_2}\Vert D_{q_1,j}(r_0,s_0)\Vert.\end{array}
\end{equation}
Summing up all terms of norm $|Q|=n$ in (\ref{eq3p15}) we have
\begin{equation}\label{eq3p16}
\frac{\sqrt{\alpha}}{\sqrt{2}} n\big[n-\frac{2\sqrt{2}\theta\beta}{\sqrt{\alpha}} \big]\sum_{|Q|=n}\Vert D_Q(r_0,s_0)\Vert \leq 2M\sum^{n-1}_{k=0}\big(k+\Vert r_0\Vert+\Vert s_0\Vert+1\big)\rho^{k-n}\big(\sum_{|Q|=k}\Vert D_Q(r_0,s_0)\Vert\big).
\end{equation}

Consider $\tilde{\psi}(x)=\sum^\infty_{|Q|=0}\Vert D_Q(r_0,s_0)\Vert x^{|Q|}$ the formal power series of nonnegative power in the variable $x$. We will show that $\tilde{\psi}$ converges in $D_R[0]$,
this will imply that $\psi$ converges in the bidisc $\Delta[(0,0),(R,R)]$. Let $d_n=\sum_{|Q|=n}\Vert D_Q(r_0,s_0)\Vert$ and then
$$\tilde{\psi}(x)=\sum^\infty_{n=0}\big(\sum_{|Q|=n}\Vert D_Q(r_0,s_0)\Vert \big)x^{n}=\sum^\infty_{n=0}d_nx^n.$$
Let $n_0$ be a natural number such that $n_0>\frac{2\sqrt{2}\theta\beta}{\sqrt{\alpha}}$ and let us define $g_0,g_1,\ldots$ of the following way:
$$g_0=\Vert D_{0,0}(r_0,s_0)\Vert=1,\,\;\;g_n=\sum_{|Q|=n}\Vert D_Q(r_0,s_0)\Vert,\;\;(n=1,2,\ldots,n_0-1)$$e
\begin{equation}\label{eq3p17}
\frac{\sqrt{\alpha}}{\sqrt{2}} n\big[n-\frac{2\sqrt{2}\theta\beta}{\sqrt{\alpha}} \big]g_n=2M\sum^{n-1}_{k=0}\big(k+\Vert r_0\Vert+\Vert s_0\Vert+1\big)\rho^{k-n}g_k
\end{equation}
for $n=n_0,n_0+1,\ldots$.
Then, comparing the definition of $g_n$ with (\ref{eq3p16}), we conclude that
\begin{equation}\label{eq3p18}
d_n\leq g_n,\;\;\;\;\;\;n=0,1,2,\ldots
\end{equation}
Thus we will show that the series
\begin{equation}\label{eq3p19}
\sum^\infty_{n=0}g_nx^n
\end{equation}
converges for $|x|<\rho$.

Replacing $n$ by $n+1$ in (\ref{eq3p17}) we have:
$$\rho\frac{\sqrt{\alpha}}{\sqrt{2}} (n+1)\big[n+1-\frac{2\sqrt{2}\theta\beta}{\sqrt{\alpha}} \big]g_{n+1}=\big(\frac{\sqrt{\alpha}}{\sqrt{2}} n\big[n-\frac{2\sqrt{2}\theta\beta}{\sqrt{\alpha}} \big]+2M(n+\Vert r_0\Vert+\Vert s_0\Vert+1)\big)g_n$$
for $n=n_0,n_0+1,\ldots$. Thus
$$\left|\frac{g_{n+1}x^{n+1}}{g_{n}x^{n}}\right|=\frac{\frac{\sqrt{\alpha}}{\sqrt{2}} n\big[n-\frac{2\sqrt{2}\theta\beta}{\sqrt{\alpha}} \big]+2M(n+\Vert r_0\Vert+\Vert s_0\Vert+1)}{\rho\frac{\sqrt{\alpha}}{\sqrt{2}} (n+1)\big[n+1-\frac{2\sqrt{2}\theta\beta}{\sqrt{\alpha}} \big]}|x|$$ converges to  $|x|/\rho$ as $n\to\infty$. Thus according to the quotient test, the series (\ref{eq3p19}) converges in $|x|<\rho$. Using (\ref{eq3p18}) and by the comparison criteria, we conclude that the series
$$\sum^\infty_{n=0}d_nx^n,\;\;\;\;d_0=1,$$ converges in   $|x|<\rho$.  But given that $\rho$ is any number that satisfies the inequality $0<\rho<R$, we have already proved that this series converges for $|x|<R$.
\end{proof}

%%%%%%%%%%%%%%%%%%%%%%%%%%%%%%%%%%%%%%%%%%%%%%%%%%%%%%%%%%%%

\section{Examples: Bessel,  Airy, Hermite  partial differential equations and others}
In this section we introduce some classes of examples bringing to our framework classical
examples from the theory of ordinary differential equations. The strategy is based on Remark~\ref{Remark:restriction}. Indeed, we look for models which once restricted to
straight lines $y=tx, \, t \in \mathbb R$, are of the original classical model. For instance, the
candidate for being the Hermite partial equation must be such that once restricted to lines
$y=tx$ becomes a Hermite ordinary differential equation. The only exception is for the Bessel equation.
We shall start with:

\subsection{Bessel partial differential equations}

We have mentioned in the Introduction that one of the motivations of this work is to introduce in the
partial differential equations framework some classical models and therefore to obtain some of their
possible applications. This is the case of the
{\it Bessel equation} $x^2 y^{\prime \prime}  + x y ^\prime + (x^2 - \nu^2) y=0$, whose range of applications goes from
heat conduction, to the model of the hydrogen atom (\cite{BS,Gr}). This equation has  the origin $x=0$ as a singular point. Its solutions are called {\it Bessel functions} and the most important cases are when $\nu$ is an integer or half-integer. Bessel functions for integer $\nu$ are also known as {\it cylinder functions}  because they appear in the solution to Laplace's equation in cylindrical coordinates.
Let us now introduce what we shall understand as the equivalent of the  Bessel equation for the case of
PDEs.

\subsubsection{Bessel PDE type  I}
Consider the equation {\it Bessel PDE of type I} given by
\begin{equation}\label{eqbessel1}
x^2\frac{\partial^2 z}{\partial x^2}+2xy\frac{\partial^2 z}{\partial x\partial y}+y^2\frac{\partial^2 z}{\partial y^2}+x\frac{\partial z}{\partial x}+y\frac{\partial z}{\partial y}+ (x^2-\nu^2)z=0,
\end{equation}
where $\nu\in\mathbb{R}$.
This model is obtained by imposing that the restrictions to  lines $y=tx$ are  Bessel ordinary differential equations.
\subsubsection{Solution of Bessel type I}
Let us first consider the case $\nu=0$. Let $\varphi$ be a solution  of (\ref{eqbessel1}) of the form
\begin{equation}\label{eqbessel2}
\varphi(x,y)=x^ry^s\sum^{\infty}_{|Q|=0}d_QX^Q
\end{equation}
where $d_{0,0}\neq0$. Then
$$\frac{\partial\varphi}{\partial x}=\sum^\infty_{|Q|=0}(q_1+r)d_Qx^{q_1+r-1}y^{q_2+s} $$
$$\frac{\partial\varphi}{\partial y}=\sum^\infty_{|Q|=0}(q_2+s)d_Qx^{q_1+r}y^{q_2+s-1} $$
$$\frac{\partial^2\varphi}{\partial x^2}=\sum^\infty_{|Q|=0}(q_1+r)(q_1+r-1)d_Qx^{q_1+r-2}y^{q_2+s} $$
$$\frac{\partial^2\varphi}{\partial x \partial y}=\sum^\infty_{|Q|=0}(q_1+r)(q_2+s)d_Qx^{q_1+r-1}y^{q_2+s-1} $$
$$\frac{\partial^2\varphi}{\partial y^2}=\sum^\infty_{|Q|=0}(q_2+s)(q_2+s-1)d_Qx^{q_1+r}y^{q_2+s-2} $$

and from this we have
$$x^2\frac{\partial^2\varphi}{\partial x^2}=x^ry^s\sum^\infty_{|Q|=0}(q_1+r)(q_1+r-1)d_QX^Q$$
$$2xy\frac{\partial^2\varphi}{\partial x \partial y}=x^ry^s\sum^\infty_{|Q|=0}2(q_1+r)(q_2+s)d_QX^Q$$
$$y^2\frac{\partial^2\varphi}{\partial y^2}=x^ry^s\sum^\infty_{|Q|=0}(q_2+s)(q_2+s-1)d_QX^Q$$
$$x\frac{\partial\varphi}{\partial x}=x^ry^s\sum^\infty_{|Q|=0}(q_1+r)d_QX^Q$$
$$y\frac{\partial\varphi}{\partial y}=x^ry^s\sum^\infty_{|Q|=0}(q_2+s)d_QX^Q$$
$$x^2\varphi=x^ry^s\sum^{\infty}_{|Q|=0}d_Qx^{q_1+2}y^{q_2}=x^ry^s\sum^{\infty}_{|Q|=2}d_{q_1-2,q_2}X^Q.$$
Given that $\varphi$ is solution of (\ref{eqbessel1}) we have
$$x^2\frac{\partial^2 \varphi}{\partial x^2}+2xy\frac{\partial^2 \varphi}{\partial x\partial y}+y^2\frac{\partial^2 \varphi}{\partial y^2}+x\frac{\partial \varphi}{\partial x}+y\frac{\partial \varphi}{\partial y}+x^2\varphi=0.$$
Therefore
$$\begin{array}{c} x^ry^s\big[\sum^\infty_{|Q|=0}\big[(q_1+r)(q_1+r-1)+2(q_1+r)(q_2+s)+(q_2+s)(q_2+s-1)+(q_1+r)+(q_2+s)\big]d_QX^Q \\ \\ +\sum^{\infty}_{|Q|=2}d_{q_1-2,q_2}X^Q\big]=0
\end{array}$$

$$\sum^\infty_{|Q|=0}(q_1+q_2+r+s)^2d_QX^Q+\sum^{\infty}_{|Q|=2}d_{q_1-2,q_2}X^Q=0$$

$$\sum^\infty_{|Q|=0}(|Q|+r+s)^2d_QX^Q+\sum^{\infty}_{|Q|=2}d_{q_1-2,q_2}X^Q=0$$
$$\begin{array}{c}
(r+s)^2d_{0,0}+(1+r+s)^2\big[d_{1,0}x+d_{0,1}y\big]+\sum^{\infty}_{|Q|=2}\big[(|Q|+r+s)^2d_Q+d_{q_1-2,q_2}\big]X^Q=0\end{array}$$

and then $$(r+s)^2d_{0,0}=0,\;(1+r+s)^2d_{1,0}=0,\;(1+r+s)^2d_{0,1}=0$$
 and
$$(|Q|+r+s)^2d_Q+d_{q_1-2,q_2}=0,\;\mbox{ for every }|Q|=2,3,\ldots$$
Given that $d_{0,0}\neq0$ we have that
\begin{equation}\label{eqbessel3}
r+s=0.
\end{equation}
Let $(r,s)$ point of (\ref{eqbessel3}). Note that  $|Q|+r_0+s_0\neq 0$, for all $|Q|=1,2,\ldots$

Thus we have $d_{1,0}=d_{0,1}=0$  and
$$d_Q=-\frac{d_{q_1-2,q_2}}{|Q|^2},\;\mbox{ for every }|Q|=2,3\ldots.$$
Observe that
$$d_{1,1}=d_{0,2}=0\mbox{ and }d_{2,0}=-\frac{d_{0,0}}{2^2}$$

$$d_{1,2}=d_{0,3}=0\mbox{ and }d_{3,0}=-\frac{d_{1,0}}{3^2}=0,d_{2,1}=-\frac{d_{0,1}}{3^2}=0$$

$$\begin{array}{c} d_{1,3}=d_{0,4}=0\mbox{ and }d_{4,0}=-\frac{d_{2,0}}{4^2}=
(-1)^2\frac{d_{0,0}}{4^22^2}
,d_{3,1}=-\frac{d_{1,1}}{4^2}=0,d_{2,2}=-\frac{d_{0,2}}{4^2}=0\end{array}$$

$$\begin{array}{c} d_{1,4}=d_{0,5}=0\mbox{ and }d_{5,0}=-\frac{d_{3,0}}{5^2}=0,
d_{4,1}=-\frac{d_{2,1}}{5^2}=0,d_{3,2}=-\frac{d_{1,2}}{5^2}=0,d_{2,3}=-\frac{d_{0,3}}{5^2}=0\end{array}$$

$$\begin{array}{c}d_{1,5}=d_{0,6}=0\mbox{ and }d_{6,0}=-\frac{d_{4,0}}{6^2}=
(-1)^3\frac{d_{0,0}}{6^24^22^2},d_{5,1}=-\frac{d_{3,1}}{6^2}=0,d_{4,2}=-\frac{d_{2,2}}{6^2}=0,d_{3,3}=-\frac{d_{1,3}}{6^2}=0,
d_{2,4}=-\frac{d_{0,4}}{6^2}=0.
\end{array} $$

In general
$$d_{q_1,q_2}=0\;\;(q_1,q_2)\neq(2n,0)$$
and
$$d_{2n,0}=\frac{(-1)^n d_{0,0}}{2^{2n}(n!)^2}\;\;\;n=1,2,\ldots$$
Choosing $d_{0,0}=1$ we have that
\begin{equation}\label{eqbessel4}
\varphi(x,y)=x^{r_0}y^{s_0}\big[1+\sum^\infty_{n=1}\frac{(-1)^n x^{2n}}{2^{2n}(n!)^2}\big]
\end{equation}
 is solution of (\ref{eqbessel1}) where $(r_0,s_0)$ verifies (\ref{eqbessel3}). We observe that
 $y(x)=1+\sum^\infty_{n=1}\frac{(-1)^n x^{2n}}{2^{2n}(n!)^2}$ is exactly one of the solutions of the
 Bessel ordinary differential equation $x^2 y^{\prime \prime} + x y ^\prime + (x^2 - \nu^2)=0$ for
 $\nu=0$. Indeed, this is well-known as a first type Bessel function of order zero and denoted by
 $J_0$. Thus our solution is of the form $\vr(x,y)=x^{r_0}y^{s_0} J_0(x)$, which show that nothing striking comes from this first model, though it may be useful in applications.

\subsubsection{Bessel PDE type II}
Now we consider a more symmetric model for the Bessel PDE.
Consider the {\it Bessel PDE of type II} given by the equation
\begin{equation}\label{eq51}
x^2\frac{\partial^2 z}{\partial x^2}+y^2\frac{\partial^2 z}{\partial y^2}+x\frac{\partial z}{\partial x}+y\frac{\partial z}{\partial y}+(xy-\nu^2)z=0
\end{equation}

\subsubsection{Solution of Bessel type II}
Let us first consider the case $\nu=0$. Let $\varphi$ be a solution  of (\ref{eq51}) of the form
\begin{equation}\label{eq52}
\varphi(x,y)=x^ry^s\sum^{\infty}_{|Q|=0}d_QX^Q
\end{equation}
where $d_{0,0}\neq0$. Then
$$\frac{\partial\varphi}{\partial x}=\sum^\infty_{|Q|=0}(q_1+r)d_Qx^{q_1+r-1}y^{q_2+s} $$
$$\frac{\partial\varphi}{\partial y}=\sum^\infty_{|Q|=0}(q_2+s)d_Qx^{q_1+r}y^{q_2+s-1} $$
$$\frac{\partial^2\varphi}{\partial x^2}=\sum^\infty_{|Q|=0}(q_1+r)(q_1+r-1)d_Qx^{q_1+r-2}y^{q_2+s} $$
$$\frac{\partial^2\varphi}{\partial y^2}=\sum^\infty_{|Q|=0}(q_2+s)(q_2+s-1)d_Qx^{q_1+r}y^{q_2+s-2} $$
and from this we have
$$x^2\frac{\partial^2\varphi}{\partial x^2}=x^ry^s\sum^\infty_{|Q|=0}(q_1+r)(q_1+r-1)d_QX^Q$$
$$y^2\frac{\partial^2\varphi}{\partial y^2}=x^ry^s\sum^\infty_{|Q|=0}(q_2+s)(q_2+s-1)d_QX^Q$$
$$x\frac{\partial\varphi}{\partial x}=x^ry^s\sum^\infty_{|Q|=0}(q_1+r)d_QX^Q$$
$$y\frac{\partial\varphi}{\partial y}=x^ry^s\sum^\infty_{|Q|=0}(q_2+s)d_QX^Q$$
$$xy\varphi(x,y)=x^ry^s\sum^{\infty}_{|Q|=0}d_Qx^{q_1+1}y^{q_2+1}=x^ry^s\sum^{\infty}_{|Q|=2}d_{q_1-1,q_2-1}x^Q.$$
Given that $\varphi$ is solution of (\ref{eq51}) we have
$$x^2\frac{\partial^2 \varphi}{\partial x^2}+y^2\frac{\partial^2 \varphi}{\partial y^2}+x\frac{\partial \varphi}{\partial x}+y\frac{\partial \varphi}{\partial y}+xy\varphi=0$$
Therefore
$$x^ry^s\big[\sum^\infty_{|Q|=0}\big[(q_1+r)(q_1+r-1)+(q_2+s)(q_2+s-1)+(q_1+r)+(q_2+s)\big]d_QX^Q+\sum^{\infty}_{|Q|=2}d_{q_1-1,q_2-1}X^Q\big]=0 $$
$$\sum^\infty_{|Q|=0}\big[(q_1+r)^2+(q_2+s)^2\big]d_QX^Q+\sum^{\infty}_{|Q|=2}d_{q_1-1,q_2-1}X^Q=0$$
$$(r^2+s^2)d_{0,0}+\big[(r+1)^2+s^2)d_{1,0}x+(r^2+(s+1)^2)d_{0,1}y\big] +\sum^{\infty}_{|Q|=2}\big(\big[(q_1+r)^2+(q_2+s)^2\big]d_Q+d_{q_1-1,q_2-1}\big)X^Q=0$$
and then $$(r^2+s^2)d_{0,0}=0,\;\;\;((r+1)^2+s^2)d_{1,0}=0,\;\;\;(r^2+(s+1)^2)d_{0,1}=0$$ and
$$\big[(q_1+r)^2+(q_2+s)^2\big]d_Q+d_{q_1-1,q_2-1}=0,\;\mbox{ for every }|Q|=2,3,\ldots$$
Given that $d_{0,0}\neq0$ we have that
\begin{equation}\label{eq53}
r^2+s^2=0.
\end{equation}
Let $(r,s)$ point of (\ref{eq53}) such that
\begin{equation}\label{eq53'}
(r,s)\notin\{(r_1,s_1)\in\mathbb{C}^2;\;2q_1r_1+2q_2s_1+q_1^2+q_2^2=0,\;\mbox{ for some }|Q|=1,2,\ldots\}.
\end{equation}
Thus we have $d_{1,0}=d_{0,1}=0$ and
$$d_Q=-\frac{d_{q_1-1,q_2-1}}{2q_1r_0+2q_2s_0+q_1^2+q_2^2},\;\mbox{ for every }|Q|=2,3,\ldots.$$
Observe that
$$d_{2,0}=d_{0,2}=0\mbox{ and }d_{1,1}=-\frac{d_{0,0}}{2(r_0+s_0+1)}$$

$$d_{3,0}=d_{0,3}=0\mbox{ and }d_{2,1}=-\frac{d_{1,0}}{4r_0+2s_0+5}=0,d_{1,2}=-\frac{d_{0,1}}{2r_0+4s_0+5}=0$$
$$\begin{array}{c}d_{4,0}=d_{0,4}=0\mbox{ and }d_{3,1}=-\frac{d_{2,0}}{6r_0+2s_0+10}=0,d_{1,3}=-\frac{d_{0,2}}{2r_0+6s_0+10}=0,\\
\\
d_{2,2}=-\frac{d_{1,1}}{4(r_0+s_0+2)}=(-1)^2\frac{d_{0,0}}{2\cdot4(r_0+s_0+1)(r_0+s_0+2)}\end{array} $$

$$\begin{array}{c}d_{5,0}=d_{0,5}=0\mbox{ and }d_{4,1}=-\frac{d_{3,0}}{8r_0+2s_0+17}=0,d_{1,4}=-\frac{d_{0,3}}{2r_0+8s_0+17}=0,\\
\\
d_{3,2}=-\frac{d_{2,1}}{6r_0+4s_0+13}=0,d_{2,3}=-\frac{d_{1,2}}{4r_0+6s_0+13}=0\end{array} $$

$$\begin{array}{c}d_{6,0}=d_{0,6}=0\mbox{ and }d_{5,1}=-\frac{d_{4,0}}{10r_0+2s_0+26}=0,d_{1,5}=-\frac{d_{0,4}}{2r_0+10s_0+26}=0,\\
\\
d_{4,2}=-\frac{d_{3,1}}{8r_0+4s_0+20}=0,d_{2,4}=-\frac{d_{1,3}}{4r_0+8s_0+20}=0\\
\\
d_{3,3}=-\frac{d_{2,2}}{6(r_0+s_0+3)}=(-1)^3\frac{d_{0,0}}{2\cdot4\cdot 6(r_0+s_0+1)(r_0+s_0+2)(r_0+s_0+3)}
\end{array} $$
In general
$$d_{q_1,q_2}=0\;\;q_1\neq q_2$$ and
$$d_{n,n}=\frac{(-1)^nd_{0,0}}{2^n(n!)(r_0+s_0+1)(r_0+s_0+2)\ldots(r_0+s_0+n)}\;\;\;n=0,1,2,\ldots$$
Choosing $d_{0,0}=1$ we have that
\begin{equation}\label{eq54}
\varphi(x,y)=x^{r_0}y^{s_0}\big[1+\sum^\infty_{n=1}\frac{(-1)^n(xy)^n}{2^n(n!)(r_0+s_0+1)(r_0+s_0+2)\ldots(r_0+s_0+n)}\big]
\end{equation}
 is solution of (\ref{eq51}) where $(r_0,s_0)$ verifies (\ref{eq53}) and (\ref{eq53'}).
 Equation (\ref{eq51}) will be called \textit{Second order Bessel PDE}, due to the similarity of the  solution (\ref{eq54})  with the order zero Bessel functions for second order ordinary differential equations.

 \subsubsection{Bessel functions}

 Let us study more the solutions of the Bessel PDE of type II.
 Recall that the ordinary Bessel equation $x^2 y^{\prime \prime}  + x y ^\prime + (x^2 - \nu^2) y=0$
has two important cases: $\nu=0$ and $\nu=1/2$. The case $\nu=0$ has one solution of the form
$y_1(x)=a_0[1 +\sum\limits_{m=1}^\infty \frac{(-1)^m x^{2m}}{2^{2m} (m!)^2}]$ defined for $x>0$.

Let us take a sample of our solutions. For $r=s=0$ there is no resonance in the Bessel type II and we have a convergent solution
of the form
\begin{equation}\label{Besselsolution}
\varphi(x,y)=1+\sum^\infty_{n=1}\frac{(-1)^n(xy)^n}{2^n(n!)(1)(2)\ldots(n)}=
1+\sum^\infty_{n=1}\frac{(-1)^n(xy)^n}{2^n(n!)^2}
\end{equation}
Unlike in the case of type I, the solution of Bessel type II  is not straight connected to the Bessel function $J_0$, providing a different type of object for study.

\subsection{Airy partial differential equations}

In ordinary differential equations we have the {\it Airy equation} $y^{\prime \prime} - x y =0$.
This equation named after  George Biddell Airy British astronomer, appears
  quite frequently in problems connected with optics. The Airy equation describes some very interesting phenomena. It is well-known as the  simplest second-order linear differential equation with a {\it turning point} (a point where the character of the solutions changes from oscillatory to exponential). There are two linearly independent  solutions of the Airy equation and they  are called
  {\it Airy functions}. The Airy functions are  solutions to Schrödinger's equation for a particle confined within a triangular potential well and for a particle in a one-dimensional constant force field. They are also used in the approximation near a turning point in mathematical physics.
   Next we introduce an equivalent to Airy equation for partial differential equations and calculate its solutions.

\subsubsection{Type I Airy PDE} Consider the {\it Airy partial differential equation of type I} as given by
\begin{equation}\label{eqairy1}
x^2\frac{\partial^2 z}{\partial x^2}+2xy\frac{\partial^2 z}{\partial x\partial y}+ y^2\frac{\partial^2 z}{\partial y^2}-x^3z=0.
\end{equation}

 This is a parabolic
PDE with a regular singularity at the origin. Notice that for each line $y=\lambda x$ we have a copy of the Airy equation.  This seems quite surprising since the original Airy ordinary differential equation is not singular at the origin.

\subsubsection{Solution of Airy type I}
Let us now calculate the solutions of this equation.
Let $\varphi$ be a solution  of (\ref{eqairy1}) of the form
\begin{equation}\label{eqairy2}
\varphi(x,y)=x^ry^s\sum^{\infty}_{|Q|=0}d_QX^Q
\end{equation}
where $d_{0,0}\neq0$. Then
$$\frac{\partial^2\varphi}{\partial x^2}=\sum^\infty_{|Q|=0}(q_1+r)(q_1+r-1)d_Qx^{q_1+r-2}y^{q_2+s} $$
$$\frac{\partial^2\varphi}{\partial x \partial y}=\sum^\infty_{|Q|=0}(q_1+r)(q_2+s)d_Qx^{q_1+r-1}y^{q_2+s-1} $$
$$\frac{\partial^2\varphi}{\partial y^2}=\sum^\infty_{|Q|=0}(q_2+s)(q_2+s-1)d_Qx^{q_1+r}y^{q_2+s-2} $$
and from this we have
$$x^2\frac{\partial^2\varphi}{\partial x^2}=x^ry^s\sum^\infty_{|Q|=0}(q_1+r)(q_1+r-1)d_QX^Q$$
$$2xy\frac{\partial^2\varphi}{\partial x \partial y}=x^ry^s\sum^\infty_{|Q|=0}2(q_1+r)(q_2+s)d_QX^Q$$
$$y^2\frac{\partial^2\varphi}{\partial y^2}=x^ry^s\sum^\infty_{|Q|=0}(q_2+s)(q_2+s-1)d_QX^Q$$
$$x^3\varphi(x,y)=x^ry^s\sum^{\infty}_{|Q|=0}d_Qx^{q_1+3}y^{q_2}=x^ry^s\sum^{\infty}_{|Q|=3}d_{q_1-3,q_2}X^Q.$$
Given that $\varphi$ is solution of (\ref{eqairy2}) we have
$$x^2\frac{\partial^2 \varphi}{\partial x^2}+2xy\frac{\partial^2 \varphi}{\partial x\partial y}+ y^2\frac{\partial^2 \varphi}{\partial y^2}-x^3\varphi=0.$$
Therefore
$$x^ry^s\big[\sum^\infty_{|Q|=0}\big[(q_1+r)(q_1+r-1)+2(q_1+r)(q_2+s)+(q_2+s)(q_2+s-1)\big]d_QX^Q+\sum^{\infty}_{|Q|=3}d_{q_1-3,q_2}X^Q\big]=0 $$
$$\sum^\infty_{|Q|=0}\big[(q_1+q_2+r+s)^2-(q_1+q_2+r+s)\big]d_QX^Q+\sum^{\infty}_{|Q|=3}d_{q_1-3,q_2}X^Q=0$$
$$\sum^\infty_{|Q|=0}\big[(|Q|+r+s)^2-(|Q|+r+s)\big]d_QX^Q+\sum^{\infty}_{|Q|=3}d_{q_1-3,q_2}X^Q=0$$
$$\begin{array}{c}
(r+s)(r+s-1)d_{0,0}+(1+r+s)(r+s)\big[d_{1,0}x+d_{0,1}y\big]+(2+r+s)(1+r+s)\big[d_{2,0}x^2+d_{1,1}xy+d_{0,2}y^2\big]\\
\\+\sum^{\infty}_{|Q|=3}\big[(|Q|+r+s)(|Q|+r+s-1)d_Q+d_{q_1-3,q_2}\big]X^Q=0\end{array}$$
and then $$(r+s)(r+s-1)d_{0,0}=0,\;(1+r+s)(r+s)d_{1,0}=0,\;(1+r+s)(r+s)d_{0,1}=0,$$
$$(2+r+s)(1+r+s)d_{2,0}=0,\;(2+r+s)(1+r+s)d_{1,1}=0,\;(2+r+s)(1+r+s)d_{0,2}=0,$$
 and
$$(|Q|+r+s)(|Q|+r+s-1)d_Q+d_{q_1-3,q_2}=0,\;\mbox{ for every }|Q|=3,4,\ldots$$
Given that $d_{0,0}\neq0$ we have that
\begin{equation}\label{eqairy3}
(r+s)(r+s-1)=0.
\end{equation}

Let $(r,s)$ point of (\ref{eqairy3}) such that
\begin{equation}\label{eqairy4}
(r,s)\notin\mathcal C= \{(r_1,s_1)\in\mathbb{C}^2;\;(|Q|+r_1+s_1)(|Q|+r_1+s_1-1)=0,\;\mbox{ for some }|Q|=1,2,\ldots\}.
\end{equation}

Thus we have $d_{1,0}=d_{0,1}=0$, $d_{2,0}=d_{1,1}=d_{0,2}=0$  and
$$d_Q=\frac{d_{q_1-3,q_2}}{(|Q|+r+s)(|Q|+r+s-1)},\;\mbox{ for every }|Q|=3,4\ldots.$$
Observe that
$$d_{2,1}=d_{1,2}=d_{0,3}=0\mbox{ and }d_{3,0}=\frac{d_{0,0}}{(3+r+s)(2+r+s)}$$

$$d_{2,2}=d_{1,3}=d_{0,4}=0\mbox{ and }d_{4,0}=\frac{d_{1,0}}{(4+r+s)(3+r+s)}=0,d_{3,1}=\frac{d_{0,1}}{(4+r+s)(3+r+s)}=0$$

$$\begin{array}{c} d_{2,3}=d_{1,4}=d_{0,5}=0\mbox{ and }d_{5,0}=\frac{d_{2,0}}{(5+r+s)(4+r+s)}=0,\\
\\
d_{4,1}=\frac{d_{3,1}}{(5+r+s)(4+r+s)}=0,d_{3,2}=\frac{d_{0,2}}{(5+r+s)(4+r+s)}=0\end{array}$$

$$\begin{array}{c} d_{2,4}=d_{1,5}=d_{0,6}=0\mbox{ and }d_{6,0}=\frac{d_{3,0}}{(6+r+s)(5+r+s)}=\frac{d_{0,0}}{(6+r+s)(5+r+s)(3+r+s)(2+r+s)}
,\\
\\
d_{5,1}=\frac{d_{2,1}}{(6+r+s)(5+r+s)}=0,d_{4,2}=\frac{d_{1,2}}{(6+r+s)(5+r+s)}=0,d_{3,3}=\frac{d_{0,3}}{(6+r+s)(5+r+s)}=0\end{array}$$

$$\begin{array}{c}d_{2,5}=d_{1,6}=d_{0,7}=0\mbox{ and }d_{7,0}=\frac{d_{4,0}}{(7+r+s)(6+r+s)}=0,d_{6,1}=\frac{d_{3,1}}{(7+r+s)(6+r+s)}=0,\\
\\
d_{5,2}=\frac{d_{2,2}}{(7+r+s)(6+r+s)}=0,d_{4,3}=\frac{d_{1,3}}{(7+r+s)(6+r+s)}=0,
d_{3,4}=\frac{d_{0,4}}{(7+r+s)(6+r+s)}=0
\end{array} $$

$$\begin{array}{c}d_{2,6}=d_{1,7}=d_{0,8}=0\mbox{ and }d_{8,0}=\frac{d_{5,0}}{(8+r+s)(7+r+s)}=0,d_{7,1}=\frac{d_{4,1}}{(8+r+s)(7+r+s)}=0,\\
\\
d_{6,2}=\frac{d_{3,2}}{(8+r+s)(7+r+s)}=0,d_{5,3}=\frac{d_{2,3}}{(8+r+s)(7+r+s)}=0,
d_{4,4}=\frac{d_{1,4}}{(8+r+s)(7+r+s)}=0,d_{3,5}=\frac{d_{0,5}}{(8+r+s)(7+r+s)}=0
\end{array} $$

$$\begin{array}{c} d_{2,7}=d_{1,8}=d_{0,9}=0\mbox{ and }d_{9,0}=\frac{d_{6,0}}{(9+r+s)(8+r+s)}=\frac{d_{0,0}}{(9+r+s)(8+r+s)(6+r+s)(5+r+s)(3+r+s)(2+r+s)}
,\\
\\
d_{8,1}=\frac{d_{5,1}}{(9+r+s)(8+r+s)}=0,d_{7,2}=\frac{d_{4,2}}{(9+r+s)(8+r+s)}=0,d_{6,3}=\frac{d_{3,3}}{(9+r+s)(8+r+s)}=0\\
\\
d_{5,4}=\frac{d_{2,4}}{(9+r+s)(8+r+s)}=0,d_{4,5}=\frac{d_{1,5}}{(9+r+s)(8+r+s)}=0,d_{3,6}=\frac{d_{0,6}}{(9+r+s)(8+r+s)}=0.
\end{array}$$

In general
$$d_{q_1,q_2}=0\;\;(q_1,q_2)\neq(3n,0)$$
and
$$d_{3n,0}=\frac{d_{0,0}}{(2+r+s)(3+r+s)\cdots(3n-1+r+s)(3n+r+s)}\;\;\;n=1,2,\ldots$$
Choosing $d_{0,0}=1$ we have that
\begin{equation}\label{eqairy5}
\varphi(x,y)=x^{r}y^{s}\big[1+\sum^\infty_{n=1}\frac{x^{3n}}{(2+r+s)(3+r+s)\cdots(3n-1+r+s)(3n+r+s)}\big]
\end{equation}
 is solution of (\ref{eqairy1}) where $(r,s)$ verifies (\ref{eqairy3}) and (\ref{eqairy4}).

 \subsubsection{Type II Airy PDE}

Consider the {\it Airy partial differential equation of type II} as given by
\begin{equation}\label{2eqairy1}
x^2\frac{\partial^2 z}{\partial x^2}+2xy\frac{\partial^2 z}{\partial x\partial y}+ y^2\frac{\partial^2 z}{\partial y^2}-x^2yz=0.
\end{equation}

This is the second possible type (up to interchanging $x$ and $y$) for the Airy PDE in order to
have the restrictions to lines $y=tx$ also given by Airy equations.

\subsubsection{Solution of Airy type II}
Let $\varphi$ be a solution  of (\ref{2eqairy1}) of the form
\begin{equation}\label{2eqairy2}
\varphi(x,y)=x^ry^s\sum^{\infty}_{|Q|=0}d_QX^Q
\end{equation}
where $d_{0,0}\neq0$. Then
$$\frac{\partial^2\varphi}{\partial x^2}=\sum^\infty_{|Q|=0}(q_1+r)(q_1+r-1)d_Qx^{q_1+r-2}y^{q_2+s} $$
$$\frac{\partial^2\varphi}{\partial x \partial y}=\sum^\infty_{|Q|=0}(q_1+r)(q_2+s)d_Qx^{q_1+r-1}y^{q_2+s-1} $$
$$\frac{\partial^2\varphi}{\partial y^2}=\sum^\infty_{|Q|=0}(q_2+s)(q_2+s-1)d_Qx^{q_1+r}y^{q_2+s-2} $$
and from this we have
$$x^2\frac{\partial^2\varphi}{\partial x^2}=x^ry^s\sum^\infty_{|Q|=0}(q_1+r)(q_1+r-1)d_QX^Q$$
$$2xy\frac{\partial^2\varphi}{\partial x \partial y}=x^ry^s\sum^\infty_{|Q|=0}2(q_1+r)(q_2+s)d_QX^Q$$
$$y^2\frac{\partial^2\varphi}{\partial y^2}=x^ry^s\sum^\infty_{|Q|=0}(q_2+s)(q_2+s-1)d_QX^Q$$
$$x^2y\varphi(x,y)=x^ry^s\sum^{\infty}_{|Q|=0}d_Qx^{q_1+2}y^{q_2+1}=x^ry^s\sum^{\infty}_{|Q|=3}d_{q_1-2,q_2-1}X^Q.$$
Given that $\varphi$ is solution of (\ref{2eqairy2}) we have
$$x^2\frac{\partial^2 \varphi}{\partial x^2}+2xy\frac{\partial^2 \varphi}{\partial x\partial y}+ y^2\frac{\partial^2 \varphi}{\partial y^2}-x^2y\varphi=0.$$
Therefore
$$x^ry^s\big[\sum^\infty_{|Q|=0}\big[(q_1+r)(q_1+r-1)+2(q_1+r)(q_2+s)+(q_2+s)(q_2+s-1)\big]d_QX^Q-\sum^{\infty}_{|Q|=3}d_{q_1-2,q_2-1}X^Q\big]=0 $$
$$\sum^\infty_{|Q|=0}\big[(q_1+q_2+r+s)^2-(q_1+q_2+r+s)\big]d_QX^Q-\sum^{\infty}_{|Q|=3}d_{q_1-3,q_2-1}X^Q=0$$
$$\sum^\infty_{|Q|=0}\big[(|Q|+r+s)^2-(|Q|+r+s)\big]d_QX^Q-\sum^{\infty}_{|Q|=3}d_{q_1-2,q_2-1}X^Q=0$$
$$\begin{array}{c}
(r+s)(r+s-1)d_{0,0}+(1+r+s)(r+s)\big[d_{1,0}x+d_{0,1}y\big]+(2+r+s)(1+r+s)\big[d_{2,0}x^2+d_{1,1}xy+d_{0,2}y^2\big]\\
\\+\sum^{\infty}_{|Q|=3}\big[(|Q|+r+s)(|Q|+r+s-1)d_Q-d_{q_1-2,q_2-1}\big]X^Q=0\end{array}$$
and then $$(r+s)(r+s-1)d_{0,0}=0,\;(1+r+s)(r+s)d_{1,0}=0,\;(1+r+s)(r+s)d_{0,1}=0,$$
$$(2+r+s)(1+r+s)d_{2,0}=0,\;(2+r+s)(1+r+s)d_{1,1}=0,\;(2+r+s)(1+r+s)d_{0,2}=0,$$
 and
$$(|Q|+r+s)(|Q|+r+s-1)d_Q-d_{q_1-2,q_2-1}=0,\;\mbox{ for every }|Q|=3,4,\ldots$$
Given that $d_{0,0}\neq0$ we have that
\begin{equation}\label{2eqairy3}
(r+s)(r+s-1)=0.
\end{equation}
Let $(r,s)$ point of (\ref{2eqairy3}) such that
\begin{equation}\label{2eqairy4}
(r,s)\notin\{(r_1,s_1)\in\mathbb{C}^2;\;(|Q|+r_1+s_1)(|Q|+r_1+s_1-1)=0,\;\mbox{ for some }|Q|=1,2,\ldots\}.
\end{equation}
Thus we have $d_{1,0}=d_{0,1}=0$, $d_{2,0}=d_{1,1}=d_{0,2}=0$  and
$$d_Q=\frac{d_{q_1-2,q_2-1}}{(|Q|+r+s)(|Q|+r+s-1)},\;\mbox{ for every }|Q|=3,4\ldots.$$
Observe that
$$d_{3,0}=d_{1,2}=d_{0,3}=0\mbox{ and }d_{2,1}=\frac{d_{0,0}}{(3+r+s)(2+r+s)}$$

$$d_{4,0}=d_{0,4}=d_{1,3}= 0\mbox{ and }d_{3,1}=\frac{d_{1,0}}{(4+r+s)(3+r+s)}=0,d_{2,2}=\frac{d_{0,1}}{(4+r+s)(3+r+s)}=0$$

$$\begin{array}{c} d_{5,0}=d_{1,4}=d_{0,5}=0\mbox{ and }d_{4,1}=\frac{d_{2,0}}{(5+r+s)(4+r+s)}=0,\\
\\
d_{3,2}=\frac{d_{1,1}}{(5+r+s)(4+r+s)}=0,d_{2,3}=\frac{d_{0,2}}{(5+r+s)(4+r+s)}=0\end{array}$$

$$\begin{array}{c} d_{6,0}=d_{1,5}=d_{0,6}=0\mbox{ and }d_{4,2}=\frac{d_{2,1}}{(6+r+s)(5+r+s)}=\frac{d_{0,0}}{(6+r+s)(5+r+s)(3+r+s)(2+r+s)}
,\\
\\
d_{5,1}=\frac{d_{3,0}}{(6+r+s)(5+r+s)}=0,d_{3,3}=\frac{d_{1,2}}{(6+r+s)(5+r+s)}=0,d_{2,4}=\frac{d_{0,3}}{(6+r+s)(5+r+s)}=0\end{array}$$

$$\begin{array}{c}d_{7,0}=d_{1,6}=d_{0,7}=0\mbox{ and }d_{6,1}=\frac{d_{4,0}}{(7+r+s)(6+r+s)}=0,d_{5,2}=\frac{d_{3,1}}{(7+r+s)(6+r+s)}=0,\\
\\
d_{4,3}=\frac{d_{2,2}}{(7+r+s)(6+r+s)}=0,d_{3,4}=\frac{d_{1,3}}{(7+r+s)(6+r+s)}=0,
d_{2,5}=\frac{d_{0,4}}{(7+r+s)(6+r+s)}=0
\end{array} $$

$$\begin{array}{c}d_{8,0}=d_{1,7}=d_{0,8}=0\mbox{ and }d_{7,1}=\frac{d_{5,0}}{(8+r+s)(7+r+s)}=0,d_{6,2}=\frac{d_{4,1}}{(8+r+s)(7+r+s)}=0,\\
\\
d_{5,3}=\frac{d_{3,2}}{(8+r+s)(7+r+s)}=0,d_{4,4}=\frac{d_{2,3}}{(8+r+s)(7+r+s)}=0,
d_{3,5}=\frac{d_{1,4}}{(8+r+s)(7+r+s)}=0,d_{2,6}=\frac{d_{0,5}}{(8+r+s)(7+r+s)}=0
\end{array} $$

$$\begin{array}{c} d_{9,0}=d_{1,8}=d_{0,9}=0\mbox{ and }d_{6,3}=\frac{d_{4,2}}{(9+r+s)(8+r+s)}=\frac{d_{0,0}}{(9+r+s)(8+r+s)(6+r+s)(5+r+s)(3+r+s)(2+r+s)}
,\\
\\
d_{8,1}=\frac{d_{6,0}}{(9+r+s)(8+r+s)}=0,d_{7,2}=\frac{d_{5,1}}{(9+r+s)(8+r+s)}=0,d_{5,4}=\frac{d_{3,3}}{(9+r+s)(8+r+s)}=0\\
\\
d_{4,5}=\frac{d_{2,4}}{(9+r+s)(8+r+s)}=0,d_{3,6}=\frac{d_{1,5}}{(9+r+s)(8+r+s)}=0,d_{2,7}=\frac{d_{0,6}}{(9+r+s)(8+r+s)}=0.
\end{array}$$

In general
$$d_{q_1,q_2}=0\;\;(q_1,q_2)\neq(2n,n)$$
and
$$d_{2n,n}=\frac{d_{0,0}}{(2+r+s)(3+r+s)\cdots(3n-1+r+s)(3n+r+s)}\;\;\;n=1,2,\ldots$$
Choosing $d_{0,0}=1$ we have that
\begin{equation}\label{2eqairy5bis}
\varphi(x,y)=x^{r}y^{s}\big[1+\sum^\infty_{n=1}\frac{x^{2n}y^n}{(2+r+s)(3+r+s)\cdots(3n-1+r+s)(3n+r+s)}\big]
\end{equation}
 is solution of (\ref{2eqairy1}) where $(r_0,s_0)$ verifies (\ref{2eqairy3}) and (\ref{2eqairy4}).

\subsubsection{Airy functions}

 Let us first  investigate the non-resonance conditions \eqref{eqhermite3} and \eqref{eqhermite4} above. If $(r,s)\in \mathcal C$ then
$r+s=1$ or $r+s=0$. Assume that $r+s=0$. In this case the resonance condition becomes
$|Q|(|Q|-1)=0$ for some multi-index $Q=(q_1,q_2)$ with $q_1 + q_2 \geq 1$.
Therefore it becomes $|Q|=1$. Thus we may have resonance in this case.
On the other hand if $r+s=1$ then the resonance condition becomes
$(|Q|+1)|Q|=0$ and therefore it becomes $|Q|+1=0$. This does not occur and we do not have resonance in this case. For instance we can take $r=s=1/2$ and have no resonance.
In this case $r+s=1$ and we have a solution of the form
\begin{equation}\label{2eqairy6}
\varphi_2(x,y)=(xy)^{\frac{1}{2}}\big[1+\sum^\infty_{n=1}\frac{x^{2n}y^n}{(3)(4)\cdots(3n)(3n+1)}\big]
\end{equation}
This will called {\it Airy function of type II in two variables.}

Similarly we have the {\it Airy function of type I in two variables} given

\begin{equation}\label{eqairy5tri}
\varphi_{1}(x,y)=(xy)^{\frac{1}{2}}\big[1+\sum^\infty_{n=1}\frac{x^{3n}}{(3)(4)\cdots(3n)(3n+1)}\big]
\end{equation}
which  is a solution of (\ref{eqairy1}).

Recall that the ordinary Airy equation is $y^{\prime \prime } - xy=0$ which has
general solution  of the form $y= a_0 y_1 (x) + a_1 y_2(x)$ for
\[
y_1(x)=1 +\sum\limits_{n=1}^\infty \frac{x^{3n}}{2.3...(3n-1)(3n)}
\]
and
\[
y_2(x)=x+ \sum\limits_{n=1}^\infty \frac{x^{3n+1}}{3.4...(3n)(3n+1)}=x[1+ \sum\limits_{n=1}^\infty \frac{x^{3n}}{3.4...(3n)(3n+1)}]
\]

The functions $y_1(x), y_2(x)$ are called {\it first} and {\it second Airy functions}.

Let us write $\xi(t):=1+ \sum\limits_{n=1}^\infty \frac{t^{n}}{3.4...(3n)(3n+1)}$. Then
$y_2(x)=x.\xi(x^3)$ and $\xi(t)=\frac{y_2(t^{\frac{1}{3}})}{t^{\frac{1}{3}}}$. Then we can write
\[
\vr_1(x,y)=(xy)^{\frac{1}{2}}\xi(x^3)=(xy)^{\frac{1}{2}} \frac{y_2(x)}{x}= \sqrt{\frac{y}{x}}.y_2(x)
\]
 As for $\vr_2(x,y)$ we observe that
 $\vr_2(x,y)= \sqrt{xy}\xi(x^2y)$ and therefore
 \[
 \vr_2(x,y)=\sqrt{xy}\frac{y_2( (x^2y)^{\frac{1}{3}})}{(x^2y)^{\frac{1}{3}}}=(\frac{y}{x})^{\frac{1}{6}}y_2((x^2y)^{\frac{1}{3}}).
 \]
\subsection{Hermite equation}
 In ordinary differential equations the {\it Hermite equation} is
the second-order ordinary differential equation
$y^{\prime \prime} -2xy^{\prime} +\lambda y=0.$ The constant $\lambda$ is complex or real.
This equation has an irregular singularity at $x=\infty$. The solutions by series are defined
for $|x|<\infty$. If $\lambda=2n$ for some natural number $n\in \mathbb N$
 the Hermite equation has among its solutions the {\it Hermite polynomial} of degree $n$
 given by $H_n(x)=(-1)^ne^{x^2}\frac{d^n}{dz^n}(e^{-x^2})$.
 Hermite's differential equation appears during the solution of the Schrödinger
equation for the harmonic oscillator.
The Hermite polynomials are mutually orthogonal with respect to the density or weight function $e^{-x^2}$.
This allows the use of Hermite polynomials as generating functions in several problems in mathematical-physics. Also in probability theory the use of Hermite functions is quite important.

\subsubsection{Hermite PDE type I}
Let us now proceed to introduce what we shall understand as a first model for the  {\it Hermite partial differential equation}.
 Consider the equation
\begin{equation}\label{eqhermite1}
x^2\frac{\partial^2 z}{\partial x^2}+2xy\frac{\partial^2 z}{\partial x\partial y}+y^2\frac{\partial^2 z}{\partial y^2}-2x^3\frac{\partial z}{\partial x}-2x^2y\frac{\partial z}{\partial y}+\lambda x^2z=0,
\end{equation}
where $\lambda\in\mathbb{R}$.

The restriction of this equation to lines $y=tx$ is a Hermite ordinary differential equation.
Again, this is somehow unexpected, since the PDE has a singularity at the origin and the Hermite ODE
is non-singular at the origin.
\subsubsection{Solution of  Hermite  type I}
Let $\varphi$ be a solution  of (\ref{eqhermite1}) of the form
\begin{equation}\label{eqhermite2}
\varphi(x,y)=x^ry^s\sum^{\infty}_{|Q|=0}d_QX^Q
\end{equation}
where $d_{0,0}\neq0$.

Then
$$\frac{\partial\varphi}{\partial x}=\sum^\infty_{|Q|=0}(q_1+r)d_Qx^{q_1+r-1}y^{q_2+s} $$
$$\frac{\partial\varphi}{\partial y}=\sum^\infty_{|Q|=0}(q_2+s)d_Qx^{q_1+r}y^{q_2+s-1} $$
$$\frac{\partial^2\varphi}{\partial x^2}=\sum^\infty_{|Q|=0}(q_1+r)(q_1+r-1)d_Qx^{q_1+r-2}y^{q_2+s} $$
$$\frac{\partial^2\varphi}{\partial x \partial y}=\sum^\infty_{|Q|=0}(q_1+r)(q_2+s)d_Qx^{q_1+r-1}y^{q_2+s-1} $$
$$\frac{\partial^2\varphi}{\partial y^2}=\sum^\infty_{|Q|=0}(q_2+s)(q_2+s-1)d_Qx^{q_1+r}y^{q_2+s-2} $$

and from this we have
$$x^2\frac{\partial^2\varphi}{\partial x^2}=x^ry^s\sum^\infty_{|Q|=0}(q_1+r)(q_1+r-1)d_QX^Q$$
$$2xy\frac{\partial^2\varphi}{\partial x \partial y}=x^ry^s\sum^\infty_{|Q|=0}2(q_1+r)(q_2+s)d_QX^Q$$
$$y^2\frac{\partial^2\varphi}{\partial y^2}=x^ry^s\sum^\infty_{|Q|=0}(q_2+s)(q_2+s-1)d_QX^Q$$
$$-2x^3\frac{\partial\varphi}{\partial x}=x^ry^s\sum^\infty_{|Q|=0}-2(q_1+r)d_Qx^{q_1+2}y^{q_2}=x^ry^s\sum^{\infty}_{|Q|=2}-2(q_1+r-2)d_{q_1-2,q_2}X^Q $$
$$-2x^2y\frac{\partial\varphi}{\partial y}=x^ry^s\sum^\infty_{|Q|=0}(q_2+s)d_Qx^{q_1+2}y^{q_2}=x^ry^s\sum^{\infty}_{|Q|=2}-2(q_2+s)d_{q_1-2,q_2}X^Q $$
$$\lambda x^2\varphi=x^ry^s\sum^{\infty}_{|Q|=0}\lambda d_Qx^{q_1+2}y^{q_2}=x^ry^s\sum^{\infty}_{|Q|=2}\lambda d_{q_1-2,q_2}X^Q.$$
Given that $\varphi$ is solution of (\ref{eqhermite1}) we have
$$x^2\frac{\partial^2 \varphi}{\partial x^2}+2xy\frac{\partial^2 \varphi}{\partial x\partial y}+y^2\frac{\partial^2 \varphi}{\partial y^2}-2x^3\frac{\partial \varphi}{\partial x}-2x^2y\frac{\partial \varphi}{\partial y}+\lambda x^2\varphi=0.$$
Therefore
$$\begin{array}{c} x^ry^s\big[\sum^\infty_{|Q|=0}\big[(q_1+r)(q_1+r-1)+2(q_1+r)(q_2+s)+(q_2+s)(q_2+s-1)\big]d_QX^Q \\ \\ +\sum^{\infty}_{|Q|=2}[-2(q_1+r-2)-2(q_2+s)+\lambda]d_{q_1-2,q_2}X^Q\big]=0
\end{array}$$
$$\sum^\infty_{|Q|=0}\big[(q_1+q_2+r+s)^2-(q_1+q_2+r+s)\big]d_QX^Q+\sum^{\infty}_{|Q|=2}[-2(q_1+r-2)-2(q_2+s)+\lambda]d_{q_1-2,q_2}X^Q=0$$
$$\sum^\infty_{|Q|=0}\big[(|Q|+r+s)^2-(|Q|+r+s)\big]d_QX^Q+\sum^{\infty}_{|Q|=2}[-2(|Q|+r+s)+4+\lambda]d_{q_1-2,q_2}X^Q=0$$
$$\begin{array}{c}
(r+s)(r+s-1)d_{0,0}+(1+r+s)(r+s)\big[d_{1,0}x+d_{0,1}y\big]+\\
\\+\sum^{\infty}_{|Q|=2}\big[(|Q|+r+s)(|Q|+r+s-1)d_Q+[-2(|Q|+r+s)+4+\lambda]d_{q_1-2,q_2}\big]X^Q=0\end{array}$$
and then $$(r+s)(r+s-1)d_{0,0}=0,\;(1+r+s)(r+s)d_{1,0}=0,\;(1+r+s)(r+s)d_{0,1}=0$$
 and
$$(|Q|+r+s)(|Q|+r+s-1)d_Q+[-2(|Q|+r+s)+4+\lambda]d_{q_1-2,q_2}=0,\;\mbox{ for every }|Q|=2,3,\ldots$$
Given that $d_{0,0}\neq0$ we have that
\begin{equation}\label{eqhermite3}
(r+s)(r+s-1)=0.
\end{equation}
Let $(r,s)$ point of (\ref{eqhermite3}) such that
\begin{equation}\label{eqhermite4}
(r,s)\notin\mathcal C= \{(r_1,s_1)\in\mathbb{C}^2;\;(|Q|+r_1+s_1)(|Q|+r_1+s_1-1)=0,\;\mbox{ for some }|Q|=1,2,\ldots\}.
\end{equation}

Thus we have $d_{1,0}=d_{0,1}=0$  and
$$d_Q=-\frac{[-2(|Q|+r+s)+4+\lambda]d_{q_1-2,q_2}}{(|Q|+r+s)(|Q|+r+s-1)},\;\mbox{ for every }|Q|=2,3\ldots.$$
Observe that
$$d_{1,1}=d_{0,2}=0\mbox{ and }d_{2,0}=-\frac{[-2(r+s)+\lambda]d_{0,0}}{(2+r+s)(1+r+s)}$$

$$d_{1,2}=d_{0,3}=0\mbox{ and }d_{3,0}=-\frac{[-2(r+s)+\lambda-2]d_{1,0}}{(3+r+s)(2+r+s)}=0,d_{2,1}=-\frac{[-2(r+s)+\lambda-2]d_{0,1}}{(3+r+s)(2+r+s)}=0$$

$$\begin{array}{c} d_{1,3}=d_{0,4}=0\mbox{ and }d_{4,0}=-\frac{[-2(r+s)+\lambda-4]d_{2,0}}{(4+r+s)(3+r+s)}=
(-1)^2\frac{[-2(r+s)+\lambda-4][-2(r+s)+\lambda]d_{0,0}}{(4+r+s)(3+r+s)(2+r+s)(1+r+s)}
,\\
\\
d_{3,1}=-\frac{[-2(r+s)+\lambda-4]d_{1,1}}{(4+r+s)(3+r+s)}=0,d_{2,2}=-\frac{[-2(r+s)+\lambda-4]d_{0,2}}{(4+r+s)(3+r+s)}=0\end{array}$$

$$\begin{array}{c} d_{1,4}=d_{0,5}=0\mbox{ and }d_{5,0}=-\frac{[-2(r+s)+\lambda-6]d_{3,0}}{(5+r+s)(4+r+s)}=0,\\
\\
d_{4,1}=-\frac{[-2(r+s)+\lambda-6]d_{2,1}}{(5+r+s)(4+r+s)}=0,d_{3,2}=-\frac{[-2(r+s)+\lambda-6]d_{1,2}}{(5+r+s)(4+r+s)}=0,d_{2,3}=-\frac{[-2(r+s)+\lambda-6]d_{0,3}}{(5+r+s)(4+r+s)}=0\end{array}$$

$$\begin{array}{c}d_{1,5}=d_{0,6}=0\mbox{ and }d_{6,0}=-\frac{[-2(r+s)+\lambda-8]d_{4,0}}{(6+r+s)(5+r+s)}=
(-1)^3\frac{[-2(r+s)+\lambda-8][-2(r+s)+\lambda-4][-2(r+s)+\lambda]d_{0,0}}{(6+r+s)(5+r+s)(4+r+s)(3+r+s)(2+r+s)(1+r+s)},\\
\\
d_{5,1}=-\frac{[-2(r+s)+\lambda-8]d_{3,1}}{(6+r+s)(5+r+s)}=0,d_{4,2}=-\frac{[-2(r+s)+\lambda-8]d_{2,2}}{(6+r+s)(5+r+s)}=0,\\
\\d_{3,3}=-\frac{[-2(r+s)+\lambda-8]d_{1,3}}{(6+r+s)(5+r+s)}=0,
d_{2,4}=-\frac{[-2(r+s)+\lambda-8]d_{0,4}}{(6+r+s)(5+r+s)}=0.
\end{array} $$

In general
$$d_{q_1,q_2}=0\;\;(q_1,q_2)\neq(2n,0)$$
and
$$d_{2n,0}=\frac{(-1)^n [-2(r+s)+\lambda-4(n-1)]\cdots[-2(r+s)+\lambda] d_{0,0}}{(2n+r+s)\cdots(1+r+s)}\;\;\;n=1,2,\ldots$$
Choosing $d_{0,0}=1$ we have that
\begin{equation}\label{eqhermite5}
\varphi_{(r,s),\lambda}(x,y)=x^{r}y^{s}\big[1+\sum^\infty_{n=1}\frac{(-1)^n [-2(r+s)+\lambda-4(n-1)]\cdots[-2(r+s)+\lambda] x^{2n}}{(2n+r+s)\cdots(1+r+s)}\big]
\end{equation}
 is solution of (\ref{eqhermite1}) where $(r,s)$ verifies (\ref{eqhermite3}) and (\ref{eqhermite4}).

 \subsubsection{Hermite polynomials}

 Let us first  investigate the non-resonance conditions \eqref{eqhermite3} and \eqref{eqhermite4} above. If $(r,s)\in \mathcal C$ then
$r+s=1$ or $r+s=0$. Assume that $r+s=0$. In this case the resonance condition becomes
$|Q|(|Q|-1)=0$ for some multi-index $Q=(q_1,q_2)$ with $q_1 + q_2 \geq 1$.
Therefore it becomes $|Q|=1$. Thus we may have resonance in this case.
On the other hand if $r+s=1$ then the resonance condition becomes
$(|Q|+1)|Q|=0$ and therefore it becomes $|Q|+1=0$. This does not occur and we do not have resonance in this case. For instance we can take $r=s=1/2$ and have no resonance.
In this case $r+s=1$ and we have a solution of the form
\begin{equation}\label{eqhermite6}
\varphi_{(\frac{1}{2},\frac{1}{2}),\lambda}(x,y)=x^{1/2}y^{1/2}\big[1+\sum^\infty_{n=1}\frac{(-1)^n [-2+\lambda-4(n-1)]\cdots[-2+\lambda] x^{2n}}{(2n+2)\cdots(2)}\big]
\end{equation}
  Thus, for $\lambda=2(2m+1)$ we have a   solution $H_m(x,y)=(xy)^{\frac{1}{2}}P_m(x,y)$
  where $P_m(x,y)$ is a polynomial of degree $m$, satisfying $P_m(0,0)=1$. Given
  any $r,s$ with $r+s=1$ and $\lambda = 2(2m+1)$ as above, we have a solution of the form
  $\vr(x,y)=x^r y^s P_m(x,y)$. We shall call $P_m(x,y)$  {\it Hermite polynomial} of degree $m$.

\subsubsection{Hermite equation II}
A second possible model for the Hermite equation in the PDE framework would be as follows:
Consider the equation
\begin{equation}\label{2eqhermite1}
x^2\frac{\partial^2 z}{\partial x^2}+2xy\frac{\partial^2 z}{\partial x\partial y}+y^2\frac{\partial^2 z}{\partial y^2}-2x^3\frac{\partial z}{\partial x}-2y^3\frac{\partial z}{\partial y}+\lambda xyz=0,
\end{equation}
where $\lambda\in\mathbb{R}$.
This second model though symmetric with respect to $x$ and $y$ and though it is parabolic, it does
restrict to lines $y=tx$ as a Hermite ordinary differential equation.

\subsubsection{Solution of Hermite PDE model II}
Let $\varphi$ be a solution  of (\ref{2eqhermite1}) of the form
\begin{equation}\label{2eqhermite2}
\varphi(x,y)=x^ry^s\sum^{\infty}_{|Q|=0}d_QX^Q
\end{equation}
where $d_{0,0}\neq0$. Then
$$\frac{\partial\varphi}{\partial x}=\sum^\infty_{|Q|=0}(q_1+r)d_Qx^{q_1+r-1}y^{q_2+s} $$
$$\frac{\partial\varphi}{\partial y}=\sum^\infty_{|Q|=0}(q_2+s)d_Qx^{q_1+r}y^{q_2+s-1} $$
$$\frac{\partial^2\varphi}{\partial x^2}=\sum^\infty_{|Q|=0}(q_1+r)(q_1+r-1)d_Qx^{q_1+r-2}y^{q_2+s} $$
$$\frac{\partial^2\varphi}{\partial x \partial y}=\sum^\infty_{|Q|=0}(q_1+r)(q_2+s)d_Qx^{q_1+r-1}y^{q_2+s-1} $$
$$\frac{\partial^2\varphi}{\partial y^2}=\sum^\infty_{|Q|=0}(q_2+s)(q_2+s-1)d_Qx^{q_1+r}y^{q_2+s-2} $$

and from this we have
$$x^2\frac{\partial^2\varphi}{\partial x^2}=x^ry^s\sum^\infty_{|Q|=0}(q_1+r)(q_1+r-1)d_QX^Q$$
$$2xy\frac{\partial^2\varphi}{\partial x \partial y}=x^ry^s\sum^\infty_{|Q|=0}2(q_1+r)(q_2+s)d_QX^Q$$
$$y^2\frac{\partial^2\varphi}{\partial y^2}=x^ry^s\sum^\infty_{|Q|=0}(q_2+s)(q_2+s-1)d_QX^Q$$
$$-2x^3\frac{\partial\varphi}{\partial x}=x^ry^s\sum^\infty_{|Q|=0}-2(q_1+r)d_Qx^{q_1+2}y^{q_2}=x^ry^s\sum^{\infty}_{|Q|=2}-2(q_1+r-2)d_{q_1-2,q_2}X^Q $$
$$-2y^3\frac{\partial\varphi}{\partial y}=x^ry^s\sum^\infty_{|Q|=0}-2(q_2+s)d_Qx^{q_1}y^{q_2+2}=x^ry^s\sum^{\infty}_{|Q|=2}-2(q_2+s-2)d_{q_1,q_2-2}X^Q$$
$$\lambda xy\varphi=x^ry^s\sum^{\infty}_{|Q|=0}\lambda d_Qx^{q_1+1}y^{q_2+1}=x^ry^s\sum^{\infty}_{|Q|=2}\lambda d_{q_1-1,q_2-1}X^Q.$$
Given that $\varphi$ is solution of (\ref{2eqhermite1}) we have
$$x^2\frac{\partial^2 \varphi}{\partial x^2}+2xy\frac{\partial^2 \varphi}{\partial x\partial y}+y^2\frac{\partial^2 \varphi}{\partial y^2}-2x^3\frac{\partial \varphi}{\partial x}-2y^3\frac{\partial \varphi}{\partial y}+\lambda x^2\varphi=0.$$
Therefore
$$\begin{array}{c} x^ry^s\big[\sum^\infty_{|Q|=0}\big[(q_1+r)(q_1+r-1)+2(q_1+r)(q_2+s)+(q_2+s)(q_2+s-1)\big]d_QX^Q \\ \\ +\sum^{\infty}_{|Q|=2}[-2(q_1+r-2)]d_{q_1-2,q_2}X^Q+\sum^{\infty}_{|Q|=2}[-2(q_2+s-2)]d_{q_1,q_2-2}X^Q+\sum^{\infty}_{|Q|=2}[\lambda d_{q_1-1,q_2-1}]X^Q\big]=0
\end{array}$$
$$\begin{array}{c} \sum^\infty_{|Q|=0}\big[(q_1+q_2+r+s)^2-(q_1+q_2+r+s)\big]d_QX^Q \\ \\ +\sum^{\infty}_{|Q|=2}[-2(q_1+r-2)d_{q_1-2,q_2}-2(q_2+s-2)d_{q_1,q_2-2}+\lambda d_{q_1-1,q_2-1}]X^Q=0
\end{array}$$
$$\begin{array}{c} \sum^\infty_{|Q|=0}\big[(|Q|+r+s)^2-(|Q|+r+s)\big]d_QX^Q \\ \\ +\sum^{\infty}_{|Q|=2}[-2(q_1+r-2)d_{q_1-2,q_2}-2(q_2+s-2)d_{q_1,q_2-2}+\lambda d_{q_1-1,q_2-1}]X^Q=0
\end{array}$$

$$\begin{array}{c}
(r+s)(r+s-1)d_{0,0}+(1+r+s)(r+s)\big[d_{1,0}x+d_{0,1}y\big]+\\
\\\sum^{\infty}_{|Q|=2}[(|Q|+r+s)(|Q|+r+s-1)d_Q-2(q_1+r-2)d_{q_1-2,q_2}-2(q_2+s-2)d_{q_1,q_2-2}+\lambda d_{q_1-1,q_2-1}]X^Q=0 \end{array}$$
and then $$(r+s)(r+s-1)d_{0,0}=0,\;(1+r+s)(r+s)d_{1,0}=0,\;(1+r+s)(r+s)d_{0,1}=0$$
 and
$$(|Q|+r+s)(|Q|+r+s-1)d_Q-2(q_1+r-2)d_{q_1-2,q_2}-2(q_2+s-2)d_{q_1,q_2-2}+\lambda d_{q_1-1,q_2-1}=0,\;\mbox{ for every }|Q|=2,3,\ldots$$
Given that $d_{0,0}\neq0$ we have that
\begin{equation}\label{2eqhermite3}
(r+s)(r+s-1)=0.
\end{equation}
Let $(r,s)$ point of (\ref{2eqhermite3}) such that
\begin{equation}\label{2eqhermite4}
(r,s)\notin\{(r_1,s_1)\in\mathbb{C}^2;\;(|Q|+r+s)(|Q|+r_1+s_1-1)=0,\;\mbox{ for some }|Q|=1,2,\ldots\}.
\end{equation}
Thus we have $d_{1,0}=d_{0,1}=0$  and
\begin{equation}\label{2eqhermite4'}
d_Q=\frac{2(q_1+r-2)d_{q_1-2,q_2}+2(q_2+s-2)d_{q_1,q_2-2}-\lambda d_{q_1-1,q_2-1}}{(|Q|+r+s)(|Q|+r+s-1)},\;\mbox{ for every }|Q|=2,3\ldots.
\end{equation}
Observe that
$$d_{2,0}=\frac{2rd_{0,0}}{(2+r+s)(1+r+s)},d_{1,1}=\frac{-\lambda d_{0,0}}{(2+r+s)(1+r+s)},d_{0,2}=\frac{2sd_{0,0}}{(2+r+s)(1+r+s)}$$

$$\begin{array}{c}
d_{3,0}=\frac{2(r+1)d_{1,0}}{(3+r+s)(2+r+s)}=0,d_{2,1}=\frac{2rd_{0,1}-\lambda d_{1,0}}{(3+r+s)(2+r+s)}=0,d_{1,2}=\frac{2sd_{1,0}-\lambda d_{0,1}}{(3+r+s)(2+r+s)}=0,d_{0,3}=\frac{2(s+1)d_{0,1}}{(3+r+s)(2+r+s)}=0 \end{array}$$

$$\begin{array}{c}
d_{4,0}=\frac{2(r+2)d_{2,0}}{(4+r+s)(3+r+s)}=\frac{2^2(r+2)rd_{0,0}}{(4+r+s)(3+r+s)(2+r+s)(1+r+s)},d_{3,1}=\frac{2(r+1)d_{1,1}-\lambda d_{2,0}}{(4+r+s)(3+r+s)}=\frac{-2\lambda(2r+1)d_{0,0}}{(4+r+s)(3+r+s)(2+r+s)(1+r+s)}\\
\\
d_{2,2}=\frac{2rd_{0,2}+2sd_{2,0}-\lambda d_{1,1}}{(4+r+s)(3+r+s)}=\frac{(8rs+\lambda^2)d_{0,0}}{(4+r+s)(3+r+s)(2+r+s)(1+r+s)},d_{1,3}=\frac{2(s+1)d_{1,1}-\lambda d_{0,2}}{(4+r+s)(3+r+s)}=\frac{-2\lambda(2s+1)d_{0,0}}{(4+r+s)(3+r+s)(2+r+s)(1+r+s)}\\
\\
d_{0,4}=\frac{2(s+2)d_{0,2}}{(4+r+s)(3+r+s)}=\frac{2^2(s+2)sd_{0,0}}{(4+r+s)(3+r+s)(2+r+s)(1+r+s)}
 \end{array}$$

$$\begin{array}{c}
d_{5,0}=\frac{2(r+3)d_{3,0}}{(5+r+s)(4+r+s)}=0,d_{4,1}=\frac{2(r+2)d_{2,1}-\lambda d_{3,0}}{(5+r+s)(4+r+s)}=0,d_{3,2}=\frac{2(r+1)d_{1,2}+2sd_{3,0}-\lambda d_{2,1}}{(5+r+s)(4+r+s)}=0,\\
\\d_{2,3}=\frac{2rd_{0,3}+2(s+1)d_{2,1}-\lambda d_{1,2}}{(5+r+s)(4+r+s)}=0,d_{1,4}=\frac{2(s+2)d_{1,2}-\lambda d_{0,3}}{(5+r+s)(4+r+s)}=0, d_{0,5}=\frac{2(s+3)d_{0,3}}{(5+r+s)(4+r+s)}=0
 \end{array}$$

$$\begin{array}{c}
d_{6,0}=\frac{2(r+4)d_{4,0}}{(6+r+s)(5+r+s)}=\frac{2^3(r+4)(r+2)rd_{0,0}}{(6+r+s)(5+r+s)(4+r+s)(3+r+s)(2+r+s)(1+r+s)},\\
\\d_{5,1}=\frac{2(r+3)d_{3,1}-\lambda d_{4,0}}{(6+r+s)(5+r+s)}=\frac{-2^2\lambda(3r^2+9r+3)d_{0,0}}{(6+r+s)(5+r+s)(4+r+s)(3+r+s)(2+r+s)(1+r+s)}\\
\\
d_{4,2}=\frac{2(r+2)d_{2,2}+2sd_{4,0}-\lambda d_{3,1}}{(6+r+s)(5+r+s)}=\frac{[24(r+2)rs+6\lambda^2(r+1)]d_{0,0}}{(6+r+s)(5+r+s)(4+r+s)(3+r+s)(2+r+s)(1+r+s)}\\
\\
d_{3,3}=\frac{2(r+1)d_{1,3}+2(s+1)d_{3,1}-\lambda d_{2,2}}{(6+r+s)(5+r+s)}=\frac{-\lambda[24rs+12r+12s+8+\lambda^2]d_{0,0}}{(6+r+s)(5+r+s)(4+r+s)(3+r+s)(2+r+s)(1+r+s)}\\
\\
d_{2,4}=\frac{2rd_{0,4}+2(s+2)d_{2,2}-\lambda d_{1,3}}{(6+r+s)(5+r+s)}=\frac{[24(s+2)rs+6\lambda^2(s+1)]d_{0,0}}{(6+r+s)(5+r+s)(4+r+s)(3+r+s)(2+r+s)(1+r+s)}\\
\\d_{1,5}=\frac{2(s+3)d_{1,3}-\lambda d_{0,4}}{(6+r+s)(5+r+s)}=\frac{-2^2\lambda(3s^2+9s+3)d_{0,0}}{(6+r+s)(5+r+s)(4+r+s)(3+r+s)(2+r+s)(1+r+s)}\\
\\
d_{0,6}=\frac{2(s+4)d_{0,4}}{(6+r+s)(5+r+s)}=\frac{2^3(s+4)(s+2)sd_{0,0}}{(6+r+s)(5+r+s)(4+r+s)(3+r+s)(2+r+s)(1+r+s)}.
 \end{array}$$
In general
$$d_{q_1,q_2}=0\;\;|Q|=2n-1$$
and
$$d_{q_1,q_2}\neq0\;\;\;|Q|=2n.$$
Choosing $d_{0,0}=1$ we have that
\begin{equation}\label{2eqhermite5}
\varphi(x,y)=x^{r}y^{s}\big[1+\sum^\infty_{|Q|\;\mbox{even}}d_QX^Q\big]
\end{equation}
where $d_Q$ given by the recurrence (\ref{2eqhermite4'}), is solution of (\ref{2eqhermite1}) where $(r_0,s_0)$ verifies (\ref{2eqhermite3}) and (\ref{2eqhermite4}).

It is visible that the recurrence in this Hermite PDE model II is much more complicate than the one for the first model.

\subsection{Legendre equation}
The Legendre ordinary differential equation writes
\[
(1-x^2) y^{\prime \prime}  - 2xy^\prime + \lambda(\lambda+1)y=0
\]
where $\lambda$ is a constant.

The Legendre differential equation shows up when applying the  separation of variables solution of second order linear elliptic, hyperbolic and parabolic partial differential equations in spherical coordinates, especially the Helmholtz equation, Laplace's equation, and the Schrödinger equation. These equations are important  in electrostatics, electromagnetic wave propagation (e.g. antenna theory), and the solution of the Hydrogen atom wave functions in single-particle quantum mechanics. Their solutions form the polar angle part of the spherical harmonics basis for the multi pole expansion, which is used in both electromagnetic and gravitational statics.

\subsubsection{Legendre PDE I}
Consider the equation
\begin{equation}\label{eqlegendre1}
(1-x^2)x^2\frac{\partial^2 z}{\partial x^2}+(1-x^2)2xy\frac{\partial^2 z}{\partial x\partial y}+(1-x^2)y^2\frac{\partial^2 z}{\partial y^2}-2x^3\frac{\partial z}{\partial x}-2x^2y\frac{\partial z}{\partial y}+\lambda(\lambda+1) x^2z=0,
\end{equation}
where $\lambda\in\mathbb{R}$. Let $\varphi$ be a solution  of (\ref{eqlegendre1}) of the form
\begin{equation}\label{eqlegendre2}
\varphi(x,y)=x^ry^s\sum^{\infty}_{|Q|=0}d_QX^Q
\end{equation}
where $d_{0,0}\neq0$. Then
$$\frac{\partial\varphi}{\partial x}=\sum^\infty_{|Q|=0}(q_1+r)d_Qx^{q_1+r-1}y^{q_2+s} $$
$$\frac{\partial\varphi}{\partial y}=\sum^\infty_{|Q|=0}(q_2+s)d_Qx^{q_1+r}y^{q_2+s-1} $$
$$\frac{\partial^2\varphi}{\partial x^2}=\sum^\infty_{|Q|=0}(q_1+r)(q_1+r-1)d_Qx^{q_1+r-2}y^{q_2+s} $$
$$\frac{\partial^2\varphi}{\partial x \partial y}=\sum^\infty_{|Q|=0}(q_1+r)(q_2+s)d_Qx^{q_1+r-1}y^{q_2+s-1} $$
$$\frac{\partial^2\varphi}{\partial y^2}=\sum^\infty_{|Q|=0}(q_2+s)(q_2+s-1)d_Qx^{q_1+r}y^{q_2+s-2} $$

and from this we have
$$\begin{array}{c l }(1-x^2)x^2\dfrac{\partial^2\varphi}{\partial x^2}&=(1-x^2)x^ry^s\sum^\infty_{|Q|=0}(q_1+r)(q_1+r-1)d_QX^Q\\&
\\&=x^ry^s\sum^\infty_{|Q|=0}(q_1+r)(q_1+r-1)d_QX^Q-x^ry^s\sum^\infty_{|Q|=0}(q_1+r)(q_1+r-1)d_Qx^{q_1+2}y^{q_2}\\
&\\
&=x^ry^s\sum^\infty_{|Q|=0}(q_1+r)(q_1+r-1)d_QX^Q-x^ry^s\sum^\infty_{|Q|=2}(q_1+r-2)(q_1+r-3)d_{q_1-2,q_2}X^Q \end{array}$$

$$\begin{array}{cl}(1-x^2)2xy\dfrac{\partial^2\varphi}{\partial x \partial y}&=(1-x^2)x^ry^s\sum^\infty_{|Q|=0}2(q_1+r)(q_2+s)d_QX^Q\\&
\\&=x^ry^s\sum^\infty_{|Q|=0}2(q_1+r)(q_2+s)d_QX^Q-x^ry^s\sum^\infty_{|Q|=0}2(q_1+r)(q_2+s)d_Qx^{q_1+2}y^{q_2}\\
&\\
&=x^ry^s\sum^\infty_{|Q|=0}2(q_1+r)(q_2+s)d_QX^Q-x^ry^s\sum^\infty_{|Q|=2}2(q_1+r-2)(q_2+s)d_{q_1-2,q_2}X^Q
\end{array}$$

$$\begin{array}{cl}(1-x^2)y^2\dfrac{\partial^2\varphi}{\partial y^2}&=(1-x^2)x^ry^s\sum^\infty_{|Q|=0}(q_2+s)(q_2+s-1)d_QX^Q\\&\\
&=x^ry^s\sum^\infty_{|Q|=0}(q_2+s)(q_2+s-1)d_QX^Q-x^ry^s\sum^\infty_{|Q|=0}(q_2+s)(q_2+s-1)d_Qx^{q_1+2}y^{q_2}\\&\\
&=x^ry^s\sum^\infty_{|Q|=0}(q_2+s)(q_2+s-1)d_QX^Q-x^ry^s\sum^\infty_{|Q|=2}(q_2+s)(q_2+s-1)d_{q_1-2,q_2}X^Q
\end{array}$$

$$-2x^3\frac{\partial\varphi}{\partial x}=x^ry^s\sum^\infty_{|Q|=0}-2(q_1+r)d_Qx^{q_1+2}y^{q_2}=x^ry^s\sum^{\infty}_{|Q|=2}-2(q_1+r-2)d_{q_1-2,q_2}X^Q $$
$$-2x^2y\frac{\partial\varphi}{\partial y}=x^ry^s\sum^\infty_{|Q|=0}(q_2+s)d_Qx^{q_1+2}y^{q_2}=x^ry^s\sum^{\infty}_{|Q|=2}-2(q_2+s)d_{q_1-2,q_2}X^Q $$
$$\lambda(\lambda+1) x^2\varphi=x^ry^s\sum^{\infty}_{|Q|=0}\lambda(\lambda+1) d_Qx^{q_1+2}y^{q_2}=x^ry^s\sum^{\infty}_{|Q|=2}\lambda(\lambda+1) d_{q_1-2,q_2}X^Q.$$
Given that $\varphi$ is solution of (\ref{eqlegendre1}) we have
$$(1-x^2)x^2\frac{\partial^2 \varphi}{\partial x^2}+(1-x^2)2xy\frac{\partial^2 \varphi}{\partial x\partial y}+(1-x^2)y^2\frac{\partial^2 \varphi}{\partial y^2}-2x^3\frac{\partial \varphi}{\partial x}-2x^2y\frac{\partial \varphi}{\partial y}+\lambda(\lambda+1) x^2\varphi=0.$$
Therefore
$$\begin{array}{c} x^ry^s\big[\sum^\infty_{|Q|=0}\big[(q_1+r)(q_1+r-1)+2(q_1+r)(q_2+s)+(q_2+s)(q_2+s-1)\big]d_QX^Q \\ \\ +\sum^{\infty}_{|Q|=2}[-(q_1+r-2)(q_1+r-3)-2(q_1+r-2)(q_2+s)-(q_2+s)(q_2+s-1)\\
\\-2(q_1+r-2)-2(q_2+s)+\lambda(\lambda+1)]d_{q_1-2,q_2}X^Q\big]=0
\end{array}$$

$$\begin{array}{c} \sum^\infty_{|Q|=0}\big[(q_1+q_2+r+s)^2-(q_1+q_2+r+s)\big]d_QX^Q \\ \\ +\sum^{\infty}_{|Q|=2}[-(q_1+q_2+r+s-2)^2+(q_1+q_2+r+s-2)+\lambda(\lambda+1)]d_{q_1-2,q_2}X^Q=0
\end{array}$$
$$\sum^\infty_{|Q|=0}\big[(|Q|+r+s)^2-(|Q|+r+s)\big]d_QX^Q+\sum^{\infty}_{|Q|=2}[-(|Q|+r+s-2)^2+(|Q|+r+s-2)+\lambda(\lambda+1)]d_{q_1-2,q_2}X^Q=0$$
$$\begin{array}{c}
(r+s)(r+s-1)d_{0,0}+(1+r+s)(r+s)\big[d_{1,0}x+d_{0,1}y\big]+\\
\\\sum^{\infty}_{|Q|=2}\big[(|Q|+r+s)(|Q|+r+s-1)d_Q-[(|Q|+r+s-2)(|Q|+r+s-3)-\lambda(\lambda+1)]d_{q_1-2,q_2}\big]X^Q=0\end{array}$$
and then $$(r+s)(r+s-1)d_{0,0}=0,\;(1+r+s)(r+s)d_{1,0}=0,\;(1+r+s)(r+s)d_{0,1}=0$$
 and
$$(|Q|+r+s)(|Q|+r+s-1)d_Q-[(|Q|+r+s-2)(|Q|+r+s-3)-\lambda(\lambda+1)]d_{q_1-2,q_2}=0,\;\mbox{ for every }|Q|=2,3,\ldots$$
Given that $d_{0,0}\neq0$ we have that
\begin{equation}\label{eqlegendre3}
(r+s)(r+s-1)=0.
\end{equation}
Let $(r,s)$ point of (\ref{eqlegendre3}) such that
\begin{equation}\label{eqlegendre4}
(r,s)\notin\{(r_1,s_1)\in\mathbb{C}^2;\;(|Q|+r+s)(|Q|+r_1+s_1-1)=0,\;\mbox{ for some }|Q|=1,2,\ldots\}.
\end{equation}
Thus we have $d_{1,0}=d_{0,1}=0$  and
$$d_Q=\frac{[(|Q|+r+s-2)(|Q|+r+s-3)-\lambda(\lambda+1)]d_{q_1-2,q_2}}{(|Q|+r+s)(|Q|+r+s-1)},\;\mbox{ for every }|Q|=2,3\ldots.$$
Observe that
$$d_{1,1}=d_{0,2}=0\mbox{ and }d_{2,0}=\frac{[(r+s)(r+s-1)-\lambda(\lambda+1)]d_{0,0}}{(2+r+s)(1+r+s)}$$

$$d_{1,2}=d_{0,3}=0\mbox{ and }d_{3,0}=\frac{[(r+s+1)(r+s)-\lambda(\lambda+1)]d_{1,0}}{(3+r+s)(2+r+s)}=0,d_{2,1}=\frac{[(r+s+1)(r+s)-\lambda(\lambda+1)]d_{0,1}}{(3+r+s)(2+r+s)}=0$$

$$\begin{array}{c} d_{1,3}=d_{0,4}=0\mbox{ and }
d_{3,1}=\frac{[(r+s+2)(r+s+1)-\lambda(\lambda+1)]d_{1,1}}{(4+r+s)(3+r+s)}=0,d_{2,2}=\frac{[(r+s+2)(r+s+1)-\lambda(\lambda+1)]d_{0,2}}{(4+r+s)(3+r+s)}=0
,\\
\\
d_{4,0}=\frac{[(r+s+2)(r+s+1)-\lambda(\lambda+1)]d_{2,0}}{(4+r+s)(3+r+s)}=
\frac{[(r+s+2)(r+s+1)-\lambda(\lambda+1)][(r+s)(r+s-1)-\lambda(\lambda+1)]d_{0,0}}{(4+r+s)(3+r+s)(2+r+s)(1+r+s)}
\end{array}$$

$$\begin{array}{c} d_{1,4}=d_{0,5}=0\mbox{ and }d_{5,0}=\frac{[(r+s+3)(r+s+2)-\lambda(\lambda+1)]d_{3,0}}{(5+r+s)(4+r+s)}=0,
d_{4,1}=\frac{[(r+s+3)(r+s+2)-\lambda(\lambda+1)]d_{2,1}}{(5+r+s)(4+r+s)}=0,\\
\\d_{3,2}=\frac{[(r+s+3)(r+s+2)-\lambda(\lambda+1)]d_{1,2}}{(5+r+s)(4+r+s)}=0,d_{2,3}=\frac{[(r+s+3)(r+s+2)-\lambda(\lambda+1)]d_{0,3}}{(5+r+s)(4+r+s)}=0\end{array}$$

$$\begin{array}{c}d_{1,5}=d_{0,6}=0\mbox{ and }d_{5,1}=\frac{[(r+s+4)(r+s+3)-\lambda(\lambda+1)]d_{3,1}}{(6+r+s)(5+r+s)}=0,d_{4,2}=\frac{[(r+s+4)(r+s+3)-\lambda(\lambda+1)]d_{2,2}}{(6+r+s)(5+r+s)}=0,\\
\\d_{3,3}=\frac{[(r+s+4)(r+s+3)-\lambda(\lambda+1)]d_{1,3}}{(6+r+s)(5+r+s)}=0,
d_{2,4}=\frac{[(r+s+4)(r+s+3)-\lambda(\lambda+1)]d_{0,4}}{(6+r+s)(5+r+s)}=0\\
\\
\begin{array}{cl}d_{6,0}&=\frac{[(r+s+4)(r+s+3)-\lambda(\lambda+1)]d_{4,0}}{(6+r+s)(5+r+s)}\\
&\\&=\frac{[(r+s+4)(r+s+3)-\lambda(\lambda+1)][(r+s+2)(r+s+1)-\lambda(\lambda+1)][(r+s)(r+s-1)-\lambda(\lambda+1)]d_{0,0}}{(6+r+s)(5+r+s)(4+r+s)(3+r+s)(2+r+s)(1+r+s)}\end{array}.
\end{array}$$

In general
$$d_{q_1,q_2}=0\;\;(q_1,q_2)\neq(2n,0)$$
and
$$d_{2n,0}=\dfrac{[(r+s+2(n-1))(r+s+2(n-1)-1)-\lambda(\lambda+1)]\cdots[(r+s)(r+s-1)-\lambda(\lambda+1)] d_{0,0}}{(2n+r+s)\cdots(1+r+s)}$$for every $n=1,2,\ldots$.

Choosing $d_{0,0}=1$ we have that
\begin{equation}\label{eqlegendre5}
\begin{array}{cl}\varphi(x,y)&=x^{r}y^{s}+\\&\\&
x^{r}y^{s}\sum^\infty_{n=1}\dfrac{[(r+s+2(n-1))(r+s+2(n-1)-1)-\lambda(\lambda+1)]\cdots[(r+s)(r+s-1)-\lambda(\lambda+1)]x^{2n}}{(2n+r+s)\cdots(1+r+s)}\end{array}
\end{equation}
 is solution of (\ref{eqlegendre1}) where $(r,s)$ verifies (\ref{eqlegendre3}) and (\ref{eqlegendre4}).

\subsubsection{Legendre PDE II}
Consider the equation
\begin{equation}\label{2eqlegendre1}
(1-xy)x^2\frac{\partial^2 z}{\partial x^2}+(1-xy)2xy\frac{\partial^2 z}{\partial x\partial y}+(1-xy)y^2\frac{\partial^2 z}{\partial y^2}-2x^3\frac{\partial z}{\partial x}-2y^3\frac{\partial z}{\partial y}+\lambda(\lambda+1) xyz=0,
\end{equation}
where $\lambda\in\mathbb{R}$. Let $\varphi$ be a solution  of (\ref{2eqlegendre1}) of the form
\begin{equation}\label{2eqlegendre2}
\varphi(x,y)=x^ry^s\sum^{\infty}_{|Q|=0}d_QX^Q
\end{equation}
where $d_{0,0}\neq0$. Then
$$\frac{\partial\varphi}{\partial x}=\sum^\infty_{|Q|=0}(q_1+r)d_Qx^{q_1+r-1}y^{q_2+s} $$
$$\frac{\partial\varphi}{\partial y}=\sum^\infty_{|Q|=0}(q_2+s)d_Qx^{q_1+r}y^{q_2+s-1} $$
$$\frac{\partial^2\varphi}{\partial x^2}=\sum^\infty_{|Q|=0}(q_1+r)(q_1+r-1)d_Qx^{q_1+r-2}y^{q_2+s} $$
$$\frac{\partial^2\varphi}{\partial x \partial y}=\sum^\infty_{|Q|=0}(q_1+r)(q_2+s)d_Qx^{q_1+r-1}y^{q_2+s-1} $$
$$\frac{\partial^2\varphi}{\partial y^2}=\sum^\infty_{|Q|=0}(q_2+s)(q_2+s-1)d_Qx^{q_1+r}y^{q_2+s-2} $$

and from this we have
$$\begin{array}{c l }(1-xy)x^2\dfrac{\partial^2\varphi}{\partial x^2}&=(1-x^2)x^ry^s\sum^\infty_{|Q|=0}(q_1+r)(q_1+r-1)d_QX^Q\\&
\\&=x^ry^s\sum^\infty_{|Q|=0}(q_1+r)(q_1+r-1)d_QX^Q-x^ry^s\sum^\infty_{|Q|=0}(q_1+r)(q_1+r-1)d_Qx^{q_1+1}y^{q_2+1}\\
&\\
&=x^ry^s\sum^\infty_{|Q|=0}(q_1+r)(q_1+r-1)d_QX^Q-x^ry^s\sum^\infty_{|Q|=2}(q_1+r-1)(q_1+r-2)d_{q_1-1,q_2-1}X^Q \end{array}$$

$$\begin{array}{cl}(1-xy)2xy\dfrac{\partial^2\varphi}{\partial x \partial y}&=(1-x^2)x^ry^s\sum^\infty_{|Q|=0}2(q_1+r)(q_2+s)d_QX^Q\\&
\\&=x^ry^s\sum^\infty_{|Q|=0}2(q_1+r)(q_2+s)d_QX^Q-x^ry^s\sum^\infty_{|Q|=0}2(q_1+r)(q_2+s)d_Qx^{q_1+1}y^{q_2+1}\\
&\\
&=x^ry^s\sum^\infty_{|Q|=0}2(q_1+r)(q_2+s)d_QX^Q-x^ry^s\sum^\infty_{|Q|=2}2(q_1+r-1)(q_2+s-1)d_{q_1-1,q_2-1}X^Q
\end{array}$$

$$\begin{array}{cl}(1-xy)y^2\dfrac{\partial^2\varphi}{\partial y^2}&=(1-xy)x^ry^s\sum^\infty_{|Q|=0}(q_2+s)(q_2+s-1)d_QX^Q\\&\\
&=x^ry^s\sum^\infty_{|Q|=0}(q_2+s)(q_2+s-1)d_QX^Q-x^ry^s\sum^\infty_{|Q|=0}(q_2+s)(q_2+s-1)d_Qx^{q_1+1}y^{q_2+1}\\&\\
&=x^ry^s\sum^\infty_{|Q|=0}(q_2+s)(q_2+s-1)d_QX^Q-x^ry^s\sum^\infty_{|Q|=2}(q_2+s-1)(q_2+s-2)d_{q_1-1,q_2-1}X^Q
\end{array}$$

$$-2x^3\frac{\partial\varphi}{\partial x}=x^ry^s\sum^\infty_{|Q|=0}-2(q_1+r)d_Qx^{q_1+2}y^{q_2}=x^ry^s\sum^{\infty}_{|Q|=2}-2(q_1+r-2)d_{q_1-2,q_2}X^Q $$
$$-2y^3\frac{\partial\varphi}{\partial y}=x^ry^s\sum^\infty_{|Q|=0}-2(q_2+s)d_Qx^{q_1}y^{q_2+2}=x^ry^s\sum^{\infty}_{|Q|=2}-2(q_2+s-2)d_{q_1,q_2-2}X^Q $$
$$\lambda(\lambda+1) xy\varphi=x^ry^s\sum^{\infty}_{|Q|=0}\lambda(\lambda+1) d_Qx^{q_1+1}y^{q_2+1}=x^ry^s\sum^{\infty}_{|Q|=2}\lambda(\lambda+1) d_{q_1-1,q_2-1}X^Q.$$
Given that $\varphi$ is solution of (\ref{2eqlegendre1}) we have
$$(1-xy)x^2\frac{\partial^2 \varphi}{\partial x^2}+(1-xy)2xy\frac{\partial^2 \varphi}{\partial x\partial y}+(1-xy)y^2\frac{\partial^2 \varphi}{\partial y^2}-2x^3\frac{\partial \varphi}{\partial x}-2y^3\frac{\partial \varphi}{\partial y}+\lambda(\lambda+1) xy\varphi=0.$$
Therefore
$$\begin{array}{c} x^ry^s\big[\sum^\infty_{|Q|=0}\big[(q_1+r)(q_1+r-1)+2(q_1+r)(q_2+s)+(q_2+s)(q_2+s-1)\big]d_QX^Q \\ \\ +\sum^{\infty}_{|Q|=2}\big[[-(q_1+r-1)(q_1+r-2)-2(q_1+r-1)(q_2+s-1)-(q_2+s-1)(q_2+s-2)+\lambda(\lambda+1)]d_{q_1-1,q_2-1}\\ \\+ [-2(q_1+r-2)]d_{q_1-2,q_2}+[-2(q_2+s-2)]d_{q_1,q_2-2}\big]  X^Q\big]=0
\end{array}$$

$$\begin{array}{c} \sum^\infty_{|Q|=0}\big[(q_1+q_2+r+s)^2-(q_1+q_2+r+s)\big]d_QX^Q \\ \\ +\sum^{\infty}_{|Q|=2}\big[[-(q_1+q_2+r+s-2)^2+(q_1+q_2+r+s-2)+\lambda(\lambda+1)]d_{q_1-1,q_2-1}\\ \\+ [-2(q_1+r-2)]d_{q_1-2,q_2}+[-2(q_2+s-2)]d_{q_1,q_2-2}\big]X^Q=0
\end{array}$$
$$\begin{array}{c} \sum^\infty_{|Q|=0}\big[(|Q|+r+s)^2-(|Q|+r+s)\big]d_QX^Q \\ \\ +\sum^{\infty}_{|Q|=2}\big[[-(|Q|+r+s-2)^2+(|Q|+r+s-2)+\lambda(\lambda+1)]d_{q_1-1,q_2-1}\\ \\+ [-2(q_1+r-2)]d_{q_1-2,q_2}+[-2(q_2+s-2)]d_{q_1,q_2-2}\big]X^Q=0
\end{array}$$

$$\begin{array}{c}
(r+s)(r+s-1)d_{0,0}+(1+r+s)(r+s)\big[d_{1,0}x+d_{0,1}y\big]+\sum^{\infty}_{|Q|=2}\big[
(|Q|+r+s)(|Q|+r+s-1)d_Q
\\ \\ -[(|Q|+r+s-2)(|Q|+r+s-3)-\lambda(\lambda+1)]d_{q_1-1,q_2-1}\\ \\-2(q_1+r-2)d_{q_1-2,q_2}-2(q_2+s-2)d_{q_1,q_2-2}\big]X^Q=0 \end{array}$$
and then $$(r+s)(r+s-1)d_{0,0}=0,\;(1+r+s)(r+s)d_{1,0}=0,\;(1+r+s)(r+s)d_{0,1}=0$$
 and
$$(|Q|+r+s)(|Q|+r+s-1)d_Q-[(|Q|+r+s-2)(|Q|+r+s-3)-\lambda(\lambda+1)]d_{q_1-1,q_2-1}-2(q_1+r-2)d_{q_1-2,q_2}-2(q_2+s-2)d_{q_1,q_2-2}=0$$
for every $|Q|=2,3,\ldots$.

Given that $d_{0,0}\neq0$ we have that
\begin{equation}\label{2eqlegendre3}
(r+s)(r+s-1)=0.
\end{equation}
Let $(r,s)$ point of (\ref{2eqlegendre3}) such that
\begin{equation}\label{2eqlegendre4}
(r,s)\notin\{(r_1,s_1)\in\mathbb{C}^2;\;(|Q|+r_1+s_1)(|Q|+r_1+s_1-1)=0,\;\mbox{ for some }|Q|=1,2,\ldots\}.
\end{equation}
Thus we have $d_{1,0}=d_{0,1}=0$  and
\begin{equation}\label{2eqlegendre4'}
d_Q=\frac{[(|Q|+r+s-2)(|Q|+r+s-3)-\lambda(\lambda+1)]d_{q_1-1,q_2-1}+2(q_1+r-2)d_{q_1-2,q_2}+2(q_2+s-2)d_{q_1,q_2-2}}{(|Q|+r+s)(|Q|+r+s-1)},
\end{equation}
for every $|Q|=2,3,\ldots$.

Observe that
$$d_{2,0}=\frac{2rd_{0,0}}{(2+r+s)(1+r+s)},d_{1,1}=\frac{[(r+s)(r+s-1)-\lambda(\lambda+1)]d_{0,0}}{(2+r+s)(1+r+s)},d_{0,2}=\frac{2sd_{0,0}}{(2+r+s)(1+r+s)}$$

$$\begin{array}{c}
d_{3,0}=\frac{2(r+1)d_{1,0}}{(3+r+s)(2+r+s)}=0,d_{2,1}=\frac{2rd_{0,1}+[(r+s+1)(r+s)-\lambda(\lambda+1)]d_{1,0}}{(3+r+s)(2+r+s)}=0,\\ \\ d_{1,2}=\frac{2sd_{1,0}+[(r+s+1)(r+s)-\lambda(\lambda+1)]d_{0,1}}{(3+r+s)(2+r+s)}=0,d_{0,3}=\frac{2(s+1)d_{0,1}}{(3+r+s)(2+r+s)}=0 \end{array}$$

$$\begin{array}{c}
d_{4,0}=\frac{2(r+2)d_{2,0}}{(4+r+s)(3+r+s)}=\frac{2^2(r+2)rd_{0,0}}{(4+r+s)(3+r+s)(2+r+s)(1+r+s)},d_{0,4}=\frac{2(s+2)d_{0,2}}{(4+r+s)(3+r+s)}=\frac{2^2(s+2)sd_{0,0}}{(4+r+s)(3+r+s)(2+r+s)(1+r+s)},\\
\\d_{3,1}=\frac{2(r+1)d_{1,1}+[(r+s+2)(r+s+1)-\lambda(\lambda+1)]d_{2,0}}{(4+r+s)(3+r+s)}=\frac{[2(2r+1)[(r+s)(r+s+1)-\lambda(\lambda+1)]-4s]d_{0,0}}{(4+r+s)(3+r+s)(2+r+s)(1+r+s)}\\
\\
d_{2,2}=\frac{2rd_{0,2}+2sd_{2,0}+[(r+s+2)(r+s+1)-\lambda(\lambda+1)]d_{1,1}}{(4+r+s)(3+r+s)}=\frac{[8rs++[(r+s+2)(r+s+1)-\lambda(\lambda+1)][(r+s)(r+s-1)-\lambda(\lambda+1)]]d_{0,0}}{(4+r+s)(3+r+s)(2+r+s)(1+r+s)},\\ \\
d_{1,3}=\frac{2(s+1)d_{1,1}+[(r+s+2)(r+s+1)-\lambda(\lambda+1)]d_{0,2}}{(4+r+s)(3+r+s)}=\frac{[2(2s+1)[(r+s)(r+s+1)-\lambda(\lambda+1)]-4r]d_{0,0}}{(4+r+s)(3+r+s)(2+r+s)(1+r+s)} \end{array}$$
In general
$$d_{q_1,q_2}=0\;\;|Q|=2n-1$$
and
$$d_{q_1,q_2}\neq0\;\;\;|Q|=2n.$$
Choosing $d_{0,0}=1$ we have that
\begin{equation}\label{2eqlegendre5}
\varphi(x,y)=x^{r}y^{s}\big[1+\sum^\infty_{|Q|\;\mbox{even}}d_QX^Q\big]
\end{equation}
where $d_Q$ given by the recurrence (\ref{2eqlegendre4'}), is solution of (\ref{2eqlegendre1}) where $(r,s)$ verifies (\ref{2eqlegendre3}) and (\ref{2eqlegendre4}).

\subsection{Chebyshev equation}

Chebyshev's equation is the second order equation 
\[
(1-x^2)y^{\prime \prime} - x y^\prime + p^2 y=0
\]
where $p\in \mathbb K$. 
Solutions of Chebyshev's equation are of importance in numerical analysis such as solution to partial differential equations, smoothing of data etc.
Chebyshev's equation can be used to generate polynomials that could serve as mathematical model to approximate some observed physical phenomena.
Let us give a word about it. 

The solutions of the Chebyshev equation are of the form 
$y=\sum\limits_{n=0}^\infty a_n x^n$ where the coefficients $a_n$ are given by 
\[
a_{n+2}=\frac{(n-p)(n+p)}{(n+1)(n+2)} a_n
\]
These solutions are convergent for $|x|<1$. 
By choosing $a_0=1, \, a_1=0$ we obtain a solution $\mathcal T_1(x)$ and by choosing 
$a_0=0, \, a_1=1$ we obtain a solution $\mathcal T_2(x)$. The remarkable fact is:
if $p$ is even then $\mathcal T_1(x)$ is polynomial of degree $p$. If $p$ is odd then 
$\mathcal T_2(x)$ is polynomial of degree $p$. These give rise to the so called {\it Chebyshev polynomials}.

\subsubsection{Chebyshev PDE of type I}
We present now a PDE model for Chebyshev equation, with the property that its restriction 
to straight lines is a Chebyshev ODE. 
Consider the equation
\begin{equation}\label{eqchebyshev1}
(1-x^2)x^2\frac{\partial^2 z}{\partial x^2}+(1-x^2)2xy\frac{\partial^2 z}{\partial x\partial y}+(1-x^2)y^2\frac{\partial^2 z}{\partial y^2}-x^3\frac{\partial z}{\partial x}-x^2y\frac{\partial z}{\partial y}+p^2x^2z=0,
\end{equation}
where $p\in\mathbb{K}$. Let $\varphi$ be a solution  of (\ref{eqchebyshev1}) of the form
\begin{equation}\label{eqchebyshev2}
\varphi(x,y)=x^ry^s\sum^{\infty}_{|Q|=0}d_QX^Q
\end{equation}
where $d_{0,0}\neq0$. Then
$$\frac{\partial\varphi}{\partial x}=\sum^\infty_{|Q|=0}(q_1+r)d_Qx^{q_1+r-1}y^{q_2+s} $$
$$\frac{\partial\varphi}{\partial y}=\sum^\infty_{|Q|=0}(q_2+s)d_Qx^{q_1+r}y^{q_2+s-1} $$
$$\frac{\partial^2\varphi}{\partial x^2}=\sum^\infty_{|Q|=0}(q_1+r)(q_1+r-1)d_Qx^{q_1+r-2}y^{q_2+s} $$
$$\frac{\partial^2\varphi}{\partial x \partial y}=\sum^\infty_{|Q|=0}(q_1+r)(q_2+s)d_Qx^{q_1+r-1}y^{q_2+s-1} $$
$$\frac{\partial^2\varphi}{\partial y^2}=\sum^\infty_{|Q|=0}(q_2+s)(q_2+s-1)d_Qx^{q_1+r}y^{q_2+s-2} $$

and from this we have
$$\begin{array}{c l }(1-x^2)x^2\dfrac{\partial^2\varphi}{\partial x^2}&=(1-x^2)x^ry^s\sum^\infty_{|Q|=0}(q_1+r)(q_1+r-1)d_QX^Q\\&
\\&=x^ry^s\sum^\infty_{|Q|=0}(q_1+r)(q_1+r-1)d_QX^Q-x^ry^s\sum^\infty_{|Q|=0}(q_1+r)(q_1+r-1)d_Qx^{q_1+2}y^{q_2}\\
&\\
&=x^ry^s\sum^\infty_{|Q|=0}(q_1+r)(q_1+r-1)d_QX^Q-x^ry^s\sum^\infty_{|Q|=2}(q_1+r-2)(q_1+r-3)d_{q_1-2,q_2}X^Q \end{array}$$

$$\begin{array}{cl}(1-x^2)2xy\dfrac{\partial^2\varphi}{\partial x \partial y}&=(1-x^2)x^ry^s\sum^\infty_{|Q|=0}2(q_1+r)(q_2+s)d_QX^Q\\&
\\&=x^ry^s\sum^\infty_{|Q|=0}2(q_1+r)(q_2+s)d_QX^Q-x^ry^s\sum^\infty_{|Q|=0}2(q_1+r)(q_2+s)d_Qx^{q_1+2}y^{q_2}\\
&\\
&=x^ry^s\sum^\infty_{|Q|=0}2(q_1+r)(q_2+s)d_QX^Q-x^ry^s\sum^\infty_{|Q|=2}2(q_1+r-2)(q_2+s)d_{q_1-2,q_2}X^Q
\end{array}$$

$$\begin{array}{cl}(1-x^2)y^2\dfrac{\partial^2\varphi}{\partial y^2}&=(1-x^2)x^ry^s\sum^\infty_{|Q|=0}(q_2+s)(q_2+s-1)d_QX^Q\\&\\
&=x^ry^s\sum^\infty_{|Q|=0}(q_2+s)(q_2+s-1)d_QX^Q-x^ry^s\sum^\infty_{|Q|=0}(q_2+s)(q_2+s-1)d_Qx^{q_1+2}y^{q_2}\\&\\
&=x^ry^s\sum^\infty_{|Q|=0}(q_2+s)(q_2+s-1)d_QX^Q-x^ry^s\sum^\infty_{|Q|=2}(q_2+s)(q_2+s-1)d_{q_1-2,q_2}X^Q
\end{array}$$

$$-x^3\frac{\partial\varphi}{\partial x}=x^ry^s\sum^\infty_{|Q|=0}-(q_1+r)d_Qx^{q_1+2}y^{q_2}=x^ry^s\sum^{\infty}_{|Q|=2}-(q_1+r-2)d_{q_1-2,q_2}X^Q $$
$$-x^2y\frac{\partial\varphi}{\partial y}=x^ry^s\sum^\infty_{|Q|=0}-(q_2+s)d_Qx^{q_1+2}y^{q_2}=x^ry^s\sum^{\infty}_{|Q|=2}-(q_2+s)d_{q_1-2,q_2}X^Q $$
$$p^2 x^2\varphi=x^ry^s\sum^{\infty}_{|Q|=0}p^2d_Qx^{q_1+2}y^{q_2}=x^ry^s\sum^{\infty}_{|Q|=2}p^2 d_{q_1-2,q_2}X^Q.$$
Given that $\varphi$ is solution of (\ref{eqchebyshev1}) we have
$$(1-x^2)x^2\frac{\partial^2 \varphi}{\partial x^2}+(1-x^2)2xy\frac{\partial^2 \varphi}{\partial x\partial y}+(1-x^2)y^2\frac{\partial^2 \varphi}{\partial y^2}-x^3\frac{\partial \varphi}{\partial x}-x^2y\frac{\partial \varphi}{\partial y}+p^2 x^2\varphi=0.$$
Therefore
$$\begin{array}{c} x^ry^s\big[\sum^\infty_{|Q|=0}\big[(q_1+r)(q_1+r-1)+2(q_1+r)(q_2+s)+(q_2+s)(q_2+s-1)\big]d_QX^Q \\ \\ +\sum^{\infty}_{|Q|=2}[-(q_1+r-2)(q_1+r-3)-2(q_1+r-2)(q_2+s)-(q_2+s)(q_2+s-1)\\
\\-(q_1+r-2)-(q_2+s)+p^2]d_{q_1-2,q_2}X^Q\big]=0
\end{array}$$

$$\begin{array}{c} \sum^\infty_{|Q|=0}\big[(q_1+q_2+r+s)^2-(q_1+q_2+r+s)\big]d_QX^Q \\ \\ +\sum^{\infty}_{|Q|=2}[-(q_1+q_2+r+s-2)^2+p^2]d_{q_1-2,q_2}X^Q=0
\end{array}$$
$$\sum^\infty_{|Q|=0}\big[(|Q|+r+s)^2-(|Q|+r+s)\big]d_QX^Q+\sum^{\infty}_{|Q|=2}[-(|Q|+r+s-2)^2+p^2]d_{q_1-2,q_2}X^Q=0$$
$$\begin{array}{c}
(r+s)(r+s-1)d_{0,0}+(1+r+s)(r+s)\big[d_{1,0}x+d_{0,1}y\big]+\\
\\\sum^{\infty}_{|Q|=2}\big[(|Q|+r+s)(|Q|+r+s-1)d_Q-(|Q|+r+s-2-p)(|Q|+r+s-2+p)d_{q_1-2,q_2}\big]X^Q=0\end{array}$$
and then $$(r+s)(r+s-1)d_{0,0}=0,\;(1+r+s)(r+s)d_{1,0}=0,\;(1+r+s)(r+s)d_{0,1}=0$$
 and
$$(|Q|+r+s)(|Q|+r+s-1)d_Q-(|Q|+r+s-2-p)(|Q|+r+s-2+p)d_{q_1-2,q_2}=0,\;\mbox{ for every }|Q|=2,3,\ldots$$
Given that $d_{0,0}\neq0$ we have that
\begin{equation}\label{eqchebyshev3}
(r+s)(r+s-1)=0.
\end{equation}
Let $(r,s)$ point of (\ref{eqchebyshev3}) such that
\begin{equation}\label{eqchebyshev4}
(r,s)\notin\{(r_1,s_1)\in\mathbb{C}^2;\;(|Q|+r_1+s_1)(|Q|+r_1+s_1-1)=0,\;\mbox{ for some }|Q|=1,2,\ldots\}.
\end{equation}
Thus we have $d_{1,0}=d_{0,1}=0$  and
$$d_Q=\frac{(|Q|+r+s-2-p)(|Q|+r+s-2+p)d_{q_1-2,q_2}}{(|Q|+r+s)(|Q|+r+s-1)},\;\mbox{ for every }|Q|=2,3\ldots.$$
Observe that
$$d_{1,1}=d_{0,2}=0\mbox{ and }d_{2,0}=\frac{(r+s-p)(r+s+p)d_{0,0}}{(2+r+s)(1+r+s)}$$

$$d_{1,2}=d_{0,3}=0\mbox{ and }d_{3,0}=\frac{(r+s+1-p)(r+s+1+p)d_{1,0}}{(3+r+s)(2+r+s)}=0,d_{2,1}=\frac{(r+s+1-p)(r+s+1+p)d_{0,1}}{(3+r+s)(2+r+s)}=0$$

$$\begin{array}{c} d_{1,3}=d_{0,4}=0\mbox{ and }
d_{3,1}=\frac{(r+s+2-p)(r+s+2+p)d_{1,1}}{(4+r+s)(3+r+s)}=0,d_{2,2}=\frac{(r+s+2-p)(r+s+2+p)d_{0,2}}{(4+r+s)(3+r+s)}=0
,\\
\\
d_{4,0}=\frac{(r+s+2-p)(r+s+2+p)d_{2,0}}{(4+r+s)(3+r+s)}=
\frac{(r+s+2-p)(r+s+2+p)(r+s-p)(r+s+p)d_{0,0}}{(4+r+s)(3+r+s)(2+r+s)(1+r+s)}
\end{array}$$

$$\begin{array}{c} d_{1,4}=d_{0,5}=0\mbox{ and }d_{5,0}=\frac{(r+s+3-p)(r+s+3+p)d_{3,0}}{(5+r+s)(4+r+s)}=0,
d_{4,1}=\frac{(r+s+3-p)(r+s+3+p)d_{2,1}}{(5+r+s)(4+r+s)}=0,\\
\\d_{3,2}=\frac{(r+s+3-p)(r+s+3+p)d_{1,2}}{(5+r+s)(4+r+s)}=0,d_{2,3}=\frac{(r+s+3-p)(r+s+3+p)d_{0,3}}{(5+r+s)(4+r+s)}=0\end{array}$$

$$\begin{array}{c}d_{1,5}=d_{0,6}=0\mbox{ and }d_{5,1}=\frac{(r+s+4-p)(r+s+4+p)d_{3,1}}{(6+r+s)(5+r+s)}=0,d_{4,2}=\frac{(r+s+4-p)(r+s+4+p)d_{2,2}}{(6+r+s)(5+r+s)}=0,\\
\\d_{3,3}=\frac{(r+s+4-p)(r+s+4+p)d_{1,3}}{(6+r+s)(5+r+s)}=0,
d_{2,4}=\frac{(r+s+4-p)(r+s+4+p)d_{0,4}}{(6+r+s)(5+r+s)}=0\\
\\
\begin{array}{cl}d_{6,0}&=\frac{(r+s+4-p)(r+s+4+p)d_{4,0}}{(6+r+s)(5+r+s)}\\
&\\&=\frac{(r+s+4-p)(r+s+4+p)(r+s+2-p)(r+s+2+p)(r+s-p)(r+s+p) d_{0,0}}{(6+r+s)(5+r+s)(4+r+s)(3+r+s)(2+r+s)(1+r+s)}\end{array}.
\end{array}$$

In general
$$d_{q_1,q_2}=0\;\;(q_1,q_2)\neq(2n,0)$$
and
$$d_{2n,0}=\dfrac{(r+s+2(n-1)-p)(r+s+2(n-1)+p)\cdots(r+s-p)(r+s+p)d_{0,0}}{(2n+r+s)\cdots(1+r+s)}$$for every $n=1,2,\ldots$.

Choosing $d_{0,0}=1$ we have that
\begin{equation}\label{eqchebyshev5}
\begin{array}{cl}\varphi(x,y)&=x^{r}y^{s}+\\&\\&
x^{r}y^{s}\sum^\infty_{n=1}\dfrac{(r+s+2(n-1)-p)(r+s+2(n-1)+p)\cdots(r+s-p)(r+s+p)x^{2n}}{(2n+r+s)\cdots(1+r+s)}\end{array}
\end{equation}
 is solution of (\ref{eqchebyshev1}) where $(r,s)$ verifies (\ref{eqchebyshev3}) and (\ref{eqchebyshev4}).

\subsubsection{Chebyshev polynomials  in two variables}
By choosing in \eqref{eqchebyshev5} $r=s=1/2$ we obtain a solution 
\begin{equation}\label{eqchebyshev5bis}
\begin{array}{cl}\varphi(x,y)&=(xy)^{\frac{1}{2}}\big[ 1+\\&\\&
\sum^\infty_{n=1}\dfrac{(1+2(n-1)-p)(1+2(n-1)+p)\cdots(1-p)(1+p)x^{2n}}{(2n+1)\cdots(2)}\big]\end{array}
\end{equation}

Thus for odd natural values of the parameter $p=2n-1\in \mathbb N$ we have  solutions of the form
$\vr(x,y)=(xy)^{\frac{1}{2}}\mathcal T_p(x,y)$, where $\mathcal T_p(x,y)$ are degree $2n=p+1$ playing the role of
Chebyshev polynomials in the case of ODE. Similarly, and surprisingly, when $-p=2n-1\in \mathbb N$ is odd also have a solution of the form $\vr(x,y)=(xy)^{\frac{1}{2}} \tilde {\mathcal T_p}(x,y)$ for some polynomial
$\tilde {\mathcal T_p} (x,y)$ of degree $2n=-p+1$.  
\subsubsection{Chebyshev equation II}
Now we introduce the model for Chebyshev PDE based on symmetries. Consider the equation
\begin{equation}\label{2eqChebyshev1}
(1-xy)x^2\frac{\partial^2 z}{\partial x^2}+(1-xy)2xy\frac{\partial^2 z}{\partial x\partial y}+(1-xy)y^2\frac{\partial^2 z}{\partial y^2}-x^3\frac{\partial z}{\partial x}-y^3\frac{\partial z}{\partial y}+p^2 xyz=0,
\end{equation}
where $p\in\mathbb{K}$. Let $\varphi$ be a solution  of (\ref{2eqChebyshev1}) of the form
\begin{equation}\label{2eqChebyshev2}
\varphi(x,y)=x^ry^s\sum^{\infty}_{|Q|=0}d_QX^Q
\end{equation}
where $d_{0,0}\neq0$. Then
$$\frac{\partial\varphi}{\partial x}=\sum^\infty_{|Q|=0}(q_1+r)d_Qx^{q_1+r-1}y^{q_2+s} $$
$$\frac{\partial\varphi}{\partial y}=\sum^\infty_{|Q|=0}(q_2+s)d_Qx^{q_1+r}y^{q_2+s-1} $$
$$\frac{\partial^2\varphi}{\partial x^2}=\sum^\infty_{|Q|=0}(q_1+r)(q_1+r-1)d_Qx^{q_1+r-2}y^{q_2+s} $$
$$\frac{\partial^2\varphi}{\partial x \partial y}=\sum^\infty_{|Q|=0}(q_1+r)(q_2+s)d_Qx^{q_1+r-1}y^{q_2+s-1} $$
$$\frac{\partial^2\varphi}{\partial y^2}=\sum^\infty_{|Q|=0}(q_2+s)(q_2+s-1)d_Qx^{q_1+r}y^{q_2+s-2} $$

and from this we have
$$\begin{array}{c l }(1-xy)x^2\dfrac{\partial^2\varphi}{\partial x^2}&=(1-x^2)x^ry^s\sum^\infty_{|Q|=0}(q_1+r)(q_1+r-1)d_QX^Q\\&
\\&=x^ry^s\sum^\infty_{|Q|=0}(q_1+r)(q_1+r-1)d_QX^Q-x^ry^s\sum^\infty_{|Q|=0}(q_1+r)(q_1+r-1)d_Qx^{q_1+1}y^{q_2+1}\\
&\\
&=x^ry^s\sum^\infty_{|Q|=0}(q_1+r)(q_1+r-1)d_QX^Q-x^ry^s\sum^\infty_{|Q|=2}(q_1+r-1)(q_1+r-2)d_{q_1-1,q_2-1}X^Q \end{array}$$

$$\begin{array}{cl}(1-xy)2xy\dfrac{\partial^2\varphi}{\partial x \partial y}&=(1-x^2)x^ry^s\sum^\infty_{|Q|=0}2(q_1+r)(q_2+s)d_QX^Q\\&
\\&=x^ry^s\sum^\infty_{|Q|=0}2(q_1+r)(q_2+s)d_QX^Q-x^ry^s\sum^\infty_{|Q|=0}2(q_1+r)(q_2+s)d_Qx^{q_1+1}y^{q_2+1}\\
&\\
&=x^ry^s\sum^\infty_{|Q|=0}2(q_1+r)(q_2+s)d_QX^Q-x^ry^s\sum^\infty_{|Q|=2}2(q_1+r-1)(q_2+s-1)d_{q_1-1,q_2-1}X^Q
\end{array}$$

$$\begin{array}{cl}(1-xy)y^2\dfrac{\partial^2\varphi}{\partial y^2}&=(1-xy)x^ry^s\sum^\infty_{|Q|=0}(q_2+s)(q_2+s-1)d_QX^Q\\&\\
&=x^ry^s\sum^\infty_{|Q|=0}(q_2+s)(q_2+s-1)d_QX^Q-x^ry^s\sum^\infty_{|Q|=0}(q_2+s)(q_2+s-1)d_Qx^{q_1+1}y^{q_2+1}\\&\\
&=x^ry^s\sum^\infty_{|Q|=0}(q_2+s)(q_2+s-1)d_QX^Q-x^ry^s\sum^\infty_{|Q|=2}(q_2+s-1)(q_2+s-2)d_{q_1-1,q_2-1}X^Q
\end{array}$$

$$-x^3\frac{\partial\varphi}{\partial x}=x^ry^s\sum^\infty_{|Q|=0}-(q_1+r)d_Qx^{q_1+2}y^{q_2}=x^ry^s\sum^{\infty}_{|Q|=2}-(q_1+r-2)d_{q_1-2,q_2}X^Q $$
$$-y^3\frac{\partial\varphi}{\partial y}=x^ry^s\sum^\infty_{|Q|=0}-(q_2+s)d_Qx^{q_1}y^{q_2+2}=x^ry^s\sum^{\infty}_{|Q|=2}-(q_2+s-2)d_{q_1,q_2-2}X^Q $$
$$p^2 xy\varphi=x^ry^s\sum^{\infty}_{|Q|=0}p^2 d_Qx^{q_1+1}y^{q_2+1}=x^ry^s\sum^{\infty}_{|Q|=2}p^2 d_{q_1-1,q_2-1}X^Q.$$
Given that $\varphi$ is solution of (\ref{2eqChebyshev1}) we have
$$(1-xy)x^2\frac{\partial^2 \varphi}{\partial x^2}+(1-xy)2xy\frac{\partial^2 \varphi}{\partial x\partial y}+(1-xy)y^2\frac{\partial^2 \varphi}{\partial y^2}-x^3\frac{\partial \varphi}{\partial x}-y^3\frac{\partial \varphi}{\partial y}+p^2 xy\varphi=0.$$
Therefore
$$\begin{array}{c} x^ry^s\big[\sum^\infty_{|Q|=0}\big[(q_1+r)(q_1+r-1)+2(q_1+r)(q_2+s)+(q_2+s)(q_2+s-1)\big]d_QX^Q \\ \\ +\sum^{\infty}_{|Q|=2}\big[[-(q_1+r-1)(q_1+r-2)-2(q_1+r-1)(q_2+s-1)-(q_2+s-1)(q_2+s-2)+p^2]d_{q_1-1,q_2-1}\\ \\+ [-(q_1+r-2)]d_{q_1-2,q_2}+[-(q_2+s-2)]d_{q_1,q_2-2}\big]  X^Q\big]=0
\end{array}$$

$$\begin{array}{c} \sum^\infty_{|Q|=0}\big[(q_1+q_2+r+s)^2-(q_1+q_2+r+s)\big]d_QX^Q \\ \\ +\sum^{\infty}_{|Q|=2}\big[[-(q_1+q_2+r+s-2)^2+(q_1+q_2+r+s-2)+p^2]d_{q_1-1,q_2-1}\\ \\+ [-(q_1+r-2)]d_{q_1-2,q_2}+[-(q_2+s-2)]d_{q_1,q_2-2}\big]X^Q=0
\end{array}$$
$$\begin{array}{c} \sum^\infty_{|Q|=0}\big[(|Q|+r+s)^2-(|Q|+r+s)\big]d_QX^Q \\ \\ +\sum^{\infty}_{|Q|=2}\big[[-(|Q|+r+s-2)^2+(|Q|+r+s-2)+p^2]d_{q_1-1,q_2-1}\\ \\+ [-(q_1+r-2)]d_{q_1-2,q_2}+[-(q_2+s-2)]d_{q_1,q_2-2}\big]X^Q=0
\end{array}$$

$$\begin{array}{c}
(r+s)(r+s-1)d_{0,0}+(1+r+s)(r+s)\big[d_{1,0}x+d_{0,1}y\big]+\sum^{\infty}_{|Q|=2}\big[
(|Q|+r+s)(|Q|+r+s-1)d_Q
\\ \\ -[(|Q|+r+s-2)(|Q|+r+s-3)-p^2]d_{q_1-1,q_2-1}\\ \\-(q_1+r-2)d_{q_1-2,q_2}-(q_2+s-2)d_{q_1,q_2-2}\big]X^Q=0 \end{array}$$
and then $$(r+s)(r+s-1)d_{0,0}=0,\;(1+r+s)(r+s)d_{1,0}=0,\;(1+r+s)(r+s)d_{0,1}=0$$
 and
$$(|Q|+r+s)(|Q|+r+s-1)d_Q-[(|Q|+r+s-2)(|Q|+r+s-3)-p^2]d_{q_1-1,q_2-1}-(q_1+r-2)d_{q_1-2,q_2}-(q_2+s-2)d_{q_1,q_2-2}=0$$
for every $|Q|=2,3,\ldots$.

Given that $d_{0,0}\neq0$ we have that
\begin{equation}\label{2eqChebyshev3}
(r+s)(r+s-1)=0.
\end{equation}
Let $(r,s)$ point of (\ref{2eqChebyshev3}) such that
\begin{equation}\label{2eqChebyshev4}
(r,s)\notin\{(r_1,s_1)\in\mathbb{C}^2;\;(|Q|+r_1+s_1)(|Q|+r_1+s_1-1)=0,\;\mbox{ for some }|Q|=1,2,\ldots\}.
\end{equation}
Thus we have $d_{1,0}=d_{0,1}=0$  and
\begin{equation}\label{2eqChebyshev4'}
d_Q=\frac{[(|Q|+r+s-2)(|Q|+r+s-3)-p^2]d_{q_1-1,q_2-1}+(q_1+r-2)d_{q_1-2,q_2}+(q_2+s-2)d_{q_1,q_2-2}}{(|Q|+r+s)(|Q|+r+s-1)},
\end{equation}
for every $|Q|=2,3,\ldots$.

Observe that
$$d_{2,0}=\frac{rd_{0,0}}{(2+r+s)(1+r+s)},d_{1,1}=\frac{[(r+s)(r+s-1)-p^2]d_{0,0}}{(2+r+s)(1+r+s)},d_{0,2}=\frac{sd_{0,0}}{(2+r+s)(1+r+s)}$$

$$\begin{array}{c}
d_{3,0}=\frac{(r+1)d_{1,0}}{(3+r+s)(2+r+s)}=0,d_{2,1}=\frac{rd_{0,1}+[(r+s+1)(r+s)-p^2]d_{1,0}}{(3+r+s)(2+r+s)}=0,\\ \\ d_{1,2}=\frac{sd_{1,0}+[(r+s+1)(r+s)-p^2]d_{0,1}}{(3+r+s)(2+r+s)}=0,d_{0,3}=\frac{(s+1)d_{0,1}}{(3+r+s)(2+r+s)}=0 \end{array}$$

$$\begin{array}{c}
d_{4,0}=\frac{(r+2)d_{2,0}}{(4+r+s)(3+r+s)}=\frac{(r+2)rd_{0,0}}{(4+r+s)(3+r+s)(2+r+s)(1+r+s)},d_{0,4}=\frac{(s+2)d_{0,2}}{(4+r+s)(3+r+s)}=\frac{(s+2)sd_{0,0}}{(4+r+s)(3+r+s)(2+r+s)(1+r+s)},\\
\\d_{3,1}=\frac{(r+1)d_{1,1}+[(r+s+2)(r+s+1)-p^2]d_{2,0}}{(4+r+s)(3+r+s)}=\frac{[(2r+1)[(r+s)(r+s+1)-p^2]-2s]d_{0,0}}{(4+r+s)(3+r+s)(2+r+s)(1+r+s)}\\
\\
d_{2,2}=\frac{rd_{0,2}+sd_{2,0}+[(r+s+2)(r+s+1)-p^2]d_{1,1}}{(4+r+s)(3+r+s)}=\frac{[2rs+[(r+s+2)(r+s+1)-p^2][(r+s)(r+s-1)-p^2]]d_{0,0}}{(4+r+s)(3+r+s)(2+r+s)(1+r+s)},\\ \\
d_{1,3}=\frac{(s+1)d_{1,1}+[(r+s+2)(r+s+1)-p^2]d_{0,2}}{(4+r+s)(3+r+s)}=\frac{[(2s+1)[(r+s)(r+s+1)-p^2]-2r]d_{0,0}}{(4+r+s)(3+r+s)(2+r+s)(1+r+s)} \end{array}$$
In general
$$d_{q_1,q_2}=0\;\;|Q|=2n-1$$
and
$$d_{q_1,q_2}\neq0\;\;\;|Q|=2n.$$
Choosing $d_{0,0}=1$ we have that
\begin{equation}\label{2eqChebyshev5}
\varphi(x,y)=x^{r}y^{s}\big[1+\sum^\infty_{|Q|\;\mbox{even}}d_QX^Q\big]
\end{equation}
where $d_Q$ given by the recurrence (\ref{2eqChebyshev4'}), is solution of (\ref{2eqChebyshev1}) where $(r,s)$ verifies (\ref{2eqChebyshev3}) and (\ref{2eqChebyshev4}).

\subsection{Laguerre equation}

As already mentioned in the Introduction 
the {\it Laguerre equation}  is given by $x y ^{\prime \prime} + (\nu +1 -x) y^\prime + \lambda y=0$ where
$\lambda, \nu \in \mathbb R$ are parameters. This equation is quite relevant  in quantum mechanics, since
it appears in the modern quantum mechanical description of the hydrogen atom.
Let us consider the case $\nu=0$ i.e., the simplified Laguerre equation given by 
$x y ^{\prime \prime} + (1 -x) y^\prime + \lambda y=0$ for $\lambda \in \mathbb K$. 
It is well-known that the solutions by series are polynomial when $\lambda$ is a nonnegative integer. This is the so called {\it Laguerre polynomial}.

\subsubsection{Laguerre PDE of type I}
As before, based on the condition that the PDE must have its restrictions to lines $y=tx$ as given by 
Laguerre EDOs we obtain the following model. 
Consider the equation
\begin{equation}\label{eqlaguerre1}
x^2\frac{\partial^2 z}{\partial x^2}+2xy\frac{\partial^2 z}{\partial x\partial y}+y^2\frac{\partial^2 z}{\partial y^2}+(1-x)x\frac{\partial z}{\partial x}+(1-x)y\frac{\partial z}{\partial y}+\lambda xz=0,
\end{equation}
where $\lambda\in\mathbb{R}$. Let $\varphi$ be a solution  of (\ref{eqlaguerre1}) of the form
\begin{equation}\label{eqlaguerre2}
\varphi(x,y)=x^ry^s\sum^{\infty}_{|Q|=0}d_QX^Q
\end{equation}
where $d_{0,0}\neq0$. Then
$$\frac{\partial\varphi}{\partial x}=\sum^\infty_{|Q|=0}(q_1+r)d_Qx^{q_1+r-1}y^{q_2+s} $$
$$\frac{\partial\varphi}{\partial y}=\sum^\infty_{|Q|=0}(q_2+s)d_Qx^{q_1+r}y^{q_2+s-1} $$
$$\frac{\partial^2\varphi}{\partial x^2}=\sum^\infty_{|Q|=0}(q_1+r)(q_1+r-1)d_Qx^{q_1+r-2}y^{q_2+s} $$
$$\frac{\partial^2\varphi}{\partial x \partial y}=\sum^\infty_{|Q|=0}(q_1+r)(q_2+s)d_Qx^{q_1+r-1}y^{q_2+s-1} $$
$$\frac{\partial^2\varphi}{\partial y^2}=\sum^\infty_{|Q|=0}(q_2+s)(q_2+s-1)d_Qx^{q_1+r}y^{q_2+s-2} $$

and from this we have
$$x^2\frac{\partial^2\varphi}{\partial x^2}=x^ry^s\sum^\infty_{|Q|=0}(q_1+r)(q_1+r-1)d_QX^Q$$
$$2xy\frac{\partial^2\varphi}{\partial x \partial y}=x^ry^s\sum^\infty_{|Q|=0}2(q_1+r)(q_2+s)d_QX^Q$$
$$y^2\frac{\partial^2\varphi}{\partial y^2}=x^ry^s\sum^\infty_{|Q|=0}(q_2+s)(q_2+s-1)d_QX^Q$$
$$\begin{array}{cl}(1-x)x\dfrac{\partial\varphi}{\partial x}&=(1-x)x^ry^s\sum^\infty_{|Q|=0}(q_1+r)d_QX^Q\\&
\\&=x^ry^s\sum^\infty_{|Q|=0}(q_1+r)d_QX^Q-x^ry^s\sum^\infty_{|Q|=0}(q_1+r)d_Qx^{q_1+1}y^{q_2}\\
&\\
&=x^ry^s\sum^\infty_{|Q|=0}(q_1+r)d_QX^Q-x^ry^s\sum^\infty_{|Q|=1}(q_1+r-1)d_{q_1-1,q_2}X^Q
\end{array}$$
$$\begin{array}{cl}(1-x)y\dfrac{\partial\varphi}{\partial y}&=(1-x)x^ry^s\sum^\infty_{|Q|=0}(q_2+s)d_QX^Q\\&
\\&=x^ry^s\sum^\infty_{|Q|=0}(q_2+s)d_QX^Q-x^ry^s\sum^\infty_{|Q|=0}(q_2+s)d_Qx^{q_1+1}y^{q_2}\\
&\\
&=x^ry^s\sum^\infty_{|Q|=0}(q_2+s)d_QX^Q-x^ry^s\sum^\infty_{|Q|=1}(q_2+s)d_{q_1-1,q_2}X^Q
\end{array}$$
$$\lambda x\varphi=x^ry^s\sum^{\infty}_{|Q|=0}\lambda d_Qx^{q_1+1}y^{q_2}=x^ry^s\sum^{\infty}_{|Q|=1}\lambda d_{q_1-1,q_2}X^Q.$$
Given that $\varphi$ is solution of (\ref{eqlaguerre1}) we have
$$x^2\frac{\partial^2 \varphi}{\partial x^2}+2xy\frac{\partial^2 \varphi}{\partial x\partial y}+y^2\frac{\partial^2 \varphi}{\partial y^2}+(1-x)x\frac{\partial \varphi}{\partial x}+(1-x)y\frac{\partial \varphi}{\partial y}+\lambda x\varphi=0.$$
Therefore
$$\begin{array}{c} x^ry^s\big[\sum^\infty_{|Q|=0}\big[(q_1+r)(q_1+r-1)+2(q_1+r)(q_2+s)+(q_2+s)(q_2+s-1)+(q_1+r)+(q_2+s)\big]d_QX^Q \\ \\ +\sum^{\infty}_{|Q|=1}[-(q_1+r-1)-(q_2+s)+\lambda]d_{q_1-1,q_2}X^Q\big]=0
\end{array}$$
$$\sum^\infty_{|Q|=0}(q_1+q_2+r+s)^2d_QX^Q-\sum^{\infty}_{|Q|=1}[q_1+q_2+r+s-(1+\lambda)]d_{q_1-1,q_2}X^Q=0$$
$$\sum^\infty_{|Q|=0}(|Q|+r+s)^2d_QX^Q-\sum^{\infty}_{|Q|=1}[|Q|+r+s-(1+\lambda)]d_{q_1-1,q_2}X^Q=0$$
$$\begin{array}{c}
(r+s)^2d_{0,0}+\sum^{\infty}_{|Q|=1}\big[(|Q|+r+s)^2d_Q-[|Q|+r+s-(1+\lambda)]d_{q_1-1,q_2}\big]X^Q=0\end{array}$$
and then $(r+s)^2d_{0,0}=0$  and
$$(|Q|+r+s)^2d_Q-[|Q|+r+s-(1+\lambda)]d_{q_1-1,q_2}=0,\;\mbox{ for every }|Q|=1,2,3,\ldots$$
Given that $d_{0,0}\neq0$ we have that
\begin{equation}\label{eqlaguerre3}
r+s=0.
\end{equation}
Let $(r,s)$ point of (\ref{eqlaguerre3}). Note that $|Q|+r+s\neq0$ for all $|Q|=1,2,\ldots$

Thus we have
$$d_Q=\frac{[|Q|-(1+\lambda)]d_{q_1-1,q_2}}{|Q|^2},\;\mbox{ for every }|Q|=1,2,\ldots.$$
Observe that
$$d_{0,1}=0\mbox{ and }d_{1,0}=\frac{-\lambda d_{0,0}}{1^2}=-\lambda d_{0,0}$$

$$d_{0,2}=0\mbox{ and }d_{1,1}=\frac{[1-\lambda]d_{0,1}}{2^2}=0,d_{2,0}=\frac{[1-\lambda]d_{1,0}}{2^2}=\frac{[1-\lambda][-\lambda]d_{0,0}}{2^2}$$

$$\begin{array}{c} d_{0,3}=0\mbox{ and }d_{2,1}=\frac{[2-\lambda]d_{1,1}}{3^2}=0,d_{1,2}=\frac{[2-\lambda]d_{0,2}}{3^2}=0, d_{3,0}=\frac{[2-\lambda]d_{2,0}}{3^2}=\frac{[2-\lambda][1-\lambda][-\lambda]d_{0,0}}{3^22^2}\end{array}$$

In general
$$d_{q_1,q_2}=0\;\;(q_1,q_2)\neq(n,0)$$
and
$$d_{n,0}=\frac{[n-1-\lambda]\cdots[-\lambda]d_{0,0}}{n^2\cdots1^2}=\frac{[n-1-\lambda]\cdots[-\lambda]d_{0,0}}{(n!)^2}\;\;\;n=1,2,\ldots$$
Choosing $d_{0,0}=1$ we have that
\begin{equation}\label{eqlaguerre5}
\varphi(x,y)=x^{r}y^{s}\big[1+\sum^\infty_{n=1}\frac{[n-1-\lambda]\cdots[-\lambda]x^n}{(n!)^2}\big]
\end{equation}
 is solution of (\ref{eqlaguerre1}) where $(r,s)$ verifies (\ref{eqlaguerre3}).

\subsubsection{Laguerre equation II}
As for the symmetric model of Laguerre PDE we consider the equation
\begin{equation}\label{2eqlaguerre1}
x^2\frac{\partial^2 z}{\partial x^2}+2xy\frac{\partial^2 z}{\partial x\partial y}+y^2\frac{\partial^2 z}{\partial y^2}+(1-xy)x\frac{\partial z}{\partial x}+(1-xy)y\frac{\partial z}{\partial y}+\lambda xyz=0,
\end{equation}
where $\lambda\in\mathbb{R}$. Let $\varphi$ be a solution  of (\ref{2eqlaguerre1}) of the form
\begin{equation}\label{2eqlaguerre2}
\varphi(x,y)=x^ry^s\sum^{\infty}_{|Q|=0}d_QX^Q
\end{equation}
where $d_{0,0}\neq0$. Then
$$\frac{\partial\varphi}{\partial x}=\sum^\infty_{|Q|=0}(q_1+r)d_Qx^{q_1+r-1}y^{q_2+s} $$
$$\frac{\partial\varphi}{\partial y}=\sum^\infty_{|Q|=0}(q_2+s)d_Qx^{q_1+r}y^{q_2+s-1} $$
$$\frac{\partial^2\varphi}{\partial x^2}=\sum^\infty_{|Q|=0}(q_1+r)(q_1+r-1)d_Qx^{q_1+r-2}y^{q_2+s} $$
$$\frac{\partial^2\varphi}{\partial x \partial y}=\sum^\infty_{|Q|=0}(q_1+r)(q_2+s)d_Qx^{q_1+r-1}y^{q_2+s-1} $$
$$\frac{\partial^2\varphi}{\partial y^2}=\sum^\infty_{|Q|=0}(q_2+s)(q_2+s-1)d_Qx^{q_1+r}y^{q_2+s-2} $$

and from this we have
$$x^2\frac{\partial^2\varphi}{\partial x^2}=x^ry^s\sum^\infty_{|Q|=0}(q_1+r)(q_1+r-1)d_QX^Q$$
$$2xy\frac{\partial^2\varphi}{\partial x \partial y}=x^ry^s\sum^\infty_{|Q|=0}2(q_1+r)(q_2+s)d_QX^Q$$
$$y^2\frac{\partial^2\varphi}{\partial y^2}=x^ry^s\sum^\infty_{|Q|=0}(q_2+s)(q_2+s-1)d_QX^Q$$
$$\begin{array}{cl}(1-xy)x\dfrac{\partial\varphi}{\partial x}&=(1-xy)x^ry^s\sum^\infty_{|Q|=0}(q_1+r)d_QX^Q\\&
\\&=x^ry^s\sum^\infty_{|Q|=0}(q_1+r)d_QX^Q-x^ry^s\sum^\infty_{|Q|=0}(q_1+r)d_Qx^{q_1+1}y^{q_2+1}\\
&\\
&=x^ry^s\sum^\infty_{|Q|=0}(q_1+r)d_QX^Q-x^ry^s\sum^\infty_{|Q|=2}(q_1+r-1)d_{q_1-1,q_2-1}X^Q
\end{array}$$
$$\begin{array}{cl}(1-xy)y\dfrac{\partial\varphi}{\partial y}&=(1-xy)x^ry^s\sum^\infty_{|Q|=0}(q_2+s)d_QX^Q\\&
\\&=x^ry^s\sum^\infty_{|Q|=0}(q_2+s)d_QX^Q-x^ry^s\sum^\infty_{|Q|=0}(q_2+s)d_Qx^{q_1+1}y^{q_2+1}\\
&\\
&=x^ry^s\sum^\infty_{|Q|=0}(q_2+s)d_QX^Q-x^ry^s\sum^\infty_{|Q|=2}(q_2+s-1)d_{q_1-1,q_2-1}X^Q
\end{array}$$
$$\lambda xy\varphi=x^ry^s\sum^{\infty}_{|Q|=0}\lambda d_Qx^{q_1+1}y^{q_2+1}=x^ry^s\sum^{\infty}_{|Q|=2}\lambda d_{q_1-1,q_2-1}X^Q.$$Given that $\varphi$ is solution of (\ref{2eqlaguerre1}) we have
$$x^2\frac{\partial^2 \varphi}{\partial x^2}+2xy\frac{\partial^2 \varphi}{\partial x\partial y}+y^2\frac{\partial^2 \varphi}{\partial y^2}+(1-xy)x\frac{\partial \varphi}{\partial x}+(1-xy)y\frac{\partial \varphi}{\partial y}+\lambda xy\varphi=0.$$
Therefore
$$\begin{array}{c} x^ry^s\big[\sum^\infty_{|Q|=0}\big[(q_1+r)(q_1+r-1)+2(q_1+r)(q_2+s)+(q_2+s)(q_2+s-1)+(q_1+r)+(q_2+s)\big]d_QX^Q \\ \\ -\sum^{\infty}_{|Q|=2}[(q_1+r-1)+(q_2+s-1)-\lambda]d_{q_1-1,q_2-1}X^Q\big]=0
\end{array}$$
$$\begin{array}{c} \sum^\infty_{|Q|=0}(q_1+q_2+r+s)^2d_QX^Q-\sum^{\infty}_{|Q|=2}[q_1+q_2+r+s-(2+\lambda)]d_{q_1-1,q_2-1}X^Q=0
\end{array}$$
$$\begin{array}{c} \sum^\infty_{|Q|=0}(|Q|+r+s)^2d_QX^Q-\sum^{\infty}_{|Q|=2}[|Q|+r+s-(2+\lambda)]d_{q_1-1,q_2-1}X^Q=0
\end{array}$$

$$\begin{array}{c}
(r+s)^2d_{0,0}+(1+r+s)^2\big[d_{1,0}x+d_{0,1}y\big]+\\
\\\sum^{\infty}_{|Q|=2}[(|Q|+r+s)^2d_Q-[|Q|+r+s-(2+\lambda)]d_{q_1-1,q_2-1}]X^Q=0 \end{array}$$
and then $$(r+s)^2d_{0,0}=0,\;(1+r+s)^2d_{1,0}=0,\;(1+r+s)^2d_{0,1}=0$$
 and
$$(|Q|+r+s)^2d_Q-[|Q|+r+s-(2+\lambda)]d_{q_1-1,q_2-1}=0,\;\mbox{ for every }|Q|=2,3,\ldots$$
Given that $d_{0,0}\neq0$ we have that
\begin{equation}\label{2eqlaguerre3}
r+s=0.
\end{equation}
Let $(r,s)$ point of (\ref{2eqlaguerre3}). Note that $|Q|+r_0+s_0\neq0$ for all $|Q|=1,2,\ldots$.

Thus we have $d_{1,0}=d_{0,1}=0$  and
$$d_Q=\dfrac{[|Q|-(2+\lambda)]d_{q_1-1,q_2-1}}{|Q|^2},\;\mbox{ for every }|Q|=2,3,\ldots$$
Observe that
$$d_{0,2}=d_{2,0}=0\mbox{ and }d_{1,1}=\frac{[-\lambda]d_{0,0}}{2^2}$$

$$\begin{array}{c} d_{0,3}=d_{3,0}=0\mbox{ and }d_{2,1}=\frac{[1-\lambda]d_{1,0}}{3^2}=0,d_{1,2}=\frac{[1-\lambda]d_{0,1}}{3^2}=0\end{array}$$

$$\begin{array}{c}d_{0,4}=d_{4,0}=0\mbox{ and }d_{3,1}=\frac{[2-\lambda]d_{2,0}}{4^2}=0,\\
\\d_{1,3}=\frac{[2-\lambda]d_{0,2}}{4^2}=0,d_{2,2}=\frac{[2-\lambda]d_{1,1}}{4^2}=
\frac{[2-\lambda][-\lambda]d_{0,0}}{4^22^2}
\end{array}$$

$$\begin{array}{c} d_{0,5}=d_{5,0}=0\mbox{ and }d_{4,1}=\frac{[3-\lambda]d_{3,0}}{5^2}=0,d_{3,2}=\frac{[3-\lambda]d_{2,1}}{5^2}=0\\ \\
d_{2,3}=\frac{[3-\lambda]d_{1,2}}{5^2}=0,d_{1,4}=\frac{[3-\lambda]d_{0,3}}{5^2}=0
\end{array}$$

$$\begin{array}{c}d_{0,6}=d_{6,0}=0\mbox{ and }d_{5,1}=\frac{[4-\lambda]d_{4,0}}{6^2}=0,\\
\\d_{4,2}=\frac{[4-\lambda]d_{3,1}}{6^2}=0,d_{2,4}=\frac{[4-\lambda][-\lambda]d_{1,3}}{6^2}=0,d_{1,5}=\frac{[4-\lambda]d_{0,4}}{6^2}=0\\
\\  d_{3,3}=\frac{[4-\lambda]d_{2,2}}{6^2}=\frac{[4-\lambda][2-\lambda][-\lambda]d_{0,0}}{6^24^22^2}.
\end{array}$$

In general
$$d_{q_1,q_2}=0\;\;(q_1,q_2)\neq(n,n)$$
and
$$d_{n,n}=\frac{[2(n-1)-\lambda]\cdots[-\lambda]d_{0,0}}{2^{2n}(n!)^2}\;\;\;n=1,2,\ldots$$
Choosing $d_{0,0}=1$ we have that
\begin{equation}\label{2eqlaguerre5}
\varphi(x,y)=x^{r}y^{s}\big[1+\sum^\infty_{n=1}\frac{[2(n-1)-\lambda]\cdots[-\lambda](xy)^n}{2^{2n}(n!)^2}\big]
\end{equation}
 is solution of (\ref{2eqlaguerre1}) where $(r,s)$ verifies (\ref{2eqlaguerre3}).

\subsubsection{Laguerre polynomials in two variables}
Both models Laguerre type I and II above exhibit solutions giving rise to polynomials  
for suitable values of the parameter $\lambda$. Indeed, for instance for $r_0=s_0=1/2$ and $\lambda=n$ 
we have the first model giving rise to solutions of  the form 
$\vr(x,y)=(xy)^{\frac{1}{2}}L_n(x)$ for some polynomial $L_n(x)$ of degree $n$.
This is however a one-variable polynomial. 
On the other hand, the second model Laguerre type II gives, for  $r_0=s_0=1/2$ and even $\lambda=2n-2$ 
a solution of the form  $\vr(x,y)=(xy)^{\frac{1}{2}}\tilde L_n(xy)$ for some polynomial $L(t)$ of degree $n$.

\subsection{Disturbed heat equation}

The next example shows the efficacy or our methods when compared to the classical separation of variables methods. It is an example that cannot have separated variables, but fits into our approach. It is based on the heat diffusion equation in a two dimension plaque. A perturbation is included in the equation as
result of some external influence as a source of heat for instance:
\begin{Example}[Disturbed heat equation]{\rm  We shall now apply our techniques in a PDE
that cannot be solved by the  usual method of separation of variables. Let us consider the following perturbed heat diffusion equation
\begin{equation}\label{eqheatdisturbed1}
     a^2\frac{\partial^2 z}{\partial x^2} =\frac{\partial z}{\partial y}+\alpha(x,y)\frac{\partial z}{\partial x},\;\;x>0,y>0
     \end{equation}
where $a$ is a positive constant and $\alpha$ is analytic.
Notice that putting $Z(x,y)=X(x)Y(y)$ and substituting in the PDE we obtain
\[
a^2 \frac{X^{\prime\prime}(x)}{X(x)}= \frac{Y^\prime(y)}{Y(y)} + \frac{\alpha(x,y)}{Y(y)}
 \]
 and therefore the PDE is not always separable variables equation.
Let us now solve this equation by our methods in some concrete examples.

 Making the change $x=\ln u, y=\ln v$ we transform the equation (\ref{eqheatdisturbed1}) in
\begin{equation}\label{eqheatdisturbed2}
a^2u^2\frac{\partial^2 \tilde{z}}{\partial u^2}+u(a^2-\alpha(\ln u,\ln v))\frac{\partial \tilde{z}}{\partial u}-v\frac{\partial \tilde{z}}{\partial v}=0.
\end{equation}
For example, if we consider $\alpha(x,y)=e^{x+y}$ in (\ref{eqheatdisturbed2}) we obtain
\begin{equation}\label{eqheatdisturbed3}
a^2u^2\frac{\partial^2 \tilde{z}}{\partial u^2}+u(a^2-uv)\frac{\partial \tilde{z}}{\partial u}-v\frac{\partial \tilde{z}}{\partial v}=0.
\end{equation}
Note that equation (\ref{eqheatdisturbed3}) is a parabolic equation with regular singularity.

Let $\tilde{\varphi}$ be a solution  of (\ref{eqheatdisturbed3}) of the form
\begin{equation}\label{eqheatdisturbed4}
\tilde{\varphi}(u,v)=u^rv^s\sum^{\infty}_{|Q|=0}d_Qu^{q_1}v^{q_2}
\end{equation}
where $d_{0,0}\neq0$. Then
$$\frac{\partial\tilde{\varphi}}{\partial u}=\sum^\infty_{|Q|=0}(q_1+r)d_Qu^{q_1+r-1}v^{q_2+s} $$
$$\frac{\partial\tilde{\varphi}}{\partial v}=\sum^\infty_{|Q|=0}(q_2+s)d_Qu^{q_1+r}v^{q_2+s-1} $$
$$\frac{\partial^2\tilde{\varphi}}{\partial u^2}=\sum^\infty_{|Q|=0}(q_1+r)(q_1+r-1)d_Qu^{q_1+r-2}v^{q_2+s} $$
and from this we have
$$a^2u^2\frac{\partial^2\tilde{\varphi}}{\partial u^2}=u^rv^s\sum^\infty_{|Q|=0}a^2(q_1+r)(q_1+r-1)d_Qu^{q_1}v^{q_2}$$

$$\begin{array}{c c l}u(a^2-uv)\frac{\partial\tilde{\varphi}}{\partial u}&=&(a^2-uv)u^rv^s\sum^\infty_{|Q|=0}(q_1+r)d_Qu^{q_1}v^{q_2}\\ &&
\\&=&u^rv^s\left(\sum^\infty_{|Q|=0}a^2(q_1+r)d_Qu^{q_1}v^{q_2}-\sum^\infty_{|Q|=2}(q_1-1+r)d_{q_1-1,q_2-1}u^{q_1}v^{q_2}\right)\end{array}$$
$$-v\frac{\partial\tilde{\varphi}}{\partial v}=u^rv^s\sum^\infty_{|Q|=0}-(q_2+s)d_Qu^{q_1}v^{q_2}.$$
Given that $\tilde{\varphi}$ is solution of (\ref{eqheatdisturbed3}) we have
$$a^2u^2\frac{\partial^2 \tilde{\varphi}}{\partial u^2}+u(a^2-uv)\frac{\partial \tilde{\varphi}}{\partial u}-v\frac{\partial \tilde{\varphi}}{\partial v}=0$$
Therefore
$$u^rv^s\big[\sum^\infty_{|Q|=0}\big[a^2(q_1+r)(q_1+r-1)+a^2(q_1+r)-(q_2+s)\big]d_Qu^{q_1}v^{q_2}+\sum^{\infty}_{|Q|=2}d_{q_1-1,q_2-1}u^{q_1}v^{q_2}\big]=0 $$
$$\sum^\infty_{|Q|=0}\big[a^2(q_1+r)^2-(q_2+s)\big]d_Qu^{q_1}v^{q_2}+\sum^{\infty}_{|Q|=2}d_{q_1-1,q_2-1}u^{q_1}v^{q_2}=0$$
$$(r^2-s)d_{0,0}+\big[(r+1)^2-s)d_{1,0}u+(r^2-(s+1))d_{0,1}v\big] +\sum^{\infty}_{|Q|=2}\big(\big[a^2(q_1+r)^2-(q_2+s)\big]d_Q+d_{q_1-1,q_2-1}\big)u^{q_1}v^{q_2}=0$$
and then $$(a^2r^2-s)d_{0,0}=0,\;\;\;(a^2(r+1)^2-s)d_{1,0}=0,\;\;\;(a^2r^2-(s+1))d_{0,1}=0$$ and
$$\big[a^2(q_1+r)^2-(q_2+s)\big]d_Q+d_{q_1-1,q_2-1}=0,\;\mbox{ for every }|Q|=2,3,\ldots$$
Given that $d_{0,0}\neq0$ we have that
\begin{equation}\label{eqheatdisturbed5}
a^2r^2-s=0.
\end{equation}
Let $(r,s)$ root of (\ref{eqheatdisturbed5}) such that
\begin{equation}\label{eqheatdisturbed6}
(r,s)\notin\{(r_1,s_1)\in\mathbb{C}^2;\;a^2(q_1+r_1)^2-(q_2+s_1)=0,\;\mbox{ for some }|Q|=1,2,\ldots\}.
\end{equation}
Thus we have $d_{1,0}=d_{0,1}=0$ and
$$d_Q=-\frac{d_{q_1-1,q_2-1}}{a^2(q_1+r)^2-(q_2+s)},\;\mbox{ for every }|Q|=2,3,\ldots.$$
Observe that
$$d_{2,0}=d_{0,2}=0\mbox{ and }d_{1,1}=-\frac{d_{0,0}}{a^2(1+r)^2-(s+1)}$$

$$d_{3,0}=d_{0,3}=0\mbox{ and }d_{2,1}=-\frac{d_{1,0}}{a^2(2+r)^2-(1+s)}=0,d_{1,2}=-\frac{d_{0,1}}{a^2(1+r)^2-(2+s)}=0$$
$$\begin{array}{c}d_{4,0}=d_{0,4}=0\mbox{ and }d_{3,1}=-\frac{d_{2,0}}{a^2(3+r)^2-(1+s)}=0,d_{1,3}=-\frac{d_{0,2}}{a^2(1+r)^2-(3+s)}=0,\\
\\
d_{2,2}=-\frac{d_{1,1}}{a^2(2+r)^2-(2+s)}=(-1)^2\frac{d_{0,0}}{(a^2(1+r)^2-(1+s))(a^2(2+r)^2-(2+s))}\end{array} $$

$$\begin{array}{c}d_{5,0}=d_{0,5}=0\mbox{ and }d_{4,1}=-\frac{d_{3,0}}{a^2(4+r)^2-(1+s)}=0,d_{1,4}=-\frac{d_{0,3}}{a^2(1+r)^2-(4+s)}=0,\\
\\
d_{3,2}=-\frac{d_{2,1}}{a^2(3+r)^2-(2+s)}=0,d_{2,3}=-\frac{d_{1,2}}{a^2(2+r)^2-(3+s)}=0\end{array} $$

$$\begin{array}{c}d_{6,0}=d_{0,6}=0\mbox{ and }d_{5,1}=-\frac{d_{4,0}}{a^2(5+r)^2-(1+s)}=0,d_{1,5}=-\frac{d_{0,4}}{a^2(1+r)^2-(5+s)}=0,\\
\\
d_{4,2}=-\frac{d_{3,1}}{a^2(4+r)^2-(2+s)}=0,d_{2,4}=-\frac{d_{1,3}}{a^2(2+r)^2-(4+s)}=0\\
\\
d_{3,3}=-\frac{d_{2,2}}{a^2(3+r)^2-(3+s)}=(-1)^3\frac{d_{0,0}}{(a^2(1+r)^2-(1+s))(a^2(2+r)^2-(2+s))(a^2(3+r)^2-(3+s))}.
\end{array} $$
In general
$$d_{q_1,q_2}=0\;\;q_1\neq q_2$$
and
$$d_{n,n}=\frac{(-1)^nd_{0,0}}{(a^2(1+r)^2-(1+s))(a^2(2+r)^2-(2+s))\cdots(a^2(n+r)^2-(n+s))}\;\;\;n=1,2,\ldots$$
Choosing $d_{0,0}=1$ we have that
\begin{equation}\label{eqheatdisturbed7}
\tilde{\varphi}(u,v)=u^{r}v^{s}\big[1+\sum^\infty_{n=1}\frac{(-1)^n(uv)^n}{(a^2(1+r)^2-(1+s))(a^2(2+r)^2-(2+s))\cdots(a^2(n+r)^2-(n+s))}\big]
\end{equation}
is solution of (\ref{eqheatdisturbed3}) where $(r_0,s_0)$ verifies (\ref{eqheatdisturbed5}) and (\ref{eqheatdisturbed6}). Therefore
$$\varphi(x,y)=\tilde{\varphi}(e^x,e^y)=e^{xr+ys}\big[1+\sum^\infty_{n=1}\frac{(-1)^ne^{n(x+y)}}{(a^2(1+r)^2-(1+s))(a^2(2+r)^2-(2+s))\cdots(a^2(n+r)^2-(n+s))}\big]$$
is solution of
$$     a^2\frac{\partial^2 z}{\partial x^2} =\frac{\partial z}{\partial y}+e^{x+y}\frac{\partial z}{\partial x}$$
 where $(r,s)$ verifies (\ref{eqheatdisturbed5}) and (\ref{eqheatdisturbed6}).

}
\end{Example}

\bibliographystyle{amsalpha}

\end{document}